\documentclass[12pt]{article}
\usepackage[english]{babel}
\usepackage{amsfonts,amsmath,amsxtra,amsthm,latexsym}
\usepackage{amssymb} 

\textwidth=180mm \textheight=240mm \hoffset=-20mm \voffset=-17mm
\pagestyle{myheadings}

\newtheorem{thm}{Theorem}[section]
 \newtheorem{cor}[thm]{Corollary}
 \newtheorem{lem}[thm]{Lemma}
 \newtheorem{prop}[thm]{Proposition}

 \theoremstyle{definition}
 \newtheorem{defn}{Definition}[section]
 \theoremstyle{remark}
 \newtheorem{rem}{Remark}[section]

 \newtheorem{ex}[thm]{Example}
 \numberwithin{equation}{section}

\DeclareMathOperator{\im}{Im}
\DeclareMathOperator{\re}{Re}
\DeclareMathOperator{\supp}{supp}
\DeclareMathOperator{\pr}{pr}
\DeclareMathOperator{\ind}{ind}

\DeclareMathOperator{\Bd}{bd}
\DeclareMathOperator{\Bdlocw}{\mathrm{loc-bd}_{\mathrm w*}}
\DeclareMathOperator{\cone}{cone}
\DeclareMathOperator{\Cone}{Cone}
\DeclareMathOperator{\conv}{conv}
\DeclareMathOperator{\Intr}{int}
\DeclareMathOperator{\argb}{\arg_0}
\DeclareMathOperator{\argph}{\arg_*}

\DeclareMathOperator{\Ray}{Ray}
\DeclareMathOperator{\Dr}{Dr}

\def\RR{\mathbb R}
\def\CC{\mathbb C}
\def\NN{\mathbb N}
\def\ZZ{\mathbb Z}
\def\TT{\mathbb T}
\def\BB{\mathbb B}
\def\DD{\mathbb D}
\def\PP{\mathbb{P}}
\def\QQ{\mathbb{Q}}
\def\Rnneg{\overline{\RR_+}}
\def\V{\mathbb{V}}

\def\vphi{\varphi}
\def\la{\lambda}
\def\de{\delta}
\def\ep{\varepsilon}
\def\ga{\gamma}
\def\om{\omega}
\def\Si{\Sigma}
\def\De{\Delta}
\def\pset{\mathcal{P}}

\def\pa{\partial}

\def\ii{\mathrm{i}}
\def\dd{\mathrm{d}}
\def\rr{\scriptstyle{r}}

\def\wt{\widetilde}
\def\dint{\displaystyle \int}

\def\modn{\hspace{-10pt}\mod}

\def\aone{a_*}
\def\I{\mathcal{I}}

\def\Mes{\mathbb{M}}
\def\A{\mathbb{A}}
\def\Am{\A_\Mes}
\def\Hyp{\mathrm{Hyp}}
\def\Cmp{\mathrm{Hcmp}}
\def\Sec{\mathrm{Sec}}
\def\Bone{\overline{\BB_1^+}}
\def\Bac{\overline{\BB_{1,\mathrm{ac}}^+}}
\def\DF{D^+}
\def\disc{\mathrm{disc}}
\def\Ext{\mathrm{Ext}}
\def\Tpl{\mathcal{T}}

\def\ChiCpl{\chi_{_{\scriptstyle \CC_+}}}

\begin{document}
\title{Pareto optimal structures producing resonances of minimal decay under $L^1$-type constraints}
\author{}
\date{}
\maketitle
{{\center {\large Illya M. Karabash } 
\\[4mm]

 Institute of Applied Mathematics and Mechanics of NAS of Ukraine,

 R. Luxemburg str. 74, Donetsk, 83114, Ukraine. \ \ 

E-mail addresses: i.m.karabash@gmail.com, karabashi@mail.ru


}}

\begin{abstract}
Optimization of resonances associated with
1-D wave equations in inhomogeneous media is studied under the constraint $\| B \|_1 \le m$ on
the nonnegative function $B \in L^1 (0,\ell)$ that represents the medium's structure.
From the Physics and Optimization points of view, it convenient to generalize
the problem replacing $B$ by a nonnegative measure $\dd M$ and imposing on $\dd M$ the condition
that its total mass is $\le m$.
The problem is to design for a given frequency $\alpha \in \RR$ a medium that generates a resonance
 $\om$ on the line $\alpha + \ii \RR$ with a minimal possible decay rate $| \im \om |$.
 Such resonances are said to be of minimal decay and form a Pareto frontier. We show that corresponding optimal measures consist
 of finite number of point masses, and that this result yields non-existence of optimizers for
  the problem over the set of absolutely continuous measures $B(x) \dd x$. Then we derive restrictions on optimal
  point masses and their positions.
  These restrictions are strong enough to calculate optimal $\dd M$ if the optimal resonance $\om$, the first point mass $m_1$,
  and one more geometric parameter are known. This reduces the original infinitely-dimensional problem to optimization
 over four real parameters. For small frequencies, we explicitly find the Pareto set
 and the corresponding optimal measures $\dd M$.
The technique of the paper is based on the two-parameter perturbation method and the notion of local boundary point.
The latter is introduced as a generalization of local extrema to vector optimization problems.
\end{abstract}

\quad\\
MSC-classes: 49R05, 58E17, 35B34, 34L15, 32A60, 58C06  \\
\quad\\
Keywords: resonance optimization, quasi-normal level, quasi-eigenvalue, multi-objective structural optimization,
scattering poles, two-parameter perturbations, high-Q cavity, long-lived  metastable state

\tableofcontents

\section{Introduction}

Wave equations equipped with damping or radiation boundary conditions are used to model open resonators.
When separation of variables is possible, the Fourier decomposition leads to a non-self-adjoint spectral problem
that has a spectral parameter both in the equation and in the boundary conditions.
This parameter is usually called a \emph{quasi-(normal) eigenvalue} or a resonance.

\emph{Quasi-eigenvalues}
considered in this paper are the values of the spectral parameter $\om \in \CC \setminus \{ 0\}$
such that the problem
\begin{eqnarray}
- \pa_x^2 y(x)  = \om^2 B(x) y (x)  , \ \ \ \
\pa_x y (0)  =  0 , \ \ \ \
y (\ell) = - \ii \pa_x y (\ell)/\om  , \label{e epB}
\end{eqnarray}
has a nontrivial solution $y$ on the interval $\I = [0,\ell]$.
A natural generalization of this problem
\begin{eqnarray} \label{e dMdx eq}
- \frac{\dd^2}{\dd M \dd x} y(x) & = & \om^2 y(x) , \\
 \pa_x^- y (0) & = & 0, \label{e BC0} \\
\ \ y(\ell) + \frac{i}{\om} \, \pa_x^+ y(\ell)& = & 0 , \label{e BCl}
\end{eqnarray}
involves the Krein-Feller differential expression $\frac{\dd^2}{\dd M \dd x} $, see \cite{KK68_II,DM76} and Section \ref{ss q-e}.
It corresponds to the change of the nonnegative function $B $ in (\ref{e epB}) to a nonnegative measure $\dd M$. When $\dd M = B \dd x$ with the density $B \in L^1 (0,\ell)$, problem (\ref{e dMdx eq})-(\ref{e BCl}) turns into (\ref{e epB}).

These models are relevant to many physical
systems with 1-D, multilayered, or radially symmetric structures. These include vibrations of a damped string with the mass distribution $\dd M$ (or the density
$B$) \cite{A75,KN79,KN89,CZ95,P97,PvdM01}, the Regge problem \cite{GP97,PvdM01}, and standing EM waves in an open optical cavity with
a symmetric 1-D structure \cite{LLTY94,MvdBY01,HBKW08,Sch11,Ka13_Optic}. In the latter case,
the cavity is often called a 1-D photonic crystal and is described by the relative permittivity function $B$.
Very kindred problems arises in connection with resonances or scattering poles
of Schr\"odinger equation \cite{GK71,HS85,F97},
standing acoustic waves  \cite{Sh96}, and gravitational radiation from a star \cite{LLTY94,MvdBY01}.

In Physics, problem (\ref{e dMdx eq})-(\ref{e BCl}) usually involves a measure $\dd M$
of the form $B(x) \dd x + \sum_{j=1}^n m_j \delta (x-a_j) \dd x$.
Here $B(x) \dd x$ is an absolutely continuous part of $\dd M$, and
$m_j \de (x-a_j) \dd x$ is a point mass $m_j$ positioned at $x=a_j$.
In Optical Engineering, the term $m_j \delta (x-a_j) \dd x$ corresponds
to a thin layer of high relative permittivity forming
a partially transmitting dielectric mirror \cite{LSL73,LLTY94,MvdBY01}.

The above models involve either damping, or leakage of energy into surrounding medium.
For  standing waves associated with (\ref{e dMdx eq})-(\ref{e BCl}),
this leads to exponential decay in time and to the fact that quasi-eigenvalues $\om$
lie in the open lower half-plane $\CC_- $ \cite{A75,KN79,KN89}.
\emph{The decay rate} $\Dr (\om,\dd M)$ of standing waves associated with $\om$ and $\dd M$
equals the minus imaginary part of $\om$  and is always positive. The real part $\re \om$
is the \emph{frequency} of oscillations. The set of quasi-eigenvalues associated
with the problem (\ref{e dMdx eq})-(\ref{e BCl}) is denoted by $\Si (\dd M)$.
The associated eigenfunctions are called (quasi-)modes (metastable states, in the context of Quantum Mechanics).

The recent engineering progress in design of resonators with small and high decay rate
\cite{AASN03,V03,NKT08} attracted considerable current interest to numerical aspects of emerging optimization problems
\cite{HBKW08,KS08,BPSchZsch11,OW11,Sch11,PMBLKR12}.

This paper is devoted to optimization of an individual quasi-eigenvalue under \emph{total mass constraints} on the coefficient $B$ or $\dd M$ and, in particular, to minimization of
 the decay rate $|\im \om|$. Optimization is considered over the set of measures (admissible family)
\begin{equation}
 \Am :=   \{ \dd M  \ : \ \dd M \text{ is a nonnegative Borel measure such that } \int_{0-}^{\ell+} \dd M \le m \} , \ \ m>0, \label{e Ad mes}
\end{equation}
 and (as a by-product) over
\begin{equation}
\A_1 :=   \{ B(x) \dd x \in \Am \ : \ B \in L^1 (0,\ell) \} \text{ and }
\A_{\disc} := \left\{ \sum_{j=1}^n m_j \delta (x-a_j) \dd x \in \Am : \ n \in \NN \right\} .
\label{e Ad1 Adisc}
\end{equation}
Note that numerical methods were used in \cite{S87} and, recently, in
\cite[Problem $\mathrm{Opt_{area}}$]{HBKW08} to consider very kindred optimization problems (with slightly different Physics backgrounds).
From Spectral Theory point of view, the optimization over the above sets can be considered as an attempt to obtain sharp estimates
on quasi-eigenvalues in terms of $\| B \|_1$ or the norm $\| \dd M \|$ of $\dd M$
(such estimates are a classical topic in Mathematical Physics \cite{H92}).

Our  main attention is directed to rigorous derivation of structural theorems for optimizers.
 The motivation is that an accurate numerical discretization requires some understanding of optimizers' structure.
The structural optimization for quasi-eigenvalues and, more generally,
 infinite-dimensional non-self-adjoint spectral problems is not adequately developed.

Short reviews of applied and numerical studies relevant to quasi-eigenvalue optimization can be found in \cite{OW11} and the monograph
\cite{Sch11}.
Most of the analytical literature on non-self-adjoint spectral optimization is concerned with optimization of spectral abscissa for damped systems.
This problem involves simultaneous optimization of all eigenvalues and, in infinite-dimensional case, becomes very difficult and quite different \cite{CZ94,CZ95,CO96}.
Though the finite-dimensional theory is more developed (e.g. \cite{BO94,FL99}),
the structural study of optimizers is not a simple question \cite{FL99}.

The essential difficulty for the standard critical point argument arises when eigenvalues are not differentiable with respect to (w.r.t.) the coefficients. Such effects usually appears
when eigenvalue is non-simple (degenerate). In this case, perturbations are studied with the use of
Puiseux series and Vishik-Lyusternic-Lidskii theory.
Relevant references and connections with the optimization of spectral abscissa can be found in \cite{CO96}
(where an explicit example of a non-differentiable splitting for a multidimensional dissipative wave equation is given), in \cite{BO94,FL99} (matrix case),
and in the authors papers \cite{Ka13,Ka13_Optic} concerning optimization of an individual quasi-eigenvalue
under side constraints on $B$.
Examples related to degeneration and merging of 1-D quasi-eigenvalues are known in Physics and Spectral Theory,
see \cite{MvdBY01,Ka13_KN} and the discussion in Section \ref{ss Mult and ex}.
It follows from results of Section \ref{ss PEig} that non-semi-simple
quasi-eigenvalues of (\ref{e dMdx eq})-(\ref{e BCl}) are \emph{always non-differentiable} w.r.t. $\dd M$,
see (\ref{e k Pser}) and Example \ref{ex 1 pt mas}.
According to \cite{CO96,MvdBY01}, these degeneracy and non-differentiability issues  cannot be discarded as mathematical exotic and can have experimental consequences (e.g. diverging laser quantum noise \cite{vdLvDMvELvEW97}).

Another difficulty arises since the standard proof of existence of optimizers relies on compactness
 arguments \cite{K51} and is applicable only to weakly* compact admissible families.
The latter is the case for $\Am$, but not for $\A_1$ since the space $L^1 (0,1)$ has no predual.
In fact, one of the results of this paper states that there are no minimizers for the decay rate over $\A_1$
 (see Theorem \ref{t A1} for the precise statement). On the other hand, the adaptation of compactness
 arguments to dissipative problems uses an additional restriction $\alpha \in [\alpha_1,\alpha_2]$
  on the real part of the  spectral parameter \cite{HS85,S87,Ka13}.
  As a result, the optimizer is not necessarily a critical point of the decay rate functional $\Dr (\om,\dd M)$.
   This makes it difficult to use the standard one-parameter perturbation theory in the study of optimizers' structures, see the discussions in \cite{Ka13,Ka13_Optic}.

In the author's paper \cite{Ka13}, the two-parameter perturbation approach was developed to overcome
the above obstacles in the study of optimization over the family
\[
\A_\infty := \{ B(x) \dd x \in \Am \ : \ B \in L^\infty (0,\ell) \text{ and } c_1 \le B (x) \le c_2 \text{ a.e.} \},  \ \ 0 \le c_1 < c_2 .
\]
 It was shown that structures of minimal decay are extreme points of
$\A_\infty$ and, moreover, are piecewise constant functions taking only extreme possible values.
Each optimal $B$ is related to one of corresponding optimal modes $y$
via $B (x) = c_1 + (c_2 - c_1) \ChiCpl \left(y^2 (x) \right)$,
where $\ChiCpl (z) := 1$ for $z \in \CC_+ $, and $\ChiCpl  (z) := 0$ for $z \in  \CC \setminus \CC_+$.
This leads to the conclusion that optimal $\om$ are eigenvalues of nonlinear equation
$ y'' + \om^2 y  \left[ c_1 + (c_2 - c_1) \ChiCpl (y^2 ) \right] =  0 $
equipped with the boundary conditions (\ref{e BC0})-(\ref{e BCl}).

The goals of the present paper require further refinement of the two-parameter perturbation method
and the essential use of convex analysis and geometric arguments.
Following \cite{Ka13}, Section \ref{s OptProbl} gives the rigorous statement of the optimization problem in
the framework of vector (Pareto) optimization.
Roughly speaking, a quasi-eigenvalue $\om$ itself is considered as an $\RR^2$-valued cost function depending on  $\dd M$.
(Actually, the formalization of $\om (\dd M)$ leads to the set-valued map $\dd M \mapsto \Si (\dd M)$.
It is multivalued even locally due to the coalescence and splitting issues.)
\emph{Quasi-eigenvalues of minimal decay} are defined as the points of the Pareto set.
Note that this provides sharp bounds on quasi-eigenvalues in terms of
$\| \dd M \|$ when the Pareto set is found (see Theorem \ref{t small fr}).

Section \ref{s properties} contains basic properties and definitions concerning quasi-eigenvalues,
as well as preparative results including: analyticity and derivatives of the characteristic determinant $F$, discussion of multiplicity and one-parameter perturbation formulae for quasi-eigenvalues, and an example of a degenerate and non-differentiable quasi-eigenvalue.

In Section \ref{s 2par pert}, we introduce the notion of \emph{local boundary point} for images of set-valued maps.
Local boundary points are introduced as an extension of the notion of local extrema to vector optimization problems
with two criteria (these are the frequency $\alpha= \re \om$ and the decay rate $\beta = - \im \om$).
 Alternatively, the set of local boundary point can be considered as the local version of the Pareto set.
Theorem \ref{t ab loc bd} reduces the study of local boundary points to convex analysis of the $x$-trajectory of the mode. The proof is based on two-parameter perturbations of Lemma \ref{l 2par per}. This lemma deals with a zero surface of an analytic function of three complex variables in a neighbourhood of a degenerate root.
The proof of Lemma \ref{l 2par per} refines that of \cite[Lemma 3.6]{Ka13} with the use of homotopy arguments.

The rest of the paper is essentially concerned with the study of local boundary point of the
\emph{set of admissible quasi-eigenvalues} $\Si [\Am] := \bigcup_{\dd M \in \Am} \Si (\dd M) $.
The analysis of local boundary points is convenient since they are independent of the choice of
 the function of $\om$ that has to be optimized. In particular, for each frequency $\alpha$,
 the quasi-eigenvalue of minimal decay rate for $\alpha$ is a local boundary point of $\Si [\Am]$.

It is easy to see that optimizers over $\Am$ are not necessarily extreme points of $\Am$.
Indeed, all quasi-eigenvalues produced by the extreme points $m \de (x -x_0) \dd x$ belong to the circle  $\{z \in \CC \ : \  |z + \ii/m | = 1/m\}$ and so can not be optimal for high frequencies.
Theorem \ref{t AM opt} state that optimal measures belongs to finite-dimensional faces of $\Am$.
In other words, they consist of finite number of point masses. An example of an optimizer that is not an extreme point of $\Am$ is given in Section \ref{s c rem}. So the optimization over
$\Am$ is equivalent to the same problem over $\A_{\disc}$.
The absence of optimizers over $\A_1$ (Theorem \ref{t A1}) is obtained as a by-product of this study.

In Sections \ref{ss strong loc-b}
and \ref{s weak loc-b}, we derive restrictions on the point masses and their positions. These restrictions are strong enough to calculate optimal $\dd M$
if the optimal quasi-eigenvalue $\om$, the first point mass $m_1$, and one more geometric parameter are known. In Section \ref{s small fr}, this allows us to explicitly find the Pareto frontier and corresponding optimal measures $\dd M$ for small frequencies.

The optimization problems of this paper can be considered as problems with two constraints,
one for the total mass and the other for the length of the interval $\I=[0,\ell]$.
In Section \ref{s c rem}, we show that at least one of these constraints is achieved by every optimizer.\\[1mm]

\noindent \textbf{Notation}. We use the convention that a sum equals zero if the lower index exceeds
the upper.

The following sets of real and complex numbers are used:
open half-lines $\RR_\pm = \{ x \in \RR: \pm x >0 \}$, open half-planes $\CC_\pm = \{ z \in \CC : \pm \im z >0 \}$,
open discs $\DD_\epsilon (\zeta) := \{z \in \CC : | z - \zeta | < \epsilon \}$ with the center at $\zeta$
and radius $\epsilon$, the unit circle $\TT :=  \{z \in \CC : | z | = 1\}$, the infinite sector (without the origin $z=0$)
\begin{equation} \label{e Sec def}
\Sec [\xi_1,\xi_2] := \{ z \in \CC\setminus{0} \, : \, \arg z = \xi \ (\modn 2\pi)
 \text{ for certain } \xi \in [\xi_1, \xi_2] \} ,  \ \ \ \xi_1 \le \xi_2 , \ \ \ \xi_{1,2} \in \RR ,
\end{equation}
closed line-segments $[z_1,z_2]$ between endpoints $z_{1,2} \in \CC$, and
line-segments with excluded endpoints $(z_1,z_2) := [z_1,z_2] \setminus \{z_1,z_2\}$.
A line segment is called \emph{degenerate} if its endpoints coincide.
$\QQ_I$, $\QQ_{II}$, $\QQ_{III}$, and $\QQ_{IV}$ are the open quadrants in $\CC$.

Let $S$ be a subset of a linear space $U$ over $\CC$ (including the case $U=\CC$).
For $u_{0,1} \in U$ and $z \in \CC$,
\[
z S  + u_0 := \{ zu + u_0 \, : \, u \in S \}  \ \ \text{ and } \ \
[u_0,u_1] := \{ (1-\la) u_0 + \la u_1 \ : \ \la \in [0,1] \} .
\]

The closure of a set $S$ (in the norm or the Euclidean topology) is denoted by $\overline{S}$,
 in particular, $\overline {\RR_-} = (-\infty,0] $, $\Rnneg = [0,+\infty)$,
 and $\Rnneg^N = [0,+\infty)^N$ is the set of $N$-tuples of nonnegative numbers.
$\Intr S$ and $\Bd S$ are the sets of interior and boundary points of $S$, respectively.

For basic definitions of convex analysis we refer to \cite{RW98}.
The convex hull of $S$ is denoted by $\conv S$.
The convex cone generated by the set $S$ (the
set of all nonnegative linear combinations of elements of $S$) is denoted by $\cone S$.

For open balls in a normed space $U$, we use
$
\BB_\epsilon (u_0) =
\BB_\epsilon (u_0; U) := \{u \in U \, : \, \| u - u_0 \|_U < \epsilon
\}.
$

By $\Mes_{\CC}$ and $\| \cdot \|$ we denote the Banach space of complex
Borel measures on $\I=[0,\ell]$ and the corresponding norm.
$\Mes_+$ is the cone of nonnegative measures in $\Mes_\CC$.
The set of nonnegative measures
in the closed unit ball is denoted by
$ \Bone := \{ \dd M \in \Mes_+ \ : \ \| \dd M \| \le 1 \} $ .

For $\dd M \in \Mes_\CC$ and a
Borel set $S$,
define the projection $\pr_{S} \dd V $
of $\dd V$ to $S$ as the measure that coincides with $\dd V$ on $S$
and coincides with the zero-measure on $\RR \setminus S$.
The topological
support $\supp \dd M $ of $\dd M $ is the smallest closed set
$S $ such that $\| \pr_{\RR \setminus S } \dd M \| = 0$.
By
$ \int_{a\pm}^{b\pm} f(x) \dd M (x)$
the corresponding Lebesgue--Stieltjes integrals over the intervals
$(a,b]$, $(a,b)$, $[a,b]$, and $[a,b)$ are denoted.
The notation $0 \dd x$ ($1 \dd x$) means the zero measure on $\I$ (resp., the Lebesgue measure on $\I$).

$L^p (0,\ell) $ are the Lebesgue spaces of complex valued functions and
\[
W^{k,p} [0,\ell] := \{ y \in L^p (0,\ell) \ : \ \pa_x^j y \in L^p (0,\ell), \ \ 1 \leq j \leq k \}
\]
are Sobolev spaces with standard norms $\| \cdot \|_{L^p}$ and $\| \cdot \|_{W^{k,p}}$. The space of continuous
complex valued functions with the uniform norm is denoted by $C
[0,\ell]$.

For a function $f$ defined on a set $S$, $f[S]$ is the image of $S$.
By $\pa_x f$, $\pa_z f $, etc. denote (ordinary or partial)
derivatives with respect to (w.r.t.) $x$, $z$, etc.

We write $z_1^{[n]} \asymp z_2^{[n]}$ as $n \to \infty$ if the sequences
$z_1^{[n]} / z_2^{[n]} $ and $z_2^{[n]} / z_1^{[n]}$ are bounded for $n$ large enough.

Sometimes, the complex plane $\CC$ is considered as a real linear space $\RR^2$ with the scalar product
\begin{equation} \label{e <>C}
\langle z_1 , z_2 \rangle_\CC := \re z_1 \re z_2 + \im z_1 \im z_2  , \ \ \ z_{1,2} \in \CC .
\end{equation}

\section{The statement of optimization problem}
\label{s OptProbl}

\subsection{Quasi-eigenvalues of Krein strings}
\label{ss q-e}

Let $\Mes_+$ be the set of bounded nonnegative Borel measures on the interval $\I = [0,\ell]$ of a finite positive length $\ell$.
In the settings of this paper, \emph{a (Krein) string} is the
interval $[0,\ell]$ carrying a dispersed mass, which is
represented by a measure $\dd M \in \Mes_+$.

With the string one can associate the quasi-eigenvalue problem (\ref{e dMdx eq})-(\ref{e BCl}).
To define the left- and right-hand derivatives $ \pa_x^- y (0)$ and
$\pa_x^+ y (\ell)$ in (\ref{e BC0}) and (\ref{e BCl}), it is convenient to assume that
\begin{eqnarray}
& \dd M \text{ is continued to } (-\infty,0) \text{ and } (\ell,+\infty)
\text{ by the zero measure} \label{a contM} \\
& \text{ and } y \text{ satisfies (\ref{e dMdx eq}) on $\RR$ \qquad (see \cite{KK68_II,A75,DM76} for details).} \label{a cont y}
\end{eqnarray}

\begin{rem}
Actually, each locally bounded Borel continuation of $\dd M$ to a vicinity of $[0,\ell]$ provides the
same values of $ \pa_x^- y (0)$ and $\pa_x^+ y (\ell)$. The continuation of $\dd M$ to $(\ell, \ell_1]$  by the Lebesgue
measure was used in \cite{KN89} since with such a special continuation the change of $\ell$ to $\ell_1$ in (\ref{e BCl})
saves the positions of quasi-eigenvalues.
\end{rem}

Assuming that $\dd M$ is extended by (\ref{a contM}), we define
\begin{equation} \label{e a2}
 \aone (\dd M) := \sup \{ x \in (-\infty,\ell] \ : \ \text{the restriction of }
 \dd M(s) \text{ on } (-\infty,x) \text{ is the zero measure} \}.
\end{equation}

The eigenvalue problem (\ref{e dMdx eq})-(\ref{e BCl})
 corresponds to free
transverse harmonic oscillations of a string with the left end $x=0$
sliding without friction and the damped right end $x=\ell$. The damping force is proportional to the velocity
of motion. 
A simple way to understand (\ref{e dMdx eq})-(\ref{e BCl}) is to rewrite the problem in the integral equation form
 \begin{gather} \label{e int eq}
 y(x) = y (0) - \om^2 \int_{0}^x \dd t \, \int_{0-}^{t-} y (s) \ \dd M (s) , \\
 y(\ell) -  \ii \om \int_{0-}^{\ell+}  y (s) \ \dd M (s) = 0. \label{e int bc}
 \end{gather}

In this paper, the boundary conditions and the length $\ell$ of the interval $\I$ are fixed. So the string is
completely determined by the measure $\dd M$. Therefore we will speak about \emph{the string} $\dd M$.

Eigen-parameters $\om \in \CC \setminus\{0\}$ such that (\ref{e dMdx eq}) has a
nontrivial continuous solution $y $ will be called \emph{quasi-eigenvalues} of $\dd M$. (Recall that a solution $y$
is said to be \emph{nontrivial} if $y \not \equiv 0$ on $[0,\ell]$).
The real part $\alpha=\re \om $ of the quasi-eigenvalue $\om$ characterizes the frequency
of oscillations corresponding to the mode $y$. For simplicity, $\alpha$ will be called the \emph{frequency} of $\om$
(actually, the angular frequency of the oscillations equals $|\alpha|$).
The minus imaginary part $\beta = - \im \om$ is always positive and
characterizes the \emph{rate of decay} of the oscillations.

The set of quasi-eigenvalues $\om$ of a string $\dd M$ is denoted by
$\Si (\dd M) $. It is known that $\Si (\dd M) \subset \CC_-$, that
quasi-eigenvalues are isolated, and that $\infty$ is their only
possible accumulation point \cite{KN79,KN89} (see also \cite{CZ95}).
The case $\om = 0$ is mathematically and physically special
and is usually excluded from the considerations, see the explanations in \cite{CZ95} and Section  \ref{ss ComplString}.

\begin{ex} \label{ex empty}
If $\dd M = 1 \dd x$ (the Lebesgue measure on $[0,\ell]$) or $\dd M = 0 \dd x$ (the zero measure), then
$\Si (\dd M) $ is empty.
\end{ex}

\subsection{Minimal decay rate}

The aim of the paper is to study optimization of quasi-eigenvalues of strings with constrained total mass and fixed length $\ell$ of the interval $\I$.
We consider optimization over the \emph{admissible families} $\A_1$ and $\Am$ defined by
(\ref{e Ad mes})-(\ref{e Ad1 Adisc}) with a constant $m>0$.


In the following, the admissible family $\A$ is either $\A_1$, or $\A_\Mes$, if not said otherwise.
The strings in $\A$ are called \emph{admissible}.
We say that a complex number $\om$ is \emph{an admissible quasi-eigenvalue} if it belongs to the set
$ \
\Si [\A] := \bigcup_{\dd M \in \A} \Si (\dd M) , \
$
and say that $\alpha \in \RR$ is \emph{an admissible frequency} if $\alpha = \re \om$ for some
admissible quasi-eigenvalue $\om$.

\begin{defn}[\cite{Ka13}]
Let $\alpha$ be an admissible frequency for an admissible family $\A$.

\item[(i)] \emph{The minimal decay rate $\beta_{\min} (\alpha)= \beta_{\min} (\alpha; \A )$ for the frequency} $\alpha$
is defined by
\[
\beta_{\min} (\alpha) := \ \inf \{ \beta \in \RR \ : \ \alpha - \ii \beta  \in \Si [\A] \} .
\]

\item[(ii)] If $\om = \alpha - \ii \beta_{\min} (\alpha)$ is a quasi-eigenvalue of certain admissible string
$\dd M \in \A$ (i.e., the infimum is achieved), we say that $\om $ and $\dd M$ are of \emph{minimal decay for the frequency} $\alpha $
(over the admissible family $\A$).
\end{defn}

\section{Properties of quasi-eigenvalues, modes, and related maps}
\label{s properties}

When a number $z$ or a string $\dd M $ is fixed we will write simply $\vphi (x)$ or $\vphi (x,z) $ instead of
$\vphi (x,z; \dd M_0)$ and will use the same shortening for 
the number $\aone$ defined by (\ref{e a2}).

\subsection{Complex Krein strings and the characteristic determinant $F$}
\label{ss ComplString}

It is convenient for us to generalize problem (\ref{e dMdx
eq})--(\ref{e BCl}) and the definition of quasi-eigenvalues to measures $\dd M$ from the class $\Mes_{\CC}$ \emph{of complex
Borel measures} on $[0,\ell]$. Recall that $\Mes_{\CC}$ is a complex
Banach space with the norm $\| \dd M \| := \int_{0-}^{\ell+} |\dd M | $,
where $|\dd M| $ denotes \emph{the total variation measure} of $\dd
M$.



Assuming that $\dd M$ is extended according to (\ref{a contM}),
we denote by $\vphi (x ) = \vphi (x, z; \dd M )$ and $\psi(x) = \psi (x, z; \dd M)$  the solutions of
$\frac{\dd^2}{\dd M \dd x}  y(x) = - z^2 y(x) $ on $\RR$ satisfying
\begin{equation*} 
\vphi (0, z; \dd M ) = \pa_x^- \psi (0, z; \dd M) = 1 , \ \ \pa_x^-
\vphi (0, z; \dd M ) = \psi (0, z; \dd M) = 0 .
\end{equation*}

The function $\vphi$ is absolutely continuous in each bounded interval
and have the left and right derivatives $\pa_x^\pm \vphi $ at every $x \in \RR$.
Moreover,
$\lim_{x \to x_0 +} \pa_x^\pm \vphi (x) = \pa_x^+ \vphi (x_0),$ \
$\lim_{x \to x_0 -} \pa_x^\pm \vphi (x) = \pa_x^- \vphi (x_0).$
The same holds for the function $\psi$, see \cite{KK68_II} for details.

Obviously,
$\vphi (x,z ; \dd M)$ is a unique solution to the
integral equation
\begin{eqnarray}
 y (x) & = & 1 - z ^2 \ \int_{0-}^x (x-s) \ y (s) \ \dd M (s) ,
\quad 0 \leq x \leq \ell. \label{e int ep phi}
\end{eqnarray}

The following lemma is a rigorous form for the integral reformulation of the quasi-eigenvalue problem.

\begin{lem}[see e.g. \cite{A75}]\label{l int}


A number $\om \in \CC$ belongs to $\Si (\dd M)$ if and only if there exists nontrivial $y (x) \in C [0,\ell]$
satisfying (\ref{e int ep phi}) with $z = \om$ and (\ref{e int bc}).
\end{lem}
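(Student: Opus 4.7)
The plan is to prove both directions of the equivalence by reducing the Krein--Feller boundary value problem to a scalar characteristic condition at the right endpoint. The key preliminary is the standard uniqueness theorem for the Krein--Feller initial value problem (cf.\ \cite{KK68_II,DM76,A75}): every continuous solution of $-\tfrac{\dd^2}{\dd M \dd x} y = \om^2 y$ on $\RR$ (with $\dd M$ and $y$ extended by (\ref{a contM})--(\ref{a cont y})) having prescribed $y(0)$ and $\pa_x^- y(0)$ coincides with the corresponding linear combination of $\vphi(\cdot,\om;\dd M)$ and $\psi(\cdot,\om;\dd M)$.

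For the forward implication, I would take a nontrivial mode $y$ associated with $\om \in \Si(\dd M)$. The boundary condition (\ref{e BC0}) pins $\pa_x^- y(0) = 0$, so by the uniqueness statement above $y = y(0)\,\vphi(\cdot,\om;\dd M)$; nontriviality rules out $y(0) = 0$, so after rescaling $y(0)=1$ and (\ref{e int ep phi}) holds. Integrating the Krein--Feller equation (\ref{e dMdx eq}) once from $0-$ to $\ell$ and using (\ref{e BC0}) yields $\pa_x^+ y(\ell) = -\om^2 \int_{0-}^{\ell+} y(s)\, \dd M(s)$; substituting this into (\ref{e BCl}) produces exactly (\ref{e int bc}).

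For the converse, I would start from a nontrivial $y \in C[0,\ell]$ satisfying (\ref{e int ep phi}) and (\ref{e int bc}), and reconstruct the differential problem. From (\ref{e int ep phi}) I read off $y(0) = 1$ and $\pa_x^- y(0) = 0$, which is (\ref{e BC0}). Differentiating (\ref{e int ep phi}) once (in the one-sided Lebesgue--Stieltjes sense) produces the formula $\pa_x^+ y(x) = -\om^2 \int_{0-}^x y(s)\, \dd M(s)$; differentiating once more with respect to $\dd M$ recovers (\ref{e dMdx eq}). Evaluating this formula at $x = \ell$ and comparing with (\ref{e int bc}) delivers (\ref{e BCl}), so $\om \in \Si(\dd M)$.

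The main technical point will be the careful handling of atoms of $\dd M$, specifically the identification of $\pa_x^\pm y$ from (\ref{e int ep phi}) and the reconciliation of the convention $t-$ appearing in (\ref{e int eq}) with the weighted form of (\ref{e int ep phi}). This reduces to a Fubini identity
\[
\int_0^x \dd t \int_{0-}^{t-} y(s)\, \dd M(s) = \int_{0-}^{x} (x-s)\, y(s)\, \dd M(s),
\]
obtained by applying Fubini's theorem to the triangle $\{ (s,t) : 0 \le s < t \le x \}$ against $\dd M \otimes \dd t$. The extension convention (\ref{a contM})--(\ref{a cont y}) handles the left endpoint automatically, and the limits $\int_{0-}^{\ell+}$ in (\ref{e int bc}) correctly absorb any atom at $x = \ell$.
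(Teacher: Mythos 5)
Your proof is essentially correct, and it fills in the standard computation that the paper itself delegates to the literature: Lemma~\ref{l int} is stated in the paper with the attribution ``see e.g.\ \cite{A75}'' and no argument, followed only by a remark about the case $\om=0$. Your reduction in both directions --- uniqueness of the Krein--Feller initial-value problem pins $y$ to a multiple of $\vphi$, the Fubini identity converts the iterated integral (\ref{e int eq}) into the Volterra form (\ref{e int ep phi}), and a single integration of the equation against $\dd M$ over $[0,\ell]$ turns (\ref{e BCl}) into (\ref{e int bc}) --- is the expected route, and the Fubini step is correctly justified (note that the apparent mismatch between $\int_{0-}^{t-}$ and $\int_{0-}^{x}$ is harmless because the kernel $(x-s)$ vanishes at $s=x$, so the atom at $s=x$ contributes nothing).

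The one place you should tighten the argument is the role of $\om=0$. By definition, $\Si(\dd M)\subset\CC\setminus\{0\}$, yet the lemma asserts the equivalence for all $\om\in\CC$. In your converse direction you divide by $\om$ (implicitly, when passing from (\ref{e int bc}) and $\pa_x^+y(\ell)=-\om^2\int_{0-}^{\ell+}y\,\dd M$ to (\ref{e BCl})), so that argument is silently restricted to $\om\neq 0$, and you then conclude ``$\om\in\Si(\dd M)$'' without observing that this requires $\om\neq 0$. The omitted case is trivial --- for $\om=0$, (\ref{e int ep phi}) forces $y\equiv 1$ while (\ref{e int bc}) forces $y(\ell)=0$, so no nontrivial solution of both exists --- but it must be stated, because it is exactly the point the paper calls out in the remark following the lemma (and references \cite[Lemma 2.7]{Ka13} for). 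A second, purely notational nit: the derivative formula should be written $\pa_x^+ y(x)=-\om^2\int_{0-}^{x+}y(s)\,\dd M(s)$ (integration over $[0,x]$), matching (\ref{e pa phi 1}); the plain upper limit ``$x$'' you wrote is ambiguous when $\dd M$ has an atom at $x$.
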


Note that in the integral settings there is no need to exclude separately the case $\om = 0$.
When $\om =0$, it is easy to see that (\ref{e int eq}), (\ref{e int bc}) has no nontrivial solutions,
see the proof of \cite[Lemma 2.7]{Ka13}.

Consider the functional
\begin{equation} \label{e F}
F (z; \dd M) := \vphi (\ell,z; \dd M)  - \ii z
\int_{0-}^{\ell+}  \vphi (s, z; \dd M) \ \dd M (s) ,
\end{equation}
produced by (\ref{e int eq}).
When $z \neq 0$, one has
\[
F (z; \dd M) = \vphi (\ell,z; \dd M) + \frac{i}{z} \, \pa_x^+ \vphi (\ell,z; \dd M) .
\]
From Lemma \ref{l int} or directly from the statement of the quasi-eigenvalue problem we see that
$\om \in \Si (\dd M) \text{ exactly when } F (\om ; \dd M) = 0.$

We say that a map $G : U_1 \to U_2$ between normed spaces $U_{1,2}$
is \emph{bounded-to-bounded} if the set $G[S]$ in $U_2$ is bounded for
every bounded set $S$ in $U_1$. Basic facts about analytic maps on Banach
spaces can be found in \cite{PT87}.

\begin{lem} \label{l an}
$(i)$ The functional $F (z; \dd M) $
is analytic on $\CC \times \Mes_\CC$.
In particular, $\Si (\dd M)$ is the set of zeroes of the entire function $F (\cdot) = F (\cdot ;
\dd M)$.

$(ii)$ The map $(z, \dd M) \mapsto \vphi (\cdot ,z; \dd M)$ is
bounded-to-bounded and analytic from $\CC \times \Mes_\CC$ to \linebreak
$W_\CC^{1,\infty} [0,\ell]$. Its Maclaurin series is given by
\begin{eqnarray}
\vphi (x , z ; \dd M) = 1 - \vphi_1 (x; \dd M) z^2 + \vphi_2 (x; \dd M) z^4 - \vphi_3 (x; \dd M)  z^6 + \dots , \label{e pow ser}\\
\vphi_0 (x; \dd M) \equiv 1 , \ \  \vphi_{j} (x; \dd M) = \int_{0-}^x
(x - s) \, \vphi_{j-1} (s; \dd M) \, \dd M (s) , \ \ j \in \NN .
\label{e phij}
\end{eqnarray}

\end{lem}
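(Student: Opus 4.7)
The plan is to realize $\vphi(\cdot,z;\dd M)$ as a $W^{1,\infty}[0,\ell]$-valued Maclaurin series in $z^2$ with coefficients $\vphi_j(\cdot;\dd M)$ given by (\ref{e phij}), prove this series converges normally on bounded subsets of $\CC\times\Mes_\CC$, and then deduce joint analyticity and the bounded-to-bounded property from standard results on analytic maps between Banach spaces (cf.\ \cite{PT87}). Iterating the defining recursion (\ref{e phij}) yields the closed form
\[
\vphi_j(x;\dd M) = \int_{0-\,\le\, s_j\,\le\, \cdots\,\le\, s_1\,\le\, x} (x-s_1)(s_1-s_2)\cdots(s_{j-1}-s_j)\,\dd M(s_1)\cdots\dd M(s_j),
\]
which displays $\vphi_j(\cdot;\dd M)$ as a bounded $j$-homogeneous polynomial in $\dd M$ (the diagonal of a continuous symmetric $j$-linear $W^{1,\infty}$-valued form), hence analytic in $\dd M$; each term $(-z^2)^j\vphi_j(\cdot;\dd M)$ is therefore jointly analytic in $(z,\dd M)$.

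The decisive estimate is the factorial bound $\|\vphi_j(\cdot;\dd M)\|_\infty \le (\ell\|\dd M\|)^j/j!$. On the ordered simplex, the $j$ nonnegative factors $(x-s_1),(s_1-s_2),\ldots,(s_{j-1}-s_j)$ sum to $x-s_j\le\ell$, so AM--GM bounds their product pointwise by $(\ell/j)^j\le\ell^j/j!$; integration of this constant against $|\dd M|^j$ over the simplex contributes at most $\|\dd M\|^j$. An analogous argument, applied to the one-sided derivatives $\pa_x^\pm\vphi_j(x;\dd M) = \int_{0-}^{x\pm}\vphi_{j-1}(s;\dd M)\,\dd M(s)$, upgrades the bound to the $W^{1,\infty}$ norm. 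Since $\sum_j R^{2j}(\ell R)^j/j! = \exp(\ell R^3)$ is finite for every $R>0$, the series $\sum_j(-z^2)^j\vphi_j(\cdot;\dd M)$ converges uniformly on every bounded subset of $\CC\times\Mes_\CC$; combined with the termwise analyticity, this gives both the bounded-to-bounded property and the joint analyticity of (ii). Term-by-term substitution into (\ref{e int ep phi}) verifies that the sum is indeed $\vphi$.

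Part (i) then follows by composing analytic maps: the functional $F$ of (\ref{e F}) combines the continuous linear evaluation $g\mapsto g(\ell)$, the continuous bilinear pairing $(g,\dd M)\mapsto\int_{0-}^{\ell+} g\,\dd M$ on $W^{1,\infty}\times\Mes_\CC$, and multiplication by the scalar $-\ii z$, each acting on the analytic object $(z,\dd M)\mapsto\vphi(\cdot,z;\dd M)$ from (ii). For the zero-set assertion, Lemma \ref{l int} identifies $\om\in\Si(\dd M)$ with the existence of a nontrivial continuous solution to (\ref{e int eq})-(\ref{e int bc}) at $z=\om$; uniqueness of solutions to the integral equation once $y(0)$ is prescribed forces every such $y$ to be a scalar multiple of $\vphi(\cdot,\om;\dd M)$, and (\ref{e int bc}) then reads $y(0)F(\om;\dd M)=0$, so nontriviality is equivalent to $F(\om;\dd M)=0$. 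The main obstacle is precisely the factorial bound: without the $1/j!$ the series would converge only in a disc in $z$, defeating both the entireness of $F(\cdot;\dd M)$ and joint analyticity, and the AM--GM estimate on the ordered simplex is the device that supplies this factor uniformly in $x\in[0,\ell]$ and $\dd M\in\Mes_\CC$.
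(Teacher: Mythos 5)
Correct, and the overall strategy mirrors the paper's: realize (\ref{e pow ser}) as a $W^{1,\infty}$-valued power series in $z$, show it converges uniformly on bounded subsets of $\CC\times\Mes_\CC$, and deduce joint analyticity, the bounded-to-bounded property, and part (i) by composing with the continuous (multi)linear evaluations entering (\ref{e F}). The one point at which you genuinely diverge from the paper is the crucial factorial estimate on $\vphi_j$. The paper reduces to nonnegative measures via $|\vphi_j(x;\dd M)|\le\vphi_j(x;|\dd M|)\le\vphi_j(\ell;|\dd M|)$ and then cites the Kac--Krein inequality $\vphi_j(\ell;|\dd M|)\le(2\ell\|\dd M\|)^j/(2j)!$ from \cite{KK68_II}; you instead unroll the recursion (\ref{e phij}) into an iterated integral over the ordered simplex and apply AM--GM to the product $(x-s_1)(s_1-s_2)\cdots(s_{j-1}-s_j)$, whose factors sum to $x-s_j\le\ell$, obtaining the weaker but still summable bound $(\ell/j)^j\|\dd M\|^j\le(\ell\|\dd M\|)^j/j!$. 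Your derivation is self-contained and makes the source of the factorial visible; the cited bound is sharper and of the correct order (for $\dd M=1\dd x$ one has $\vphi_j(\ell)=\ell^{2j}/(2j)!$), though that sharpness is not needed here. One small slip of attribution: Lemma \ref{l int} is phrased in terms of (\ref{e int ep phi}), which already normalizes $y(0)=1$ and identifies $y$ with $\vphi(\cdot,\om;\dd M)$, so your route through (\ref{e int eq}) and the factorization $y(0)F(\om;\dd M)=0$ is an unnecessary (though harmless) detour.
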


\begin{proof}
\textbf{(i)} follows immediately from (ii). Let us prove
\textbf{(ii)}. Consider (\ref{e pow ser}) as a $C [0,\ell]$-valued
series. One can see that $|\vphi_{j} (x;\dd M)| \le \vphi_j (x; |\dd
M|)  \le \vphi_j (\ell; |\dd M|)$ for all $x \in [0,\ell]$. By
\cite[Section 2]{KK68_II} (see also \cite[Exercise 5.4.2]{DM76}), one has
\begin{equation} \label{e phij<}
\vphi_{j} (\ell; |\dd M|) \leq \frac{(2\ell \| \dd M \|_\Mes)^j}{(2j) !} , \ \ \ j \in \NN  .
\end{equation}
Hence, the series (\ref{e pow ser}) converge uniformly on every bounded set of
$\CC \times \Mes_\CC$. So  (\ref{e pow ser}) defines an analytic map
from $\CC \times \Mes_\CC$ to $C [0,\ell]$. This map is also
bounded-to-bounded due to (\ref{e phij<}). Since (\ref{e pow ser})
satisfies (\ref{e int ep phi}), series (\ref{e pow ser}) is the Maclaurin
series of $ \vphi$.
By  (\ref{e int ep phi}),
\begin{eqnarray} \label{e pa phi 1}
\pa_x^\pm \vphi (x) & = & - z^2 \ \dint_{0-}^{x\pm} \vphi (x) \dd M (s) ,
\ \ x \in (0,\ell], \\
\text{ and } \ \ \pa_x \vphi (x) & = & \pa_x^+ \vphi (x) = \pa_x^- \vphi (x) \text{ a.e. on }(0,\ell). \label{e pa phi 2}
\end{eqnarray}
This implies $\pa_x \vphi \in L_\CC^\infty (0,\ell) $. Plugging  (\ref{e
pow ser}) into (\ref{e pa phi 1})-(\ref{e pa phi 2}), one gets an $L_\CC^\infty$-valued
Maclaurin series for $\pa_x \vphi $. Due to (\ref{e phij<}), this
series converges uniformly on every bounded set of $\CC \times \Mes_\CC$
to a bounded-to-bounded map from $\CC \times \Mes_\CC$ to $L_\CC^\infty
(0,\ell)$. This proves (ii).
\end{proof}

For Sturm-Liouville equations, formal expansions of such type  were already known to Hermann Weyl,
while the analyticity of fundamental solution w.r.t. the coefficients of the equation was emphasized
and intensively used in \cite{PT87}.

\subsection{Multiplicities and examples of degenerate quasi-eigenvalues}
\label{ss Mult and ex}

It is obvious that all modes $y (\cdot)$ corresponding to $\om \in \Si (\dd M)$
are equal to $\vphi (\cdot, \om; \dd M )$ up to a multiplication by a
constant.  So \emph{the geometric multiplicity} of any
quasi-eigenvalue equals 1. In the following, \emph{the multiplicity}
of a quasi-eigenvalue means its \emph{algebraic multiplicity}.

 \begin{defn}\label{d mult}
\emph{The multiplicity} of a quasi-eigenvalue is its multiplicity as
a zero of the entire function $F (\cdot; \dd M) $. A quasi-eigenvalue is
called \emph{simple} if its multiplicity is $1$.
 \end{defn}

This definition was used in \cite{A75,KN79,KN89} for nonnegative $\dd M$ and
is a natural extension of the classical M.V. Keldysh
definition of multiplicity for eigenvalue problems with an
eigen-parameter in boundary conditions \cite{N69}. Indeed, when $\dd M = B \dd x$,
the function $F(z)= F (z; \dd M)$ is equal to the characteristic determinant \cite{N69}
of (\ref{e dMdx eq})-(\ref{e BCl}) up to a multiplication on a nonzero constant.

Since $F (0; \cdot ) \equiv 1$, each quasi-eigenvalue has a finite
multiplicity. There exist strings $\dd M\in \Mes_+$ with \emph{multiple quasi-eigenvalues} (i.e., quasi-eigenvalues of multiplicity $\ge 2$).
 A simple example that fits the settings of the present paper was
given recently in \cite[Remark 2.1]{Ka13_KN} (see formula (\ref{e x0 ne l}) below).
For slightly different classes of strings, the existence of multiple quasi-eigenvalues
can be obtained from the results on the direct spectral problem for quasi-eigenvalues
\cite[Theorem 3.1]{KN89}, \cite{GP97}, and
\cite[Theorem 4.1]{PvdM01} and was explicitly noticed in \cite{MvdBY01} (the last paper provides an example with the boundary condition $y(0)=0$ instead of (\ref{e BC0})).

\begin{ex}[\cite{Ka13_KN}] \label{ex 1 pt mas}
Let $\dd M $ be the string consisting of a mass $m_0 > 0$
placed at a point $x_0 \in [0,\ell] $,
i.e., writing with  Dirac's $\delta$-function
$ \dd M = m_0 \delta (x-x_0) \dd x $.
Then the set of quasi-eigenvalues of $\dd M$ has the following description:
\begin{eqnarray}
& \Si (\dd M )  = \{ - \ii m_0^{-1} \} \ \ & \text{ if } \ \ x_0 = \ell \label{e x0=l}, \\
& \Si (\dd M )  = \left\{ \ - \ii \frac {1}{2 (\ell - x_0) }
\pm \sqrt{ \frac{1}{m_0 (\ell - x_0)} -
\frac {1}{4 (\ell - x_0)^2} }  \ \right\} \ \ & \text{ if } \ \ x_0 < \ell. \label{e x0 ne l}
\end{eqnarray}
These formulae take into account the multiplicities. That is, when $4 (\ell - x_0) =m_0$,
(\ref{e x0 ne l}) means that $ \frac {-\ii}{2(\ell - x_0)}$ is a quasi-eigenvalue of multiplicity 2.
In the other cases, each quasi-eigenvalue is simple.
\end{ex}

\subsection{Trajectories of $\vphi$-solutions}

\begin{lem} \label{l phi R}
$(i)$ \ $ \vphi (x,z,\dd M) =1 $ for all $x \in [0,\aone ]$. \\
$(ii)$ If $\dd M$ is a nonnegative measure, $z^2 \not \in \RR$, and
$x \in \left( \aone , \ell \right]$, then
\begin{equation} \label{e im vhi vhi'}
\im z^2 \ \im [\overline{\vphi (x,z)} \pa_x^\pm \vphi (x,z)] < 0 ,
\end{equation}
in particular, $\vphi (x) \neq 0$ and $\pa_x^\pm
\vphi (x)  \neq 0 $.
(Recall that $\aone = \aone (\dd M)$ is defined by (\ref{e a2}).)
\end{lem}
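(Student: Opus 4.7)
Part (i) should be essentially immediate from the integral equation (\ref{e int ep phi}): by the definition (\ref{e a2}) of $\aone$, the measure $\dd M$ vanishes on $(-\infty, \aone)$, so for $x \in [0, \aone)$ the integrand is supported on a $\dd M$-null set and $\vphi(x, z) = 1$. At $x = \aone$ any possible atom of $\dd M$ is killed by the factor $(\aone - s)$, giving $\vphi(\aone, z) = 1$ as well. Continuity of $\vphi$, coming from Lemma \ref{l an}, is a clean way to close the argument.

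For part (ii), my plan is to derive a Lagrange/Green-type identity for $\vphi$ via direct manipulation of (\ref{e pa phi 1}). Starting from
\begin{equation*}
\overline{\vphi(x)} \pa_x^+ \vphi(x) = - z^2 \int_{0-}^x \overline{\vphi(x)} \vphi(s) \dd M(s),
\end{equation*}
split $\overline{\vphi(x)} = \overline{\vphi(s)} + \int_s^x \overline{\pa_t \vphi(t)} \dd t$, apply Fubini to the resulting double integral, and substitute (\ref{e pa phi 1}) back to recognize $\int_{0-}^{t} \vphi(s) \dd M(s) = -\pa_x^+ \vphi(t)/z^2$. Using $\pa_t \vphi = \pa_x^+ \vphi$ a.e., one obtains
\begin{equation*}
\overline{\vphi(x)} \pa_x^+ \vphi(x) = - z^2 \int_{0-}^x |\vphi(s)|^2 \dd M(s) + \int_0^x |\pa_t \vphi(t)|^2 \dd t ,
\end{equation*}
and an analogous formula for $\pa_x^- \vphi$ with the integration running up to $x-$ in the $\dd M$-integral. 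Since $\dd M \ge 0$ and both integrands in the two terms are real-valued, taking imaginary parts and using (i) to trim the lower limit gives $\im[\overline{\vphi(x)} \pa_x^\pm \vphi(x)] = -\im(z^2) \int_{\aone -}^{x \pm} |\vphi|^2 \dd M$.

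It remains to show that this integral is strictly positive for $x \in (\aone, \ell]$. Since $\vphi$ is continuous on $\RR$ and $\vphi(\aone) = 1$ by (i), there exists $\delta > 0$ such that $|\vphi(s)|^2 \ge 1/2$ on $[\aone, \aone + \delta] \cap [0, \ell]$. The supremum definition (\ref{e a2}) of $\aone$ forces $\dd M$ to assign positive mass to $[\aone, \aone + \eta)$ for every $\eta > 0$; choosing $0 < \eta < \min(\delta, x - \aone)$, the interval $[\aone, \aone + \eta)$ sits inside both $[\aone -, x-]$ and $[\aone -, x]$, and the integral is bounded below by $\tfrac{1}{2} \dd M([\aone, \aone + \eta)) > 0$. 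Multiplying by $\im z^2 \ne 0$ yields $\im z^2 \cdot \im[\overline{\vphi} \pa_x^\pm \vphi] = -(\im z^2)^2 \int_{\aone -}^{x \pm} |\vphi|^2 \dd M < 0$, and non-vanishing of $\vphi(x)$ and $\pa_x^\pm \vphi(x)$ follows automatically, since a product with nonzero imaginary part cannot have a zero factor.

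The main nuisance is the careful bookkeeping of atoms of $\dd M$ and of the one-sided conventions for the limits of integration, especially in the boundary cases where $\dd M$ concentrates at $\aone$ itself (matters for the $\pa_x^+$ formula) or where $x$ is an atom of $\dd M$ (matters for the $\pa_x^-$ formula). These do not affect the conclusion but require one to be pedantic about which endpoints appear with $+$ or $-$; Fubini for Lebesgue--Stieltjes integrals handles the rest.
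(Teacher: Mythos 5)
Your proof is correct and takes essentially the same route as the paper: both rest on the Lagrange-type identity $\im z^2 \int_{0-}^{x\pm} |\vphi(s,z)|^2\,\dd M = -\im\bigl[\overline{\vphi(x,z)}\,\pa_x^\pm\vphi(x,z)\bigr]$. The paper simply cites this identity from Kac--Krein \cite{KK68_II} (pointing out a sign misprint there), whereas you re-derive it from the Volterra integral equation via a Fubini swap and also spell out the strict positivity of the $\dd M$-integral for $x>\aone$ --- using continuity of $\vphi$ together with part (i) and the supremum definition of $\aone$ --- a detail the paper leaves implicit.
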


Statement (i) is obvious. Statement (ii)  follows immediately from the well-known particular case of the Lagrange identity
$
\im z^2 \int_{0-}^{x\pm} |\vphi (s,z)|^2 \dd M = - \im [\overline{\vphi (x,z)} \pa_x^\pm \vphi (x,z)] .
$
The details can be found in \cite[Section 2]{KK68_II} (note that there is an obvious misprint in
\cite[formula (2.25)]{KK68_II}, namely,
the last part of the equality has to be with minus sign).
From another point of view, (\ref{e im vhi vhi'}) is a reformulation of the fact that
the Titchmarsh-Weyl m-coefficients $\frac{\vphi (x,\sqrt{\la})}{\pa_x^\pm \vphi (x,\sqrt{\la})} $
(associated with problem (\ref{e dMdx eq}), (\ref{e BC0}) on the interval $[0,x)$ and
depending on $\la = z^2$)
are non-degenerate (R)-functions (Nevanlinna functions) in the terminology of \cite{KK68_II}.
One more interpretation of this fact in terms of the trajectory of $\vphi (\cdot,z)$ is given in the next lemma.

\begin{lem} \label{l phi rot}
Let $\dd M $ be a nonnegative measure and $z^2 \not \in \RR$.
Denote by
\[
\text{$\argph \vphi (x) $ the continuous in $x$ branch of $\arg \vphi (x) $ fixed by $\argph \vphi (0) = 0$}.
\]
Then:
\begin{eqnarray}
 \text{in the case } z^2 \in \CC_-, \ \  & \argph \vphi (x) \text{ is strictly increasing on } (\aone,\ell]; \label{e arg phi inc}
 \\
 \text{in the case } z^2 \in \CC_+, \ \  & \argph \vphi (x) \text{ is strictly decreasing on } (\aone,\ell].
 \label{e arg phi dec} \notag
\end{eqnarray}
(Note that $\argph \vphi (x)$ is well-defined due to Lemma \ref{l phi R} and the equality $\vphi (0) =1$).
\end{lem}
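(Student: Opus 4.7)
The plan is to reduce the statement to a sign computation for $\pa_x \argph \vphi(x)$, which is obtained from the standard formula $\pa_x \arg f = \im(\overline{f} \pa_x f)/|f|^2$ combined with the information already furnished by Lemma \ref{l phi R}(ii).

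First, I would verify that $\argph \vphi$ is well defined and sufficiently regular on $[0,\ell]$. By Lemma \ref{l phi R}(i), $\vphi \equiv 1$ on $[0,\aone]$, and by Lemma \ref{l phi R}(ii) (which applies since $z^2 \notin \RR$), $\vphi(x) \neq 0$ on $(\aone,\ell]$. Thus $\vphi$ is continuous and nonvanishing on the compact interval $[0,\ell]$, so $|\vphi| \geq c_0 > 0$ there, and the path-lifting property of the covering $t \mapsto e^{\ii t}$ produces a unique continuous branch of the argument with $\argph \vphi(0) = 0$. Moreover $\vphi$ is absolutely continuous on $[0,\ell]$ (Lemma \ref{l an}(ii) gives $\vphi \in W^{1,\infty}$), so the composition $\argph \vphi = \im \log \vphi$ is itself absolutely continuous on $[0,\ell]$ for a continuous branch of $\log \vphi$.

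Next I would compute the derivative. At every $x \in (\aone,\ell]$ where $\pa_x \vphi(x)$ exists (which is a.e. $x$, and by (\ref{e pa phi 2}) coincides there with $\pa_x^\pm \vphi(x)$), one has
\[
\pa_x \argph \vphi(x) \; = \; \im \frac{\pa_x \vphi(x)}{\vphi(x)} \; = \; \frac{\im[\overline{\vphi(x)}\, \pa_x \vphi(x)]}{|\vphi(x)|^2} .
\]
Lemma \ref{l phi R}(ii) asserts $\im z^2 \cdot \im[\overline{\vphi(x)} \pa_x^\pm \vphi(x)] < 0$ for every $x \in (\aone,\ell]$, so this derivative has strictly negative sign a.e.\ when $z^2 \in \CC_+$ and strictly positive sign a.e.\ when $z^2 \in \CC_-$.

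Finally, to conclude strict monotonicity, take any $\aone < x_1 < x_2 \leq \ell$. By absolute continuity,
\[
\argph \vphi(x_2) - \argph \vphi(x_1) \; = \; \int_{x_1}^{x_2} \pa_x \argph \vphi(x) \, \dd x ,
\]
and since the integrand has a definite (strict) sign a.e.\ on the interval of positive length $[x_1,x_2]$, the difference is strictly negative in the case $z^2 \in \CC_+$ and strictly positive in the case $z^2 \in \CC_-$. The only routine point requiring care is the justification that $\pa_x \log \vphi = \pa_x \vphi / \vphi$ a.e.\ for our AC nonvanishing $\vphi$; that follows from the chain rule for AC functions since $\log$ is smooth on any compact subset of $\CC \setminus \{0\}$ and the image of $\vphi$ lies in such a set. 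No serious obstacle is expected; everything is packaged in Lemma \ref{l phi R}.
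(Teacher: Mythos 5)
Your proof is correct and follows the route the paper intends: Lemma~\ref{l phi rot} is presented in the paper without its own proof, merely as a reinterpretation of the sign inequality (\ref{e im vhi vhi'}) from Lemma~\ref{l phi R}(ii), and your argument — well-definedness of the branch, $\pa_x \argph\vphi = \im[\overline{\vphi}\,\pa_x\vphi]/|\vphi|^2$ a.e.\ with $W^{1,\infty}$ regularity from Lemma~\ref{l an}(ii) and (\ref{e pa phi 2}), and integration of the a.e.\ strict sign — is exactly the expected way to make that reinterpretation rigorous. The signs check out: $\im z^2 < 0$ forces $\im[\overline{\vphi}\,\pa_x\vphi] > 0$ and hence strict increase, and symmetrically for the other case.
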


The following statement is obvious from the integral equation (\ref{e int ep phi}).

\begin{lem} \label{l phi on int M=0}
Let $ (x_0,x_1) \cap \supp \dd M = \emptyset $. Then the point $\vphi (x) $ with $x$ running through the interval $(x_0,x_1)$
either moves along the ray $\{ \vphi (x_0) + s \pa_x^+ \vphi (x_0) \ : \ s >0  \}$ with the constant speed
$\pa_x^+ \vphi (x_0)$, or stays at $ \vphi (x_0)$ (in the case $\pa_x^+ \vphi (x_0)=0$).
\end{lem}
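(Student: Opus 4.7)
The plan is to extract the statement directly from the integral equation (\ref{e int ep phi}) by showing that $\vphi(x)$ is affine in $x$ on $(x_0,x_1)$. This affine structure, together with the identification of the slope as $\pa_x^+ \vphi(x_0)$, immediately yields the two alternatives (motion along a ray, or being stationary).

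The first step is to split the integral in (\ref{e int ep phi}) at $x_0$: for $x \in (x_0, x_1)$,
\[
\int_{0-}^{x} (x-s)\, \vphi(s)\, \dd M(s) = \int_{0-}^{x_0+}(x-s)\,\vphi(s)\,\dd M(s) + \int_{x_0+}^{x}(x-s)\,\vphi(s)\,\dd M(s).
\]
Since $(x_0,x_1) \cap \supp \dd M = \emptyset$ and $(x_0,x] \subset (x_0,x_1)$, the second integral vanishes. Expanding $(x-s)=x-s$ in the first integral then gives
\[
\vphi(x) = 1 - z^2 x \int_{0-}^{x_0+}\vphi(s)\,\dd M(s) + z^2 \int_{0-}^{x_0+} s\,\vphi(s)\,\dd M(s),
\]
which is of the form $A + Bx$ with constants $A,B \in \CC$ independent of $x$ on $(x_0,x_1)$.

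Next, I identify the slope $B$ using the right-derivative formula (\ref{e pa phi 1}), which gives $\pa_x^+\vphi(x_0) = - z^2 \int_{0-}^{x_0+}\vphi(s)\,\dd M(s) = B$. Rewriting $A + Bx = \vphi(x_0) + B(x - x_0)$ (using $\vphi(x_0) = A + B x_0$, a value that is consistent with the formula above by taking $x \downarrow x_0$) yields
\[
\vphi(x) = \vphi(x_0) + (x - x_0)\, \pa_x^+ \vphi(x_0), \qquad x \in (x_0, x_1).
\]
If $\pa_x^+\vphi(x_0) \ne 0$, then as $x$ ranges over $(x_0,x_1)$ the parameter $s:=x-x_0$ runs through $(0, x_1-x_0) \subset \RR_+$, so $\vphi(x)$ traces the open ray $\{\vphi(x_0) + s\,\pa_x^+\vphi(x_0) : s > 0\}$ at constant speed $\pa_x^+\vphi(x_0)$. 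If $\pa_x^+\vphi(x_0) = 0$, the formula reduces to $\vphi(x) \equiv \vphi(x_0)$.

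There is no genuine obstacle here; the only technical point worth flagging is the careful bookkeeping of the endpoint $x_0$ in the Stieltjes integrals (since $x_0$ itself may carry a point mass, it belongs to $[0-, x_0+]$ but not to $(x_0, x_1)$), which is what makes the split above valid and lets the right-hand derivative at $x_0$ — as opposed to the left-hand one — appear naturally as the slope.
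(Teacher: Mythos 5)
Your proof is correct, and it is exactly the argument the paper leaves implicit (the paper merely remarks the lemma is ``obvious from the integral equation (\ref{e int ep phi})''): splitting the Stieltjes integral at $x_0+$, noting the tail over $(x_0,x]$ vanishes, reading off the affine dependence $\vphi(x)=\vphi(x_0)+(x-x_0)\pa_x^+\vphi(x_0)$, and invoking (\ref{e pa phi 1}) to identify the slope. One small simplification: you do not need a limiting argument $x\downarrow x_0$ to get $\vphi(x_0)=A+Bx_0$; since $(x_0-s)$ vanishes at $s=x_0$, the value $\vphi(x_0)=1-z^2\int_{0-}^{x_0+}(x_0-s)\vphi(s)\,\dd M(s)$ already equals $A+Bx_0$ directly.
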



\subsection{Quasi-eigenvalues' perturbations and the derivatives of $F$.}
\label{ss PEig}

Note that if $\om$ is a quasi-eigenvalue of $\dd M_0$, then
$\om \neq 0 $ (since $F (0; \dd M_0 ) = 1$) and $ \vphi (\ell , \om ; \dd M_0)  \neq 0$
 (otherwise (\ref{e BCl}) implies $\pa_x^+ \vphi (\ell) =0$ and, in turn,
$\vphi (x) \equiv 0$ on $[0,\ell]$).

\begin{prop} \label{p der F}
Let $\om \in \Si (\dd M_0)$ and $\vphi (x)= \vphi (x,\om; \dd M_0)$.
At the point $(\om, \dd M_0 ) \in \CC \times \Mes_\CC$, the derivative in $z$
of the functional $ F (z; \dd M)$ is given by
\begin{equation} \label{e paz F}
\pa_z F (\om; \dd M_0) = -\frac{\vphi
(\ell)}{\om} + \frac{2 \ii}{\vphi
(\ell)}  \int_{0-}^{\ell+} \vphi^2 (s) \ \dd M_0 (s) ,
\end{equation}
and the directional derivative of $F$
w.r.t. $\dd M$ in a direction $\dd V \in \Mes_\CC$ equals
\begin{equation} \label{e padM F}
\frac{ \pa F (\om; \dd M_0)}{\pa M} \ (\dd V) = -\frac{\ii \om}{ \vphi (\ell) }  \int_{0-}^{\ell+} \vphi^2 (s) \ \dd V(s) .
\end{equation}
\end{prop}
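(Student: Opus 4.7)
The plan is to compute both derivatives directly from the definition (\ref{e F}) of $F$, invoking the analyticity supplied by Lemma \ref{l an}(ii) to justify termwise differentiation, and then to collapse the resulting ``auxiliary'' derivatives $\dot\vphi := \pa_z \vphi$ and $\wt\vphi := \pa_M \vphi(\dd V)$ into the closed forms (\ref{e paz F})--(\ref{e padM F}) using a Wronskian-type (Green's) identity for the Krein--Feller problem. Beyond the derivation of that identity, everything is either termwise differentiation of (\ref{e F}) or algebraic manipulation using the boundary condition in the form $\pa_x^+ \vphi(\ell,\om;\dd M_0) = \ii\om\,\vphi(\ell,\om;\dd M_0)$, which is equivalent to $F(\om;\dd M_0) = 0$.

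The key preliminary I need is the Green identity: for $\vphi_i(x) := \vphi(x, z_i; \dd M_i)$, $i=1,2$, consider the Wronskian $W(x) := \vphi_1(x)\pa_x^+ \vphi_2(x) - \vphi_2(x)\pa_x^+ \vphi_1(x)$. By Lemma \ref{l phi on int M=0} and the extension of $\dd M_i$ by zero outside $[0,\ell]$, $W$ vanishes for $x<0$ and is constant on each component of $[0,\ell]\setminus(\supp \dd M_1 \cup \supp \dd M_2)$; at each atom $x_0$ of $\dd M_1$ or $\dd M_2$ it jumps by $[z_1^2 \dd M_1(\{x_0\}) - z_2^2 \dd M_2(\{x_0\})]\,\vphi_1(x_0)\vphi_2(x_0)$, thanks to the jump formula $\pa_x^+\vphi_i(x_0) - \pa_x^-\vphi_i(x_0) = -z_i^2\,\dd M_i(\{x_0\})\,\vphi_i(x_0)$ that is immediate from (\ref{e int ep phi}) together with $\pa_x^-\vphi_i(0) = 0$. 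Summing jumps and extending from purely atomic $\dd M_i$ to arbitrary Borel measures by a standard density argument (justified by the analyticity of Lemma \ref{l an}) produces
\begin{equation} \label{e Green}
\vphi_1(\ell)\pa_x^+\vphi_2(\ell) - \vphi_2(\ell)\pa_x^+\vphi_1(\ell) = \int_{0-}^{\ell+}\vphi_1(s)\vphi_2(s)\,\bigl[z_1^2\,\dd M_1 - z_2^2\,\dd M_2\bigr](s) .
\end{equation}
An equivalent derivation multiplies the distributional identity $-\dd(\pa_x \vphi_i) = z_i^2 \vphi_i\,\dd M_i$ by $\vphi_{3-i}$ and subtracts.

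For (\ref{e paz F}), I specialize (\ref{e Green}) to $\dd M_1 = \dd M_2 = \dd M_0$, $z_1 = \om$, $z_2 = z$, differentiate in $z$ at $z=\om$, and solve the resulting relation for $\pa_x^+\dot\vphi(\ell)$ in terms of $\dot\vphi(\ell)$, $\vphi(\ell)$, and $\int_{0-}^{\ell+} \vphi^2\,\dd M_0$. Separately, rewriting $F(z;\dd M_0) = \vphi(\ell,z) + \tfrac{\ii}{z}\pa_x^+\vphi(\ell,z)$ for $z\neq 0$ and differentiating in $z$ gives
\[
\pa_z F(\om;\dd M_0) = \dot\vphi(\ell) - \tfrac{\ii}{\om^2}\pa_x^+\vphi(\ell) + \tfrac{\ii}{\om}\pa_x^+\dot\vphi(\ell).
\]
Substituting $\pa_x^+\vphi(\ell) = \ii\om\,\vphi(\ell)$ and the expression for $\pa_x^+\dot\vphi(\ell)$ furnished by the differentiated Green identity forces the two $\dot\vphi(\ell)$ contributions to cancel, and the surviving terms assemble into (\ref{e paz F}). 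For (\ref{e padM F}), I apply (\ref{e Green}) with $z_1 = z_2 = \om$, $\dd M_1 = \dd M_0$, $\dd M_2 = \dd M_0 + \ep\,\dd V$; dividing by $\ep$ and letting $\ep \to 0$ (using Lemma \ref{l an}(ii) to identify $(\vphi_2 - \vphi_1)/\ep \to \wt\vphi$ in $W^{1,\infty}$) yields $\vphi(\ell)\pa_x^+\wt\vphi(\ell) - \wt\vphi(\ell)\pa_x^+\vphi(\ell) = -\om^2 \int_{0-}^{\ell+}\vphi^2\,\dd V$. Combining this with the directional derivative $\frac{\pa F(\om;\dd M_0)}{\pa M}(\dd V) = \wt\vphi(\ell) + \tfrac{\ii}{\om}\pa_x^+\wt\vphi(\ell)$ (obtained by differentiating (\ref{e F}) in $\dd M$) and the boundary relation $\pa_x^+\vphi(\ell) = \ii\om\,\vphi(\ell)$, the $\wt\vphi(\ell)$ terms cancel and (\ref{e padM F}) drops out.

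The principal obstacle is the careful derivation of (\ref{e Green}) in the presence of atomic parts of $\dd M_i$: the classical smooth-coefficient Wronskian argument has to be replaced by a jump-bookkeeping argument at mass points, and the precise endpoint conventions (the ``$0-$'' and ``$\ell+$'' in the integrals) emerge from the left/right-derivative conventions for $\pa_x^\pm \vphi_i$. Once (\ref{e Green}) is secured, the rest of the argument is routine algebra pivoting on $\pa_x^+\vphi(\ell) = \ii\om\,\vphi(\ell)$.
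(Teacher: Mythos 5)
Your proof is correct and takes a genuinely different route from the paper. The paper obtains $\dot\vphi := \pa_z\vphi$ and $\wt\vphi := \pa_M\vphi(\dd V)$ by variation of parameters (in its Stieltjes/integral form), landing first on intermediate formulas that involve the second fundamental solution $\psi$; only at the end does it invoke the Wronskian (\ref{e Wrons}) plus the boundary relation $\pa_x^+\vphi(\ell) = \ii\om\,\vphi(\ell)$ to show $\om\psi(\ell) + \ii\pa_x\psi(\ell) = \ii/\vphi(\ell)$ and collapse $\psi$ away. You instead deploy a two-parameter bilinear Lagrange/Green identity directly, which expresses the needed boundary quantity $\pa_x^+\dot\vphi(\ell)$ (resp. $\pa_x^+\wt\vphi(\ell)$) in terms of $\dot\vphi(\ell)$ (resp. $\wt\vphi(\ell)$) and the mass integral, so $\psi$ never enters and the only structural input beyond the definition of $F$ is (\ref{e Green}); the unknown boundary value of $\dot\vphi$ (resp. $\wt\vphi$) then cancels thanks to the boundary relation. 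Your identity (\ref{e Green}) is correct, and the ``density from atomic measures'' step is actually unnecessary: Stieltjes integration by parts (or your alternative, multiplying $-\dd(\pa_x\vphi_i) = z_i^2\vphi_i\,\dd M_i$ by $\vphi_{3-i}$ and subtracting) already works for arbitrary Borel measures, given that $\vphi_i$ is continuous and $\pa_x^+\vphi_i$ is right-continuous of bounded variation with $\pa_x^-\vphi_i(0)=0$.

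One flag: if you actually carry out the algebra you sketch, your method yields $\pa_z F(\om) = +\frac{\vphi(\ell)}{\om} - \frac{2\ii}{\vphi(\ell)}\int_{0-}^{\ell+}\vphi^2\,\dd M_0$, the \emph{negative} of (\ref{e paz F}) as printed. Your sign is the correct one, as one can check against Example \ref{ex 1 pt mas} with $x_0<\ell$: there $F(z)=1 - z^2(\ell-x_0)m_0 - \ii z m_0$, $\vphi(\ell)=\ii\om m_0$, $\int\vphi^2\dd M_0 = m_0$, and a direct computation gives $\pa_z F(\om) = \ii m_0 - 2/\om$, whereas (\ref{e paz F}) gives $-\ii m_0 + 2/\om$. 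So (\ref{e paz F}) (and the intermediate formula in the paper's proof) carries a sign typo; this does not affect (\ref{e padM F}), which you recover correctly, nor Lemma \ref{l deg eig} which only needs the vanishing of $\pa_z F$. You should not present your final $\dot\vphi(\ell)$-cancellation as ``assembling into (\ref{e paz F})'' without verifying the overall sign.
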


\begin{proof}
Put $\psi (x) := \psi  (x,\om; \dd M_0)$.
The formulae
\begin{eqnarray*}
\pa_z F (\om; \dd M_0) = - \frac{\vphi
(\ell)}{\om} + 2 \left[ \om \psi (\ell) + \ii \pa_x \psi (\ell)
\right] \int_{0-}^{\ell+} \vphi^2 (s) \dd M_0 (s) , \\
\frac{ \pa F (\om; \dd M_0)}{\pa M} \ (\dd V) = -\om \left[ \om \psi (\ell) + \ii
\pa_x \psi (\ell) \right] \int_{0-}^{\ell+} \vphi^2 (s) \ \dd V(s) ,
\end{eqnarray*}
can be obtained in the lines of the proof of \cite[Lemma 3.2]{Ka13}
with the change of the usual variation of parameters method to its integral equation version \cite[Sec. 11]{A64} (see also \cite{KK68_II}).
To obtain (\ref{e paz F}) and (\ref{e padM F}), it remains to note that
\begin{equation} \label{e i / phi l}
 \om \psi (\ell) + \ii \pa_x \psi (\ell) = \ii / \vphi (\ell) .
\end{equation}
The last identity easily follows from $\vphi (\ell)
= -\frac{i}{\om} \pa_x^+ \vphi (\ell)$
and the constancy of the Wronskian
\begin{equation} \label{e Wrons}
\left| \begin{array}{cc}
\vphi (x) & \psi (x) \\
\pa_x^\pm \vphi (x) & \pa_x^\pm \psi (x)
\end{array} \right| \equiv 1 \ \  .
\end{equation}
\end{proof}

In the rest of this subsection we assume that $\om $ \emph{is a quasi-eigenvalue of algebraic multiplicity $r \in \NN$
of a nonnegative string} $\dd M_0$. Proposition \ref{p der F} leads to several statements given below.
Though we do not use them directly, they stand behind crucial points of the subsequent sections.

The Fr\'echet derivative $\frac{ \pa F (\om; \dd M_0)}{\pa M}$ is nonzero.
The directional derivative $\frac{ \pa F (\om; \dd M_0)}{\pa M} \ (\dd V)$ is nonzero whenever the direction
$\dd V $ is an atom measure $\delta (x-x_0) \dd x$ with  $x_0 \in \I$ (more generally, whenever nonnegative
and nonzero $\dd V $ is supported on a small enough interval).

\begin{lem} \label{l deg eig}
The quasi-eigenvalue $\om$ is non-simple exactly when
$\vphi^2 (\ell) = 2 \ii \om  \int_{0-}^{\ell+} \vphi^2 (s) \dd M_0 (s) .$
\end{lem}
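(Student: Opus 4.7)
The plan is to unwind the definition of multiplicity directly using the formula for $\pa_z F$ given in Proposition \ref{p der F}. By Definition \ref{d mult}, the quasi-eigenvalue $\om$ is non-simple exactly when $F(\om;\dd M_0)=0$ and $\pa_z F(\om;\dd M_0) = 0$ simultaneously; the first condition is already assumed since $\om \in \Si(\dd M_0)$, so everything reduces to analyzing when $\pa_z F(\om;\dd M_0)$ vanishes.

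Substituting the explicit expression (\ref{e paz F}),
\[
\pa_z F(\om;\dd M_0) = -\frac{\vphi(\ell)}{\om} + \frac{2\ii}{\vphi(\ell)} \int_{0-}^{\ell+} \vphi^2(s)\,\dd M_0(s),
\]
the vanishing condition becomes
\[
\frac{\vphi(\ell)}{\om} = \frac{2\ii}{\vphi(\ell)} \int_{0-}^{\ell+} \vphi^2(s)\,\dd M_0(s).
\]
To clear denominators I would multiply both sides by $\om\,\vphi(\ell)$, which is legitimate since $\om \neq 0$ (because $F(0;\dd M_0)=1$) and $\vphi(\ell) \neq 0$ (otherwise the boundary condition (\ref{e BCl}) would force $\pa_x^+ \vphi(\ell)=0$ and hence $\vphi \equiv 0$), both facts recorded in the paragraph preceding Proposition \ref{p der F}. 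This immediately yields the equivalent identity
\[
\vphi^2(\ell) = 2\ii\om \int_{0-}^{\ell+} \vphi^2(s)\,\dd M_0(s),
\]
which is exactly the claim of the lemma.

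There is essentially no obstacle here: the content of the lemma is a one-line algebraic rewriting of (\ref{e paz F}) combined with the definition of multiplicity, and the only things that must be checked are the non-vanishing of $\om$ and of $\vphi(\ell)$, both of which are already in hand. The conclusion holds for any $\dd M_0 \in \Mes_\CC$ (the nonnegativity assumption made at the start of the subsection is not actually used in this particular lemma).
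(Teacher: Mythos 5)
Your proof is correct and is exactly what the paper intends: the text preceding the lemma explicitly says these statements follow from Proposition \ref{p der F}, and your argument---multiplicity $\ge 2$ means $\pa_z F(\om;\dd M_0)=0$, then clear denominators in (\ref{e paz F}) using $\om \neq 0$ and $\vphi(\ell)\neq 0$---is the intended (and essentially only) route. Your side remark that nonnegativity of $\dd M_0$ is not needed here is also accurate, since both the non-vanishing of $\vphi(\ell)$ and Proposition \ref{p der F} are established for general $\dd M_0 \in \Mes_\CC$.
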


When $\om$ is a simple quasi-eigenvalue of $\dd M_0$,  the implicit function theorem for analytic maps
implies that there exists a functional $\Omega$ analytic in a certain neighborhood $W$ of $\dd M_0$ such that
$\Omega (\dd M_0) = \om_0$ and
$\Omega (\dd M ) \in \Si (\dd M)$ for all $\dd M \in W$.
The derivative $\frac{ \pa \Omega (\dd M_0)}{\pa M} (\dd V)$ of $\Omega$
at $\dd M_0$ in a direction $\dd V \in \Mes_\CC$ is given by
\begin{equation} \label{e der k}
\frac{ \pa \Omega (\dd M_0)}{\pa M} (\dd V) =
 \frac{\om^2 \int_{0-}^{\ell+} \vphi^2 (s,\om;\dd M_0) \ \dd V(s)}
{ 2 \om \int_{0-}^{\ell+} \vphi^2 (s,\om_0; \dd M_0) \ \dd M_0 (s) +  \ii \vphi^2 (\ell,\om_0; \dd M_0 )} .
\end{equation}

\begin{rem}
For absolutely continuous $\dd M$ and $\dd V$ and the boundary condition $y(0)=0$,
the analogues of Lemma \ref{l deg eig}, formula (\ref{e der k}), as well as higher order corrections for $\Omega$ were obtained
in Physics papers \cite{LLTY94}, \cite{MvdBY01} (with slightly more intuitive arguments).
\end{rem}

When $\om$ is a multiple quasi-eigenvalue (i.e., $r \ge 2$) and $\frac{ \pa F (\om; \dd M_0)}{\pa M} \ (\dd V) \neq 0$,
it is possible to obtain from
the Weierstrass preparation theorem  that
there exist nonempty open discs $\DD_{\delta} (0)$, $\DD_{\ep} (\omega) $,
and a convergent in $\DD_{\delta} (0)$ $r$-valued Puiseux series
\begin{equation} \label{e k Pser}
\Omega (\zeta) = \omega + \sum_{n=1}^{\infty} c_j \zeta^{j/r} \
\text{ with } c_1 = \left[
\frac{ \ii r! \, \om  \ \int_{0-}^{\ell+} \ \vphi^2 (s,\om) \ \dd V(s)}
 {\vphi (\ell,\om) \ \pa_z^r F (\om ; \dd M_0)}
\right]^{1/r}
\neq 0
\end{equation}
such that for each $\zeta \in \DD_{\delta} (0)$, the $r$ values of $\Omega
(\zeta)$ give all the quasi-eigenvalues of $\dd M_0 + \zeta \dd V $ in
$\DD_{\ep} (\omega)$ and all these $r$ quasi-eigenvalues are distinct
and simple (for details, see the proof of \cite[Proposition 3.5]{Ka13} and also Lemma \ref{l P ser} below).
 When $r=1$, (\ref{e k Pser}) turns into (\ref{e der k}).

\section{Local boundary points and two-parameter perturbations}
\label{s 2par pert}

The quasi-eigenvalues of $\dd M$ are zeroes of the entire function $F(\cdot) = F(\cdot; \dd M)$. If $\om$ is a boundary point
of the set of admissible quasi-eigenvalues $\Si [\A]$,
then perturbations of $\om_0$ as a zero of $F(\cdot; \dd M)$ under small changes of $\dd M$ inside the admissible
family $\A$ cannot cover a neighborhood of $\om$. We want to use this fact to derive strong restriction
on the structure of strings that produce quasi-eigenvalues on the boundary of $\Si [\A]$
(and, in particular, on the structure of strings of minimal decay).

This approach requires the study of multi-parameter perturbations of zeroes of analytic maps (functionals) from a Banach space to $\CC$.
In this section, the question is reduced to the two-parameter perturbation theory for
the zeroes of an analytic function of three complex variables.


Let $U$ be a topological space. Consider a functional $G : \CC \times U \to \CC $.
For $u \in U$, denote by
\[
\Si_G (u) := \{ z \in \CC \ : \ G (z;u) = 0 \}
\]
the set of zeroes of the function $G (\cdot, u)$.
Then $\Si_G$ is a set-valued map from $U$ to $\CC$.
We will systematically use only one specific definition of the theory of set-valued maps.
For a subset $S$ of $U$, the image $\Si_G [S]$ of $S$ is defined by
$
\Si_G [S] := \bigcup_{u \in S } \Si_G (u) .
$

\begin{defn} \label{d loc Bd}
Let  $S$ be a set in a topological space $U$. Let $u \in S$ and $z \in \Si_G (u)$. Then:

\item[(i)] the complex number $z$ is called a \emph{$u$-local boundary point of the image} $\Si_G [S]$ if $z$
is a boundary point
of $\Si_G [S \cap W]$ for a certain open neighborhood $W$ of $u$.

\item[(ii)] $z$ is called a \emph{$u$-local interior  point}  $\Si_G [S]$ if $z$ is an interior point
of the set $\Si_G [S \cap W]$ for any open neighborhood $W$ of $u$.
\end{defn}

If $u \in S$, then a complex number $z \in \Si_G (u)$ is a $u$-local interior  point of $\Si_G (S)$
exactly when it is not a $u$-local boundary point of $\Si_G [S]$.
The above definition depends not only on the point $z$ and the set $\Si_G [S]$, but also
on the choice of $u$, $S$, $G$, and a topology in $U$.
Our main (but not only) uses of the above definition will involve $\Mes_\CC$ with the norm or weak* topology as
a topological space $U$ and the functional $F$ defined by (\ref{e F}).
The set-valued map $\dd M \mapsto \Si (\dd M) $ that associates a string $\dd M $ with the set
of the string's quasi-eigenvalues can be identified with the map $\Si_F$, see Lemma \ref{l an} (i).
So $\Si (\dd M) = \Si_F (\dd M)$ and $\Si [\A] = \Si_F [\A]$.

\begin{defn} \label{d loc Bd qe}
Let $\Mes_\CC$ equipped with the norm topology (the weak* topology) be taken as the topological space $U$.
Let $\dd M \in \A$ and $ \om \in \Si (\dd M)$.
Then $\om$ is called \emph{a strongly (resp., weakly*) $\dd M$-local boundary point of}
$\Si [\A]$ if $\om$ is a $\dd M$-local boundary point of $\Si_F [\A]$ in the sense of
Definition \ref{d loc Bd}.
\end{defn}

When $U$ is a Banach space and $G: \CC \times U \to \CC $ is analytic, we denote by
\[
 \frac{\pa G (z;u_0)}{\pa u} (u_1) :=
 \lim_{\zeta \to 0} \frac{G (z;u_0 + \zeta u_1 ) - G (z;u_0)}{\zeta} ,
 \]
\emph{the directional derivative} of $G(z;\cdot)$ along the vector $u_1 \in U$ at the point $(z,u_0) \in \CC \times U$.
By $ \frac{\pa G (z;u_0)}{\pa u} $ the corresponding \emph{Fr\'echet derivative in $u$} is denoted.
The Fr\'echet derivative $\frac{\pa G (z;u_0)}{\pa u} $ is a linear functional on $U$ and
$ \frac{\pa G (z;u_0)}{\pa u} [S]$ defines the image of a set $S \subset U$
under this functional.

The following theorem is one of our main technical tools.

\begin{thm} \label{t ab loc bd}
Let $U$ be a Banach space and $G (\cdot;\cdot)$ be an analytic functional on $\CC \times U$.
Assume that
\item[(i)] $S$ is a convex subset of $U$,
\item[(ii)] $z_0 \in \CC$ and $u_0 \in S$ are such that $G(z_0 , u_0) = 0$,
\item[(iii)] $\frac{\pa G (z_0;u_0)}{\pa u}  (u_0)$ is an interior point of the set of directional derivatives
$\frac{\pa G (z_0;u_0)}{\pa u}  [S]$

Then $z_0$ is a strongly $u_0$-local interior point of $\Si_G [S]$.
\end{thm}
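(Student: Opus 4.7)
The plan is to reduce the abstract functional-analytic statement to the two-parameter perturbation theory for analytic functions of three complex variables provided by Lemma \ref{l 2par per}. Write $T := \frac{\pa G(z_0;u_0)}{\pa u}$; the hypothesis that $T(u_0)$ is an interior point of the convex set $T[S] \subset \CC$ yields a radius $a > 0$ such that for every $w$ with $|w - T(u_0)| \le a$ the set $T[S]$ contains $w$, so I can pick four vectors $v_1^\pm, v_2^\pm \in S$ satisfying $T(v_1^\pm - u_0) = \pm a$ and $T(v_2^\pm - u_0) = \pm \ii a$. For real parameters $(\zeta_1, \zeta_2)$ near the origin, set
\[
u(\zeta_1, \zeta_2) := u_0 + \zeta_1^+(v_1^+ - u_0) + \zeta_1^-(v_1^- - u_0) + \zeta_2^+(v_2^+ - u_0) + \zeta_2^-(v_2^- - u_0),
\]
with $\zeta^\pm := \max(\pm\zeta, 0)$. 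By convexity of $S$, $u(\zeta_1, \zeta_2) \in S$ whenever $|\zeta_1| + |\zeta_2|$ is small enough; inside each of the four open quadrants of $\RR^2$ the map $u$ is affine (hence analytic) in $(\zeta_1, \zeta_2)$; and $T(u(\zeta_1, \zeta_2) - u_0) = a(\zeta_1 + \ii\zeta_2)$ throughout.

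Next I work in each of the four open quadrants $Q \subset \RR^2$ separately. On a complex neighbourhood of $(0,0)$ to which the affine formula for $u|_Q$ extends, the composition $H_Q(z, \zeta_1, \zeta_2) := G(z, u(\zeta_1, \zeta_2))$ is analytic in three complex variables with $H_Q(z_0, 0, 0) = 0$, and its partial derivatives $\pa_{\zeta_j} H_Q(z_0, 0, 0)$ lie in $\{\pm a, \pm \ii a\}$ and are in particular $\RR$-linearly independent in $\CC$. Applying Lemma \ref{l 2par per} to each $H_Q$ produces, for every $z$ in some fixed disc $\DD_\rho(z_0)$, a solution $(\zeta_1, \zeta_2)$ in $\overline{Q}$ close to the origin with $H_Q(z, \zeta_1, \zeta_2) = 0$. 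Since the four closed quadrants cover $\RR^2$ near the origin, patching these four conclusions yields, for every $z \in \DD_\rho(z_0)$, a real pair $(\zeta_1, \zeta_2)$ near $(0,0)$ with $G(z, u(\zeta_1, \zeta_2)) = 0$ and $u(\zeta_1, \zeta_2) \in S$. Given an arbitrary strong neighbourhood $W = \BB_\rho(u_0; U)$, continuity of $u(\cdot,\cdot)$ at the origin ensures $u(\zeta_1, \zeta_2) \in W$ for $(\zeta_1, \zeta_2)$ sufficiently small, so after shrinking $\rho$ we obtain $\DD_\rho(z_0) \subset \Si_G[S \cap W]$; this is exactly the definition of $z_0$ being a strongly $u_0$-local interior point of $\Si_G[S]$.

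The main obstacle is the degenerate case in which $\pa_z G(z_0, u_0) = 0$ (possibly with further vanishing $z$-derivatives of order up to $r-1$): the classical implicit function theorem then fails to realise $z$ as an analytic function of $(\zeta_1, \zeta_2)$ in a single branch, and a one-parameter Puiseux expansion of the form (\ref{e k Pser}) only covers $r$ sectors around $z_0$ rather than a full neighbourhood. Lemma \ref{l 2par per} is designed precisely to overcome this by exploiting the second complex parameter to close up those sectors through a homotopy/winding-number argument (the refinement of the technique of \cite[Lemma 3.6]{Ka13} mentioned in the introduction). Once that lemma is granted, the reduction above is essentially bookkeeping of directional derivatives and convexity; the only mildly delicate point is that the non-analyticity of $u(\zeta_1, \zeta_2)$ across quadrant boundaries is harmless, because Lemma \ref{l 2par per} is applied separately on each open quadrant and the resulting real solutions glue together by continuity.
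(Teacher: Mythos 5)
Your overall reduction — pick finitely many directions in $S-u_0$ with prescribed values of $T := \frac{\pa G(z_0;u_0)}{\pa u}$, restrict $G$ to the resulting affine slices, and invoke Lemma~\ref{l 2par per} — is the same strategy the paper has in mind (the paper indicates that the theorem is "easily obtained" from Proposition~\ref{p iii'iii''} and Lemma~\ref{l 2par per}). However, your execution with the four cardinal directions $\pm a,\pm\ii a$ has a genuine gap at the patching step.

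The sentence "Applying Lemma~\ref{l 2par per} to each $H_Q$ produces, for every $z$ in some fixed disc $\DD_\rho(z_0)$, a solution $(\zeta_1,\zeta_2)$ in $\overline{Q}$..." misstates what the lemma gives. For a single pair of perturbation directions with $\arg\eta_2-\arg\eta_1=\xi_0\in(0,\pi)$, the conclusion of Lemma~\ref{l 2par per} covers $\DD_\epsilon(0)\cap\Sec\left[\arg\sqrt[r]{\eta_1}+\delta_2,\,\arg\sqrt[r]{\eta_2}-\delta_2\right]$ — a sector of angular width $\xi_0/r-2\delta_2<\pi/r$, not a disc. In your set-up each quadrant has $\xi_0=\pi/2$, so each quadrant (together with each of the $r$ branches of $\sqrt[r]{\cdot}$) contributes a sector of width $\pi/(2r)-2\delta_2$; the four quadrants and $r$ branches yield $4r$ such sectors, of total angular measure $2\pi-8r\delta_2$, leaving $4r$ uncovered wedges of width $2\delta_2$ around the angles $\arg\sqrt[r]{\pm c a}$ and $\arg\sqrt[r]{\pm\ii c a}$ (where $c:=-r!/\pa_z^r G(z_0,u_0)$). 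These gaps do not close as $\delta_2\to 0$, because the radius $\epsilon=\epsilon(\delta_1,\delta_2)$ guaranteed by the lemma may shrink simultaneously — so the union of the four quadrant contributions is not a full punctured disc, and $z_0$ is not shown to be an interior point.

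The fix is to exploit the full strength of condition (iii): the hypothesis gives you, for every angle $\theta$, a vector $v_\theta\in S$ with $T(v_\theta-u_0)=a e^{\ii\theta}$. Choose several pairs $(v_{\theta},v_{\theta+\xi_0})$ with suitable $\theta$'s (and $\xi_0\in(0,\pi)$ fixed, e.g. $\pi/2$), so that the resulting sectors of width $\xi_0/r-2\delta_2$, taken over all $r$ branches, form an \emph{open} cover of the circle of directions; by compactness finitely many of them suffice, and then $\epsilon:=\min_j\epsilon_j>0$ gives a full punctured disc $\DD_\epsilon(z_0)\setminus\{z_0\}\subset\Si_G[S\cap W]$ (while $z_0$ itself is covered by $u_0$). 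Your four fixed cardinal directions cannot achieve this, since their sector endpoints are rigidly placed and the boundary angles are never swept out.
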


Theorem \ref{t ab loc bd} can be easily obtained from the two following statements.
The first of them gives several equivalent reformulations of the condition (iii) and is an easy exercise in Convex Analysis.

\begin{prop} \label{p iii'iii''}
Under assumptions  (i) and (ii) of Theorem \ref{t ab loc bd},
condition (iii) is equivalent to each of the following conditions:
\item[(iii$\;^\prime$)] The origin $0$ is an interior point of the set $\frac{\pa G (z_0;u_0)}{\pa u}  [S-u_0]$.
\item[(iii$\;^{\prime\prime}$)] $0$ is an interior point of $\cone \frac{\pa G (z_0;u_0)}{\pa u}  [  S-u_0]$.
\item[(iii$\;^{\prime\prime\prime}$)] $\cone \frac{\pa G (z_0;u_0)}{\pa u}  [  S-u_0] = \CC$.

Here and below $\cone W $ is the (nonnegative) convex cone generated by a set $W$.
\end{prop}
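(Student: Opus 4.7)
The plan is to set $L := \frac{\pa G(z_0;u_0)}{\pa u}$, a bounded $\RR$-linear functional from $U$ into $\CC$ (viewed as the real space $\RR^2$), and to transfer the whole question to the target plane. Since $S$ is convex and $u_0 \in S$, the translate $S - u_0$ is convex and contains $0$, so $W := L[S - u_0]$ is a convex subset of $\CC$ with $0 \in W$, and $L[S] = W + L(u_0)$. The first three equivalences are then routine: (iii) $\Leftrightarrow$ (iii$\;^\prime$) is just this translation identity, since interior points transform covariantly under translation; (iii$\;^\prime$) $\Rightarrow$ (iii$\;^{\prime\prime}$) is the trivial inclusion $W \subseteq \cone W$; and (iii$\;^{\prime\prime}$) $\Rightarrow$ (iii$\;^{\prime\prime\prime}$) uses that $\cone W$ is invariant under multiplication by $\Rnneg$, so if it contains an open disc $\DD_r(0)$ it must contain $\bigcup_{t \ge 0} t\DD_r(0) = \CC$.

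The substantive step is (iii$\;^{\prime\prime\prime}$) $\Rightarrow$ (iii$\;^\prime$). First I will record the auxiliary identity $\cone W = \Rnneg \cdot W$, valid for any convex $W$ with $0 \in W$: given a nonnegative combination $\sum_{i=1}^n t_i w_i$, either $T := \sum_i t_i = 0$, in which case the sum is $0 \in W$, or $T > 0$ and the sum equals $T \sum_i (t_i/T) w_i \in T \cdot W$ by convexity. Assuming then $\cone W = \CC$, I will pick four elements of the form $w_j = s_j \xi_j \in W$ with $s_j > 0$ and $\xi_j \in \{1,\ii,-1,-\ii\}$; since $W \ni 0$ is convex, it contains the convex hull $\conv\{0, w_1, w_2, w_3, w_4\}$, which is a closed quadrilateral with $0$ in its interior. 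Hence $0 \in \Intr W$, which is exactly (iii$\;^\prime$).

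The main obstacle is this last implication, and it is where the two-dimensionality of the target space $\CC$ is used in an essential way: in an infinite-dimensional codomain a convex cone can equal the whole space without possessing any interior point (any dense proper subspace of a Banach space is such a cone), so the ``four compass points'' trick has no direct analogue. The author's description of the proposition as an easy exercise in convex analysis is justified precisely because in $\CC \cong \RR^2$ this compass argument is elementary; the work done by the proposition is then entirely in translating the analytic hypothesis (iii) into the geometrically transparent form (iii$\;^{\prime\prime\prime}$) used in the proof of Theorem \ref{t ab loc bd}.
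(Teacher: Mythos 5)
Your proof is correct. The paper itself omits a proof of this proposition (calling it ``an easy exercise in Convex Analysis''), and your argument—translating the problem into the convex set $W = L[S-u_0] \ni 0$ in $\CC\cong\RR^2$, noting $\cone W = \Rnneg\cdot W$ for a convex $W$ containing $0$, and then using the four-compass-points convex-hull trick to deduce $0\in\Intr W$ from $\cone W=\CC$—is a clean and complete way to fill that gap, consistent with the spirit in which the paper invokes the result.
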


The second one, Lemma \ref{l 2par per}, is the technical core of Theorem \ref{t ab loc bd}.
Essentially, it describes local structure of the sets covered by the zeroes of $G(\cdot;u)$
under two-parameter perturbations of $u$.

In Lemma \ref{l 2par per} and its proof, a 2-tuple $\zeta = (\zeta_1,\zeta_2) $ belongs to $ \CC^2$ and $Q(z;\zeta)= Q(z,\zeta_1,\zeta_2)$ is
a function of three complex
variables analytic in a neighborhood of the origin
$\mathbf{0}=(0,0,0)$.  As before, $\Sigma_Q (\zeta) $ is the set of zeroes of $Q (\cdot; \zeta)$ and
$\Sigma_Q [S]  := \bigcup_{\zeta \in S} \Sigma_Q (\zeta) $ for any set $S \subset \CC^2$.
We use (real) triangles
$
T_\de  :=  \{ \zeta= (\zeta_1,\zeta_2) \, : \, \zeta_1,
\zeta_2 \in \RR_+ \ \text{ and } \ \zeta_1 + \zeta_2 < \delta \} .
$

\begin{lem} \label{l 2par per}
Let $Q (z,\zeta_1,\zeta_2) $ be a function of three complex
variables analytic in a neighborhood of the origin
$\mathbf{0}$.
Assume that $0$ is an $r$-fold zero ($1\leq r
< \infty$) of the function $Q (\cdot,0,0)$. Denoting
\begin{equation} \label{e def eta}
\eta_j := -\frac{r!
\pa_{\zeta_{\scriptstyle j}} Q (\mathbf{0})}{\pa_z^r Q
(\mathbf{0})}, \ \ \ j=1,2,
\end{equation}
assume that
\begin{equation} \label{a eta}
\eta_1 \neq 0, \ \ \eta_2 \neq 0 \ \ \text{ and } \ \
 \arg \eta_2 = \arg \eta_1 + \xi_0 \ (\modn 2\pi) \
 \text{ with certain } \ \xi_0 \in (0,\pi).
\end{equation}
Then for small enough and positive $\de_1$ and $\de_2$,
there exists $\ep>0$ such that
\[
\DD_\ep (0) \cap \Sec \left[ \arg \sqrt[r]{\eta_1} + \delta_2 ,  \arg
\sqrt[r]{\eta_2} - \delta_2 \right]
\ \subset \
\Sigma_Q [ T_{\de_1} ]  .
\]
Here $\sqrt[\rr]{\zeta}$ and $\arg z$  are arbitrary branches of the corresponding multi-functions
continuous on the infinite sectors $\Sec [\arg \eta_1 , \arg \eta_1 + \xi_0 ] $ and
$\Sec [\arg \sqrt[r]{\eta_1}  , \arg \sqrt[r]{\eta_1} + \xi_0 / r ] $, respectively.
\end{lem}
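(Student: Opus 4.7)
The plan is to construct explicit zeroes of $Q(\cdot,\zeta_1,\zeta_2)$ at each prescribed point $\rho e^{\ii\xi}$ by a scaling ansatz that isolates the $r$-th order behaviour of $Q$ at the origin. Expand $Q(z,\zeta_1,\zeta_2)=\sum c_{jk\ell}\, z^j \zeta_1^k \zeta_2^\ell$. The hypotheses give $c_{j,0,0}=0$ for $j<r$ and $c_{r,0,0}=\pa_z^r Q(\mathbf{0})/r!\neq 0$, while (\ref{e def eta}) yields $c_{0,1,0}=-c_{r,0,0}\,\eta_1$ and $c_{0,0,1}=-c_{r,0,0}\,\eta_2$.

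Fix $\xi$ in the target sector and seek a zero of $Q$ of the form $z=\rho e^{\ii\xi}$, $\zeta_j=\rho^r s_j$ with $s_j>0$; define
\[
G(\rho,\xi,s_1,s_2) := \frac{1}{c_{r,0,0}\,\rho^r}\, Q\!\bigl(\rho e^{\ii\xi},\,\rho^r s_1,\,\rho^r s_2\bigr).
\]
Each monomial $c_{jk\ell}\rho^{j+r(k+\ell)}e^{\ii j\xi}s_1^k s_2^\ell$ has $\rho$-order $j+r(k+\ell)$, so the only contributions of exact order $\rho^r$ come from $(j,k,\ell)=(r,0,0),(0,1,0),(0,0,1)$. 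Hence $G$ extends analytically across $\rho=0$ with
\[
G(0,\xi,s_1,s_2) = e^{\ii r\xi}-\eta_1 s_1-\eta_2 s_2,
\]
and the remainder $G(\rho,\xi,\cdot)-G(0,\xi,\cdot)$ is $O(\rho)$ uniformly on bounded $(s_1,s_2)$-sets.

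Condition (\ref{a eta}) makes $\eta_1,\eta_2$ linearly independent over $\RR$, so the real-linear map $(s_1,s_2)\mapsto \eta_1 s_1+\eta_2 s_2$ is a bijection $\RR^2\to\CC$; its solution $(s_1^0(\xi),s_2^0(\xi))$ of $\eta_1 s_1+\eta_2 s_2=e^{\ii r\xi}$ lies in $\RR_+^2$ exactly when $e^{\ii r\xi}$ belongs to the open cone generated by $\eta_1,\eta_2$, i.e.\ when $r\xi\in(\arg\eta_1,\arg\eta_2)\pmod{2\pi}$. Under the branch conventions of the lemma, this is precisely $\xi\in(\arg\sqrt[\rr]{\eta_1},\arg\sqrt[\rr]{\eta_2})$. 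For $\xi$ in the compact subinterval $[\arg\sqrt[\rr]{\eta_1}+\delta_2,\,\arg\sqrt[\rr]{\eta_2}-\delta_2]$, the coordinates $s_j^0(\xi)$ are continuous and bounded away from $0$ and from infinity.

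The real Jacobian of $G(0,\xi,\cdot,\cdot)$ at $s^0(\xi)$ is the matrix of $(s_1,s_2)\mapsto -\eta_1 s_1-\eta_2 s_2$, invertible by the same linear independence. The real implicit function theorem (applied uniformly over the compact $\xi$-interval) produces $\rho_0>0$ and continuous positive $s_j(\rho,\xi)$ on $[0,\rho_0)\times[\arg\sqrt[\rr]{\eta_1}+\delta_2,\,\arg\sqrt[\rr]{\eta_2}-\delta_2]$ solving $G=0$. Then $\zeta_j:=\rho^r s_j(\rho,\xi)\in\RR_+$, $\zeta_1+\zeta_2=O(\rho^r)<\delta_1$ for $\rho$ small, whence $\zeta\in T_{\delta_1}$ and $\rho e^{\ii\xi}\in\Sigma_Q(\zeta)\subset\Sigma_Q[T_{\delta_1}]$; taking $\ep:=\rho_0$ yields the stated inclusion. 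The main technical obstacle is the uniform control of the remainder term in $G$ across the variable $\xi$, and this is where the homotopy viewpoint signalled by the paper is natural: one fixes a small closed disc around $s^0(\xi)$, verifies that $G(\rho,\xi,\cdot)$ is nonvanishing on its boundary for small $\rho$, and invokes invariance of the winding number under the homotopy $t\mapsto G(t\rho,\xi,\cdot)$, whose $t=0$ endpoint is the affine bijection of degree $1$. This bypasses any regularity scruple at $\rho=0$ and makes the choice among the $r$ branches of $\sqrt[\rr]{\cdot}$ unambiguous.
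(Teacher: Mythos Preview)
Your proof is correct and takes a genuinely different route from the paper's. The paper works \emph{forward}: it applies the Weierstrass preparation theorem to reduce $Q$ to a monic Weierstrass polynomial $P(z;\zeta)$, builds via Puiseux series and analytic continuation a single-valued branch $Z_1(\zeta)$ of its zeros over a subtriangle $T_\epsilon[\theta_1,\theta_2]$ (this requires first showing the zeros are simple there), and then proves that the image $Z_1[T_\epsilon[\theta_1,\theta_2]]$ covers the target sector by a winding-number/homotopy argument applied to the loops $Z_1(\gamma_\tau)$, where $\gamma_\tau$ parameterizes $\partial T_\tau[\theta_1,\theta_2]$ and shrinks to the origin as $\tau\to 0$. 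Your approach works \emph{backward}: fix the target $z_0=\rho e^{\ii\xi}$ and solve for $(\zeta_1,\zeta_2)$. The scaling $\zeta_j=\rho^r s_j$ is the key simplification; after dividing by $\rho^r$ the limiting equation at $\rho=0$ is the $\RR$-linear system $\eta_1 s_1+\eta_2 s_2=e^{\ii r\xi}$, whose strictly positive solution exists precisely when $\xi$ lies in the open sector, and the implicit function theorem (with the constant, invertible Jacobian $(s_1,s_2)\mapsto -(\eta_1 s_1+\eta_2 s_2)$) perturbs this to $\rho>0$ uniformly on the compact $\xi$-subinterval. This bypasses Weierstrass preparation, Puiseux series, and the analytic-continuation bookkeeping altogether, and is both shorter and more elementary. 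The degree alternative you sketch in the last paragraph is in the spirit of the paper's homotopy step, but carried out in the $(s_1,s_2)$-plane rather than in $\zeta$-space; either formulation ultimately rests on the fact that the unperturbed map has degree $\pm 1$.
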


The proof of Lemma \ref{l 2par per} is given in Appendix. It is an essential refinement of that of \cite[Lemma 3.6]{Ka13}. With the use of continuous curve of zeroes produced in \cite[Lemma 3.6]{Ka13}, we construct a loop and show that this loop can be  homotopically  shrunken into origin. As a result, the projections of zeroes cover the desired set.

\section{Existence, non-existence, and discrete strings}

The next proposition easily follows from Lemmas \ref{l int},  \ref{l an} (ii), Example \ref{ex 1 pt mas}, and
standard weak* compactness arguments \cite{K51,HS85}
(the scheme of the proof is the same as in \cite{Ka13}).

\begin{prop} \label{p Sclosed}
(i) The set $\Si [\Am] $ is closed.
\item[(ii)] For each frequency $\alpha \in \RR$, there exists a string of minimal decay  over $\Am$.
\end{prop}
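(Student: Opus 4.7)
The plan is to combine weak* compactness of $\Am$ with joint continuity of the characteristic determinant $F(z;\dd M)$ with respect to the weak* topology on $\Mes_\CC$.

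\textbf{Setup.} Since $\Mes_\CC = C[0,\ell]^*$, the Banach--Alaoglu theorem gives weak* compactness of the mass-$m$ ball; its subset $\Am$ is weak*-closed, because positivity is preserved by weak* limits and $\| \dd M \| = \int_{0-}^{\ell+} \dd M$ is weak*-continuous on the cone of positive measures (the constant $1$ being a continuous function on the compact $[0,\ell]$). For joint continuity of $F$, I would invoke the Maclaurin expansion from Lemma \ref{l an}(ii): each coefficient $\vphi_j(x;\dd M)$ is a $j$-fold iterated integral against $\dd M$ with a continuous kernel, so by induction on $j$ the map $\dd M \mapsto \vphi_j(\cdot;\dd M) \in C[0,\ell]$ is weak*-to-norm continuous on $\Am$. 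The factorial bound (\ref{e phij<}) guarantees uniform convergence of the series on $\{|z|\le R\} \times \Am$ for every $R>0$; hence $(z,\dd M) \mapsto F(z;\dd M)$ is continuous in the product of the Euclidean topology on $\CC$ and the weak* topology on $\Am$.

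\textbf{Proof of (i).} Take $\om_n \in \Si[\Am]$ with $\om_n \to \om_*$, and pick $\dd M_n \in \Am$ with $\om_n \in \Si(\dd M_n)$. Extract a weak*-convergent subsequence $\dd M_{n_k} \to \dd M_* \in \Am$. Joint continuity yields $0 = F(\om_{n_k};\dd M_{n_k}) \to F(\om_*;\dd M_*)$. Since $\vphi(\ell,0;\dd M)\equiv 1$ by (\ref{e pow ser}), one has $F(0;\dd M_*) = 1 \neq 0$, so $\om_* \neq 0$ and $\om_* \in \Si(\dd M_*) \subset \Si[\Am]$ by Lemma \ref{l int}.

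\textbf{Proof of (ii).} Example \ref{ex 1 pt mas} applied to the one-point-mass strings $m_0 \de(x-x_0)\dd x \in \Am$ shows that every $\alpha \in \RR$ is an admissible frequency (choose $m_0 \le \min(m, 1/|\alpha|)$ and $x_0 \in [0,\ell)$ with $4(\ell-x_0) > m_0$ for $\alpha \neq 0$; the degenerate case $4(\ell-x_0) = m_0$ yields $\alpha = 0$). Fix $\alpha$ and take a minimizing sequence $\om_n = \alpha - \ii\beta_n \in \Si[\Am]$ with $\beta_n \downarrow \beta_{\min}(\alpha)$; then $\om_n \to \om_* := \alpha - \ii\beta_{\min}(\alpha)$, and (i) gives $\om_* \in \Si[\Am]$. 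Since $\Si(\dd M) \subset \CC_-$ strictly for every $\dd M$ (Section \ref{ss q-e}), $\Si[\Am] \cap \RR = \emptyset$, forcing $\beta_{\min}(\alpha) > 0$; hence $\om_*$ is a genuine quasi-eigenvalue of some $\dd M_* \in \Am$, the required string of minimal decay.

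\textbf{Main obstacle.} The substantive point is the weak* joint continuity of $F$: norm continuity is already packaged in Lemma \ref{l an}, but closing $\Si[\Am]$ under limits in $\dd M$ requires the stronger weak* version, which is what unlocks Banach--Alaoglu compactness. This hinges on the multilinear iterated-integral structure of the Maclaurin coefficients $\vphi_j$ together with the factorial bound (\ref{e phij<}); once that is in place, the remaining steps are a routine compactness-plus-continuity argument, and the ``$\om_* \neq 0$'' point is handled cost-free by the identity $F(0;\cdot)\equiv 1$.
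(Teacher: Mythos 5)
Your proof is correct and follows the same route the paper sketches: Banach--Alaoglu compactness of $\Am$, weak*-to-norm continuity of $F$ via the Maclaurin coefficients $\vphi_j$ and the factorial bound (\ref{e phij<}), the identity $F(0;\cdot)\equiv 1$ to rule out $\om_*=0$, and Example \ref{ex 1 pt mas} to verify admissibility of every frequency — precisely the ingredients (Lemmas \ref{l int}, \ref{l an}(ii), Example \ref{ex 1 pt mas}, weak* compactness) that the paper cites. Only a trivial bookkeeping remark: when choosing a one-point mass realizing $\re\om=\alpha\neq 0$, one should also ensure $m_0<4\ell$ (e.g.\ take $m_0\le\min(m,1/|\alpha|,2\ell)$) so that a suitable $x_0\in[0,\ell)$ exists.
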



In the sequel, we fix a string $\dd M_0 $ and its quasi-eigenvalue $\om$ and then show that various  assumptions on $\om$ impose strong restrictions on $\dd M_0$ and the corresponding mode
\[
  \Phi (x) = \vphi (x,\om; \dd M_0) .
\]
This section is devoted to two following theorems.

\begin{thm} \label{t AM opt}
Let a string $\dd M_0$ be of minimal decay over $\Am$
for a frequency $\alpha \in \RR $.
Then $\dd M_0$ consists of a finite number of point masses, i.e.,
$
\dd M_0 = \sum_{j=1}^n m_j \delta (x-a_j) \dd x
$
with $n \in \NN$,  positive $m_j$, and distinct increasingly ordered $a_j \in \I$.
\end{thm}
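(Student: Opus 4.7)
The plan is to invoke the local boundary point machinery of Section~\ref{s 2par pert}. Since $\dd M_0$ is of minimal decay, the quasi-eigenvalue $\om = \alpha - \ii\beta_{\min}(\alpha)$ lies on the boundary of $\Si[\Am]$ (points $\alpha - \ii\beta'$ with $\beta' < \beta_{\min}(\alpha)$ are not admissible) and so is a strongly $\dd M_0$-local boundary point. The contrapositive of Theorem~\ref{t ab loc bd} with $G = F$ and $S = \Am$ forces condition~(iii) to fail; then Proposition~\ref{p iii'iii''}(iii$^{\prime\prime\prime}$) gives that $\cone \frac{\pa F(\om;\dd M_0)}{\pa M}[\Am - \dd M_0]$ is contained in a closed half-plane of $\CC$. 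Using formula~(\ref{e padM F}), this yields an angle $\theta \in \RR$ such that, with $g(x) := \re[e^{\ii\theta}\Phi^2(x)]$, one has $\int g\,\dd V \le \int g\,\dd M_0$ for every $\dd V \in \Am$.

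Testing this inequality against point-mass probes $\dd V = m\,\delta(\cdot - x)\dd x$ for arbitrary $x \in [0,\ell]$ yields $m\, g(x) \le \int g\,\dd M_0$; combining with $\int g\,\dd M_0 \le \|\dd M_0\|\,\max g \le m \max g$ collapses the chain to equality. Hence $\|\dd M_0\| = m$ and $\supp \dd M_0 \subset E := \{x \in [0,\ell] : g(x) = M\}$, where $M := \max_{[0,\ell]} g \ge 0$. Since also $\supp \dd M_0 \subset [\aone,\ell]$ by definition of $\aone$, the theorem reduces to showing that $E \cap [\aone,\ell]$ is a finite set.

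Assume first $\alpha \ne 0$, so $\om^2 \notin \RR$. On every maximal open sub-interval $(a,b) \subset [\aone,\ell]\setminus \supp\dd M_0$, Lemma~\ref{l phi on int M=0} forces $\Phi$ to be affine; Lemma~\ref{l phi R}(ii) asserts $\pa_x^+\Phi \ne 0$ on $(\aone,\ell]$, so the slope is nonzero. Hence $g$ on $(a,b)$ is a non-constant real quadratic polynomial and meets $\{g=M\}$ in at most two points. To preclude accumulation of $\supp \dd M_0$ itself, I would argue by contradiction from an accumulation point $x^* \in \supp \dd M_0$: using $g(x^*) = M$, the atom-jump identity $\pa_x^+\Phi(x_j) - \pa_x^-\Phi(x_j) = -\om^2 m_j \Phi(x_j)$ at atoms $x_j \to x^*$, and Lemma~\ref{l phi rot} (strict monotonicity of $\argph \Phi$), I expect to force the curve $x \mapsto e^{\ii\theta}\Phi^2(x)$ to remain on the vertical line $\{\re z = M\}$ while winding monotonically in argument, which is incompatible with $\Phi$ being a continuous bounded function on $[\aone,\ell]$.

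The main obstacle is this last accumulation-point argument: $\Am$ admits measures with perfect (Cantor-like) support, and ruling these out requires a careful compatibility of gap affinity, atomic jumps, and monotone phase. The degenerate case $\alpha = 0$ (where $\Phi$ is real, $\Phi^2$ is real, and the two-dimensional cone argument is vacuous since $\frac{\pa F}{\pa M}[\Am] \subset \RR$) must be handled separately by analyzing the real level sets $\{\Phi^2 = M/\cos\theta\}$ directly via the same gap analysis. Once $\supp \dd M_0$ is shown to be finite, the nonnegativity of $\dd M_0$ immediately yields the claimed form $\sum_{j=1}^n m_j \delta(x - a_j)\dd x$ with positive weights at distinct ordered points of $\I$.
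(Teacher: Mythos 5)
Your overall strategy coincides with the paper's: pass to the strong local-boundary condition, apply the contrapositive of Theorem~\ref{t ab loc bd} via Proposition~\ref{p iii'iii''} to get a supporting half-plane for the image of the derivative, and conclude that $\supp\dd M_0$ is contained in a level set of the real linear functional $g(x)=\re\bigl[e^{\ii\theta}\Phi^2(x)\bigr]$. This is the content of Proposition~\ref{p z0}. However, there are two issues, one of which is an outright error and one a genuine gap you yourself flag.

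First, your chain $m\max g\le\int g\,\dd M_0\le\|\dd M_0\|\max g\le m\max g$ only forces $\|\dd M_0\|=m$ when $\max g>0$; when $\max g=0$ (the degenerate case $L=\RR$ in the paper's notation, $\theta=\pm\pi/2$ in yours) it gives nothing about $\|\dd M_0\|$. The blanket conclusion $\|\dd M_0\|=m$ is in fact false: Section~\ref{s c rem}, item~2, exhibits frequencies $\alpha\ne 0$ whose unique optimal string is $m_1\de(x)\dd x$ with $0<m_1<m$. Proposition~\ref{p z0}(iii) is the correct statement: $\|\dd M_0\|<m$ forces $L=\RR$. So your argument must bifurcate into the non-degenerate and degenerate cases (the paper's Sections~\ref{sss HypBill} and~\ref{sss deg case}); in the degenerate case the finiteness is established not by a spacing bound but by the fact that $\overline{\Cmp_0}\setminus\{0\}$ splits into two quadrants, trapping the connected trajectory in one of them and permitting at most two support points (Proposition~\ref{p oinL}).

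Second, and more fundamentally, the finiteness of $E\cap\supp\dd M_0$ is exactly where the proof has to earn its keep, and your sketch does not close it. The paper's mechanism is concrete and quantitative: between consecutive support points $x_0<x_1$ the trajectory $\Phi$ is affine (Lemma~\ref{l phi on int M=0}), and $\Phi(x_0)$, $\Phi(x_1)$ lie on \emph{opposite} branches $\Hyp^\pm$ of the hyperbola $\{z:z^2\in L\}$ (Lemma~\ref{l aj}(ii)). Since $\dist(\Hyp^+,\Hyp^-)=2|p_0|^{1/2}>0$ in the non-degenerate case and $|\pa_x\Phi|\le\|\pa_x\Phi\|_{L^\infty}$, one gets a uniform lower bound $x_1-x_0>2|p_0|^{1/2}/\|\pa_x\Phi\|_{L^\infty}$ (Lemma~\ref{l aj}(viii)), which kills infinite support outright, Cantor-like or otherwise. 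Your ``monotone winding of $\arg\Phi$'' heuristic does not yield a contradiction by itself: $\argph\Phi$ is continuous and hence bounded on $[\aone,\ell]$, and a spiral can in principle touch a line infinitely often while its argument increases by a finite amount. You need the branch-alternation geometry, not just phase monotonicity. Finally, for $\alpha=0$ the paper does not attempt the level-set analysis you propose; it invokes the explicit bound $\beta\ge\|\dd M\|^{-1}$ of Lemma~\ref{l ab in Kr a=0}, which identifies the optimizer as $m\,\de(x-\ell)\dd x$ directly.
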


\begin{rem}
Theorem \ref{t AM opt} will be further refined in Sections \ref{ss res weak*} and \ref{s c rem}.1,
where additional relations connecting $a_j$, $m_j$, and $\Phi$ will be derived and an iterative procedure for calculation of $m_j$ and $a_j$ will be given.
\end{rem}

The proof of the theorem is given in Subsection \ref{ss proofs opt}.
Essentially, it is based on the study of $\dd M_0 \in \Am$ and $\om \in \Si [\Am] $
such that
\begin{eqnarray}
& \om \text{ is a strongly $\dd M_0$-local boundary point of } \Si [\Am] ,
\label{e om loc-bd}
\end{eqnarray}
see Subsection \ref{ss strong loc-b}.
These subsection  contains a description of specific properties of the corresponding mode
$\Phi$
and their connections with the structure of $\dd M_0$.
Then, Theorem \ref{t AM opt} is obtained with the use of the obvious implications
\begin{eqnarray}
\text{$\dd M_0$ and $\om$} & \text{ are of } & \text{minimal decay for a frequency $\alpha $ (over $\A$)} \label{e impl loc bd md}
\\
& \Longrightarrow &
 \om \text{ is a weakly* $\dd M_0$-local boundary point of $\Si [\A]$} \label{e impl loc bd weak} \\
& \ & \Longrightarrow  \om \text{ is a strongly $\dd M_0$-local boundary point of $\Si [\A]$} . \label{e impl loc bd strong}
\end{eqnarray}

As a by-product of the study of optimization over $\Am$, we obtain the following result concerning optimization over
the admissible family $\A_1$.

\begin{thm} \label{t A1}
There are no strings of minimal decay over $\A_1$ (for any frequency $\alpha \in \RR$).
\end{thm}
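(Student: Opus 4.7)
The strategy is to reduce Theorem \ref{t A1} to Theorem \ref{t AM opt} by showing that the optimal decay rates over $\A_1$ and $\Am$ coincide. Suppose for contradiction that $B_0\dd x\in\A_1$ is of minimal decay at some $\alpha\in\RR$, with quasi-eigenvalue $\om_0=\alpha-\ii\beta_0$ and $\beta_0=\beta_{\min}(\alpha;\A_1)$. Once one establishes $\beta_0=\beta_{\min}(\alpha;\Am)$, the same measure $B_0\dd x\in\Am$ is a minimizer over the larger family $\Am$, and Theorem \ref{t AM opt} forces it to be a finite sum of point masses; absolute continuity collapses this to $B_0\equiv 0$ a.e., but then $\Si(0\dd x)=\emptyset$ by Example \ref{ex empty}, contradicting $\om_0\in\Si(B_0\dd x)$. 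The proof therefore hinges on proving $\beta_{\min}(\alpha;\A_1)\le\beta_{\min}(\alpha;\Am)$ (the reverse inequality is trivial from $\A_1\subset\Am$).

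To establish the inequality, fix a discrete optimizer $\dd M_*=\sum_{j=1}^n m_j\delta(x-a_j)\dd x\in\Am$ with $\om_*=\alpha-\ii\beta_*$, $\beta_*:=\beta_{\min}(\alpha;\Am)$, whose existence follows from Proposition \ref{p Sclosed} and Theorem \ref{t AM opt}. I would smooth each atom by setting $B_n:=\sum_j \tfrac{m_j}{2\epsilon_n}\chi_{[a_j-\epsilon_n,a_j+\epsilon_n]\cap[0,\ell]}$ with $\epsilon_n\to 0^+$, so that $B_n\dd x\in\A_1$ (total mass preserved) and $B_n\dd x\to\dd M_*$ weakly*. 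The iterated-integral series (\ref{e pow ser}) dominated by (\ref{e phij<}) passes to the limit term by term, since each $\vphi_j$ is built from continuous integrands against weakly*-convergent measures with uniform convergence of $\vphi_{j-1}$ feeding the next level. This yields $F(\cdot;B_n\dd x)\to F(\cdot;\dd M_*)$ uniformly on compact subsets of $\CC$, and Hurwitz's theorem applied in a small disc about $\om_*$ produces $\om_n\in\Si(B_n\dd x)$ with $\om_n\to\om_*$.

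To pin the real part exactly to $\alpha$, I would apply the implicit function theorem near the simple zero $\om_n$ of $F(\cdot;B_n\dd x)$ (for multiple $\om_*$, first isolate a simple branch via the Puiseux expansion (\ref{e k Pser})), obtaining by Lemma \ref{l an} an analytic quasi-eigenvalue functional $\Omega$ on a norm-neighbourhood of $B_n\dd x$. Its Fréchet derivative is (\ref{e der k}), proportional to $\dd V\mapsto\int_0^\ell \vphi^2(s,\om_n;B_n\dd x)\dd V(s)$. Lemma \ref{l phi R}(ii) ensures that $\vphi^2$ is non-vanishing and non-constant on $[0,\ell]$, so the image of this derivative on the convex cone of admissible $L^1$-directions (those $V$ with $B_n+tV\ge 0$ and $\|B_n+tV\|_1\le m$ for small $t>0$) fills all of $\CC$. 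Hence $\Omega$ is an open map from $\A_1$ into $\CC$ at $B_n\dd x$, producing $\tilde B_n\dd x\in\A_1$ with $\Omega(\tilde B_n\dd x)=\alpha-\ii\tilde\beta_n$ and $\tilde\beta_n\to\beta_*$. For $n$ large, $\tilde\beta_n<\beta_0$ whenever $\beta_0>\beta_*$, yielding the required contradiction.

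The principal obstacle is the real-part adjustment step: one must verify that the admissible-direction cone in $\A_1$ is rich enough to make $\Omega$ an open map at $B_n\dd x$, which becomes delicate when the mass constraint $\|B_n\|_1=m$ is saturated (so first-order perturbations lie on a hyperplane $\{\int V\le 0\}$) or when $B_n$ vanishes on large subsets of $[0,\ell]$ (so negative perturbations are locally forbidden). In both subcases the infinite-dimensional freedom of $L^1(0,\ell)$, together with the linear independence of $\vphi^2$ from the constants, rescues the argument, but careful bookkeeping is required. The exceptional frequency $\alpha=0$, where $\vphi$ becomes real-valued and the naive derivative image degenerates to $\RR$, needs separate treatment exploiting the imaginary-axis symmetry $\Si(\dd M)=-\overline{\Si(\dd M)}$ for nonnegative $\dd M$.
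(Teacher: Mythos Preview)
Your approach is workable but unnecessarily circuitous, and it contains a redundancy that, once noticed, collapses into the paper's own argument.  The hard step in your reduction --- adjusting the real part by showing that the quasi-eigenvalue functional $\Omega$ is an open map on $\A_1$ at the smoothed strings $B_n\dd x$ --- applies verbatim at the hypothetical minimizer $B_0\dd x$ itself.  If $\Omega$ is open at $B_0\dd x$, then $\om_0$ is a local interior point of $\Si[\A_1]$ and cannot be of minimal decay; the detour through the approximants $B_n$, the Hurwitz step, and the invocation of Theorem~\ref{t AM opt} all become superfluous.  Moreover, you concede that the openness claim (surjectivity of the derivative on the admissible cone, under mass saturation and with $B_n$ vanishing on large sets) is ``the principal obstacle'' and leave it as a sketch; this is precisely the substance of the theorem.

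The paper's proof for $\alpha\ne 0$ (Proposition~\ref{p LocInt}) takes the direct route but replaces your open-map verification by a sharper structural observation: if $\om$ were a strongly $\dd M_0$-local boundary point of $\Si[\A_1]$, then Proposition~\ref{p z0} applies to $\A_1$, and the arguments of Lemma~\ref{l aone 0ninL new}(ii) and Proposition~\ref{p oinL} force $\aone=\min\supp\dd M_0$ to be an \emph{isolated} point of $\supp\dd M_0$.  An absolutely continuous nonzero measure has no isolated support points, contradiction.  This extracts one geometric consequence of the boundary condition rather than checking surjectivity of a derivative, and sidesteps all the bookkeeping you flag.  For $\alpha=0$ the paper does not use symmetry but instead exhibits an explicit optimizing sequence of piecewise-constant densities (Proposition~\ref{p const struct}) whose quasi-eigenvalues on $\ii\RR$ tend to $-\ii/m$, and then invokes Lemma~\ref{l ab in Kr a=0} to rule out attainment in $\A_1$.
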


This result is proved in Subsection \ref{ss proofs opt}.

Recall that $ \aone = \aone (\dd M_0)$ equals \ $\min \; \supp \dd M$ when $\dd M$ is nonzero, otherwise $\aone := \ell$.
The trajectories associated with the functions $\Phi (\cdot)$ and
$\Phi^2 (\cdot) := (\Phi (\cdot))^2$ are denoted by
\[
\Phi [\I] := \left\{  \  \Phi (x) \ : \  x  \in [0,\ell] \right\}  \text{ and }
\Phi^2 [\I] :=  \left\{  \  \Phi^2 (x)  \ : \  x  \in [0,\ell] \right\} .
\]


In the sequel, points of $\CC$ are perceived both
as complex numbers and as $\RR^2$-vectors.
In particular, \emph{hyperplanes in $\CC$ are lines}. By $\langle z_1 , z_2 \rangle_\CC := \re z_1 \re z_2 + \im z_1 \im z_2 $ we denote
the $\RR^2$ scalar product of two complex numbers.
Note that, for every $c \in \CC$,
\begin{equation} \label{e c<zz>}
\langle cz_1 , cz_2 \rangle_\CC = |c|^2 \ \langle z_1 , z_2 \rangle_\CC .
\end{equation}

In this and subsequent sections, by $\sqrt{z}$ and $\argb z$ we denote continuous in $\CC \setminus \RR_-$ branches of $z^{1/2}$ and $\arg z$, resp.,
fixed by $\sqrt{1} =1$ and $\argb 1 = 0$. For $z \in \RR_- $, put $\argb z = - \pi$.

\subsection{Lemmas on directions producing extremal derivatives.}
\label{ss yBone}

Denote
\[
\Bone := \{ \dd M \in \Mes_+ \ : \ \| \dd M \| \le 1 \}
\ \text{ and } \ \Bac := \{ \dd M \in \Bone \ : \ \dd M = B(x) \dd x \text{ with } B \in L^1 \} .
\]
Since $\Am = m \Bone$,
Proposition \ref{p der F} shows that
the set of the directional derivatives
$\frac{ \pa F (\om; \dd M_0)}{\pa M} \ (\dd V) $ with $\dd V$ running through the admissible family $\Am$
is the image of  $\Bone$ under the functional
$ \left<  - m \frac{\ii  \om}{ \Phi (\ell)} \Phi^2 , \, \cdot \, \right> $ (defined on $\Mes$). Boundary points of this image will play a special role due to Theorem \ref{t ab loc bd}.

With a  function $y \in C [0,\ell]$ and a set $W \subset \Mes$, we associate the set of complex numbers
\begin{equation*} \label{e yBone}
\langle y , W \rangle =
\left\{ \int_{0-}^{\ell+} y (s) \ \dd V(s) \ : \ \dd V \in W \right\} .
\end{equation*}
Note that $\langle y , W \rangle$ is convex whenever $W$ is convex and, in particular,
when $W = \Bone$ or $W = \Bac$.

\begin{lem} \label{l zeta extr}
Let $W = \Bone$ or $W = \Bac$. Suppose  $\dd V \in W$, $\zeta = \langle y , \dd V \rangle$,
and that $\zeta $ is an extreme point of $\langle y , W \rangle$.
Then $y [\supp \dd V] := \{ y (x) \ : \ x \in \supp \dd V \}$ is a subset of $\{ \zeta \}$, i.e., $y [\supp \dd V]$ either consists of
the single point $ \zeta $, or is empty. If, additionally, $\|\dd V \| < 1$, then $\zeta = 0$.
\end{lem}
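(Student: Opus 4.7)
My plan is a two-stage proof by contradiction. First, I will show that $y$ must be constant on $\supp \dd V$, by constructing a mass-preserving perturbation of $\dd V$ that realises $\zeta$ as the midpoint of two distinct points in $\langle y, W\rangle$. Second, I will match this constant value with $\zeta$ itself, using a scaling perturbation when $\|\dd V\| < 1$. Throughout, convexity of $\langle y, W\rangle$ (which follows from convexity of $W$) is what makes extremality a meaningful constraint.

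For the first stage, assume $y$ is non-constant on $\supp \dd V$. Then there exist $x_0,x_1\in\supp\dd V$ with $y(x_0)\ne y(x_1)$, and since at most one of these values can equal the average $\zeta/\|\dd V\|$, I may arrange $y(x_0)\ne \zeta/\|\dd V\|$. Pick a Borel neighborhood $A\subset\I$ of $x_0$ excluding $x_1$, split $\dd V = \dd V_1 + \dd V_2$ with $\dd V_1 := \pr_A\dd V$ and $\dd V_2 := \pr_{\I\setminus A}\dd V$, and set $c_i := \|\dd V_i\|$, $\zeta_i := \langle y,\dd V_i\rangle$. Both $c_i>0$, since $x_i\in\supp\dd V\cap(\text{respective set})$. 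By continuity of $y$, as $A$ shrinks to $\{x_0\}$ the average $\zeta_1/c_1$ tends to $y(x_0)$, while $\zeta_2/c_2$ tends to $(\zeta-y(x_0)\dd V(\{x_0\}))/(\|\dd V\|-\dd V(\{x_0\}))$, and the assumption $y(x_0)\ne\zeta/\|\dd V\|$ forces these two limits to differ. Fixing $A$ so that $\zeta_1/c_1\ne\zeta_2/c_2$, consider
\[
\dd V^{(t)} := (1+t)\,\dd V_1 + \bigl(1 - \tfrac{tc_1}{c_2}\bigr)\,\dd V_2,
\]
which for small $|t|$ is nonnegative, has the same total mass as $\dd V$, and inherits absolute continuity from $\dd V$ whenever $W=\Bac$; hence $\dd V^{(t)}\in W$ for $t$ in an open interval around $0$. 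A direct computation yields $\langle y,\dd V^{(t)}\rangle = \zeta + t(\zeta_1 - c_1\zeta_2/c_2)$, a nonconstant affine function of $t$, which presents $\zeta$ as a strict convex combination of two distinct elements of $\langle y,W\rangle$ and contradicts extremality. Therefore $y$ is constant on $\supp\dd V$, with value $c_* = \zeta/\|\dd V\|$ (the case $\dd V=0$ being vacuous).

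For the matching stage, if $\|\dd V\|=1$ then $c_* = \zeta$ directly, so $y[\supp\dd V]\subset\{\zeta\}$. If $\|\dd V\| < 1$, the rescaled measures $(1+s)\dd V$ lie in $W$ for all sufficiently small $|s|$ of either sign, and $\langle y,(1+s)\dd V\rangle = (1+s)\zeta$; extremality of $\zeta$ then forces $\zeta = 0$, whence $c_* = \zeta/\|\dd V\| = 0$ and again $y[\supp\dd V]\subset\{\zeta\}$. This also yields the supplementary claim. The main obstacle is the first stage, where one must simultaneously separate $x_0$ from $x_1$ in $A$, keep $c_1,c_2>0$, and arrange $\zeta_1/c_1\ne\zeta_2/c_2$; all three are controlled by continuity of $y$ and by the single key inequality $y(x_0)\ne\zeta/\|\dd V\|$, with the possibility that $x_0$ is an atom of $\dd V$ requiring only the minor care shown in the limit computation above.
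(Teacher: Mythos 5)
Your proof is correct and takes a genuinely different route from the paper's. The paper first settles the case of real-valued $y$ by a sign/averaging argument based on the inclusion $\langle y,W\rangle\subset\conv\{\min y,\max y,0\}$, and then reduces the general (complex-valued) case to that one: it chooses a supporting line $L$ at $\zeta$, applies the real case to the scalar projection $\langle y(\cdot),p\rangle_\CC$ (with $p$ the unit normal of $L$), and finishes by exploiting extremality of $\zeta$ inside the segment $L\cap\langle y,W\rangle$. Your argument is variational and avoids supporting hyperplanes altogether: non-constancy of $y$ on $\supp\dd V$ is shown to produce a two-sided, mass-preserving admissible perturbation $\dd V^{(t)}\in W$ along which $\langle y,\dd V^{(t)}\rangle=\zeta+t(\zeta_1-c_1\zeta_2/c_2)$ is a nonconstant affine function of $t$, which kills extremality; the case $\|\dd V\|<1$ is then handled by scaling $\dd V$. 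The paper's route buys a clean reduction to a one-dimensional picture; yours is more self-contained and makes explicit the mass-transfer mechanism behind the extremality constraint. One small point to tighten: merely requiring the Borel neighborhood $A$ of $x_0$ to exclude $x_1$ does not by itself give $c_2=\|\pr_{\I\setminus A}\dd V\|>0$ (e.g.\ if $\dd V$ is atomless and $x_1$ lies on the boundary of $A$); take $A$ to be a small open interval around $x_0$ whose closure excludes $x_1$, so that $\I\setminus A$ contains an open neighborhood of $x_1\in\supp\dd V$ and hence has positive $\dd V$-mass. With that adjustment the proof is complete.
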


\begin{proof}
\emph{Step 1. Let us prove the lemma for the case when $y$ is real-valued.} Obviously,
\[
\left( \min\limits_{x \in \I} y (x), \max\limits_{x \in \I} y (x) \right)  \cup  \{ 0 \} \  \subset \  \langle y , W \rangle
\ \subset \ \conv \{ \min\limits_{x \in \I} y (x) , \max\limits_{x \in \I} y (x) , 0 \} .
\]

Assume that $ \min y \cdot \max y \le 0$. Since $\zeta$ is extreme, we see that either $\zeta = \min y $, or $\zeta = \max y$. To be specific, assume $\zeta = \max y$.
If $\| \pr_{\{x : y(x) < \zeta \}} \dd V \| >0$, then
$ \zeta = \int y \dd V < \zeta $, a contradiction.
So $y(x) =\zeta $ for all $x \in \supp \dd V$, and
$ \zeta = \zeta \| \dd V \|$.
The latter implies that $\zeta =0$ in the case $\|\dd V \| < 1$.

These arguments work also in the case $\zeta \neq 0$. Assume now that $ \min y \cdot \max y > 0$
and $\zeta = 0$. Then $\dd V = 0 \dd x$ and $\supp \dd V = \emptyset$.

\emph{Step 2. The general case.}
Since $\zeta$ is extreme, there exists a supporting line $L$
to $\langle y , W \rangle$ at $\zeta$. Let us write $L$ in the form
$\{ z \in \CC : \ \langle z -\zeta, p \rangle_\CC \ = 0\}$ with certain $p \in \CC \setminus \{ 0 \}$.
Applying Step 1 to the real-valued function
$y_1 (x) := \langle y(x) , p \rangle_\CC $, one can show that
$y [\supp \dd V]  \subset L $ and, moreover, that $0 \in L$ in the case $\|\dd V \| < 1$.

Since $\zeta $ is an extreme point of $S_1 := L \cap \langle y , W \rangle$,
we see that
\begin{equation}
\text{$S_1 $ lies in one of the two rays $\zeta \pm \ii p \Rnneg$.} \label{e S1 in rays}
\end{equation}
In the case $\| \dd V \|=1$, this immediately implies that $\| \pr_{\{y(x) \neq  \zeta \}} \dd V \| = 0$
and $y [\supp \dd V]  = \{ \zeta \}$.

In the case $\| \dd V \|<1$, we see that the line-segment $[0, \| \dd V \|^{-1} \zeta ]$ is a subset of $S_1$.
Since $\zeta$ is extreme, $\zeta = 0$.
Now (\ref{e S1 in rays}) easily implies $y [\supp \dd V]  \subset \{ \zeta \}$.
\end{proof}

It is essential for the rest of the paper that $\langle y , \Bone \rangle$ contains
the trajectory $y [\I] = \{ y (x) : x \in [0,\ell] \}$ and the origin $0$.
The following sharpening of this obvious fact is not crucial, but makes many of subsequent proofs more transparent.

\begin{lem} \label{l conv Trj y}
$ \langle y , \Bone \rangle  = \conv \left( \, y [\I ]  \cup \{0\} \, \right) $.
\end{lem}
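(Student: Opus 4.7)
My plan is to prove the two inclusions separately, the forward one by convexity and the reverse one by normalizing $\dd V$ to a probability measure and invoking the barycenter property of the convex hull in $\RR^2$.

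For the inclusion $\langle y, \Bone\rangle \supseteq \conv(y[\I] \cup \{0\})$, I would first note that $\langle y, \Bone\rangle$ is convex: $\Bone$ is a convex subset of $\Mes_+$ and $\dd V \mapsto \int_{0-}^{\ell+} y(s)\dd V(s)$ is a linear functional on $\Mes_\CC$. The zero measure lies in $\Bone$ and contributes the origin, while for each $x_0 \in [0,\ell]$ the point mass $\delta(x-x_0)\dd x$ lies in $\Bone$ (having norm $1$) and contributes $y(x_0)$. Hence $y[\I] \cup \{0\} \subset \langle y, \Bone\rangle$, and convexity yields the desired containment.

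For the reverse inclusion, fix $\zeta = \int y \dd V$ with $\dd V \in \Bone$ and set $t := \| \dd V\| \in [0,1]$. If $t=0$, then $\zeta = 0 \in \conv(y[\I]\cup\{0\})$. Otherwise $\mu := t^{-1} \dd V$ is a Borel probability measure on $\I$, and I intend to show that $\int y\dd\mu \in \conv y[\I]$; writing $\zeta = t \cdot \int y\dd\mu + (1-t) \cdot 0$ then expresses $\zeta$ as a convex combination of a point of $\conv y[\I]$ and the origin, completing the argument. The barycenter claim is standard finite-dimensional convex analysis: since $y \in C[0,\ell]$ and $\I$ is compact, $y[\I]$ is compact in $\CC \cong \RR^2$, so $\conv y[\I]$ is closed and equals the intersection of all closed half-planes $\{z : \langle z,p\rangle_\CC \le c\}$ containing $y[\I]$. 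For each such half-plane, the pointwise inequality $\langle y(x),p\rangle_\CC \le c$ integrated against the probability measure $\mu$ yields $\langle \int y\dd\mu, p\rangle_\CC \le c$, so $\int y\dd\mu$ lies in every such half-plane and hence in $\conv y[\I]$.

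The main obstacle here is essentially negligible; the only substantive ingredient is the barycenter property, a textbook fact in $\RR^2$. The one mild subtlety is remembering to normalize $\dd V$ to a probability measure first, so that the subunit mass $t$ combines with the origin to account for the points of $\conv(y[\I]\cup\{0\})$ that lie strictly between $\conv y[\I]$ and $0$.
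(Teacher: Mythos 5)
Your proof is correct, and it takes a genuinely different route from the paper's. The paper proves the inclusion $\langle y, \Bone\rangle \subseteq \conv(y[\I]\cup\{0\})$ via extreme points: it first invokes the Helly selection and convergence theorems to establish that $\langle y, \Bone\rangle$ is compact and convex, then applies Minkowski's theorem to write this set as the convex hull of its extreme points, and finally calls on the earlier Lemma~\ref{l zeta extr} (that an extreme value $\zeta \neq 0$ forces $\|\dd V\| = 1$ and $y[\supp \dd V] = \{\zeta\}$) to conclude that every extreme point lies in $y[\I]\cup\{0\}$. You instead normalize $\dd V$ to a probability measure $\mu$ and prove the barycenter inclusion $\int y\,\dd\mu \in \conv y[\I]$ directly by intersecting supporting half-planes, then dilate by $t = \|\dd V\|$ back toward the origin; you do not need the compactness of $\langle y, \Bone\rangle$ nor any statement about its extreme points. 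Your route is more self-contained and elementary; the paper's route is economical in context because it recycles Lemma~\ref{l zeta extr}, which it has already proved and which also carries the burden in the companion Lemma~\ref{l supp line}. One small remark: your supporting-half-plane argument for the barycenter property and the paper's step-2 reduction in the proof of Lemma~\ref{l zeta extr} are in fact close cousins -- both test the set against linear functionals $\langle\,\cdot\,, p\rangle_\CC$ -- but you apply this idea directly to the inclusion rather than routing through extreme points.
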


\begin{proof}
By the Helly selection and convergence theorems, the set $ \langle y , \Bone \rangle $ is compact and convex. So, by the Minkowski theorem, $\langle y , \Bone \rangle $
is a convex hull of the set $\Ext$ of its extreme points. Let us show that $\Ext \subset y [\I ] \cup \{0\}$. Indeed,
assume that a nonzero point $z$ is extreme and $ z = \langle y , \dd V \rangle$ with $\dd V \in \Bone$.
By Lemma \ref{l zeta extr}, $\| \dd V\|=1$ and $y [\supp \dd V] = \{z\}$. Taking
arbitrary $x_0 \in \supp \dd V$ one can see that
$
 z =  \langle y , \de (x - x_0) \dd x \rangle = y (x_0) \in y [\I ] .
$
\end{proof}


\begin{lem} \label{l supp line}
Let $W = \Bone$ or $W = \Bac$. Assume that $\dd V \in W $ and $\zeta := \langle y , \dd V \rangle$
is a boundary point of the set $ \langle y , W \rangle $.
Then there exists a supporting line (hyperplane) $L$ to the set $ \langle y , W \rangle $ at the point $ \zeta $.
For every such $L$ the following assertions hold:
\item[(i)] $y [\supp \dd V] \subset L$,
\item[(ii)] if additionally $\| \dd V \| < 1$, then $0 \in L$.
\end{lem}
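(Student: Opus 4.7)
The plan is to reduce the two-dimensional supporting-line condition at $\zeta$ to a real one-dimensional inequality for a continuous function $y_1$ on $[0,\ell]$, and then deduce (i) and (ii) by a short case analysis on $m := \min y_1$. Convexity of $\langle y, W \rangle \subset \CC$ (already used in Lemma \ref{l conv Trj y}) together with standard convex analysis \cite{RW98} provides a supporting line $L$ to $\langle y, W\rangle$ at the boundary point $\zeta$. Given any such $L$, I would write
\begin{equation*}
L = \{z \in \CC : \langle z - \zeta, p \rangle_\CC = 0\}, \qquad p \in \CC \setminus \{0\},
\end{equation*}
oriented so that $\langle z - \zeta, p\rangle_\CC \ge 0$ on $\langle y, W\rangle$, and set $y_1(x) := \langle y(x), p\rangle_\CC$ and $c := \langle \zeta, p\rangle_\CC$. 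Then $y_1$ is real-valued and continuous, and the supporting condition together with $\zeta = \int y \, \dd V$ becomes
\begin{equation*}
\int_{0-}^{\ell+} y_1(s) \, \dd V'(s) \ge c \ \text{for all } \dd V' \in W, \qquad \int_{0-}^{\ell+} y_1(s)\, \dd V(s) = c .
\end{equation*}

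Next I would extract three bounds on $c$ by plugging specific test measures into the inequality above. Taking $\dd V' = 0 \dd x \in W$ gives $c \le 0$. Taking $\dd V' = \delta_{x^*}\dd x \in \Bone$ gives $c \le y_1(x^*)$ for every $x^* \in [0,\ell]$, hence $c \le m := \min_{[0,\ell]} y_1$; in the case $W = \Bac$, Dirac test measures are not admissible but can be replaced by narrow nonnegative $L^1$-approximations, the point being that $\overline{\langle y, \Bac\rangle} = \langle y, \Bone\rangle$ (weak-$*$ density of $\Bac$ in $\Bone$ combined with continuity of $y$), so the supporting line $L$ at $\zeta$ is simultaneously a supporting line to $\langle y, \Bone\rangle$. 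Finally, $y_1 \ge m$ pointwise and $\|\dd V\|\le 1$ give $c = \int y_1 \dd V \ge m \|\dd V\|$.

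The conclusion follows by a case split on the sign of $m$. If $m \ge 0$, the chain $c \le 0 \le m \|\dd V\| \le c$ collapses to $c = 0$ and $m\|\dd V\| = 0$: either $\|\dd V\| = 0$ so that $\supp \dd V = \emptyset$ and (i) is vacuous, or $m = 0$, in which case $y_1 \ge 0$, $\int y_1 \dd V = 0$, and continuity of $y_1$ force $y_1 \equiv 0 = c$ on $\supp \dd V$; in either subcase $c = 0$ means $0 \in L$, giving (ii). If $m < 0$, the bounds $m\|\dd V\| \le c \le m$ together with $\|\dd V\| \le 1$ force $\|\dd V\| = 1$ and $c = m$, and then $\int (y_1 - m)\dd V = 0$ with $y_1 - m \ge 0$ continuous yields $y_1 \equiv m = c$ on $\supp \dd V$; here (ii) is vacuous since $\|\dd V\| = 1$. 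In both cases $y[\supp \dd V] \subset \{z : \langle z, p\rangle_\CC = c\} = L$, proving (i), while (ii) holds whenever $\|\dd V\| < 1$ because that hypothesis rules out the $m<0$ case. The only genuine subtlety is the $W = \Bac$ case, handled by the approximation argument above; otherwise the proof reduces to a direct one-dimensional computation.
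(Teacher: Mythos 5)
Your proof is correct and follows essentially the same strategy as the paper: reduce to a real-valued problem by pairing with the unit normal $p$ of the supporting line (so that $y_1 := \langle y, p\rangle_\CC$ and $c := \langle \zeta, p\rangle_\CC$), then extract (i) and (ii) from elementary scalar inequalities. The paper performs this scalar step by invoking the real-valued case of Lemma~\ref{l zeta extr} (Step~1 of its proof), whereas you re-derive it directly via test measures and a case split on $\min y_1$; the content is the same, and your handling of $W=\Bac$ by density of $\langle y,\Bac\rangle$ in $\langle y,\Bone\rangle$ is the standard way to fill in what the paper leaves implicit.
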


\begin{proof}
The existence of a supporting line follows immediately from $\zeta \in \Bd \langle y , W \rangle $.
Assertions (i) and (ii) can be easily obtained from the real-valued case of Lemma \ref{l zeta extr} and the arguments of Step 2 of its proof.
\end{proof}

Now we apply this lemma to  strongly local boundary points.
Assume that $\A = \Am$ or $\A = \A_1$ and $\dd M_0 \in \A$.
The set $\langle \Phi^2 , m^{-1} \A \rangle$
consists of the complex numbers
$\langle \Phi^2 , m^{-1} \dd M \rangle := \frac1m \int_{0-}^{\ell+} \Phi^2 (s) \ \dd M(s) $ produced by admissible strings $\dd M \in \A$.
On the other side, by (\ref{e padM F}),
\begin{equation} \label{e C0}
\langle \Phi^2 , m^{-1} \dd M \rangle  =  \frac {C_0}m  \ \frac{ \pa F (\om, \dd M_0) }{\pa M} \ (\dd M) , \ \
\text{ where } C_0 := - \frac{\Phi (\ell)}{\ii \om} \text{ is a nonzero constant}.
\end{equation}
So $\langle \Phi^2 , m^{-1} \A \rangle$ is the image of the admissible family $\A$ under
the Fr\'echet derivative $\frac{ \pa F (\om, \dd M_0) }{\pa M}$ additionally scaled and rotated
by the multiplication on $C_0/m$.

The point
\begin{equation}
\text{$z_0 :=  \langle \Phi^2 , m^{-1} \dd M_0 \rangle $ can be written as} \  z_0 =  \frac {C_0}m \ \frac{ \pa F (\om, \dd M_0) }{\pa M} \ ( \dd M_0 ) . \label{e z0}
\end{equation}
It belongs to $\langle \Phi^2 , m^{-1} \A \rangle$ and plays a special role due to Theorem \ref{t ab loc bd}.

\begin{prop} \label{p z0}
Let $\A = \Am$ or $\A= \A_1$.
Let $\dd M_0 \in \A$ and let $\om$ be a strongly $\dd M_0$-local boundary point of
$ \Si [\A] $. Then
$z_0 $ is a boundary point of $\langle \Phi^2 , m^{-1} \A \rangle$, and for every supporting line $L$ to the set $\langle \Phi^2 , m^{-1} \A \rangle$
at $ z_0 $,  the following assertions hold:
\item[(i)] $L$ contains the image $\Phi^2 [\supp \dd M_0] $,
\item[(ii)] $1 \in L $, i.e., the line passes through the point $z=1$,
\item[(iii)] if additionally $\| \dd M_0 \| < m$, then $ L = \RR$.
\end{prop}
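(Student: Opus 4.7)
The plan is two-step: first I would use the contrapositive of Theorem~\ref{t ab loc bd} to force $z_0$ onto the boundary of $\langle \Phi^2, m^{-1}\A\rangle$; then I would apply Lemma~\ref{l supp line} and a short computation with the mode $\Phi$ to extract assertions (i)--(iii) for any supporting line.

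For the boundary claim, I set $G = F$, $U = \Mes_\CC$, $S = \A$, $u_0 = \dd M_0$, and $z_0 = \om$ in Theorem~\ref{t ab loc bd}. The admissible families $\Am$ and $\A_1$ are convex, $F$ is analytic by Lemma~\ref{l an}, and $F(\om;\dd M_0) = 0$ by hypothesis. Since $\om$ is a strongly $\dd M_0$-local boundary point of $\Si_F[\A]$, the contrapositive of that theorem says that $\frac{\pa F(\om;\dd M_0)}{\pa M}(\dd M_0)$ is a boundary point of the image $\frac{\pa F(\om;\dd M_0)}{\pa M}[\A]$. Identity (\ref{e C0}) realizes the multiplication by the nonzero constant $C_0/m$ as an affine equivalence (hence a homeomorphism of $\CC$) carrying $\frac{\pa F(\om;\dd M_0)}{\pa M}[\A]$ onto $\langle \Phi^2, m^{-1}\A\rangle$ and sending the above derivative precisely to $z_0$. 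Thus $z_0$ lies on the boundary of $\langle \Phi^2, m^{-1}\A\rangle$.

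For the three assertions about any supporting line $L$, I invoke Lemma~\ref{l supp line} with $y = \Phi^2$, $\dd V = m^{-1}\dd M_0$, and $W = \Bone$ (if $\A = \Am$) or $W = \Bac$ (if $\A = \A_1$); note that the previous paragraph verifies $\zeta = z_0 \in \Bd\langle y, W\rangle$. The lemma's assertion (i) is exactly $\Phi^2[\supp \dd M_0] \subset L$ since $\supp(m^{-1}\dd M_0) = \supp \dd M_0$, giving (i) of the proposition. For (ii), I observe that $\dd M_0 \neq 0$ (otherwise $F(\om;\dd M_0) = F(\om;0) = 1$, contradicting $\om \in \Si(\dd M_0)$), so the leftmost point $\aone = \aone(\dd M_0)$ of $\supp \dd M_0$ actually belongs to $\supp \dd M_0$; by Lemma~\ref{l phi R}(i) we have $\Phi(\aone) = 1$, hence $1 = \Phi^2(\aone) \in \Phi^2[\supp \dd M_0] \subset L$. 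Assertion (iii) now follows from Lemma~\ref{l supp line}(ii): if $\|\dd M_0\| < m$ then $\|m^{-1}\dd M_0\| < 1$, forcing $0 \in L$, so together with $1 \in L$ the line $L$ contains two distinct points of the real axis and must coincide with $\RR$.

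The main obstacle is essentially bookkeeping: confirming that the nonzero scalar $C_0/m$ in (\ref{e C0}) preserves the boundary--interior dichotomy when transferring the conclusion of Theorem~\ref{t ab loc bd} from $\frac{\pa F}{\pa M}[\A]$ to $\langle \Phi^2, m^{-1}\A\rangle$, and verifying that $\aone(\dd M_0) \in \supp \dd M_0$ whenever $\dd M_0$ is a nonzero nonnegative Borel measure. Both are short topological checks, so once the two lemmas are invoked, the three assertions fall out directly.
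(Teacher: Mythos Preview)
Your proof is correct and follows essentially the same route as the paper: apply (the contrapositive of) Theorem~\ref{t ab loc bd} to $F$ and $\A$ to get $z_0\in\Bd\langle \Phi^2, m^{-1}\A\rangle$, then read off (i) and the ``$0\in L$'' part of (iii) from Lemma~\ref{l supp line}, and obtain (ii) from $\aone\in\supp\dd M_0$ together with $\Phi(\aone)=1$. The additional bookkeeping you spell out (nonvanishing of $C_0/m$, $\dd M_0\neq 0$) is accurate and just makes explicit what the paper leaves implicit.
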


\begin{proof}
The assertion that $z_0 $ is a boundary point of $\langle \Phi^2 , m^{-1} \A \rangle$ follows from Theorem \ref{t ab loc bd} applied to the functional $F$ and the set $\A$.
Statement (i) follows from Lemma \ref{l supp line}.
To prove (ii), note  that $\aone \in \supp \dd M_0$ and $\Phi (\aone) = 1$.
Now (iii) follows from  (ii) and Lemma \ref{l supp line}.
\end{proof}

\subsection{Strings producing strongly local boundary points}
\label{ss strong loc-b}

In this subsection, we assume that  $\om$ is a strongly $\dd M_0$-local boundary point of $ \Si [\Am] $
and  $\re \om \neq 0$.
Without narrowing generality, the last assumption  can be replaced by
$
\re \om > 0 .
$

\begin{rem} \label{r sym}
Indeed,  $\Si [\Am]$ is symmetric w.r.t. $\ii \RR$, as well as $\Si (\dd M)$ for $\dd M \in \Mes_+$.
Thus, if $\om=\alpha - \ii \beta$ is a strongly (weakly*) $\dd M_0$-local boundary point of $\Si [\Am]$ then
$\overline{-\om}= -\alpha - \ii \beta$ is so.
Note that according to Definition \ref{d loc Bd qe}, assumption (\ref{e om loc-bd})    yields $\om \in \Si(\dd M_0)$.
\end{rem}

Recalling that $\Si [\Am ] \subset \CC_- $, we see that $\om$   belongs to the quadrant $\QQ_{\mathrm{IV}}$.
So
\begin{equation} \label{e xi0}
\xi_0 := \argb (-\om^2)  \text{ belongs to } (0,\pi).
\end{equation}

Under these assumptions the measure $\dd M_0$ is nonzero and
\begin{equation} \label{e aone<l}
\text{$\aone = \min \ \supp \dd M_0 < \ell$.}
\end{equation}
This follows from Examples \ref{ex empty}, \ref{ex 1 pt mas}, and the fact that $\dd M_0$ produces $\om \not \in \ii \RR$.


With $\dd M_0 $ and $\om$, we associate the set of complex numbers
\begin{equation} \label{e S0}
S_0 := \langle \Phi^2 , \Bone \rangle =
\left\{ \int_{0-}^{\ell+} \Phi^2 (s) \ \dd V(s) \ : \ \dd V \in \Mes_+ \text{ and  } \| \dd V \| \le 1 \right\} .
\end{equation}
The set $S_0$ is a convex and, by Lemma \ref{l conv Trj y},
\begin{equation} \label{e conv Trj 2}
S_0 = \conv \left( \ \Phi^2 [\I]  \cup \{0\} \ \right).
\end{equation}
Obviously, the point $z_0$ defined by (\ref{e z0}) belongs to $S_0$.

\subsubsection{Hyperbolic "billiard"}
\label{sss HypBill}

Assume additionally that
\begin{eqnarray}
\text{there exists a supporting line $L$ to $ S_0 $ at $ z_0 $ such that } \ \ 0 \not \in L .
 \label{e 0 nin L}
\end{eqnarray}

We introduce the following parametrization of $L$. Let $p_0$ be the point of $L$ closest to $0$.
Since $p_0 \neq 0$, we see that $p := \frac{p_0}{|p_0|}$ is a  normal unit vector to $L$.
Proposition \ref{p z0} (ii) yields $1 \in L$. So
$
L = 1+ \ii p \RR = \{ 1 + \ii p s \ : \ s \in \RR \} .
$
The line $L$ divides $\CC \setminus L$ into two open half-planes
\begin{equation}
H_0 = H_0 (p) := \{ z \in \CC \ : \ \langle z- 1 , p \rangle_\CC < 0 \} \ \ \ \
\text{ and \ \ $\CC \setminus \overline{H_0}$.}
\end{equation}
 Note that $ 0 \in H_0 $ \ since
\ \ $\re p = \langle 1 , p \rangle_\CC > 0$ \ \ and \ \
$ H_0 = \{ z \in \CC \ : \ \langle z , p \rangle_\CC < \re p \}$.

The preimage of $L$ under the map $z \mapsto z^2$ is a rectangular hyperbola $\Hyp$ consisting of two branches
$\Hyp^+$ and $\Hyp^-$,
\begin{equation} \label{e Hyp pm}
\Hyp^\pm  := \{ \pm  \sqrt{1+\ii p s } \ : \
s \in \RR \} .
\end{equation}
Clearly, $\pm 1 \in \Hyp^\pm$.
The hyperbola $\Hyp$ divides $\CC \setminus \Hyp$ into three connected components. By $\Cmp_0$ we denote
the connected component containing $0$,
$ \
\Cmp_0 = \{ \zeta \in \CC  \ : \ \langle \zeta^2 , p \rangle_\CC < \re p \} .
$
The two other components
\begin{equation} \label{e Hcmp pm conv}
\Cmp_\pm = \{ \zeta \in \CC \ : \ \pm\re  \zeta > 0 \text{ and } \langle \zeta^2 , p \rangle_\CC > \re p \} \text{ are convex} .
\end{equation}

Since $L$ is a supporting line and $0 \in S_0$, we see that
\begin{equation} \label{e S0 sub H0}
S_0 \subset \overline{H_0} .
\end{equation}
This, (\ref{e conv Trj 2}), and the fact that $\Cmp_0$ is the preimage of $H_0$ under $z \mapsto z^2$
imply
\begin{equation} \label{e Trj in}
\Phi^2 [\I]  \subset \overline{H_0} \ \ \ \text{ and } \ \Phi [\I] \subset \overline{\Cmp_0} .
\end{equation}

Differentiating parameterizations (\ref{e Hyp pm}), we see that, for
the point $\zeta = \pm  \sqrt{1+\ii p s} \in \Hyp$,
\begin{eqnarray}
\frac{\pm \ii p}{\sqrt{1+\ii p s}} &  \ \text{is a tangent vector to $\Hyp$
at $\zeta $,} \ \ \  \notag \\
\text{  and } \ \frac{\pm p}{\sqrt{1+\ii p s}} & \ \text{is a normal to $\Hyp$
at $\zeta $ pointing toward $\Cmp_\pm$}. \label{e nv Hyp}
\end{eqnarray}


\begin{prop} \label{p billiard}
Suppose $\re \om >0$, (\ref{e om loc-bd}), and (\ref{e 0 nin L}).
Then $\| \dd M_0 \| = m $ and there exists a finite set of real numbers $\{ a_j \}_{j=1}^{n+1}$
with the following properties:
\item[(i)] $ \aone = a_1 , \ \ a_j < a_{j+1} \le \ell \text{ for } 1 \le j \le n-1, \  \
\text{ and } a_{n+1} = \ell $,
\item[(ii)] $ \supp \dd M_0 = \{ a_j \}_{j=1}^{n} $,
\item[(iii)] The trajectory $\Phi [\I]$ is a piecewise linear path consisting of the
(closed) line-segments \linebreak $[\Phi (a_j) , \Phi(a_{j+1})]$, $j=1, \dots, n$, lying in the closure
$\overline{\Cmp_0}$.
\item[(iv)] For $1 \le j \le n$, the endpoints $\Phi (a_j)$ of these line-segments lies on the hyperbola $\Hyp$. Moreover,
\[
\Phi (a_j) \in \Hyp^+ \text{ if $j$ is odd , \ \ \ \ and } \
\Phi (a_j) \in \Hyp^- \text{ if $j$ is even} .
\]
\item[(v)] If $n \ge 2$ and real numbers  $ s_j $ \ $(j=1, \dots, n)$  are such that
$\Phi^2 (a_j) = 1+\ii p s_j$, then
\begin{eqnarray}
  s_1  =  0 ; \qquad   s_j  >  s_{j+1} \ \text{ and  } \ s_j \, > \, \frac{\left< \om^2 , p \right>_\CC}{\im \om^2} \ \ \text{ for } 1 \le j \le n-1 .
  \label{e sj>sj+1}
 \end{eqnarray}
\item[(vi)] If $a_n < \ell$, then $s_n \ge \frac{\left< \om^2 , p \right>_\CC}{\im \om^2}$.

(Note that (i) allows $a_n=a_{n+1} = \ell$.
According to (i)-(ii), $a_n=a_{n+1} = \ell $ \quad $\Leftrightarrow $ \quad $\ell \in \supp \dd M_0$).
\end{prop}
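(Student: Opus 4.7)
The plan is to extract increasingly fine structure of $\Phi$ and $\dd M_0$ from the boundary-point hypothesis via Proposition~\ref{p z0}. First, Proposition~\ref{p z0}(iii) combined with~(\ref{e 0 nin L}) forces $\|\dd M_0\|=m$, for otherwise the supporting line would be $L=\RR\ni 0$. Proposition~\ref{p z0}(i) then gives $\Phi^2[\supp\dd M_0]\subset L$, equivalently $\Phi[\supp\dd M_0]\subset\Hyp$; together with the inclusion $\Phi[\I]\subset\overline{\Cmp_0}$ from~(\ref{e Trj in}). Since $\dd M_0$ is nonzero (as $\re\om\neq 0$, by Examples~\ref{ex empty} and~\ref{ex 1 pt mas}) and $\Phi(\aone)=1\in\Hyp^+$, I set $a_1:=\aone$, giving $s_1=0$.

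For (i)--(iv) I combine Lemma~\ref{l phi on int M=0} with the convexity of $\Cmp_\pm$. On each maximal gap of $[\aone,\ell]\setminus\supp\dd M_0$, $\Phi$ traces a straight-line segment. If such a segment $[\Phi(b),\Phi(c)]$ had both endpoints on one branch, say $\Hyp^+$, then by convexity~(\ref{e Hcmp pm conv}) it would lie in $\overline{\Cmp_+}$, and together with the trajectory constraint in $\overline{\Cmp_+}\cap\overline{\Cmp_0}=\Hyp^+$; since the conic branch $\Hyp^+$ contains no non-degenerate line segment, the chord is degenerate. Hence consecutive bounce-images alternate between $\Hyp^+$ and $\Hyp^-$. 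These branches sit in the separated half-planes $\{\pm\re\zeta>0\}$, so an alternating convergent sequence in $\supp\dd M_0$ is impossible; $\supp\dd M_0$ is therefore finite. Enumerate its elements $a_1<\cdots<a_n$ and set $a_{n+1}:=\ell$.

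For the lower bound $s_j\ge s^*:=\langle\om^2,p\rangle_\CC/\im\om^2$, let $v_j:=\pa_x^+\Phi(a_j)$ with $v_0:=0$, so $v_j-v_{j-1}=-\om^2 m_j\Phi(a_j)$. The requirement that $\Phi^2$ stay in $\overline{H_0}$ on both sides of $a_j$ gives the one-sided-tangent inequalities
\[
\langle\Phi(a_j)v_{j-1},p\rangle_\CC\ge 0\ge\langle\Phi(a_j)v_j,p\rangle_\CC.
\]
Subtracting and using $\langle\ii p\om^2,p\rangle_\CC=\re(\ii\om^2|p|^2)=-\im\om^2$ (by~(\ref{e <>C}) and $|p|=1$), I obtain $-m_j(\langle\om^2,p\rangle_\CC-s_j\im\om^2)\le 0$; dividing by $\im\om^2<0$ yields $s_j\ge s^*$. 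For $1\le j\le n-1$ the trajectory must enter $\Cmp_0$ strictly on $(a_j,a_{j+1})$ in order to reach the opposite branch (e.g.\ at $j=1$, the line through $1$ in direction $\ii p$, which would be the tangent to $\Hyp^+$, does not meet $\Hyp^-$), so at least one of the tangent inequalities is strict and $s_j>s^*$; at $j=n<\ell$ no such opposite-branch constraint applies, leaving the non-strict case (vi).

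Finally, for the monotonicity $s_j>s_{j+1}$, start from $\ii p(s_{j+1}-s_j)=\Phi^2(a_{j+1})-\Phi^2(a_j)$ and pin the sign geometrically. Since $\Phi(a_j)\in\Hyp^+$ has $\argph$ strictly inside $(\arg p/2-\pi/4,\arg p/2+\pi/4)$ and $\Phi(a_{j+1})\in\Hyp^-$ strictly inside $(\arg p/2+3\pi/4,\arg p/2+5\pi/4)$ (the arg ranges of $\pm\sqrt{1+\ii p\RR}$), the strict monotone rotation from Lemma~\ref{l phi rot} gives $\Delta\argph\Phi>\pi/2$, hence $\Delta\Theta>\pi$ for the continuous branch $\Theta:=2\argph\Phi$ on $(a_j,a_{j+1})$. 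On the other hand the chord $[\Phi(a_j),\Phi(a_{j+1})]$ avoids the origin by Lemma~\ref{l phi R}(ii), so $\Delta\argph\Phi<\pi$ and $\Delta\Theta<2\pi$. Since $\arg(1+\ii ps)$ is strictly monotone in $s$ with total range of length $\pi$, a $\Theta$-increment in $(\pi,2\pi)$ must correspond to exactly one winding increment and a strict decrease of the principal argument on $L$ between $\Phi^2(a_j)$ and $\Phi^2(a_{j+1})$, giving $s_{j+1}<s_j$. The main technical obstacle throughout is this last orientation argument: one must carefully separate continuous from principal branches of $\arg$ and quantitatively exploit that $\Phi(a_j)$ lies on the specific conic $\Hyp$ (not merely in an opposite half-plane) in order to produce the strict lower bound $\Delta\Theta>\pi$.
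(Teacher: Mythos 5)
Your high-level scheme (Proposition~\ref{p z0} $\Rightarrow$ mass saturation and $\Phi^2[\supp\dd M_0]\subset L$; Lemma~\ref{l phi on int M=0} $\Rightarrow$ chords; convexity of $\Cmp_\pm$ $\Rightarrow$ alternation; tangent inequalities $\Rightarrow$ lower bound on $s_j$; winding of $\argph$ $\Rightarrow$ monotonicity) is in the right spirit and much of it is parallel to the paper, but there is a genuine gap at the crucial finiteness step.

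The gap is that nowhere do you establish that $\supp\dd M_0$ is \emph{discrete}. Your alternation argument is stated only for the endpoints of \emph{maximal gaps} of $\supp\dd M_0$, and you then conclude ``an alternating convergent sequence in $\supp\dd M_0$ is impossible; $\supp\dd M_0$ is therefore finite.'' This does not follow: a priori $\supp\dd M_0$ could contain a nondegenerate interval (e.g.\ an absolutely continuous component), or could be a Cantor-type set, in which case there are either no gaps to bounce between, or a gap endpoint need not be an isolated point with positive mass $m_{\{x\}}>0$. Without discreteness you also cannot invoke Lemma~\ref{l phi on int M=0} to say the trajectory is \emph{piecewise linear} (iii), nor can you write the jump relation $v_j-v_{j-1}=-\om^2 m_j\Phi(a_j)$ used in your proof of (v); and the claim ``$v_{j-1}=\pa_x^-\Phi(a_j)$'' uses that $(a_{j-1},a_j)$ is entirely free of support. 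The paper closes this gap with a genuine local argument near $a_*$: using the integral equation~(\ref{e int ep phi}) it shows $\Phi(x)-1\in\Sec[\xi_0,\xi_0+\ep]\cap\DD_\ep(0)$ for $x\in(a_*,a_*+\de]$, deduces $\Phi^2(x)\notin L$ there so $\supp\dd M_0\cap(a_*,a_*+\de)=\emptyset$ (hence $a_*$ is isolated and carries a point mass), and then propagates this inductively (Lemma~\ref{l aj}(vii)) together with the quantitative separation $a_{j+1}-a_j>2|p_0|^{1/2}/\|\pa_x\Phi\|_\infty$ (Lemma~\ref{l aj}(viii)) to conclude finiteness. You need some version of that local-sector analysis; the half-plane separation of $\Hyp^\pm$ alone is not enough.

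On the parts that do not depend on that gap, your reasoning is sound and in places genuinely different from the paper. Your derivation of $s_j\ge\langle\om^2,p\rangle_\CC/\im\om^2$ from the two one-sided tangent inequalities and the jump relation is a compact and correct repackaging of Lemma~\ref{l aj}(iv)--(vi), and the strictness argument via the fact that a tangent line to a conic meets it only at the tangency point is a clean way to get $s_j>s^*$ for $j\le n-1$. Your proof of $s_j>s_{j+1}$ via the winding estimate $\pi<\De\Theta<2\pi$ (using Lemma~\ref{l phi rot} for the lower bound and Lemma~\ref{l phi R}(ii) to exclude the origin from the chord for the upper bound) is a more explicit and quantitative version of what the paper dispatches with the remark that it ``is geometrically obvious... [from] the central symmetry of $\Hyp$''; that part is a useful expansion if worked out carefully with the correct branch bookkeeping.
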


\subsubsection{The proof of Proposition \ref{p billiard}}
\label{sss proof billiard}

\begin{lem} \label{l aone 0ninL new}
(i) \ \ $\langle \om^2 , p \rangle_\CC \ge 0$
\item[(ii)] \ \ $a_* $ is an isolated point of $\supp \dd M_0$.
\end{lem}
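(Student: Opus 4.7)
The plan is to read off constraints on $\om^2$ relative to the unit normal $p$ of the supporting line $L$ by Taylor-expanding $\Phi^2(x)-1$ as $x\searrow\aone$ and comparing with two inclusions: $\Phi^2[\I]\subset\overline{H_0}$ from (\ref{e Trj in}), which gives an inequality constraint and hence (i); and $\Phi^2[\supp\dd M_0]\subset L$ from Proposition \ref{p z0}(i), which gives an equality constraint and hence (ii). Setting $g_0\equiv 1$ and $g_j(x):=\int_{\aone-}^{x}(x-s)g_{j-1}(s)\,\dd M_0(s)$, monotonicity of $g_1$ yields inductively $0\le g_j\le g_1^j$, and iteration of the integral equation for $\Phi$ produces, as $x\searrow\aone$,
\[
\Phi^2(x)-1 \;=\; -2\om^2 g_1(x)+\om^4\bigl(g_1(x)^2+2g_2(x)\bigr)+O\bigl(g_1(x)^3\bigr).
\]

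For (i), since $\aone\in\supp\dd M_0$ one has $g_1(x)>0$ for every $x>\aone$. The inequality $\langle\Phi^2(x)-1,p\rangle_\CC\le 0$ from (\ref{e Trj in}), combined with the leading term of the expansion, yields $\langle\om^2,p\rangle_\CC\ge 0$ after dividing by $g_1(x)$ and letting $x\searrow\aone$.

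For (ii) I argue by contradiction. If $\aone$ were not isolated in $\supp\dd M_0$, pick $y_n\in\supp\dd M_0\setminus\{\aone\}$ with $y_n\searrow\aone$; Proposition \ref{p z0}(i) then forces $\Phi^2(y_n)\in L$, so $\langle\Phi^2(y_n)-1,p\rangle_\CC=0$. The leading term of the expansion gives $\langle\om^2,p\rangle_\CC=0$, hence $\om^2\in\ii p\RR$; write $\om^2=c\,\ii p$ with $c\in\RR\setminus\{0\}$. Reading off the next order produces $(g_1(y_n)^2+2g_2(y_n))\langle\om^4,p\rangle_\CC=O(g_1(y_n)^3)$, and a direct calculation using $\om^4=-c^2 p^2$, $|p|=1$, and $\re p>0$ (guaranteed by (\ref{e 0 nin L})) gives $\langle\om^4,p\rangle_\CC=-c^2\re p<0$. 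Since $g_1(y_n),g_2(y_n)\ge 0$ and $g_1(y_n)>0$, this forces $g_1(y_n)^2=O(g_1(y_n)^3)$, i.e.\ $1=O(g_1(y_n))\to 0$, a contradiction. The main obstacle is the second-order step: one has to verify that the remainder really is $O(g_1^3)$ (where the estimate $g_j\le g_1^j$ on the iterated integrals is indispensable) and then extract the decisive negative sign of the quadratic correction along $p$, which depends on the tangency $\om^2\in\ii p\RR$ coming from (i) together with $\re p>0$; without the latter condition, the quadratic term could vanish or change sign and the argument would collapse.
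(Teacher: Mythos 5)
Your proof is correct and reaches the same conclusion, but by a genuinely different route from the paper's. The paper localizes $\Phi(x)-1$ for $x\searrow\aone$ inside the thin sector $\Sec[\xi_0,\xi_0+\ep]$ by combining $\argph\Phi$ being increasing with the integral equation, and then argues purely geometrically: for (i) it notes that $p$ is the normal to $\Hyp$ at $1$ pointing into $\Cmp_+$, so $\langle -\om^2,p\rangle_\CC>0$ would push the trajectory into the convex region $\Cmp_+$, contradicting (\ref{e Trj in}); for (ii) it observes that the punctured half-disc $\{\,0<|z-1|<\ep,\ \langle z-1,p\rangle_\CC\le 0\,\}$ lies in $\Cmp_0$ because $\Hyp$ bows away from $L$ toward $\Cmp_+$ near $z=1$, whence $\Phi^2(x)\notin L$ just to the right of $\aone$ and Proposition \ref{p z0}(i) finishes. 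You instead carry out a second-order expansion of $\Phi^2(x)-1$ in $g_1(x)$ with the iterated-kernel estimate $g_j\le g_1^j$ controlling the remainder; the linear coefficient $-2\om^2$ gives (i) after dividing by $g_1(x)>0$ and letting $x\searrow\aone$, and the quadratic coefficient $\om^4(g_1^2+2g_2)$, once the tangency $\langle\om^2,p\rangle_\CC=0$ is forced, has strictly negative projection $-c^2\re p$ onto $p$, producing the contradiction in (ii). The two arguments are in effect dual: the paper's statement that $\Hyp$ curves into $\Cmp_+$ near $z=1$ is exactly the geometric content of your sign computation $\langle\om^4,p\rangle_\CC=-c^2\re p<0$, and both correctly invoke $\re p>0$ (which comes from (\ref{e 0 nin L})) at the decisive second-order step. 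The paper's version is shorter and avoids remainder bookkeeping; yours makes the quadratic correction fully explicit and keeps all estimates elementary, at the cost of verifying the uniform $O(g_1^3)$ bound that you rightly flag as the delicate point.
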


\begin{proof}
Recall that $\xi_0 = \argb (-\om^2)$ and, by Lemma \ref{l phi rot} and (\ref{e xi0}),
\begin{equation} \label{e Phi inc}
 \argph \Phi (x) \text{ is strictly increasing on } [\aone,\ell] .
\end{equation}

For small enough $\ep>0$, there exists $\de = \de(\ep) > 0$
such that
$|\Phi (x) - 1|<\ep$ and $\Phi (x) \in \Sec [0,\ep]$ for all $x \in (\aone,\aone+\de]$.
Combining this and $\Phi (\aone) =1$ with the integral equation (\ref{e int ep phi})
($\Phi$ satisfies (\ref{e int ep phi}) with $z=\om$),
it is easy to see that
\begin{equation} \label{e Phi-1}
\Phi (x) - 1 \ \in \ \Sec [\xi_0 , \xi_0 + \ep] \cap \DD_\ep (0) \ \text{  for all } x \in (\aone,\aone+\de] .
\end{equation}

Let us prove \textbf{(i)} by \emph{reductio ad absurdum}. Assume
$\langle -\om^2 , p \rangle_\CC > 0$. Note that $p$ is a normal to $\Hyp$
at $1$ pointing toward $\Cmp_+$, see e.g. (\ref{e nv Hyp}).
This, $\langle -\om^2 , p \rangle_\CC > 0$, and (\ref{e Phi-1}) imply that, for small enough $\ep>0$,
\[
\text{$\Phi (x) \in \Cmp_+$ \ \ \ for all $x \in (\aone,\aone+\de(\ep)]$.}
\]
The latter contradicts (\ref{e Trj in}) since $\aone < \ell$.


\textbf{(ii)} Since $\re p >0$ and $\im (-\om^2) >0$, we see that
$\xi := \xi_0 - \argb p \in (-\pi/2, 3\pi/2)$.
Statement (i) implies $\xi \in [\pi/2, 3\pi /2)$. Combining this, (\ref{e Phi-1}), and taking $\ep$ small enough
we see that
\[
\text{$0<|\Phi (x) - 1| < \ep$ \ \ and \ \  $\langle \Phi (x) -1 , p \rangle_\CC \le 0$ \ \ for $x \in (\aone , \aone+ \de (\ep))$}.
\]
For sufficiently small $\ep$, the set
$ \{ z \ : \ 0<|z-1| < \ep  \ , \  \langle z -1 , p \rangle_\CC \le 0 \} $ lies in $ \Cmp_0 $.

Taking such $\ep$, we ensure $\Phi(x) \in \Cmp_0$. So $\Phi^2 (x) \not \in L$ for all
$(\aone , \aone+ \de (\ep))$. By Proposition \ref{p z0}, $\supp \dd M_0 \cap (\aone , \aone+ \de (\ep)) = \emptyset$.
This completes the proof.
\end{proof}

Let $m_{\{x\}}$ be the mass of the point $x$ w.r.t. the measure $\dd M_0$.

\begin{lem} \label{l aj}
Assume that $x_0, x_1 \in \supp \dd M_0$ and $t_0 \in \RR$ are such that
\begin{eqnarray}
 x_0 < x_1 , \quad m_{\{x_0\}} & > & 0,  \ \ \ \ \
(x_0,x_1) \cap \supp \dd M_0 = \emptyset ,
\label{e x0x1 notin supp}
\\
\text{ and } \quad \Phi (x_0) & = & \pm  \sqrt{1+ \ii p t_0} \ \ \ \ \
\text{(in particular, $\Phi (x_0) \in \Hyp^\pm$)} . \label{e Phi x0}
\end{eqnarray}

Then:
\item[(i)] $\{\Phi (x) \ : \ x \in [x_0,x_1] \}$ is a non-degenerate closed line-segment
$[\Phi (x_0), \Phi (x_1)]$. Moreover,
\begin{equation} \label{e vx0x1}
\text{ for all $x \in (x_0,x_1)$, \ \ \ $\pa_x \Phi (x) = \pa_x^+ \Phi (x_0) = \pa_x^- \Phi (x_1) =: v_{(x_0,x_1)}$.}
\end{equation}
\item[(ii)] There exists $t_1 \in \RR$ such that $\Phi (x_1) = \mp  \sqrt{1+\ii p t_1}$
(this means that $\Phi (x_1) \in \Hyp^-$ if $\Phi (x_0) \in \Hyp^+$, and vise versa).
\item[(iii)] $\bigl( \Phi (x_0), \Phi (x_1) \bigr) \subset \Cmp_0$
(recall that $(z_1,z_2) := [z_1,z_2] \setminus \{z_1,z_2\}$).
\item[(iv)] $\left< \pa_x^- \Phi (x_0) , \frac{\pm p}{\sqrt{1+\ii p t_0}} \right>_\CC  \ge 0  $
and $\left< \pa_x^- \Phi (x_1) , \frac{\mp p}{\sqrt{1+\ii p t_1}} \right>_\CC  > 0  $.
\item[(v)] $t_0> \frac{\left< \om^2 , p \right>_\CC}{\im \om^2} $ and $t_0 > t_1$
\item[(vi)] If additionally $x_1 < \ell$, then $t_1 > \frac{\left< \om^2 , p \right>_\CC}{\im \om^2} $.
\item[(vii)] $x_1$ is an isolated point of $\supp \dd M_0$ (in particular, $m_{\{x_1\}} >0$).
\item[(viii)] $x_1 - x_0 > \frac{2|p_0|^{1/2}}{| v_{(x_0,x_1)} |}
\ge \frac{2|p_0|^{1/2}}{\| \pa_x \Phi \|_{L^\infty} }$

(Recall that $\pa_x \Phi (x)$ exists for a.a. $x \in [0,\ell]$ and is in $L^\infty$ due to
(\ref{e pa phi 2}) and Lemma \ref{l an} (ii).)
\end{lem}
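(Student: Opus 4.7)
The plan is to establish (i)--(viii) in order, using the convex-hyperbolic geometry of Subsection \ref{sss HypBill}, the Lagrange identity (Lemma \ref{l phi R}(ii)), the piecewise-linear structure of $\Phi$ off $\supp\dd M_0$ (Lemma \ref{l phi on int M=0}), and the support constraint $\Phi[\supp\dd M_0]\subset\Hyp$ from Proposition \ref{p z0}(i).

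For (i)--(iii), Lemma \ref{l phi on int M=0} gives constant velocity $v := \pa_x^+\Phi(x_0)$ on $(x_0,x_1)$. Non-degeneracy ($v\neq 0$) I verify via the jump formula (\ref{e pa phi 1}): for $x_0=\aone$ directly from $\pa_x^+\Phi(\aone)=-\om^2 m_{\{\aone\}}$; for $x_0>\aone$, assuming $v=0$ yields $\pa_x^-\Phi(x_0)=\om^2 m_{\{x_0\}}\Phi(x_0)$, and substituting into $\im[\overline{\Phi(x_0)}\pa_x^-\Phi(x_0)]$ produces a sign contradiction with Lemma \ref{l phi R}(ii) since $\im\om^2<0$. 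By Proposition \ref{p z0}(i), $\Phi(x_j)\in\Hyp$, and the non-degenerate chord lies in $\overline{\Cmp_0}$ by (\ref{e Trj in}). Since $\overline{\Cmp_\pm}$ are convex with strictly convex boundary branches $\Hyp^\pm$ (smooth conic arcs), any chord with both endpoints on the same branch would be trapped in $\Hyp^\pm$ and hence degenerate --- a contradiction, giving (ii). The same branch-trapping argument applied to any hypothetical interior crossing $\Phi(x^*)\in\Hyp$ for $x^*\in(x_0,x_1)$ yields (iii).

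For (iv), $\Phi$ approaches $\Phi(x_0)\in\Hyp^\pm$ from within $\overline{\Cmp_0}$, so the incoming vector $\pa_x^-\Phi(x_0)$ lies in the inward tangent half-plane, giving the first inequality with the outward normal from (\ref{e nv Hyp}); strictness at $x_1$ follows because a tangent line to the conic $\Hyp$ at $\Phi(x_1)$ meets $\Hyp$ only at that point and therefore cannot contain $\Phi(x_0)\in\Hyp^\pm$. For the bound $t_0>\langle\om^2,p\rangle_\CC/\im\om^2$ in (v), I decompose $v=\pa_x^-\Phi(x_0)-\om^2 m_{\{x_0\}}\Phi(x_0)$, pair with $\frac{\pm p}{\sqrt{1+\ii p t_0}}$, and combine the strict inequality $\langle v,\frac{\pm p}{\sqrt{1+\ii p t_0}}\rangle_\CC<0$ (from (iii) and open $\Cmp_0$), the bound (iv), and the algebraic identity $\langle\om^2\Phi(x_0),\frac{\pm p}{\sqrt{1+\ii p t_0}}\rangle_\CC=\frac{\langle\om^2,p\rangle_\CC-t_0\im\om^2}{|1+\ii p t_0|}$ (derived via $\Phi^2(x_0)=1+\ii p t_0$ and (\ref{e c<zz>})); $m_{\{x_0\}}>0$ then yields the bound. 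For $t_0>t_1$, taking WLOG $\Phi(x_0)=\sqrt{1+\ii p t_0}$ and $\Phi(x_1)=-\sqrt{1+\ii p t_1}$ (Remark \ref{r sym}), I apply the Lagrange identity $\im[\overline{\Phi(x_1)}v]>0$ and compute: writing $\sqrt{1+\ii p t_j}=u_j+\ii w_j$ with $u_j>0$ and using $u_j^2-w_j^2=1-t_j\im p$, $2u_j w_j=t_j\re p$, the quantity $\im(\overline{\Phi(x_1)}\Phi(x_0))=u_1 w_0-w_1 u_0$ reduces by elementary algebra to a positive multiple of $t_0-t_1$. For (vi), the analogous (v)-argument at $x_1$ (using that $\Phi$ continues in $\overline{\Cmp_0}$ past $x_1<\ell$) combined with (iv)'s strict inequality forces both $m_{\{x_1\}}>0$ and the analogous lower bound for $t_1$.

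For (vii)--(viii), suppose accumulation $y_n\to x_1^+$ with $y_n\in\supp\dd M_0$; then $\Phi(y_n)\in\Hyp$ accumulates at $\Phi(x_1)\in\Hyp^\mp$, so $\Phi(y_n)\in\Hyp^\mp$ eventually. The first linear segment past $x_1$ is a line through $\Phi(x_1)$ meeting the conic $\Hyp$ at no more than two points, so non-tangent directions bound the next support point away from $x_1$; in the tangent case the line touches $\Hyp$ only at $\Phi(x_1)$, forcing $\pa_x^+\Phi(x_1)=0$ and $\Phi\equiv\Phi(x_1)$ on $[x_1,x_1+\delta]$ for some $\delta>0$, but then $\pa_x^\pm\Phi\equiv 0$ on this interval makes $\int_{0-}^{x\pm}\Phi\dd M_0$ constant there, forcing $\dd M_0$ to vanish on $(x_1,x_1+\delta]$ (since $\Phi(x_1)\neq 0$) and contradicting the accumulation. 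Hence $x_1$ is isolated, and positive mass at $x_1$ then follows from (vi). For (viii), (i) and (ii) give $|v|(x_1-x_0)=|\sqrt{1+\ii p t_0}+\sqrt{1+\ii p t_1}|$ with both radicals on $\Hyp^+$; by (v) they are distinct, so their midpoint lies strictly inside the open convex $\Cmp_+$ and has modulus strictly greater than the minimum $\sqrt{|p_0|}=\sqrt{\re p}$ of $|\cdot|$ on $\overline{\Cmp_+}$ (uniquely attained at $\sqrt{p_0}\in\Hyp^+$); hence $|v|(x_1-x_0)>2\sqrt{|p_0|}$, and the second bound follows from $|v|\le\|\pa_x\Phi\|_{L^\infty}$. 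The subtlest step is the monotonicity $t_0>t_1$ in (v) --- the only conclusion that uses the rectangular-hyperbolic algebra of $\Hyp$ (through the identity $\im[(1-\ii\bar p t_1)(1+\ii p t_0)]=\re p(t_0-t_1)$) rather than being a formal convexity consequence, with the sign analysis requiring care on principal branches --- followed by the degenerate tangent subcase of (vii).
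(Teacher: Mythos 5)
Your argument follows the paper's geometric approach (Proposition \ref{p z0}, the hyperbolic picture from Section \ref{sss HypBill}, the Lagrange identity of Lemma \ref{l phi R}), and most of the individual steps are sound; the derivation of the $t_0$-bound in (v) by pairing $v=\pa_x^-\Phi(x_0)-\om^2 m_{\{x_0\}}\Phi(x_0)$ with the normal is even a bit cleaner than the paper's \emph{reductio ad absurdum}. There are, however, two problems. In (v), the formula $\im(\overline{\Phi(x_1)}\Phi(x_0))=u_1w_0-w_1u_0$ has the wrong sign (with $\Phi(x_1)=-(u_1+\ii w_1)$ it is $-(u_1w_0-u_0w_1)$), and the phrase ``reduces by elementary algebra to a positive multiple of $t_0-t_1$'' oversells the computation: the factor one obtains depends on $t_0,t_1$ and the sign-match with $t_0-t_1$ requires a small monotonicity argument (roughly, $t/(1+|1+\ii p t|)$ is strictly increasing). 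The paper instead leans on the monotone increase of $\argph\Phi$ and the central symmetry of $\Hyp$, which avoids this bookkeeping.

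The genuine gap is in (vii). Your argument begins by assuming accumulation $y_n\to x_1^+$ and then speaks of ``the first linear segment past $x_1$''---but in the accumulation case $\supp\dd M_0\cap(x_1,\ell]$ has no minimum, so there is no such segment; the reasoning is circular, since the existence of that first segment is exactly equivalent to $x_1$ being isolated. The subsequent tangent-case dispatch is also incorrect: tangency means $\langle\pa_x^+\Phi(x_1),\,\mp p/\sqrt{1+\ii p t_1}\rangle_\CC=0$, not $\pa_x^+\Phi(x_1)=0$ (indeed Lemma \ref{l phi R}(ii) rules out the latter for $x_1\in(\aone,\ell]$). The paper instead adapts the proof of Lemma \ref{l aone 0ninL new}(ii): from the integral equation one has $\Phi(x)=\Phi(x_1)+\pa_x^+\Phi(x_1)(x-x_1)+o(x-x_1)$, and $\langle\pa_x^+\Phi(x_1),\,\mp p/\sqrt{1+\ii p t_1}\rangle_\CC\le 0$; in the strict case $\Phi(x)\in\Cmp_0$ follows immediately for small $x-x_1>0$, while in the tangent case one must pass to $\langle\Phi^2(x)-1,p\rangle_\CC$ and use the strictly negative second-order contribution $\langle(\pa_x^+\Phi(x_1))^2,p\rangle_\CC<0$ (plus, when mass is present, (\ref{e -om norm Ph x1}) established in (vi)) to conclude $\Phi^2(x)\not\in L$, whence $x\not\in\supp\dd M_0$ by Proposition \ref{p z0}. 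That local analysis of the integral equation is what your proof is missing.
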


\begin{proof}
\textbf{(i)} follows from Lemmas \ref{l phi on int M=0} and \ref{l phi R} .

\textbf{(ii)} Since $x_0, x_1 \in \supp \dd M_0$, Proposition \ref{p z0} yields $\Phi (x_0), \Phi (x_1) \in \Hyp$.
%
%
Assume that $\Phi (x_0)$ and $\Phi (x_1)$ lie on the same branch of $\Hyp$, say $\Hyp^+$. Then
the line-segment $\bigl( \Phi (x_0), \Phi (x_1) \bigr)$  belongs to
$\Cmp_+$. This follows from (\ref{e Hcmp pm conv}) and the fact that $\Hyp_\pm$
do not contain line-segments. However, $\bigl(\Phi (x_0), \Phi (x_1) \bigr) \subset \overline{\Cmp_0}$
according to (\ref{e Trj in}), a contradiction.

\textbf{(iii)} Due to (\ref{e Trj in}), it is enough to prove that $\Phi (x) \not \in \Hyp$ for all $x \in (x_0, x_1)$.
The latter easily follows from the arguments of the previous step.

\textbf{(iv)} By (\ref{e BC0}),
$ \left< \pa_x^- \Phi (x_0) , \frac{\pm p}{\sqrt{1+\ii p t_0}} \right>_\CC  = 0 $ if $x_0 = 0 $.
Assume $x_0 > 0$ and $\left< \pa_x^- \Phi (x_0) , \frac{\pm p}{\sqrt{1+\ii p t_0}} \right>_\CC <0$.
This and (\ref{e nv Hyp}) yield that $\Phi (x) \in \Cmp_\pm$ for $x<x_0$. The latter contradicts (\ref{e Trj in}).

The same arguments show that $\left< \pa_x^- \Phi (x_1) , \frac{\mp p}{\sqrt{1+\ii p t_1}} \right>_\CC \ \ge 0  $.
Assume that $\left< \pa_x^- \Phi (x_1) , \frac{\mp p}{\sqrt{1+\ii p t_1}} \right>_\CC = 0  $. Then statement (i)
implies that the line-segment $[\Phi (x_0), \Phi (x_1)]$ lies on the tangent line $L_1$ to $\Hyp$ at the point $\Phi (x_1)$.
The intersection of the tangent line $L_1$ and $\Hyp$ consists of exactly one point. This contradicts (i).
Thus, $\left< \pa_x^- \Phi (x_1) , \frac{\mp p}{\sqrt{1+\ii p t_1}} \right>_\CC  > 0  $.

\textbf{(v)} By (\ref{e pa phi 1}),
\begin{equation} \label{e pa+ = pa- -om}
\pa_x^+ \Phi (x) = \pa_x^- \Phi (x) - \om^2 \Phi (x)  m_{\{x\}} .
\end{equation}
Let us show that
\begin{equation} \label{e -om norm}
\left<  -\om^2 \Phi (x_0), \frac{\pm p}{\sqrt{1+\ii p t_0}} \right>_\CC < 0.
\end{equation}

 Assume converse.
Then (\ref{e pa+ = pa- -om}), $m_{\{x_0\}} > 0$, and statement (iv) imply
$\left< \pa_x^+ \Phi (x_0) , \frac{\pm p}{\sqrt{1+\ii p t_0}} \right>_\CC \, \ge 0$.
The sign $>$ in the last inequality  leads to a contradiction due to (\ref{e nv Hyp})
and  (\ref{e Trj in}), see the proof of (iv).
So $\left< \pa_x^+ \Phi (x_0) , \frac{\pm \sqrt{p}}{\sqrt{1+\ii p t_0}} \right>_\CC \, = 0$
and, by (i), the line-segment $[\Phi (x_0) , \Phi (x_1)]$ lies on the tangent line to $\Hyp$ at $\Phi (x_0)$.
This contradicts the fact that $\Phi (x_1)$ belongs to the other branch of $\Hyp$, see the end of the proof of (iv).
Thus, (\ref{e -om norm}) is proved.

Rewriting (\ref{e -om norm}) with the use of (\ref{e Phi x0}),
we get $0 < \left< \om^2 (\pm 1) \sqrt{1+\ii p t_0} \ , \ \frac{\pm p}{\sqrt{1+\ii p t_0}} \right>_\CC $.
Using (\ref{e c<zz>}) and $|p|=1$, let us modify this inequality:
\begin{equation}
0 < \left< \om^2 (1+\ii p t_0) , p \right>_\CC = \left< \om^2 , p \right>_\CC + t_0 |p|^2 \left<  \ii \om^2, 1 \right>_\CC
= \left< \om^2 , p \right>_\CC + t_0 \im (- \om^2) .
\end{equation}
This yields $t_0 > \frac{\left< \om^2 , p \right>_\CC}{\im \om^2}$.

The assertion $t_0 > t_1$ is geometrically obvious if one takes (\ref{e Phi inc}) and the central symmetry of $\Hyp$
into account.

\textbf{(vi)} Using (\ref{e pa+ = pa- -om}) and
$\left< \pa_x^- \Phi (x_1) , \frac{\mp p}{\sqrt{1+\ii p t_1}} \right>_\CC \ > 0  $ from statement (iv),
one can apply the arguments of the previous step to prove
\begin{equation} \label{e -om norm Ph x1}
\left<  -\om^2 \Phi (x_1), \frac{\mp p}{\sqrt{1+\ii p t_1}} \right>_\CC <  0 ,
\end{equation}
and then $t_1 > \frac{\left< \om^2 , p \right>_\CC}{\im \om^2} $.

\textbf{(vii)} If $x_1 = \ell$, the statement is obvious from (\ref{e x0x1 notin supp}).

Assume $x_1 < \ell$.
Adjusting the proof of Lemma \ref{l aone 0ninL new} (ii), it is possible to show that
\begin{equation} \label{e Phx>x1}
\text{$\Phi^2 (x) \not \in L$ for sufficiently small positive $x - x_1 $}.
\end{equation}
Then Proposition \ref{p z0} gives the desired result.

\textbf{(viii)} The distance between $\Hyp_+$ and $\Hyp_-$ equals $2 |p_0|^{1/2}$.
So statement (i) implies (viii).
\end{proof}

\begin{lem} \label{l an l}
Let $x_0 = \max \ \supp \dd M_0$ and assume $x_0 < \ell$. 
Put $x_1 = \ell$.
Then:
\item[(i)] statement (i) of Lemma \ref{l aj} is valid for the part $\{\Phi (x) \ : \ x \in [x_0,\ell] \}$
of the trajectory $\Phi [\I]$,
\item[(ii)] $\left(\Phi (x_0), \Phi (\ell) \right) \subset \Cmp_0$.
\end{lem}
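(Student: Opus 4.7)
The plan is to combine two ingredients already in place: the ray behaviour of $\Phi$ on intervals disjoint from $\supp\dd M_0$ (Lemma~\ref{l phi on int M=0}) and the containments $\Phi[\I]\subset\overline{\Cmp_0}$ and $\Phi[\supp\dd M_0]\subset\Hyp$ coming from (\ref{e Trj in}) and Proposition~\ref{p z0}(i). Since $x_0=\max\supp\dd M_0<\ell$, the interval $(x_0,\ell]$ is disjoint from $\supp\dd M_0$ and, in particular, $\ell\notin\supp\dd M_0$. Lemma~\ref{l phi on int M=0} therefore yields $\Phi(x)=\Phi(x_0)+(x-x_0)v$ on $[x_0,\ell]$ with $v:=\pa_x^+\Phi(x_0)$; moreover, $\pa_x\Phi(x)=v=\pa_x^-\Phi(\ell)=\pa_x^+\Phi(\ell)$ for every $x\in(x_0,\ell)$.

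For part (i), the only point at issue is non-degeneracy, i.e., $v\neq 0$. I would argue by contradiction: if $v=0$, then $\Phi$ is constant on $[x_0,\ell]$, so $\pa_x^+\Phi(\ell)=0$, and the boundary condition $\Phi(\ell)=-\frac{\ii}{\om}\pa_x^+\Phi(\ell)$ forces $\Phi(\ell)=\Phi(x_0)=0$. But Proposition~\ref{p z0}(i) places $\Phi^2(x_0)$ in $L$, whereas $0\notin L$ by hypothesis (\ref{e 0 nin L}). This contradiction gives $v\neq 0$, so $\{\Phi(x):x\in[x_0,\ell]\}$ is a non-degenerate closed line-segment $[\Phi(x_0),\Phi(\ell)]$ on which $\pa_x\Phi$ is constantly equal to $v$.

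For part (ii), since $\Phi[\I]\subset\overline{\Cmp_0}$ by (\ref{e Trj in}), it suffices to exclude the existence of $x^*\in(x_0,\ell)$ with $\Phi(x^*)\in\Hyp$. If such $\Phi(x^*)$ and $\Phi(x_0)$ lay on the same branch $\Hyp^{\pm}$, the argument of Lemma~\ref{l aj}(ii) would apply verbatim: convexity of $\Cmp_\pm$ via (\ref{e Hcmp pm conv}) together with the absence of line-segments in $\Hyp^{\pm}$ would place $(\Phi(x_0),\Phi(x^*))$ inside $\Cmp_\pm$, contradicting (\ref{e Trj in}). If they lie on opposite branches, the line carrying the segment meets the non-degenerate conic $\Hyp$ at only two points (counted with multiplicity), so both intersections must be transverse; the crossing at $\Phi(x^*)$ then pushes the straight trajectory out of $\overline{\Cmp_0}$ into $\Cmp_+\cup\Cmp_-$ immediately after $x^*$, again contradicting $\Phi[\I]\subset\overline{\Cmp_0}$.

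The delicate step is the opposite-branch subcase of (ii): one must rule out the possibility that the line carrying the segment is tangent to $\Hyp$ at $\Phi(x^*)$, for in that event the trajectory could, a priori, remain inside $\overline{\Cmp_0}$ past $x^*$. This is where I would invoke the intersection-number bound for a line and a conic—tangential contact at $\Phi(x^*)$ (multiplicity~$2$) together with a simple intersection at $\Phi(x_0)$ (multiplicity~$1$) would give total multiplicity~$3$, impossible for a line meeting a non-degenerate conic. With this subcase dispatched, both (i) and (ii) follow.
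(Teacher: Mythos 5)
Your proof is correct, and it supplies exactly the details that the paper elides by saying the lemma ``can be easily obtained by the arguments of the proof of Lemma~\ref{l aj}.'' There is one place where you genuinely diverge from the paper's route: for the non-degeneracy in part~(i), the paper's Lemma~\ref{l aj}(i) cites Lemma~\ref{l phi R}, whose statement~(ii) gives $\pa_x^\pm\Phi(x)\neq 0$ directly from the Lagrange identity (valid for any nonnegative $\dd M$ and $z^2\notin\RR$), whereas you instead use the boundary condition $\Phi(\ell)=-\frac{\ii}{\om}\pa_x^+\Phi(\ell)$ to infer $\Phi(x_0)=0$ and then contradict $\Phi^2(x_0)\in L$ with the billiard hypothesis $0\notin L$. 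Both are valid here; the Lagrange-identity route is a touch more economical and does not lean on~(\ref{e 0 nin L}), so it would also survive in the degenerate case of Section~\ref{sss deg case}. (Incidentally, the paper records $\Phi(\ell)\neq0$ once and for all just before Proposition~\ref{p der F}, which is the shortest way to the same conclusion.) For part~(ii), your two-case analysis, with the opposite-branch case handled via transversality and the line--conic intersection bound of at most~$2$, is precisely the needed adaptation of Lemma~\ref{l aj}(ii)--(iii) to the situation where only one endpoint $\Phi(x_0)$ is known to lie on $\Hyp$; this is faithful to the paper's intent, and you correctly flagged the tangency possibility as the only delicate step.
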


The lemma can be easily obtained by the arguments of the proof of Lemma \ref{l aj}.

\textbf{Now we are able to prove Proposition \ref{p billiard}}.
The equality $\| \dd M_0 \| = m $ follows immediately from $0 \not \in L$ and Proposition \ref{p z0} (iii).

\emph{Consider the case $\supp \dd M_0 = \{ \aone \}$.} Putting $n=1$, $a_1 = \aone$, and $a_2 = \ell$,
we  ensure that (i)-(iii) follows from Lemma \ref{l an l}.  Note that statements (iv) and (v)
involve only $a_1$ and $s_1$. This makes them trivial.
Lemma \ref{l aone 0ninL new} (i) and $\im \om^2 < 0$ yield $s_1 = 0 \ge \frac{\left< \om^2 , p \right>_\CC}{\im \om^2} $.
This gives (vi).

\emph{Consider the case $\supp \dd M_0 \setminus \{ \aone \} \neq \emptyset$.} In this case,
there exists $a_2 := \inf \left( \supp \dd M_0 \setminus \{ a_1 \} \right)$, where $a_1 := \aone$ as before.
By Lemma \ref{l aone 0ninL new} (ii), $a_2 > a_1$. So Lemma \ref{l an l} is applicable to the interval $[a_1,a_2]$
and yields that $a_2$ is an isolated point of $\supp \dd M_0$. If $a_2 = \max \ \supp \dd M_0$, we apply
Lemma \ref{l an l}. Otherwise, there exists $a_3 := \min \left( \supp \dd M_0 \setminus \{ a_j \}_{j=1}^2 \right)$
and $a_3 > a_2$. Continuing this process, we obtain a set of point $\{ a_j \}_{j=1}^n$, $n \le \infty$,
such that the support of $\dd M_0$ in the interval $[0, \sup_j a_j ]$ consists of the points
$a_j$ and $\sup a_j$. Lemma \ref{l aj} (viii) implies $a_{j+1} - a_j > \frac{2|p|^{1/2}}{\| \pa_x \Phi \|_{L^\infty} }$.
So the lengths of the intervals $[a_j,a_{j+1}]$ (with $j < n$) are separated from $0$. Thus, $n < \infty$ and this immediately yields $\supp \dd M_0 = \{ a_j \}_{j=1}^n$.
The rest of statements of the proposition easily follows from Lemmas \ref{l aj} and \ref{l an l}.
This completes the proof.

\subsubsection{The case when the hyperbola degenerates}
\label{sss deg case}

Assume now that
\begin{eqnarray}
& \text{the real line $\RR$ is a supporting line to the set $ S_0 $ at $ z_0 $} .
 \label{e 0 in L}
\end{eqnarray}
Proposition \ref{p z0} implies that (\ref{e 0 in L}) is fulfilled whenever $\| \dd M_0 \| < m$.

From the point of view of the settings of Section \ref{sss HypBill}, the assumption (\ref{e 0 in L})
means that
\[
\text{ $p = -\ii$, \ \ \ \ \ \ $H_0 = H_0 (-\ii) = \CC_+$, }
\]
and the hyperbola $\Hyp$
degenerates into the union of coordinate axes $\RR \cup \ii \RR$.
The set $\overline{\Cmp_0}$ degenerates into the union $\overline{\QQ_I} \cup \overline{\QQ_{III}} $ of the closures of the first and the third quadrants.
The fact that $\overline{\Cmp_0} \setminus \{ 0 \}$ is not path-connected essentially simplifies
the description of $ \supp \dd M_0  $ and the trajectory $\Phi [\I]$
in comparison with Proposition \ref{p billiard}.
In particular, $\supp \dd M_0 = \{ a_j \}_{j=1}^n$ with $n=1$ or $n=2$,
i.e., $\dd M_0$ consists of at most of two point masses.
Moreover,  $ \aone = a_1 < a_2 = \ell $ (in the settings of statements (i)-(ii) of Proposition \ref{p billiard}).

\begin{prop} \label{p oinL}
Let (\ref{e om loc-bd}), (\ref{e 0 in L}) be fulfilled and $\re \om >0$.
Then $ \aone < \ell $ and the following statements hold:
\item[(i)] Either $ \supp \dd M_0 = \{ \aone \} $, or $\supp \dd M_0 = \{ \aone , \ell  \} $.
\item[(ii)] The trajectory $\Phi [\I]$ is the closed line-segment $[1, \Phi (\ell)]$ (recall that $\Phi (\aone) =1$).
\item[(iii)] $\re \Phi (\ell) \ge 0$ and $\im \Phi (\ell) > 0$. Moreover,
$\re \Phi (\ell) = 0$ in the case $\ell \in \supp \dd M_0$.
\end{prop}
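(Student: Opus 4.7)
The plan is to exploit the degeneration in assumption (\ref{e 0 in L}): when $p=-\ii$, the preimage $\overline{\Cmp_0}$ of $\overline{H_0}=\overline{\CC_+}$ equals $\overline{\QQ_I}\cup\overline{\QQ_{III}}$, whose nontrivial part $\overline{\Cmp_0}\setminus\{0\}$ splits into two disjoint connected components. This disconnection, combined with the strict monotonicity of $\argph\Phi$ from Lemma \ref{l phi rot}, will pin down both the support of $\dd M_0$ and the trajectory $\Phi[\I]$ quickly.

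First I would show that $\Phi[\I]\subset\overline{\QQ_I}\setminus\{0\}$, from which $\argph\Phi$ takes values in $[0,\pi/2]$. The inequality $\aone<\ell$ is immediate from (\ref{e aone<l}). Since $L=\RR$ is a supporting line to $S_0$ at $z_0$ and $0\in S_0$, the inclusion (\ref{e Trj in}) yields $\Phi[\I]\subset\overline{\QQ_I}\cup\overline{\QQ_{III}}$. Lemma \ref{l phi R} gives $\Phi\equiv 1$ on $[0,\aone]$ and $\Phi(x)\ne 0$ for $x\in(\aone,\ell]$, so $0\notin\Phi[\I]$. As $\Phi$ is continuous, $\Phi[\I]$ is a connected subset of the disjoint union $(\overline{\QQ_I}\setminus\{0\})\sqcup(\overline{\QQ_{III}}\setminus\{0\})$ containing $\Phi(0)=1$, and must therefore lie in $\overline{\QQ_I}\setminus\{0\}$.

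Next I would derive (i) and (ii) jointly. By Proposition \ref{p z0}(i) applied with $L=\RR$, one has $\Phi^2[\supp \dd M_0]\subset\RR$; together with the previous step this forces $\Phi(x)\in\RR_+\cup\ii\RR_+$, equivalently $\argph\Phi(x)\in\{0,\pi/2\}$, for every $x\in\supp \dd M_0$. Since $\argph\Phi$ is strictly increasing on $(\aone,\ell]$ with $\argph\Phi(\aone)=0$, the value $0$ is attained only at $x=\aone$, while the value $\pi/2$ can hold at most at a single point $a_2>\aone$; moreover, if such an $a_2<\ell$ existed, strict monotonicity would give $\argph\Phi(x)>\pi/2$ on $(a_2,\ell]$, contradicting $\Phi[\I]\subset\overline{\QQ_I}$. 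Hence $\supp \dd M_0\subset\{\aone,\ell\}$, which is (i). Consequently $\supp \dd M_0\cap(\aone,\ell)=\emptyset$, so Lemma \ref{l phi on int M=0} together with (\ref{e pa phi 1}) shows that $\Phi$ moves linearly on $[\aone,\ell]$ from $\Phi(\aone)=1$ with velocity $\pa_x^+\Phi(\aone)=-\om^2 m_{\{\aone\}}$, while $\Phi\equiv 1$ on $[0,\aone]$; consequently $\Phi[\I]=[1,\Phi(\ell)]$, which is (ii).

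For (iii), writing $\om=\alpha-\ii\beta$ with $\alpha,\beta>0$ yields $\im(-\om^2)=2\alpha\beta>0$, so $\im\Phi(\ell)=(\ell-\aone)\,m_{\{\aone\}}\,\im(-\om^2)>0$; combined with $\Phi(\ell)\in\overline{\QQ_I}$ this gives the first assertion of (iii). If $\ell\in\supp \dd M_0$, the argument of the previous paragraph applied at $\ell$ gives $\argph\Phi(\ell)\in\{0,\pi/2\}$, and strict monotonicity of $\argph\Phi$ forbids the value $0$, forcing $\Phi(\ell)\in\ii\RR_+$ and thus $\re\Phi(\ell)=0$. The main delicate point is the combined step: one must couple the one-dimensional algebraic constraint of Proposition \ref{p z0}(i) with the dynamical rotation from Lemma \ref{l phi rot} to rule out interior mass in $(\aone,\ell)$; after that, the rest is a direct integration of the $\vphi$-equation across a mass-free interval.
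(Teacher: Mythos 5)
Your proposal is correct and follows essentially the same route as the paper's proof: both reduce to the inclusion $\Phi[\I]\subset\overline{\QQ_I}\setminus\{0\}$, combine Proposition \ref{p z0}(i) (with $L=\RR$) with the strict monotonicity of $\argph\Phi$ from Lemma \ref{l phi rot} to rule out support in $(\aone,\ell)$, and then invoke Lemma \ref{l phi on int M=0} for statement (ii). The only cosmetic differences are that you make the connectedness argument for $\Phi[\I]\subset\overline{\QQ_I}$ explicit and verify $\im\Phi(\ell)>0$ by a direct linear computation rather than appealing once more to $(\ref{e arg phi inc})$.
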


\begin{proof}
By (\ref{e aone<l}), $ \aone < \ell $. It follows from (\ref{e arg phi inc}),
that $\Phi (x)$ and $\Phi^2 (x)$ are in $\CC_+$ for $x$ greater than $\aone$ and sufficiently close to $\aone$.
This, (\ref{e conv Trj 2}), (\ref{e 0 in L}), and Lemma \ref{l phi R} (ii)
yields that $\Phi^2 [\I] \subset \overline{\CC_+} \setminus \{0\}$
and $\Phi [\I] \subset \overline{\QQ_I} \setminus \{0\}$.
In particular, $0 \le \re \Phi (\ell)$.
Applying  (\ref{e arg phi inc}) again, we see that
$0< \im \Phi (x) $ for all $ x \in (\aone,\ell] $,  and $ 0 < \re \Phi (x) $ for all $ x \in (\aone,\ell) .$
This and Proposition \ref{p z0} (i) prove statements (i) and (iii).
Statement (ii) follows from statement (i) and Lemma \ref{l phi on int M=0}.
\end{proof}

\subsection{Proofs of Theorems \ref{t AM opt} and \ref{t A1}}
\label{ss proofs opt}

When $\re \om \neq 0 $, Theorem \ref{t AM opt} follows immediately from Propositions
\ref{p billiard} and \ref{p oinL}.
Consider the special case $\re \om = 0$.

\begin{lem}[\cite{Ka13_KN}, see also \cite{CZ95} and Section 4.1 in \cite{KN89}] \label{l ab in Kr a=0}
Let $\dd M \in \Mes_+$ and  $ (- \ii) \beta \in \Si (\dd M)$. Then
$\beta \ge \| \dd M \|^{-1} $. The equality holds if and only if
$\dd M $ consists of a single point mass placed at $\ell$ .
\end{lem}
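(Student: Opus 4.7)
The plan is to set $\om = -\ii\beta$ and exploit the sign structure of the integral equation (\ref{e int ep phi}), which becomes particularly favourable when $-\om^2 = \beta^2 > 0$ and $\dd M \in \Mes_+$.

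First, I would show that the real function $\vphi(x) = \vphi(x,-\ii\beta;\dd M)$ is nondecreasing and satisfies $\vphi(x) \ge 1$ on $[0,\ell]$. This is immediate from the Maclaurin series (\ref{e pow ser})--(\ref{e phij}): with $z = -\ii\beta$, every term $(-1)^j z^{2j}\vphi_j = \beta^{2j}\vphi_j$ is nonnegative, so $\vphi \ge 1$. Moreover, by (\ref{e pa phi 1}),
\[
\pa_x^\pm \vphi(x) = \beta^2 \int_{0-}^{x\pm} \vphi(s)\,\dd M(s) \ge 0,
\]
so $\vphi$ is monotone nondecreasing on $[0,\ell]$.

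Next, I would rewrite the quasi-eigenvalue condition $F(-\ii\beta;\dd M)=0$ via (\ref{e F}) as
\[
\vphi(\ell) \;=\; \beta \int_{0-}^{\ell+} \vphi(s)\,\dd M(s).
\]
Using $\vphi(s) \le \vphi(\ell)$ for all $s\in[0,\ell]$, the right-hand side is bounded above by $\beta\,\|\dd M\|\,\vphi(\ell)$, yielding $\vphi(\ell) \le \beta\,\|\dd M\|\,\vphi(\ell)$. Since $\vphi(\ell)\ge 1 > 0$, division gives the bound $\beta \ge \|\dd M\|^{-1}$.

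For the equality case, suppose $\beta\,\|\dd M\|=1$. Then $\int_{0-}^{\ell+}(\vphi(\ell)-\vphi(s))\,\dd M(s) = 0$, and since the integrand is nonnegative, $\vphi(s) = \vphi(\ell)$ for $\dd M$-a.e.\ $s$. In particular $\aone \in \supp\dd M$ (as $\dd M \ne 0$) and $\vphi(\aone) = 1$ by Lemma \ref{l phi R}(i), forcing $\vphi(\ell) = 1$ and hence $\vphi\equiv 1$ on $[0,\ell]$. Substituting $\vphi\equiv 1$ into (\ref{e int ep phi}) at $x=\ell$ gives $\beta^2 \int_{0-}^{\ell+}(\ell-s)\,\dd M(s) = 0$, which, because $(\ell-s)\ge 0$, forces $\supp\dd M \subseteq\{\ell\}$. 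Thus $\dd M = \|\dd M\|\,\delta(x-\ell)\,\dd x$. Conversely, if $\dd M$ is a single point mass at $\ell$, formula (\ref{e x0=l}) of Example \ref{ex 1 pt mas} gives $\Si(\dd M) = \{-\ii/\|\dd M\|\}$, so equality holds.

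There is no serious obstacle here; the only subtlety is making sure the chain of inequalities is sharp and that one reads off $\vphi\equiv 1$ rather than only $\vphi\equiv\vphi(\ell)$ in the equality case, which requires invoking $\vphi(\aone)=1$ and the inclusion $\aone \in \supp\dd M$.
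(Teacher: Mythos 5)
Your proof is correct and self-contained. The paper does not prove Lemma~\ref{l ab in Kr a=0}; it quotes it from \cite{Ka13_KN}, \cite{CZ95}, and \cite{KN89}, so there is no in-paper argument to compare against. Your route — specialize to $\om = -\ii\beta$ so that $-\om^2 = \beta^2 > 0$, read off $\vphi \ge 1$ and $\pa_x\vphi \ge 0$ from the series \eqref{e pow ser} and from \eqref{e pa phi 1}, rewrite $F(-\ii\beta;\dd M) = 0$ as $\vphi(\ell) = \beta\int\vphi\,\dd M$, and close via $\vphi \le \vphi(\ell)$ — is exactly the kind of positivity argument these references use, and every step checks out against the paper's lemmas.

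One small step that deserves to be spelled out in the equality case: from ``$\vphi(s) = \vphi(\ell)$ for $\dd M$-a.e.\ $s$'' you conclude ``$\vphi(\aone) = \vphi(\ell)$'' because $\aone \in \supp\dd M$. This is right, but strictly it needs the observation that the closed set $\{s \in \I : \vphi(s) = \vphi(\ell)\}$ has full $\dd M$-measure and therefore contains $\supp\dd M$ (its complement in $\I$ is relatively open and $\dd M$-null), or equivalently, every neighbourhood of $\aone$ carries positive $\dd M$-mass and hence meets the a.e.\ level set, and continuity of $\vphi$ does the rest. Also, the preliminary remark that $\dd M \neq 0\dd x$ follows from the hypothesis $-\ii\beta \in \Si(\dd M)$ together with Example~\ref{ex empty}. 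With these two lines made explicit, the proof is complete; the remainder, including the use of \eqref{e int ep phi} at $x=\ell$ to force $\supp\dd M \subset \{\ell\}$ and the converse via \eqref{e x0=l}, is airtight.
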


This lemma easily yields the following proposition.

\begin{prop} \label{p al=0 min dec}
$\om = -\ii m^{-1} $ is the quasi-eigenvalue of minimal decay for the zero frequency ($\alpha =0$) over $\Am$.
The corresponding string of minimal decay is unique and equals $\dd M_0 = m \de (x- \ell )$.
\end{prop}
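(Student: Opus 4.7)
The plan is to derive Proposition \ref{p al=0 min dec} directly as a corollary of Lemma \ref{l ab in Kr a=0} together with Example \ref{ex 1 pt mas}. There is no genuine obstacle here; the argument amounts to combining the sharp inequality of Lemma \ref{l ab in Kr a=0} with the upper bound $\|\dd M\| \le m$ built into the definition of $\Am$.

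First, I would establish the lower bound. For any $\dd M \in \Am$ with $(-\ii)\beta \in \Si(\dd M)$, Lemma \ref{l ab in Kr a=0} gives $\beta \ge \|\dd M\|^{-1}$. Since $\dd M \in \Am$ means $\|\dd M\| \le m$, this yields $\beta \ge m^{-1}$. Consequently $\beta_{\min}(0) \ge m^{-1}$.

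Next, to see that this bound is attained, I would invoke Example \ref{ex 1 pt mas} with $x_0 = \ell$ and $m_0 = m$: by (\ref{e x0=l}), the string $\dd M_0 = m\delta(x-\ell)\dd x$ has $\Si(\dd M_0) = \{-\ii m^{-1}\}$. Since $\|\dd M_0\| = m$, this string belongs to $\Am$, so $-\ii m^{-1} \in \Si[\Am]$ and hence $\beta_{\min}(0) = m^{-1}$ with $\dd M_0$ a string of minimal decay.

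Finally, for uniqueness, suppose $\dd M_1 \in \Am$ also satisfies $-\ii m^{-1} \in \Si(\dd M_1)$. Applying Lemma \ref{l ab in Kr a=0} again gives $m^{-1} \ge \|\dd M_1\|^{-1}$, i.e.\ $\|\dd M_1\| \ge m$; combined with $\dd M_1 \in \Am$ this forces $\|\dd M_1\| = m$. The equality case of Lemma \ref{l ab in Kr a=0} then implies that $\dd M_1$ consists of a single point mass placed at $\ell$, and the total-mass condition $\|\dd M_1\|=m$ fixes this mass to be exactly $m$, so $\dd M_1 = m\delta(x-\ell)\dd x = \dd M_0$. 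This completes the proof.
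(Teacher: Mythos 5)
Your proof is correct and follows exactly the route the paper indicates (the paper simply states that Lemma \ref{l ab in Kr a=0} "easily yields" the proposition, without spelling out the steps). You have correctly supplied the missing details: the lower bound $\beta_{\min}(0) \ge m^{-1}$ from Lemma \ref{l ab in Kr a=0} combined with $\|\dd M\| \le m$, the attainment via Example \ref{ex 1 pt mas} with $x_0 = \ell$, and the uniqueness via the equality case of the lemma forcing $\|\dd M_1\| = m$ and a single point mass at $\ell$.
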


This completes the proof of Theorem \ref{t AM opt}.

Now we consider optimization over $\A= \A_1$ and prove Theorem \ref{t A1}.

First, we show that
\begin{equation} \label{e no min a=0 A1}
\text{there is no a string of minimal decay rate for the zero frequency (over $\A_1$).}
\end{equation}

\begin{prop} \label{p const struct}
Let $x_0 \in [0, \ell)$,  $B (x) = 0 $ for $x \in [0, x_0]$ and $B(x) = c $  for $x \in (x_0, \ell]$, where $c>1$ is a constant.
Then
the sequence
$
\displaystyle \om_k = -\ii \frac{1}{2(\ell-x_0)\sqrt{c}} \ln  \frac{\sqrt{c}+1}{\sqrt{c}-1}  + \frac{k\pi}{(\ell-x_0)\sqrt{c}}
$, \ $ k \in \ZZ$,
forms the set of quasi-eigenvalues of the string $B(x) \dd x$.
\end{prop}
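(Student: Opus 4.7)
The plan is to compute the fundamental solution $\vphi(x,\om;B\,\dd x)$ explicitly on each of the two subintervals where $B$ is constant, then solve the characteristic equation $F(\om)=0$ from Lemma \ref{l an}(i). Because $B$ is piecewise constant, no spectral machinery is needed beyond elementary ODE computation.

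First I would observe that on $[0,x_0]$ the equation $-\pa_x^2 y = \om^2 B y$ reduces to $y''=0$, so together with $\vphi(0)=1$, $\pa_x^-\vphi(0)=0$, one gets $\vphi(x,\om) \equiv 1$ and $\pa_x\vphi(x,\om)\equiv 0$ on $[0,x_0]$. In particular $\aone = x_0$ and the matching data at $x_0$ is $\vphi(x_0)=1$, $\pa_x^+\vphi(x_0)=0$. On $(x_0,\ell]$ the equation becomes $y''+\om^2 c\, y=0$, whose unique solution satisfying these initial data is
\[
\vphi(x,\om) = \cos\bigl(\om\sqrt{c}\,(x-x_0)\bigr), \qquad x\in[x_0,\ell],
\]
with $\pa_x^+\vphi(\ell,\om) = -\om\sqrt{c}\sin\bigl(\om\sqrt{c}(\ell-x_0)\bigr)$.

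Next, by the discussion following (\ref{e F}), $\om\in\Si(B\,\dd x)$ iff
\[
F(\om) = \vphi(\ell,\om)+\frac{\ii}{\om}\,\pa_x^+\vphi(\ell,\om)
= \cos\bigl(\om\sqrt{c}(\ell-x_0)\bigr)-\ii\sqrt{c}\,\sin\bigl(\om\sqrt{c}(\ell-x_0)\bigr)=0.
\]
Setting $w := \om\sqrt{c}(\ell-x_0)$ and rewriting in exponentials,
\[
\frac{e^{\ii w}+e^{-\ii w}}{2} = \sqrt{c}\cdot\frac{e^{\ii w}-e^{-\ii w}}{2},
\]
which simplifies to $e^{2\ii w}=\dfrac{\sqrt{c}+1}{\sqrt{c}-1}$. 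Since $c>1$, the right-hand side is a positive real number, so its logarithm is a real number plus $2\pi\ii k$, $k\in\ZZ$. Solving
\[
2\ii w = \ln\frac{\sqrt{c}+1}{\sqrt{c}-1}+2\pi\ii k
\]
for $w$, and then for $\om=w/[\sqrt{c}(\ell-x_0)]$, yields exactly the claimed family $\om_k$. Conversely, each $\om_k$ makes $F(\om_k)=0$, so no spurious roots or missed roots occur.

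There is essentially no obstacle: the only thing to be careful about is that $F$, being entire and of exponential-type structure here (an explicit trigonometric polynomial in $\om$), vanishes precisely on the listed discrete set, so one really obtains all of $\Si(B\,\dd x)$ and not merely a subset. This is immediate because the derivation above is a chain of equivalences.
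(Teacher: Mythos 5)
Your proposal is correct and is exactly the direct computation that the paper gestures at (the paper gives no explicit proof, only the remark that it can be obtained by direct computation, citing \cite{CZ95}). The piecewise integration of the fundamental solution, the reduction to $e^{2\ii w}=\frac{\sqrt{c}+1}{\sqrt{c}-1}$ with $w=\om\sqrt{c}(\ell-x_0)$, and the chain-of-equivalences argument that no roots are missed are all sound; one may also note for completeness that $F(0)=1\neq 0$, so the quasi-eigenvalue set is genuinely $\{\om_k\}_{k\in\ZZ}$ with no need to discard $\om=0$.
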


The proposition can be obtained by direct computation, see e.g. \cite{CZ95}.

Taking $B(x)$ introduced above with $c$ going to $+\infty$ and $x_0$ such that $(x_0 - \ell) c = m$, one can see
that $\re \om_0 = 0$ (and so $\alpha = 0$ is an admissible frequency) and that $\om_0$ tends to $(-\ii) m^{-1}$. On the other side,
Lemma \ref{l ab in Kr a=0} implies that $(-\ii) m^{-1}$ is not a quasi-eigenvalue for any string in $\A_1$.
This proves (\ref{e no min a=0 A1}).
Note that the above arguments provide an optimizing sequence (see \cite{Ka13_KN}) for $\alpha = 0$.

In the case when the frequency $\alpha \neq 0$, Theorem \ref{t A1} follows immediately from the following result.

\begin{prop} \label{p LocInt}
Assume that $\re \om \neq 0$ and $\om$ is a quasi-eigenvalue of $\dd M_0 \in \A_1$.
Then $\om$ is a strongly $\dd M_0$-local interior point of $\Sigma (\A_1)$. In particular, the set $\Si [\A_1] \setminus \ii \RR$ is open.
\end{prop}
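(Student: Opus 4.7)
The plan is to argue by contradiction: assuming $\om$ is a strongly $\dd M_0$-local boundary point of $\Si[\A_1]$ (rather than an interior point), I will deduce that $\dd M_0$ is forced to be the zero measure, contradicting $\om \in \Si(\dd M_0)$ via Example \ref{ex empty}. Since $\dd M_0 = B_0(x) \dd x$ is real and $\Si(\dd M)$ is $\ii\RR$-symmetric for every $\dd M \in \Mes_+$, the reflection argument of Remark \ref{r sym} adapts immediately to $\A_1$, so without loss of generality $\re \om > 0$.

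Next, apply Proposition \ref{p z0} in its $\A = \A_1$ form: the point $z_0 = \langle \Phi^2 , m^{-1} \dd M_0 \rangle$ is a boundary point of the convex set $\langle \Phi^2 , m^{-1} \A_1 \rangle$, and there exists a supporting line $L$ at $z_0$ with $\Phi^2[\supp \dd M_0] \subset L$, $1 \in L$, and $L = \RR$ whenever $\| \dd M_0 \| < m$. The key observation is that the geometric analysis of Sections \ref{sss proof billiard} and \ref{sss deg case} which yields Propositions \ref{p billiard} and \ref{p oinL} exploits only (a) these consequences of Proposition \ref{p z0} and (b) intrinsic properties of the mode $\Phi$ from Lemmas \ref{l phi R}, \ref{l phi rot}, and \ref{l phi on int M=0} (the last of which requires $\om^2 \notin \RR$, guaranteed by $\re \om \ne 0$). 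Hence the same conclusion transfers to the $\A_1$ setting: $\supp \dd M_0 = \{a_j\}_{j=1}^n$ is a finite set of points.

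This yields the contradiction. For $\dd M_0 = B_0(x) \dd x$ with $B_0 \in L^1(0,\ell)$, topological support contained in a finite set forces $B_0 = 0$ almost everywhere on $[0,\ell]$ (every open neighborhood of $[0,\ell] \setminus \{a_1,\dots,a_n\}$ has $|B_0|$-integral zero), so $\dd M_0$ is the zero measure and $\Si(\dd M_0) = \emptyset$. For the ``in particular'' claim, note that every $\om \in \Si[\A_1] \setminus \ii\RR$ is a strongly $\dd M_0$-local interior point for some producing $\dd M_0 \in \A_1$; taking $W = \Mes_\CC$ in Definition \ref{d loc Bd} shows that $\om$ is an interior point of $\Si[\A_1]$ itself, and intersecting the resulting open disc around $\om$ with the open set $\CC \setminus \ii\RR$ produces an open neighborhood contained in $\Si[\A_1] \setminus \ii\RR$.

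The main obstacle is verifying that the proofs of Propositions \ref{p billiard} and \ref{p oinL}, both formally stated under the hypothesis (\ref{e om loc-bd}) that refers to $\Si[\Am]$, transfer verbatim when $\Am$ is replaced by $\A_1$. Concretely, one must scan Lemmas \ref{l aone 0ninL new}, \ref{l aj}, and \ref{l an l} and confirm that the admissible family enters only through the conclusions of Proposition \ref{p z0} (the existence of the supporting line $L$, the inclusion $\Phi^2[\supp \dd M_0] \subset L$, and the points $0,1$ lying in $L$ under the stated conditions), so that the argument yielding finiteness of $\supp \dd M_0$ is genuinely intrinsic to $\Phi$ and $L$. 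This bookkeeping is routine but essential, and it is the principal technical step.
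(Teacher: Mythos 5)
Your proposal is correct and follows essentially the same route as the paper: reduce to $\re\om>0$, invoke Proposition \ref{p z0} for $\A=\A_1$, transfer the geometric analysis to conclude that the support of $\dd M_0$ must contain an isolated point, and derive a contradiction with absolute continuity. The only difference is one of economy: the paper stops as soon as Lemma \ref{l aone 0ninL new}~(ii) (or, in the degenerate case, the proof of Proposition \ref{p oinL}) shows that $\aone$ is an isolated point of $\supp\dd M_0$ — already incompatible with $\dd M_0=B_0\dd x$ being absolutely continuous — whereas you run the whole of Propositions \ref{p billiard}/\ref{p oinL} to obtain full finiteness of the support and conclude $B_0=0$ a.e., which is valid but more than is needed.
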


\begin{proof}
Assume that $\om$ is strongly $\dd M_0$-local boundary point of $\Sigma (\A_1)$.
Then $\aone = \aone (\dd M_0)$ is an isolated point of $\supp \dd M_0$.
This fact follows from the proofs of Lemma \ref{l aone 0ninL new} and Proposition \ref{p oinL}
(due to Proposition \ref{p z0}, arguments of these proofs work without changes for the case of the admissible family $\A_1$).
Hence the measure $\dd M_0$ is not absolutely continuous and so $\dd M_0 \not \in \A_1$, a contradiction.
Thus, each $\om \in \Si [\A_1] \setminus \ii \RR$ is a local interior point
(for every string in $\A_1$ that produce the quasi-eigenvalue $\om$). This yields $\om \in \Intr (\Si [\A_1] \setminus \ii \RR)$.
\end{proof}

\section{Weakly* local boundary and the  "reflection law"}
\label{s weak loc-b}

Propositions \ref{p billiard} and \ref{p oinL}  leave too much freedom in the choice of masses $m_j$ and the position $a_1$ of the first point mass.
In this section, we study $\dd M_0 \in \Am$ and $\om \in \QQ_{IV} $ (i.e, $\re \om >0$, $\im \om < 0$) under the stronger assumption that
\begin{eqnarray}
\om \text{ is a weakly* $\dd M_0$-local boundary point of } \Si [\Am] \label{e weak loc-bd}.
\end{eqnarray}
We show that $m_j$, $a_j$, and $\om$ are connected by additional relations.
Then, the results on weakly* local boundary points can be immediately applied to strings of minimal decay
due to the implication
(\ref{e impl loc bd md}) $\Rightarrow$ (\ref{e impl loc bd weak}).
Note that, from the point of view of the minimal decay, only the case $\re \om > 0$
is interesting since the case $\re \om =0$ is described by Proposition \ref{p al=0 min dec}
(see also Remark \ref{r sym}).

\subsection{The "reflection" law and a position of the first point mass}
\label{ss res weak*}

Assumption (\ref{e weak loc-bd}), the implication (\ref{e impl loc bd weak}) $\Rightarrow$ (\ref{e impl loc bd strong}),
and Propositions \ref{p billiard}, \ref{p oinL} imply that
\begin{equation} \label{e dM=sum}
\dd M_0 = \sum_{j=1}^n m_j \delta (x-a_j) \dd x
\end{equation}
with $n \in \NN$,  positive masses $m_j$, and distinct increasingly ordered positions $a_j$,
and also connect $a_j$ with supporting lines $L$ of Propositions \ref{p billiard} and \ref{p oinL}.
This constitute the beginning of the following theorem. The essentially new part of the theorem consists of statements (ii)-(iii).

In the sequel, $n$ is always the number of points in $\supp \dd M_0$.
As before, $  \Phi (x) := \vphi (x,\om; \dd M_0)  $ and $\aone = \aone (\dd M_0) = \min \supp \dd M_0 = a_1$.

\begin{thm} \label{t AM opt det}
Let $\alpha = \re \om >0$. Assume that $\om$ and $\dd M_0$ satisfy (\ref{e weak loc-bd})
(in particular, the latter holds if $\om$ and $\dd M_0$ are of minimal decay for $\alpha $).
Then $\dd M_0$ has the form (\ref{e dM=sum})
with $n \in \NN$,  $m_j>0$, and distinct increasingly ordered $a_j$.
Moreover, $a_j$, $m_j$, and the mode $\Phi$ are connected by the following statements:

\item[(i)] There exists a line $L \subset \CC$ passing through $z_0 = m^{-1} \int_{0-}^{\ell+}  \Phi^2 (x)  \dd M_0 (x) $
and there exists a unit normal $p $ to $L$  such that:

\subitem (i.a)  The trajectory $\Phi [\I]$ is a piecewise linear path lying in
\[
\overline{\Cmp_0} := \{ \zeta \in \CC  \ : \ \langle \zeta^2 -1 , p \rangle_\CC \le 0  \}
\] and consisting of the
closed line-segments $[\Phi (a_j) , \Phi(a_{j+1})]$, $j=1, \dots, n$.
Here $a_{n+1} :=\ell$ (note that $a_{n+1} \not \in \supp \dd M_0$ whenever $a_n < \ell$).
\\[0.2mm]
\subitem (i.b)
In the case $a_n < \ell $, the formula
$ \
 \{ a_j \}_{j=1}^n    =  \{ x \in [\aone , \ell) \ : \ \Phi^2 (x) \in L \}  \
$ holds.
In the case $a_n = \ell$, the formula
$
\ \{ a_j \}_{j=1}^{n}  =  \{ x \in [\aone , \ell] \ : \ \Phi^2 (x) \in L \}  \
$
takes place.
Note that, in both the cases, $1 = \Phi (a_1) \in L$ and so $L=1+ \ii p \RR$.\\

\item[(ii)] When $n \ge 2$, the line $L$ and the unit normal $p$ are uniquely determined by assertions (i.a)-(i.b)
and the following statements hold:

\subitem (ii.a) $a_1 = 0$

\subitem(ii.b) Let the real numbers  $ s_j $ $ (j=1, \dots, n)$  be defined by the equalities $\Phi^2 (a_j) = 1+ \ii p s_j$.
Then $ 0 = s_1 >  s_2 > \dots > s_n $.

\subitem (ii.c) When $j \ge 2 $ and $a_j < \ell$,
\begin{equation} \label{e mj a neq l}
m_j = \frac{ \langle \ \pa_x^- \Phi^2 (a_j) \ , \ p \ \rangle_\CC}{\langle \ \om^2  \Phi^2 (a_j)  \  , \ p  \ \rangle_\CC} =
- \frac{ \langle \ \pa_x^+ \Phi^2 (a_j) \ , \ p \ \rangle_\CC}{\langle \ \om^2 \Phi ^2 (a_j)  \ , \ p  \ \rangle_\CC} .
\end{equation}

\subitem (ii.d) When $a_n = \ell$,
\begin{equation} \label{e mn a=l}
m_n = \frac{ \pa_x^- \Phi (a_n) }{ \om^2  \Phi (a_n)  } - \frac{\ii}{\om} .
\end{equation}

\item (iii) If $\| \dd M_0 \|<m$, then $L = \RR$, $n \le 2$, and $a_1 = 0$.
\end{thm}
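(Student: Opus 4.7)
My plan divides the proof into a structural stage and a perturbative stage.

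First, since (\ref{e weak loc-bd}) implies the corresponding strongly local boundary assertion via the implication (\ref{e impl loc bd weak})$\Rightarrow$(\ref{e impl loc bd strong}), I would apply Propositions~\ref{p billiard} and~\ref{p oinL} to obtain the representation (\ref{e dM=sum}), the line $L = 1 + \ii p \RR$ with $p$ the unit normal to a supporting line of $S_0$ at $z_0$, the piecewise linear trajectory in (i.a)--(i.b), and the ordering $0 = s_1 > s_2 > \ldots > s_n$ of (ii.b). Uniqueness of $(L,p)$ when $n \geq 2$ is then immediate because $\Phi^2(a_1) = 1$ and $\Phi^2(a_2) = 1 + \ii p s_2$ with $s_2 \neq 0$ pin down the line through two distinct points. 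Formula (\ref{e mn a=l}) of (ii.d) is purely computational: I would eliminate $\pa_x^+ \Phi(a_n)$ between (\ref{e BCl}) and the jump relation (\ref{e pa+ = pa- -om}) at $a_n = \ell$ and solve algebraically for $m_n$; no optimization input is required.

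The nontrivial content lies in the reflection law (ii.c), the constraint $a_1 = 0$ of (ii.a), and the refinement (iii). The key observation is that a translation $a_j \to a_j + t$ of a point mass is weak*-continuous but never norm-continuous when $m_j > 0$, so the weakly* boundary hypothesis is strictly stronger than its strongly local counterpart. For a discrete string I would represent $F$ through the $2\times 2$ transfer-matrix product built from free propagation between successive atoms and the derivative-jump matrix at each $a_k$; this makes $F$ jointly real-analytic in $(a_1,\ldots,a_n,m_1,\ldots,m_n)$, so that for simple $\om$ the implicit function theorem produces $\om$ as an analytic function of these parameters (the multiple case I would handle by the Puiseux series (\ref{e k Pser}), mirroring the argument behind Theorem~\ref{t ab loc bd}). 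The weakly* boundary hypothesis, together with the open mapping theorem for analytic maps, then forces the differential of $\om$, restricted to all admissible directions (subject to $\sum m_j \leq m$ and $0 \leq a_1 \leq \ldots \leq a_n \leq \ell$), to have image contained in a single real line of $\CC$; otherwise the image of $\om$ would cover a neighborhood of $\om$, contradicting boundary-ness.

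By Proposition~\ref{p der F} and (\ref{e C0}), the mass-perturbation directions already lie on the line through the origin parallel to $\ii p$, so the position-perturbation directions must be parallel as well. For $j \geq 2$ with $a_j < \ell$, I would compute $\pa\om/\pa a_j$ from the transfer-matrix representation and show that its projection on $p$ vanishes iff $\langle \pa_x^- \Phi^2(a_j) + \pa_x^+ \Phi^2(a_j), p\rangle_\CC = 0$; substituting the jump relation then yields the two equivalent forms of (\ref{e mj a neq l}). For (ii.a), if $a_1 > 0$ then $a_1$ is a two-sided free parameter, and the same projection argument at $a_1$ forces $\langle \pa_x^+ \Phi^2(a_1), p\rangle_\CC = 0$ (using $\pa_x^- \Phi^2(a_1) = 0$, which follows from $\pa_x^- \Phi$ vanishing on $[0,a_1]$); hence $m_1 \langle \om^2, p\rangle_\CC = 0$, contradicting $m_1 > 0$ and $\langle \om^2, p\rangle_\CC > 0$ (the latter following from $s_1 = 0$ combined with Lemma~\ref{l aj}(v) in the case $n \geq 2$). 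Statement (iii) follows by combining $L = \RR$ (Proposition~\ref{p z0}(iii)) with $n \leq 2$ (Proposition~\ref{p oinL}); the case $n = 2$ inherits $a_1 = 0$ from (ii.a), and the case $n = 1$ is handled by a direct computation showing that $\pa\om/\pa a_1$ and $\pa\om/\pa m_1$ are $\RR$-linearly independent whenever $\re\om > 0$ and $a_1 > 0$.

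The main obstacle will be the clean computation of $\pa\om/\pa a_j$ and the verification that its $p$-projection assembles precisely into the reflection identity; changing $a_j$ affects two neighboring free-propagation factors $T(a_j - a_{j-1})$ and $T(a_{j+1} - a_j)$ simultaneously, so the bookkeeping is delicate. A secondary technical difficulty is the non-simple $\om$ case, where the implicit function theorem must be replaced by a Puiseux-series analysis analogous to (\ref{e k Pser}), paralleling the method underlying Theorem~\ref{t ab loc bd}.
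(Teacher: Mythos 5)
You correctly split the proof into a structural stage (Propositions~\ref{p billiard}, \ref{p oinL} yield (i), (ii.b); (ii.d) is pure algebra from (\ref{e BCl}) and the jump relation) and a perturbative stage. You also correctly identify the key issue that $a_j \mapsto a_j+t$ is weak*-continuous but not norm-continuous, so a finite-dimensional discretization is needed — this mirrors the paper's Lemma~\ref{l Fdisc}, Lemma~\ref{l strong to weak}, and Proposition~\ref{p w* boundary}. Your transfer-matrix computation of $\pa_t F$ is a legitimate alternative to the Volterra-integral-equation argument of Proposition~\ref{p m one move}.

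The gap is in the central analytical step. You claim that boundary-ness forces ``the differential of $\om$, restricted to all admissible directions, to have image contained in a single real line of $\CC$; otherwise the image of $\om$ would cover a neighborhood of $\om$.'' This implication is false: a convex cone of derivative directions can be a closed half-plane rather than a line, and the image of $\om$ then covers only a half-disc, not a neighborhood, so $\om$ may still be a boundary point. This is in fact what happens here. The directional derivatives coming from measure perturbations alone already fill a two-dimensional set — after rescaling by $C_0/m$ they form $S_0-z_0$, whose cone is by (\ref{e conv Trj 2}), (\ref{e z0 in segment}), (\ref{e H_0 in cone}) the whole half-plane $\overline{H_0}-z_0$, not the line $\ii p\RR$. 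So your intermediate remark that ``the mass-perturbation directions already lie on the line through the origin parallel to $\ii p$'' is also incorrect (that would require every $\wt\mu$-perturbation to satisfy $\sum_j\wt\mu_j=0$, which the constraint $\sum\wt\mu_j\le 0$ does not enforce). The correct statement, which the paper extracts from Theorem~\ref{t ab loc bd} via Lemma~\ref{l 2par per} and Proposition~\ref{p w* boundary}, is that the convex cone $\Cone_0$ generated by $C_0\DF[\V]$ and $S_0-z_0$ is \emph{not all of} $\CC$; since it already contains the half-plane $\overline{H_0}-z_0$, one concludes $C_0\DF[\V]\subset \overline{H_0}-z_0$, and it is only for \emph{two-sided} position variations ($\pm v$ both admissible) that one obtains the ``on-the-line'' condition $\langle C_0\DF(v),p\rangle_\CC = 0$. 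Your deductions for (ii.a), (ii.c), (iii) happen to need precisely this two-sided conclusion, so they go through once the intermediate claim is repaired, but as stated the chain of reasoning has a hole.

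A secondary, minor point: in (iii) with $n=1$, your appeal to $\RR$-linear independence of $\pa\om/\pa a_1$ and $\pa\om/\pa m_1$ implicitly requires $\om$ to be a simple quasi-eigenvalue; here this is automatic because, by Example~\ref{ex 1 pt mas}, the only multiple quasi-eigenvalue of a single point mass lies on $\ii\RR$, which is excluded by $\re\om>0$, but the assumption should be made explicit.
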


\begin{proof}[Proof of statement (i) of Theorem \ref{t AM opt det}.]
Only (i.b) needs an additional argument.  It was proved that  $\Phi^2 [ \supp \dd M_0 ] \subset L$. Assertion (i.b) means that $\wt x \in [\aone,\ell)$ and $\Phi^2 (\wt x) \in L$ imply $\wt x \in \supp \dd M_0$.
Let us prove this by \emph{reductio ad absurdum}. Assume $\wt x \not \in \supp \dd M_0$. Then $\wt x \in (a_j, a_{j+1})$ for certain $1 \le j \le n$. By (ia), $\Phi (\wt x) \in \left( \Phi (a_j), \Phi (a_{j+1}) \right)$. On the other side, $\Phi (\wt x) \in \Hyp$. So the line passing through $\Phi (a_j)$ and $ \Phi (a_{j+1})$ is a tangent line to $\Hyp$ at $\Phi (\wt x)$. On the other side, it intersects $\Hyp$ at least twice (at $\Phi (a_j)$ and $\Phi (\wt x)$), a contradiction.
\end{proof}

\begin{rem} Actually, it is proved that (i) is valid if  (\ref{e weak loc-bd}) is replaced by  the weaker assumption (\ref{e om loc-bd}).
\end{rem}

Statements (ii) and (iii) will be proved in Subsection \ref{ss proof weak* lb}.

It follows from (\ref{e pa+ = pa- -om}) that
\begin{equation} \label{e pa+ = pa- -om aj}
 \pa_x^+ \Phi (a_j) - \pa_x^- \Phi (a_j) = - \om^2 \Phi (a_j) m_j .
\end{equation}
In the case when $j \ge 2$ and $a_j < \ell$,  the proof of (ii.c) shows that
\begin{equation} \label{e pa pm proj  p}
 \langle \pa_x^+ \Phi^2 (a_j) \ , p \ \rangle_\CC
 = - \langle \pa_x^- \Phi^2 (a_j) \ , p \ \rangle_\CC  .
\end{equation}
The combination of the last equality and (\ref{e pa+ = pa- -om aj}) can be interpreted
as \emph{a nonstandard reflection law} for the hyperbolic billiard of Section  \ref{sss HypBill}.

\subsection{Weakly*-continuous coordinates on N-dimensional faces}
\label{ss weak coord}

Let $N \in \NN$. 
With two $N$-tuples $b=(b_1,b_2, \dots, b_N) \in \RR^N$ and $\mu = ( \mu_1,\mu_2, \dots, \mu_N) \in \RR^N$ we associate
the measure
$
\displaystyle \ \dd \Delta_{b,\mu} := \sum_{j=1}^N \mu_j \delta (x-b_j) \dd x. \
$
Then (\ref{e int ep phi}) implies
\begin{equation} \label{e phi disc}
\vphi (x, z; \dd \Delta_{b,\mu} ) =  1- z^2 \sum_{b_j <  x} (x-b_j ) \ \vphi (b_j, z; \dd \Delta_{b,\mu} ) \ \mu_j .
\end{equation}

We say that $b$ is \emph{a tuple of ordered positions} if
\begin{eqnarray}
 0 \le b_1 \le b_2 \le \dots \le b_N \le \ell . \label{e b<b}
\end{eqnarray}
\emph{The set of tuples of ordered positions} is denoted by $\PP_N$. It is convex.


The following lemma concerns analyticity of $F (z;  \dd \Delta_{b,\mu} )$ in
$b_j$ and $\mu_j$.

\begin{lem} \label{l Fdisc}
For each $N \in \NN$, there exists polynomials $F_\disc (z; b, \mu )$, $\vphi_1 (z; b, \mu ) $, \dots, $\vphi_N (z; b, \mu ) $,
in the $2N+1$ variables
$z$, $b_1$, \dots, $ b_N $, $ \mu_1 $, \dots $\mu_N$, such that for any $b \in \PP_N$ the equalities
\[
F (z;  \dd \Delta_{b,\mu} ) = F_\disc (z; b, \mu ) \ \text{ and  } \
\vphi (b_j , z ;  \dd \Delta_{b,\mu} ) = \vphi_j (z; b, \mu ) , \ \ \ j=1,\dots, N
\]
hold (note that the polynomials depend also on $N$).
\end{lem}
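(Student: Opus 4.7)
The plan is to construct $\vphi_k$ and $F_\disc$ explicitly by a recursion on $k$, and then verify the claimed polynomial identities at points of $\PP_N$ using (\ref{e phi disc}) and the definition (\ref{e F}) of $F$. The key observation is that, for a discrete measure, (\ref{e phi disc}) becomes a genuine finite recursion on the list of values $\vphi(b_k,z;\dd\Delta_{b,\mu})$, $k=1,\dots,N$, whose right--hand side at stage $k$ only involves the values at earlier stages.

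Concretely, first I set $\vphi_1(z;b,\mu):=1$ and inductively define
\[
\vphi_k(z;b,\mu) \; := \; 1 - z^2 \sum_{j=1}^{k-1} (b_k-b_j)\,\mu_j\,\vphi_j(z;b,\mu), \qquad k=2,\dots,N .
\]
By induction, each $\vphi_k$ is a polynomial in $z,b_1,\dots,b_k,\mu_1,\dots,\mu_{k-1}$, hence \emph{a fortiori} in all $2N+1$ variables. Then, substituting $x=\ell$ in (\ref{e phi disc}) and reading off (\ref{e F}), I put
\[
F_\disc(z;b,\mu) \; := \; 1 - z^2 \sum_{j=1}^{N} (\ell-b_j)\,\mu_j\,\vphi_j(z;b,\mu) \; - \; \ii z \sum_{j=1}^{N} \mu_j\,\vphi_j(z;b,\mu),
\]
which is manifestly a polynomial in the same variables.

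Next, I verify $\vphi(b_k,z;\dd\Delta_{b,\mu}) = \vphi_k(z;b,\mu)$ by induction on $k$, for arbitrary $b\in\PP_N$. The base case $k=1$ is immediate: either $b_1=0$, so the initial condition gives $\vphi(0,\cdot)=1=\vphi_1$, or $b_1>0$, in which case (\ref{e phi disc}) at $x=b_1$ yields $1$ since the ordering $b\in\PP_N$ forbids $b_j<b_1$. For the step $k\mapsto k+1$, applying (\ref{e phi disc}) at $x=b_{k+1}$ produces a sum indexed by $\{j:b_j<b_{k+1}\}$, which for $b\in\PP_N$ differs from $\{1,\dots,k\}$ only by those $j\le k$ with $b_j=b_{k+1}$; on such $j$ the prefactor $(b_{k+1}-b_j)$ vanishes, so the two sums are equal. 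Inserting the induction hypothesis $\vphi(b_j,z;\dd\Delta_{b,\mu})=\vphi_j(z;b,\mu)$ for $j\le k$ reproduces exactly the recursion defining $\vphi_{k+1}$. For $F$, the analogous evaluation of (\ref{e phi disc}) at $x=\ell$, with any term $b_j=\ell$ absorbed again by $(\ell-b_j)=0$, together with the computation $\int_{0-}^{\ell+}\vphi(s,z;\dd\Delta_{b,\mu})\,\dd\Delta_{b,\mu}(s)=\sum_{j=1}^N\mu_j\vphi_j(z;b,\mu)$, yields $F(z;\dd\Delta_{b,\mu})=F_\disc(z;b,\mu)$ via (\ref{e F}).

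The only subtlety, and hence the only point that needs care rather than routine algebra, is that $b\in\PP_N$ permits coinciding positions $b_j=b_k$ (for $j<k$) and the value $b_j=\ell$. On such ``degenerate'' configurations the index set $\{j:b_j<x\}$ in (\ref{e phi disc}) at $x=b_k$ or $x=\ell$ is strictly smaller than $\{1,\dots,k-1\}$ or $\{1,\dots,N\}$, so one cannot naively replace one by the other. The construction survives this because in every potentially offending term the prefactor $(b_k-b_j)$ or $(\ell-b_j)$ is identically zero, so the two sums coincide; this is precisely what makes the recursion expressible as polynomials in \emph{all} $2N+1$ variables. I do not anticipate any further obstacle.
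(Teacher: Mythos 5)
Your proof is correct and follows essentially the same route as the paper's: you evaluate the discrete version (\ref{e phi disc}) of the integral equation at $x=b_k$ to obtain a finite recursion, show by induction that the values $\vphi(b_k,z;\dd\Delta_{b,\mu})$ are polynomials, and then assemble $F_\disc$ from $\vphi(\ell,\cdot)$ and the boundary term. The only differences are cosmetic: you write out the recursion and $F_\disc$ explicitly, you use the form $F=\vphi(\ell)-\ii z\int\vphi\,\dd M$ of (\ref{e F}) rather than invoking $\pa_x^+\vphi(\ell)$ as the paper does, and you explicitly verify that the argument survives ties $b_j=b_k$ and the case $b_j=\ell$ because the vanishing prefactors $(b_k-b_j)$ and $(\ell-b_j)$ absorb them — a useful point the paper treats implicitly.
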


\begin{proof}
Assume (\ref{e b<b}).
Then (\ref{e phi disc}) takes the form
\begin{equation*} 
\vphi (b_j , z ;  \dd \Delta_{b,\mu} )  =  1- z^2 \sum_{k=1}^{j-1} \ (x-b_j ) \  \vphi_k (z ; b , \mu ) \ \mu_j  .
\end{equation*}
In particular, $\vphi (b_1 , z ;  \dd \Delta_{b,\mu} )  = 1 =: \vphi_1 (z; b, \mu )$ and one can inductively show that
$\vphi (b_k , z ;  \dd \Delta_{b,\mu} ) $ are polynomials in $z$, $b_j$, and $\mu_j $ for all $k =1$, \dots, $N$.
We denote them by $\vphi_k (z; b, \mu )$.
Next, (\ref{e phi disc}) and (\ref{e b<b}) yield that $\vphi (\ell, z; \dd \Delta_{b,\mu} ) $ is a polynomial.
It follows from (\ref{e phi disc}) (or directly from (\ref{e pa phi 1})) that
\begin{equation*} 
\pa_x^+ \vphi (x, z; \dd \Delta_{b,\mu} ) =
- z^2 \sum_{b_j \le  x} (x-b_j ) \ \vphi (b_j, z; \dd \Delta_{b,\mu} ) \ \mu_j   .
\end{equation*}
Thus, $z^{-1} \pa_x^+ \vphi (\ell, z; \dd \Delta_{b,\mu} ) $ and, in turn, $F (z;  \dd \Delta_{b,\mu} )$ are also polynomials.
\end{proof}


Assume that $b^0 = (b^0_j )_{j=1}^N \in \PP_N$ is fixed.
A tuple $v= (v_j) \in \RR^N$ is \emph{a tuple of $b^0$-order preserving velocities} if there exists $\ep>0$
such that the $t$-dependent tuple
\begin{equation} \label{e b(t)}
b(t) = \left( b_j (t) \right) := (b_j^0 + t v_j)
\end{equation}
belongs to $\PP_N$ for all $t \in [0,\ep]$. The set of tuples of $b^0$-order preserving velocities
is denoted by $\V = \V (b^0)$. Obviously,
\[
\V (b^0) = \cone (\PP_N - b^0).
\]

Fixing $\mu$ and taking $z= \om$,
consider the function $F (\om; \dd \De_{b(t),\mu})$ with $b(t)$ as in (\ref{e b(t)}). This function depends only on $t$.
Its right derivative taken at $t=0$ is a functional of the velocities tuple $v$. It will be denoted by
\[
\DF_{b^0,\mu} (v) := \left[ \pa_t^+ F (\om, \dd \De_{b(t),\mu}) \right]_{t=0} .
\]
According to Lemma \ref{l Fdisc} this derivative exists (at least) for $v \in \V (b^0)$.
The set
\[
\DF [\V  ] = \DF_{b^0,\mu} [\V (b^0) ] := \{ \DF_{b^0,\mu} (v)  \ : \ v \in \V (b^0) \}
\]
of such derivatives is a convex cone in $\CC$.

One can see from (\ref{e dM=sum}) that for every $N_0 \ge n$
there exists a tuple of ordered positions $b^0 \in \PP_{N_0}$ and a tuple
of nonnegative numbers $\mu^0 \in \Rnneg^{N_0}$
such that $\dd M_0 = \dd \Delta_{b^0,\mu^0} $.

\begin{prop} \label{p w* boundary}
Assume $\re \om >0$ and  (\ref{e weak loc-bd}).
Let the set $S_0$ and the complex numbers $C_0$, $z_0$ be defined as in (\ref{e S0}), (\ref{e C0}), (\ref{e z0}).
Let $b^0 \in \PP_{N_0}$ and $\mu^0 \in \Rnneg^{N_0}$ be
such that $\dd M_0 = \dd \Delta_{b^0,\mu^0} $.
Then
\[
\text{$0$ is a boundary point
of the convex cone } \quad
 \Cone_0 := \cone \left( C_0 \DF [\V ] \cup [S_0 -z_0] \right)
\]
generated by the convex sets
$C_0 \DF [\V ] = \left\{ C_0 \DF_{b^0,\mu^0} (v)  \ : \ v \in \V (b^0) \right\}$
and  $S_0 -z_0$.
\end{prop}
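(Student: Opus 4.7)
The plan is to argue by contradiction in the spirit of Theorem~\ref{t ab loc bd}: if $0$ were an interior point of $\Cone_0$, one would combine position-perturbations $v \in \V(b^0)$ with mass-redistribution perturbations $\dd M - \dd M_0$ ($\dd M \in \Am$) into a two-parameter family of admissible measures converging weakly* to $\dd M_0$, whose quasi-eigenvalues would cover a full neighborhood of $\om$, contradicting~(\ref{e weak loc-bd}).

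First I set up the family. For fixed $v \in \V(b^0)$ and $\dd M \in \Am$, define
\[
\dd M (t_1, t_2) := (1-t_2)\, \dd \Delta_{b^0 + t_1 v,\, \mu^0} + t_2\, \dd M, \qquad (t_1,t_2) \in T_{\delta_1},
\]
for $\delta_1$ small enough that $b^0 + t_1 v \in \PP_{N_0}$ throughout. Convexity of $\Am$ gives $\dd M(t_1,t_2) \in \Am$ on $T_{\delta_1}$. The weak* convergence $\dd M(t_1,t_2) \to \dd M_0$ as $(t_1,t_2) \to (0,0)$ is immediate: $\int f \, \dd\Delta_{b^0+t_1 v,\mu^0} = \sum_j \mu_j^0 f(b_j^0+t_1 v_j) \to \int f \, \dd M_0$ for every $f \in C[0,\ell]$, and the $t_2$-convex combination converges in norm to this first term.

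Set $Q(z,t_1,t_2) := F(z;\dd M(t_1,t_2))$. By Lemma~\ref{l an}(i) the dependence on $t_2$ is analytic, and by Lemma~\ref{l Fdisc} the dependence on $t_1$ is polynomial once $\dd M(t_1,t_2)$ is unfolded on its discrete support; so $Q$ is analytic near $(\om,0,0) \in \CC^3$. Its partial derivatives there are
\[
\partial_{t_1} Q = \DF_{b^0,\mu^0}(v), \qquad \partial_{t_2} Q = \frac{\pa F(\om;\dd M_0)}{\pa M}(\dd M - \dd M_0) = \frac{m}{C_0} \bigl(\langle \Phi^2, m^{-1}\dd M\rangle - z_0\bigr),
\]
where the second identity uses~(\ref{e C0}). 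After multiplication by $C_0$ these values land in $C_0\,\DF[\V]$ and in $m(S_0 - z_0)$ respectively, i.e., inside the two generating sets of $\Cone_0$ (positive scaling is immaterial inside a convex cone). Now suppose for contradiction $0 \in \Intr \Cone_0$. Since $\Cone_0$ is a convex cone in $\CC \cong \RR^2$, this forces $\Cone_0 = \CC$. Then for every direction $\zeta \in \TT$ I can choose $v \in \V(b^0)$ and $\dd M \in \Am$ so that the associated $\eta_j$ of~(\ref{e def eta}) (with $r$ the algebraic multiplicity of $\om$) are nonzero and satisfy $\arg \eta_2 - \arg \eta_1 \in (0,\pi) \pmod{2\pi}$, with the sector between $\arg\sqrt[r]{\eta_1}$ and $\arg\sqrt[r]{\eta_2}$ containing the argument of $\zeta$. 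Lemma~\ref{l 2par per} then delivers $\ep > 0$ such that the zeros of $F(\cdot;\dd M(t_1,t_2))$ for $(t_1,t_2) \in T_{\delta_1}$ cover $\om + \DD_\ep(0) \cap \Sec[\cdot]$; all these zeros are admissible quasi-eigenvalues. By compactness of $\TT$, finitely many pairs $(v^{(k)},\dd M^{(k)})$ produce sectors covering $\TT$, hence a full disk $\DD_{\ep'}(\om) \subset \Si[\Am \cap W]$ for any prescribed weak* neighborhood $W$ of $\dd M_0$, violating~(\ref{e weak loc-bd}).

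The main obstacle is the bookkeeping in the last step: one must verify that the finitely many perturbation families associated with the $(v^{(k)},\dd M^{(k)})$ can be simultaneously squeezed inside a single prescribed weak* neighborhood $W$ of $\dd M_0$, and that the sectors delivered by Lemma~\ref{l 2par per} for different $k$ fit together around a common disk $\DD_{\ep'}(\om)$ instead of disks of incompatible radii. Both are resolved by choosing the scales $\delta_1^{(k)}$ smaller than a common threshold (using weak* continuity of each family established above) and taking $\ep' := \min_k \ep^{(k)}$. A secondary subtlety is the multiple-root case $r \ge 2$: the pair $(\eta_1, \eta_2)$ differs from the corresponding chosen generators of $\Cone_0$ by the common nonzero factor $-r!/\pa_z^r F(\om;\dd M_0)$, so the argument-difference hypothesis of Lemma~\ref{l 2par per} reduces precisely to prescribing two generators of $\Cone_0$ whose arguments differ by a value in $(0,\pi)$ -- which is exactly what $\Cone_0 = \CC$ guarantees.
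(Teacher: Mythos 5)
Your overall strategy matches the paper's: argue by contradiction, assemble two-parameter families of admissible measures, apply Lemma~\ref{l 2par per} to cover sectors around $\om$, and use a finite covering of $\TT$ to fill a disk, contradicting~(\ref{e weak loc-bd}). Your observation that the common factor $-r!/\pa_z^r F$ cancels in the argument-difference hypothesis is correct, and your concern at the end about shrinking the scales to fit a prescribed weak* neighborhood $W$ is the right thing to worry about and is indeed resolvable.

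The genuine gap is the claimed joint analyticity of $Q(z,t_1,t_2):=F(z;\dd M(t_1,t_2))$ with $\dd M(t_1,t_2)=(1-t_2)\,\dd\Delta_{b^0+t_1 v,\mu^0}+t_2\,\dd M$. Neither lemma you cite establishes it. Lemma~\ref{l an}(i) gives analyticity of $F$ on $\CC\times\Mes_\CC$ with the \emph{norm} topology, but $t_1\mapsto\dd\Delta_{b^0+t_1 v,\mu^0}$ is not even norm-continuous (shifting a point mass $m_j$ by an arbitrarily small distance produces a variation-norm change of $2m_j$), so analyticity in $t_1$ cannot be inherited that way. Lemma~\ref{l Fdisc} provides a polynomial $F_\disc(z;b,\mu)$ only for purely discrete measures with an ordered tuple $b\in\PP_N$; your $\dd M$ is an arbitrary element of $\Am$, so $\dd M(t_1,t_2)$ need not be discrete, and even when $\dd M$ is discrete one must check that the combined positions $\{b_j^0+t_1 v_j\}\cup\supp\dd M$ keep a fixed ordering for $(t_1,t_2)\in T_{\delta_1}$ (which can fail if $\supp\dd M$ meets $\{b_j^0\}$). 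Without joint analyticity near $(\om,0,0)\in\CC^3$, Lemma~\ref{l 2par per} does not apply, and the contradiction never gets started.

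The paper closes exactly this gap by a discretization step that your proposal lacks: Lemma~\ref{l 0 in Int S2} shows that when $0\in\Intr S_*$, one can choose three points $\zeta_1,\zeta_2,\zeta_3$ with $0\in\Intr\conv\{\zeta_1,\zeta_2,\zeta_3\}$, expressible using only finitely many trajectory values $\Phi^2(x_{j,k})$, and then absorb all the $x_{j,k}$ into an enlarged ordered tuple $b^1\in\PP_N$ with zero extra masses. After this, both the position perturbations and the mass perturbations live in the single convex set $\A^\disc_N\subset\RR^{2N}$, where $F_\disc$ is a genuine polynomial and hence jointly analytic; Theorem~\ref{t ab loc bd} then applies directly (Lemma~\ref{l 0 in Int S2 -> loc int disc}), and Lemma~\ref{l strong to weak} transfers the finite-dimensional local-interior conclusion back to the weak* setting. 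To repair your argument you would have to restrict $\dd M$ to discrete measures (which suffices by $\langle y,\Bone\rangle=\conv(y[\I]\cup\{0\})$), enlarge the tuple to absorb $\supp\dd M$ into the $b$-variables, and then your $Q$ becomes a composition with the polynomial $F_\disc$ — i.e., essentially the paper's proof.
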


\subsection{Proof of Proposition \ref{p w* boundary}} 
\label{ss pr p w* boundary}


It is enough to prove that $0 \in \Bd S_*$, where
\begin{equation} \label{e S*}
S_* := \conv \left( \frac{C_0}{m} \DF [\V ] \cup [S_0 -z_0] \right).
\end{equation}
This fact can be easily obtained from the following three lemmas.

For a tuple $b = (b_j) \in \RR^N$,
we denote by $\Phi^2 [b] $ the set consisting of $\Phi^2 (b_1)$, \dots, $\Phi^2 (b_N)$.

\begin{lem} \label{l 0 in Int S2}
Assume  $0 \in \Intr S_* $. Then there exist $N \ge N_0$, $b^1 \in \PP_{N}$
and $\mu^1 \in \Rnneg^{N}$ such that
$\dd M_0 = \dd \Delta_{b^1,\mu^1} $  and  $0 \in \Intr S_2$,
where
\begin{equation} \label{e S2}
S_2 := \conv \left( \ \frac{C_0}{m} \DF_{b^1,\mu^1} [\V (b^1) ] \cup S_1 \ \right)
\text{ and } S_1 := \conv \left( \Phi^2 [b^1] \cup \{0\} \right) - z_0 .
\end{equation}
\end{lem}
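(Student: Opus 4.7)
The plan is to use Carath\'eodory's finite-combination theorem in $\CC \cong \RR^2$ to replace the set $S_0 - z_0$ by the finitely generated convex set $S_1 = \conv(\Phi^2[b^1] \cup \{0\}) - z_0$, while arranging that the directional-derivative cone can only grow when we extend $b^0$ by zero-mass positions. Since $S_*$ is a convex subset of $\CC \cong \RR^2$ with $0 \in \Intr S_*$, I would first pick three affinely independent points $z_1, z_2, z_3 \in S_*$ whose convex hull $T$ is a triangle with $0 \in \Intr T$. Using that both $\frac{C_0}{m}\DF[\V]$ and $S_0 - z_0$ are convex, each vertex admits a representation $z_i = \lambda_i a_i + (1 - \lambda_i)(c_i - z_0)$ with $a_i \in \frac{C_0}{m}\DF_{b^0,\mu^0}[\V(b^0)]$, $c_i \in S_0$, and $\lambda_i \in [0,1]$. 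By Lemma \ref{l conv Trj y}, $S_0 = \conv(\Phi^2[\I] \cup \{0\})$, so each $c_i$ is a convex combination of finitely many elements of $\Phi^2[\I] \cup \{0\}$; collecting the underlying arguments yields a finite set $X \subset \I$.

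Next, I would form $b^1$ by inserting the elements of $X \setminus \{b^0_j\}_{j=1}^{N_0}$ into $b^0$ and reordering, setting $\mu^1$ to equal $\mu^0$ at the original positions and $0$ at the inserted ones. Then $b^1 \in \PP_N$ for $N = N_0 + |X \setminus \{b^0_j\}|$, $\mu^1 \in \Rnneg^N$, and $\dd M_0 = \dd \Delta_{b^1,\mu^1}$ because the inserted masses are zero. Each $c_i$ now writes as a convex combination of elements of $\Phi^2[b^1] \cup \{0\}$, so $c_i - z_0 \in S_1$. The remaining task is to show $\DF_{b^1,\mu^1}[\V(b^1)] \supseteq \DF_{b^0,\mu^0}[\V(b^0)]$: given $v \in \V(b^0)$, I would extend it to $\tilde v \in \RR^N$ by placing zeros at the inserted positions. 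Because the inserted positions are strictly distinct from all $b^0_j$, the strict ordering inequalities between adjacent coordinates of $b^1$ survive small perturbations, and the endpoint constraints $0 \le b^1_1(t)$ and $b^1_N(t) \le \ell$ (when relevant) are preserved by assigning velocity $0$ at inserted boundary positions. Hence $\tilde v \in \V(b^1)$, and since the inserted positions carry no mass, $\dd \Delta_{b^1(t),\mu^1} = \dd \Delta_{b^0(t),\mu^0}$ for small $t \ge 0$, so $\DF_{b^1,\mu^1}(\tilde v) = \DF_{b^0,\mu^0}(v)$.

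It then follows that each $a_i$ also lies in $\frac{C_0}{m}\DF_{b^1,\mu^1}[\V(b^1)]$, so $z_i \in S_2$ for $i = 1, 2, 3$; convexity of $S_2$ gives $T \subseteq S_2$, and therefore $0 \in \Intr T \subseteq \Intr S_2$. The main obstacle is the simultaneous \emph{enlargement} of the derivative cone and \emph{shrinkage} of the geometric convex set during the passage from $S_*$ to $S_2$: one must enrich $b^1$ enough that $S_1$ captures Carath\'eodory representations of the triangle vertices, while keeping $\V(b^1)$ compatible with velocities lifted from $\V(b^0)$. The resolution rests on the elementary but decisive observation that zero-mass positions are invisible to $F(\om;\cdot)$, so extending $b^0$ in this way costs nothing in the derivative set while freely enriching the available vertices.
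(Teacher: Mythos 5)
Your proof is correct and follows essentially the same route as the paper: pick a triangle around $0$ inside $S_*$, expand each vertex via the decomposition $\conv(A\cup B)=\{\lambda a+(1-\lambda)b\}$ and Lemma \ref{l conv Trj y}, adjoin the finitely many resulting trajectory points to $b^0$ with zero masses, and observe that the enlarged derivative cone and the finitely generated convex set $S_1$ together capture the triangle. The one small difference is that you insert only positions in $X\setminus\{b^0_j\}$ and prove the inclusion $\DF_{b^1,\mu^1}[\V(b^1)]\supseteq\DF_{b^0,\mu^0}[\V(b^0)]$, whereas the paper adjoins all $x_{j,k}$ and asserts the equality of the two derivative sets; your variant is slightly more cautious about order-preservation at coinciding positions, and $\supseteq$ is all that is needed for the conclusion.
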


\begin{proof}
Since $0 \in \Intr S_* $, it is possible to take $\zeta_1, \zeta_2, \zeta_3 \in S_*$ such that
$0 \in \Intr \, \conv \{ \zeta_1,\zeta_2,\zeta_3 \} $.
By (\ref{e S*}) and (\ref{e conv Trj 2}), $\zeta_j$ can be represented as convex combinations
\begin{equation} \label{e zetaj comb}
\zeta_j = \la_{j}^{[-1]} \frac{C_0}{m} \DF_{b^0,\mu^0} (v^j) - \la_{j}^{[0]} z_0 + \sum_{k=1}^{n_j} \la_{j}^{[k]}
[\Phi^2 (x_{j,k}) - z_0]
\end{equation}
with certain $v^j \in \V (b^0)$, $n_j \ge 0 $, $x_{j,k} \in \I $ and certain $\la_{j}^{[k]} \ge 0 $ satisfying
$\sum_{k=-1}^{n_j} \la_{j}^{[k]} = 1 $.
Let us construct a new tuple $b^1 \in \PP_{N+ n_1+n_2+n_3 }$ adjoining
$( x_{j,k} )^{k =1,\dots, n_k}_{j=1,2,3}$ to $b^0=( b^0_i )_{i=1}^N$ and then sorting the obtained tuple
in the increasing order. The associated tuple $\mu^1 \in \Rnneg^{N+ n_1+n_2+n_3 }$ is constructed
by insertion of $0$ into the tuple $\mu^0 =( \mu^0_i )_{i=1}^N$ at the places where $x_{j,k}$ were inserted into
$b^1$. So
\[
\dd \Delta_{b^1,\mu^1} = \dd \Delta_{b^0,\mu^0} = \dd M_0.
\]
Since any movements of zero point masses $\mu^1_i = 0$ inserted at $b^1_i = x_{j,k}$  do not influence the measure
$\dd \Delta_{b^1,\mu^1}$, it is easy to see that $\DF_{b^1,\mu^1} [\V (b^1) ] = \DF_{b^0,\mu^0} [\V (b^0) ] $.
This, (\ref{e zetaj comb}), and $0 \in \Intr \conv  \{ \zeta_1,\zeta_2,\zeta_3 \} $ yield that
$0$ is an interior point of $S_2$.
\end{proof}

Denote by $\A^\disc_N $ the set of $(b,\mu) \in \PP_{N} \times \Rnneg^{N}$ such that
$\dd \Delta_{b,\mu} \in \Am$.
Clearly, $\A^\disc_N $ is convex.
Let  $F_\disc $ be the polynomial defined in Lemma \ref{l Fdisc}.

\begin{lem} \label{l strong to weak}
Let $(b,\mu) \in \A^\disc_N$ be such that $\dd M_0 = \dd \Delta_{b,\mu}$.
Assume that $\om$ is a (strongly) $(b,\mu)$-local interior point of $\Si_{F_\disc} [\A^\disc_N]$.
Then $\om$ is a weakly* $\dd M_0$-local interior point of $\Si [\Am]$.
\end{lem}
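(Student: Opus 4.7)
The plan is to transfer the local interior property from the finite-dimensional parameter picture $(b,\mu) \in \A^\disc_N \subset \RR^{2N}$ to the infinite-dimensional measure picture $\dd M \in \Am$ by exploiting the fact that the natural assembly map $(b,\mu) \mapsto \dd \Delta_{b,\mu}$ is continuous from the Euclidean topology into the weak* topology on $\Mes_\CC$. Once this continuity is established, a standard pull-back of a weak* basic neighborhood of $\dd M_0$ yields a Euclidean neighborhood of $(b,\mu)$ inside which strong local interiority holds, and the inclusion of images under $\Si_F$ does the rest.

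First, I would make the identification of zero-sets precise. By Lemma \ref{l Fdisc}, for each $(b',\mu') \in \PP_N \times \Rnneg^N$ one has $F(z; \dd \Delta_{b',\mu'}) = F_\disc (z; b', \mu')$, hence $\Si_{F_\disc}(b',\mu') = \Si (\dd \Delta_{b',\mu'}) = \Si_F (\dd \Delta_{b',\mu'})$. In particular, for any subset $T \subset \A^\disc_N$,
\[
\Si_{F_\disc} [T] = \Si_F \bigl[ \, \{ \dd \Delta_{b',\mu'} \ : \ (b',\mu') \in T \} \, \bigr] .
\]
Next, I would verify weak* continuity of the assembly map $\Psi : (b',\mu') \mapsto \dd \Delta_{b',\mu'}$ from $\PP_N \times \Rnneg^N$ into $\Mes_\CC$: for every $f \in C [0,\ell]$, the scalar function $(b',\mu') \mapsto \int_{0-}^{\ell+} f(s) \, \dd \Delta_{b',\mu'} (s) = \sum_{j=1}^N \mu'_j f(b'_j)$ is continuous in the Euclidean topology. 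Since the weak* topology on $\Mes_\CC$ is generated by these evaluations, $\Psi$ is continuous as claimed. Moreover, $\Psi$ maps $\A^\disc_N$ into $\Am$ by definition of $\A^\disc_N$.

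Now fix an arbitrary weak* open neighborhood $\wt W$ of $\dd M_0 = \Psi(b,\mu)$ in $\Mes_\CC$. By continuity of $\Psi$, the preimage $\Psi^{-1}(\wt W)$ is open in the Euclidean topology on $\PP_N \times \Rnneg^N$ and contains $(b,\mu)$, so we may pick an Euclidean open neighborhood $W$ of $(b,\mu)$ with $W \subset \Psi^{-1}(\wt W)$. Then $\Psi (\A^\disc_N \cap W) \subset \Am \cap \wt W$, and combining this with the identification of zero-sets above gives
\[
\Si_{F_\disc} [\A^\disc_N \cap W] \ = \ \Si_F \bigl[ \Psi (\A^\disc_N \cap W) \bigr] \ \subset \ \Si_F [\Am \cap \wt W] .
\]
By hypothesis, $\om$ is a $(b,\mu)$-local interior point of $\Si_{F_\disc} [\A^\disc_N]$, so $\om$ is an interior point of $\Si_{F_\disc} [\A^\disc_N \cap W]$; hence $\om$ is also an interior point of the larger set $\Si_F [\Am \cap \wt W]$. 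Since $\wt W$ was an arbitrary weak* neighborhood of $\dd M_0$, this is exactly the assertion that $\om$ is a weakly* $\dd M_0$-local interior point of $\Si [\Am]$.

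The only step requiring genuine care is the continuity of $\Psi$ into the weak* topology and the matching of the two notions of $\Si$ via Lemma \ref{l Fdisc}; everything else is a formal transfer of interiority along set inclusion. I do not expect a substantial obstacle, since the weak* topology on $\Mes_\CC$ restricted to the compact interval $[0,\ell]$ is exactly the one generated by $C[0,\ell]$-evaluations, which are polynomial (hence smooth) in $(b',\mu')$.
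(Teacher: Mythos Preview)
Your proof is correct and follows essentially the same approach as the paper: both arguments pull back an arbitrary weak* neighborhood of $\dd M_0$ to a Euclidean neighborhood of $(b,\mu)$ via the continuity of $(b',\mu') \mapsto \dd\Delta_{b',\mu'}$, identify $\Si_{F_\disc}$ with $\Si_F$ through Lemma~\ref{l Fdisc}, and conclude by inclusion. The only difference is cosmetic: you phrase the key step as continuity of an ``assembly map'' $\Psi$, while the paper simply asserts the existence of the Euclidean neighborhood $W_E$ directly from the form of basic weak* neighborhoods.
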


\begin{proof} Let $W$ be a neighborhood of $\dd M_0$ in the weak* topology $\Tpl_{w*}$.
Take any $W_1 \subset W$ from the standard base of $\Tpl_{w*}$-neighborhoods of $\dd M_0$.
The latter means that there exist $\ep >0$
and a finite family of functions $y_j \in C [0,\ell]$, $j=1, \dots, k$, such that
$W_1 = \{ \dd M \ : \ | \langle y_j , \dd M - \dd M_0 \rangle  | < \ep \ , \ j=1,\dots, k \}$.
Clearly, there exists a neighborhood $W_E$ of $(b,\mu)$ (in
the Euclidean topology of $\RR^{2N}$) with the property
\[
\{ \dd \Delta_{b,\mu} \ : \  (b^1,\mu^1) \in W_E \} \subset W_1 .
\]
This and Lemma \ref{l Fdisc} yield $\Si_{F_\disc} [\A^\disc_N \cap W_E] \subset \Si [\Am \cap W]$.
By the assumption of the lemma, $\om \in \Intr \Si_{F_\disc} [\A^\disc_N \cap W_E]$.
Thus, $\om$ is an interior point of $\Si [\Am \cap W]$.
\end{proof}

\begin{lem} \label{l 0 in Int S2 -> loc int disc}
Assume that $(b^1,\mu^1) \in \A^\disc_N $, $\dd M_0 = \dd \Delta_{b^1,\mu^1} $, and $0 \in \Intr S_2$,
where $S_2$ is defined by (\ref{e S2}).
Then $\om$ is a $(b^1,\mu^1)$-local interior point of $\Si_{F_\disc} [\A^\disc_N]$.
\end{lem}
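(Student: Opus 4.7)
The plan is to apply Theorem \ref{t ab loc bd} with $U = \CC^{2N}$, $G = F_\disc$ (a polynomial by Lemma \ref{l Fdisc}, hence complex-analytic after extension), $z_0 = \om$, $u_0 = (b^1, \mu^1)$, and $S = \A^\disc_N$ viewed as a convex subset of $\RR^{2N} \subset \CC^{2N}$. Since $\A^\disc_N$ factors as the product $\PP_N \times M^\star$, where $M^\star := \{\mu \in \Rnneg^N : \sum_j \mu_j \le m\}$, it is convex, and $F_\disc(\om; b^1, \mu^1) = 0$ because $\dd M_0 = \dd\Delta_{b^1,\mu^1}$. What remains is to verify hypothesis (iii); by Proposition \ref{p iii'iii''}, this amounts to $0 \in \Intr L[\A^\disc_N - u_0]$, where $L$ denotes the Fr\'echet derivative of $F_\disc(\om; \cdot)$ at $u_0$.

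First I would compute $\tfrac{C_0}{m}L$ explicitly. Combining the definition of $\DF$ with Proposition \ref{p der F} and the identifications in (\ref{e C0})--(\ref{e z0}) gives, for $(v, w) \in \RR^{2N}$,
\[
\tfrac{C_0}{m}\, L(v, w) \;=\; \tfrac{C_0}{m}\,\DF_{b^1,\mu^1}(v) \;+\; \tfrac{1}{m}\sum_{j=1}^N w_j\, \Phi^2(b^1_j).
\]
As $(v, w)$ ranges over $\A^\disc_N - u_0 = (\PP_N - b^1) \times (M^\star - \mu^1)$, the substitution $w = m\nu - \mu^1$ (with $\nu \in \Rnneg^N$, $\sum_j \nu_j \le 1$) shows that the mass contribution $\tfrac{1}{m}\sum_j w_j \Phi^2(b^1_j) = \sum_j \nu_j \Phi^2(b^1_j) - z_0$ sweeps out exactly $S_1$. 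Hence $\tfrac{C_0}{m}L[\A^\disc_N - u_0]$ equals the Minkowski sum $\tfrac{C_0}{m}\DF_{b^1,\mu^1}[\PP_N - b^1] + S_1$.

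The main step is to convert $0 \in \Intr S_2$ into $0 \in \Intr \tfrac{C_0}{m}L[\A^\disc_N - u_0]$. I would pick three points $\zeta_1, \zeta_2, \zeta_3 \in S_2$ whose convex hull contains $0$ in its interior, and for each $k$ decompose $\zeta_k = \alpha_k \tfrac{C_0}{m}\DF_{b^1,\mu^1}(v^k) + \beta_k s^k$ with $v^k \in \V(b^1) = \cone(\PP_N - b^1)$, $s^k = \sum_j \nu^k_j \Phi^2(b^1_j) - z_0 \in S_1$, and $\alpha_k + \beta_k = 1$. Set $\widetilde v^k := \alpha_k v^k \in \V(b^1)$ and $\widetilde w^k := \beta_k(m\nu^k - \mu^1)$; writing $\widetilde v^k = r_k(p^k - b^1)$ with $r_k \ge 0$ and $p^k \in \PP_N$, the identities $b^1 + t\widetilde v^k = (1 - tr_k)b^1 + tr_k p^k$ and $\mu^1 + t\widetilde w^k = (1 - t\beta_k)\mu^1 + t\beta_k m\nu^k$ show that for a sufficiently small $\epsilon_k > 0$ the pair $\epsilon_k(\widetilde v^k, \widetilde w^k)$ lies in $\A^\disc_N - u_0$, while by linearity $\tfrac{C_0}{m}\, L\bigl(\epsilon_k(\widetilde v^k, \widetilde w^k)\bigr) = \epsilon_k\zeta_k$. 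Choosing $\epsilon = \min_k \epsilon_k$, the triangle $\conv\{\epsilon\zeta_k\}_{k=1}^3$ lies inside the convex set $\tfrac{C_0}{m}L[\A^\disc_N - u_0]$ and contains $0$ in its interior, so the same is true for $L[\A^\disc_N - u_0]$, and Theorem \ref{t ab loc bd} yields the conclusion. The main obstacle, and the reason for the detour through $S_2$, is that $S_2$ is built using the unbounded cone $\DF[\V(b^1)]$ while $L[\A^\disc_N - u_0]$ comes from the bounded image $\DF[\PP_N - b^1]$; the passage requires absorbing the convex weights $\alpha_k, \beta_k$ into the perturbations and then rescaling by $\epsilon_k$ to land inside the actual admissible differences.
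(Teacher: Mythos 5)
Your proof is correct and follows essentially the same route as the paper's: both appeal to Theorem~\ref{t ab loc bd} via Proposition~\ref{p iii'iii''} after extending $F_\disc$ to complex arguments, both identify $\frac{C_0}{m}\frac{\pa F_\disc}{\pa \mu}[\cdot]$ with $S_1$, and both relate $\DF[\V(b^1)]$ to $\cone\bigl(\frac{\pa F_\disc}{\pa b}[\PP_N - b^1]\bigr)$. The only notable difference is that the paper verifies form~(iii$^{\prime\prime}$) (interiority in the generated \emph{cone}), which absorbs the disparity between the unbounded cone $\DF[\V]$ and the bounded set $\DF[\PP_N-b^1]$ automatically, whereas you verify form~(iii$^{\prime}$) directly and therefore carry out the three-point rescaling argument by hand; your extra step is sound but unnecessary given that the cone formulation is already available in Proposition~\ref{p iii'iii''}.
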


\begin{proof}
By Theorem \ref{t ab loc bd} and Proposition \ref{p iii'iii''},
it is enough to show that $0 \in \Intr S_2$ implies
that assumption (iii'') of Proposition \ref{p iii'iii''} holds with $G= F_\disc$ and
$S= \A^\disc_N$.

To this end, we extend the polynomial $F_\disc $ to complex $b_j$ and $\mu_j$.
Then $F_\disc $ is an analytic functional of $ (z; b , \mu) \in \CC \times \CC^N \times \CC^N$.
Denote by $\frac{\pa F_\disc }{\pa b}  (v)$ and $\frac{\pa F_\disc }{\pa \mu}  ( \wt \mu)$
the directional derivatives in $b$ and $\mu$, resp., at the point $(\om;b^1,\mu^1)$.
We consider these derivatives as linear functionals of $v \in \CC^N$ and
$\wt \mu \in \CC^N$, respectively.

Obviously,
\[
\frac{\pa F_\disc }{\pa \mu}  (\wt \mu) = \frac{\pa F (\om; \De_{b^1,\mu^1})}{\pa M} \left(\De_{b^1,\wt \mu} \right)
= C_0^{-1} \sum_{j=1}^{N} \Phi^2 (b^1_j) \wt \mu_j.
\]
Hence the set $S_1$ defined in (\ref{e S2}) can be expressed as
$
S_1 =  \left\{ \frac{C_0}{m} \, \frac{\pa F_\disc }{\pa \mu}  (\wt \mu)  \ : \
(b^1, \mu^1 + \wt \mu )
\in \A^\disc_N  \right\} .
$
It follows from $\V (b^0) = \cone (\PP_N - b^0)$
that
$ \cone \left( \frac{\pa F_\disc }{\pa b}  [\PP_N - b^0 ]  \right) = \DF [\V (b^0)] .$

Summarizing, we see that $0 \in \Intr S_2$ implies that
$0$ is an interior point of the convex cone $S_3$ generated by the union of the sets
$\frac{C_0}{m} \frac{\pa F_\disc }{\pa b}  [\PP_N - b^0 ] $ and
$\frac{C_0}{m} \left\{ \frac{\pa F_\disc }{\pa \mu}  (\wt \mu)  \ : \ (b^1, \mu^1 + \wt \mu )
\in \A^\disc_N  \right\} $ (actually, this means that $S_3 = \CC$).
So condition (iii'') of Proposition \ref{p iii'iii''} is fulfilled.
Thus, Theorem \ref{t ab loc bd} yields that $\om$ is a $(b^1,\mu^1)$-local interior point of $\Si_{F_\disc} [\A^\disc_N]$.
\end{proof}

\subsection{Movements of point masses}


\begin{prop} \label{p m one move}
Assume that  $b^0 \in \PP_N$, $v \in \V (b^0)$,  and $1\le k \le N$
are such that
\begin{eqnarray}
& v_j = 0 \ \text{ and } \ b_j^0 \neq b_k^0 \ \ \ \text{ for all } \ \ \ j \neq k  \label{e vj=0 k}.
\end{eqnarray}
Assume that $\mu \in \RR^N$ and $F (\om; \dd \De_{b^0,\mu} ) = 0$.
Let $ b(t) = b^0  + t v$ and $\vphi (x) = \vphi (x, \om; \dd \De_{b^0,\mu})$ .
Then the right $t$-derivative
$\DF := \left[ \pa_t^+ F (\om, \dd \De_{b(t),\mu}) \right]_{t=0} $
is given by
\begin{equation} \label{e Do F k}
 \DF = - \frac{\ii \om  }{ \vphi (\ell)}
 \vphi (b_k^0) \left[\pa_x^- \vphi (b_k^0) + \pa_x^+ \vphi (b_k^0)\right]  v_k  \mu_k   .
\end{equation}
\end{prop}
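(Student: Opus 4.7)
The plan is to reduce the calculation of $\DF$ to the Fr\'echet derivative formula (\ref{e padM F}) by interpolating along a convex path of nonnegative measures. For $r\in[0,1]$ and small $t\ge 0$, set
\[
\dd M(r;t) := \dd \De_{b^0,\mu} + r\mu_k\bigl[\delta(\cdot - b_k(t)) - \delta(\cdot - b_k^0)\bigr]\dd x,
\]
which is a nonnegative measure carrying the unchanged masses $\mu_j$ at $b_j^0$ for $j\ne k$ together with masses $(1-r)\mu_k$ at $b_k^0$ and $r\mu_k$ at $b_k(t)$; in particular $\dd M(0;t)=\dd \De_{b^0,\mu}$ and $\dd M(1;t)=\dd \De_{b(t),\mu}$. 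Writing $\vphi_{r,t}(x):=\vphi(x,\om;\dd M(r;t))$, the Fr\'echet formula (\ref{e padM F}) applied to the constant direction $\mu_k[\delta(\cdot-b_k(t))-\delta(\cdot-b_k^0)]\dd x$ together with the fundamental theorem of calculus in $r$ yields
\[
F(\om;\dd \De_{b(t),\mu}) - F(\om;\dd \De_{b^0,\mu}) = -\ii\om\mu_k\int_0^1\frac{\vphi_{r,t}^2(b_k(t))-\vphi_{r,t}^2(b_k^0)}{\vphi_{r,t}(\ell)}\,\dd r.
\]
Since $F(\om;\dd \De_{b^0,\mu})=0$ by assumption, $\DF$ is the limit of $t^{-1}$ times the right-hand side as $t\to 0^+$.

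The key computation is a first-order expansion in $t$ of the integrand, uniformly in $r\in[0,1]$. Because $\dd M(r;t)$ coincides with $\dd \De_{b^0,\mu}$ strictly to the left of $\min(b_k^0,b_k(t))$, the piecewise-linear structure of $\vphi_{r,t}$ between consecutive atoms (with jumps in the one-sided derivative governed by (\ref{e pa+ = pa- -om})) makes it possible to propagate $\vphi_{r,t}$ explicitly across the short interval between the two close-by atoms at $b_k^0$ and $b_k(t)$. A short case analysis in the sign of $v_k$ then yields $\vphi_{r,t}(b_k^0),\vphi_{r,t}(b_k(t))\to\vphi(b_k^0)$ and $\vphi_{r,t}(\ell)\to\vphi(\ell)$ uniformly in $r$, together with the formula
\[
\vphi_{r,t}(b_k(t)) - \vphi_{r,t}(b_k^0) = tv_k\bigl[\pa_x^-\vphi(b_k^0) - \om^2\la(r)\mu_k\vphi(b_k^0)\bigr] + o(t),
\]
where $\la(r)=1-r$ when $v_k\ge 0$ and $\la(r)=r$ when $v_k<0$.

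Inserting this expansion, together with $\vphi_{r,t}(b_k(t))+\vphi_{r,t}(b_k^0)=2\vphi(b_k^0)+o(1)$, into the integral displayed above, dividing by $t$, sending $t\to 0^+$, and using the elementary identity $\int_0^1\la(r)\dd r=\tfrac12$ (valid for both choices of $\la$), one arrives at
\[
\DF = -\frac{2\ii\om\mu_k v_k \vphi(b_k^0)}{\vphi(\ell)}\Bigl[\pa_x^-\vphi(b_k^0) - \tfrac12\om^2\mu_k\vphi(b_k^0)\Bigr].
\]
The bracket equals $\tfrac12[\pa_x^-\vphi(b_k^0)+\pa_x^+\vphi(b_k^0)]$ by the jump identity $\pa_x^+\vphi(b_k^0)-\pa_x^-\vphi(b_k^0)=-\om^2\mu_k\vphi(b_k^0)$ of (\ref{e pa+ = pa- -om}), and this is exactly (\ref{e Do F k}).

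The main technical obstacle is the uniform-in-$r$ first-order expansion of $\vphi_{r,t}(b_k(t))$ when the atoms at $b_k^0$ and $b_k(t)$ nearly coincide. A clean justification is to apply Lemma \ref{l Fdisc} to the enlarged $(N+1)$-tuple that contains both $b_k^0$ and $b_k(t)$ as separate positions, with masses $(1-r)\mu_k$ and $r\mu_k$ respectively; the lemma represents $\vphi_{r,t}(b_k(t))$ as a polynomial in $(t,r)$ whose coefficients are continuous in $r$, and a direct Taylor expansion in $t$ identifies the leading coefficient with the propagation formula stated above. Everything else is routine bookkeeping.
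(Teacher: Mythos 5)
Your approach is genuinely different from the paper's. The paper's proof works directly with the discrete recursion for $\Phi(x,t)=\vphi(x,\om;\dd\Delta_{b(t),\mu})$, differentiates it in $t$, identifies $\theta(x)=\pa_t^+\Phi(x,0)$ (for $x>b_k^0$) as a solution of a Volterra integral equation on $[b_k^0,\ell]$, decomposes $\theta=c_2\vphi+c_3\psi$ via constancy of the Wronskian, and reads off $\DF$ from the boundary condition together with (\ref{e i / phi l}). You instead interpolate linearly in $\Mes_\CC$ between $\dd\Delta_{b^0,\mu}$ and $\dd\Delta_{b(t),\mu}$, apply the fundamental theorem of calculus in $r$ together with the Fr\'echet derivative of $F$ in $M$, and expand the resulting integrand to first order in $t$. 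Your method avoids the second fundamental solution $\psi$ entirely and replaces the Volterra/Wronskian machinery with a convexity trick; this is a legitimate, somewhat slicker route, and your final algebra (the splitting of $\vphi_{r,t}^2(b_k(t))-\vphi_{r,t}^2(b_k^0)$, the identity $\int_0^1\lambda(r)\,\dd r=\tfrac12$, and the jump identity (\ref{e pa+ = pa- -om aj})) does land exactly on (\ref{e Do F k}).

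Two points need correcting. First, and most substantively: you cite the simplified formula (\ref{e padM F}) with the factor $-\ii\om/\vphi_{r,t}(\ell)$ at the intermediate measures $\dd M(r;t)$, but that formula is only valid when $\om\in\Si(\dd M(r;t))$, which fails for $t>0$. The correct integrand (from the unsimplified version in the proof of Proposition \ref{p der F}) carries the factor $-\om\bigl[\om\psi_{r,t}(\ell)+\ii\pa_x\psi_{r,t}(\ell)\bigr]$ in place of $-\ii\om/\vphi_{r,t}(\ell)$. As written, your displayed integral identity is false as an equality. The argument is nonetheless salvaged because, as $t\to0^+$, $\om\psi_{r,t}(\ell)+\ii\pa_x\psi_{r,t}(\ell)\to\om\psi(\ell)+\ii\pa_x\psi(\ell)=\ii/\vphi(\ell)$ uniformly in $r$ (this does require a Gronwall-type or polynomial-coefficient argument, since the path is weak*- but not norm-convergent), while the bracket $\vphi_{r,t}^2(b_k(t))-\vphi_{r,t}^2(b_k^0)$ is already $O(t)$ uniformly in $r$; hence the substitution introduces only an $O(t^2)$ error, and the limit $\DF$ is unaffected. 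You should replace the claimed equality with the correct unsimplified integrand, or at least say explicitly that you are replacing $\om\psi_{r,t}(\ell)+\ii\pa_x\psi_{r,t}(\ell)$ by its $t\to0$ limit and control the error. Second, a minor point: you assert that $\dd M(r;t)$ is a nonnegative measure, but $\mu\in\RR^N$ is allowed in the proposition, so $\mu_k$ may be negative. The nonnegativity claim is neither true in general nor needed, since Lemma \ref{l an} and Proposition \ref{p der F} hold over $\Mes_\CC$.
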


\begin{proof}
Let $\Phi (x,t) := \vphi (x , \om ; \dd \Delta_{b(t),\mu}) $.
Then
\begin{eqnarray} \label{e Ph xt k}
\Phi (x,t) & = & 1- \om^2 \sum_{b_j (t) <  x} [x-b_j (t)] \ \Phi (b_j (t),t) \ \mu_j ,
\\
\pa_x^+ \Phi (x,t) & = & -\om^2 \sum_{b_j (t) \le x} \ \Phi (b_j (t),t) \ \mu_j . \label{e pax Ph xt k}
\end{eqnarray}
Put
$ \qquad
\Phi_j (t) := \Phi (b_j (t) \, , \, t) , \ \ \
\Phi_j^+ (t) := -\om^2 \sum_{i=1}^{j} \Phi_i (t) \mu_i .
$

For $t\in (0,t_0)$ with $t_0>0$ small enough, we have $b(t) \in \PP_N$ and
\begin{equation}
 \Phi_j (t) =  1- \om^2 \sum\limits_{i=1}^{j-1} [b_j(t) -b_i (t)] \Phi_i (t) \mu_i . \label{e Phj eq k}
\end{equation}

Differentiating (\ref{e Phj eq k}) and (\ref{e Ph xt k}) in $t$ with the use of
(\ref{e vj=0 k}), one gets
\begin{eqnarray} 
 \pa_t^+ \Phi_j (0)  & = & 0   \ \ \text{ if $j<k$ } , \notag
 \\
 \pa_t^+ \Phi_{k} (0) & = & - v_k \om^2 \sum_{i = 1}^{k-1}  \Phi_i (0) \mu_i ,
\label{e Dt Ph k} \\
\pa_t^+ \Phi (x,0) & = & - \om^2 \sum\limits_{\substack{i \geq k \\ b_i^0 < x} }
(x- b_i^0 ) \  \pa_t^+ \Phi_i (0) \ \mu_i +
\om^2 v_k \vphi (b_k^0 )  \mu_k \ \  \ \text{ if $x > b_k^0$}. \label{e D0 Ph k}
\end{eqnarray}
The last equality shows that, for $x \in [b_k^0,+\infty)$,
the function $\theta (x) := \lim\limits_{\wt x \to x+0 } \pa_t^+ \Phi (\wt x,0)$
is  a unique solution to the integral equation
\begin{eqnarray}
& y(x) = c_0 + c_1 (x-b_k^0) - \om^2 \int\limits_{b_k^0 +}^x (x-s) y(s) \dd \De_{b^0,\mu} \
\ \text{ on the interval } x \in [b_k^0,+\infty) \label{e th eq k}\\
& \text{with } \ \ c_0 = \om^2 \vphi (b_k^0 ) v_k  \mu_k   \ \ \text{ and  } \ \
c_1 =  - \om^2 \pa_t^+ \Phi_k (0) \mu_k . \notag
\end{eqnarray}
Note that $\theta (x) = \pa_t^+ \Phi (x,0)$ when $x>b_k^0$,
and $\theta (b_k^0) = \pa_t^+ \Phi (b_k^0,0)$ when $v_k \le 0$.
In the case $b^0_k = \ell$, the inequality  $v_{k} \le 0$ holds due to $v \in \V (b^0)$.

Since the functions $\vphi (x) $ and $\psi (x) := \psi (x , \om ; \dd \Delta_{b^0,\mu}) $
are also solutions of (\ref{e th eq k}) with the "initial data" $(c_0,c_1)$ equal to $\left( \vphi (b_k^0), \pa_x^+ \vphi (b_k^0) \right)$ and
$\left( \psi (b_k^0), \pa_x^+ \psi (b_k^0) \right)$, resp.,
the theory of Volterra integral equations (see e.g. \cite{A64}) states that
\begin{equation} \label{e th= k}
\theta (x) = c_2 \vphi(x) + c_3 \psi (x),  \ \  \text{ where $c_{2,3} $ are certain constants} .
\end{equation}

By definition, $\DF = \pa_t^+ \Phi (\ell,0) +  \frac{i}{\om} \, \pa_t^+ \pa_x^+ \Phi (\ell,0)$.
Taking $\pa_t^+$ of (\ref{e pax Ph xt k}) and $\pa_x^+$ of (\ref{e D0 Ph k}) and (\ref{e th eq k}),
we see that
$
\pa_x^+ \theta (\ell) = \pa_x^+ \pa_t^+ \Phi (\ell,0) = \pa_t^+ \pa_x^+ \Phi (\ell,0).
$
So
$
\DF = \theta (\ell) +  \frac{i}{\om} \, \pa_x^+ \theta (\ell) .
$
Using (\ref{e th= k}) and the equalities $\vphi (\ell) +  \frac{i}{\om} \, \pa_x^+ \vphi (\ell) = 0 $ and (\ref{e i / phi l}),
we get
\begin{equation} \label{e DoF psi k}
\DF = c_3 \left[ \psi (\ell) +  \frac{i}{\om} \, \pa_x^+ \psi (\ell) \right] = \frac{\ii c_3}{\om \vphi (\ell)} .
\end{equation}

The constants $c_2$ and $c_3$ can be expressed via $c_0$ and $c_1$ using (\ref{e th= k}) and
(\ref{e Wrons}) for $x=b_k^0$.
For $c_3$, one gets
\begin{gather*}
c_3 = \left|
\begin{array}{cc}
\vphi (b_k^0) & c_0 \\
\pa_x^+ \vphi (b_k^0) & c_1
\end{array}
\right|
= - \om^2  \vphi (b_k^0)
\left|
\begin{array}{cc}
1  &  - v_k  \mu_k \\
\pa_x^+ \vphi (b_k^0) &    \pa_t^+ \Phi_k (0) \mu_k
\end{array}
\right| =
- \om^2  \vphi (b_k^0) I_1 ,
\end{gather*}
where
$
I_1 =
\pa_t^+ \Phi_k (0) \mu_k + \pa_x^+ \vphi (b_k^0)  v_k  \mu_k   .
$
Using (\ref{e Dt Ph k}) and the equality
$\pa_x^- \vphi (b_k^0) = -\om^2  \sum_{i = 1}^{k-1}
\Phi_i (0) \mu_i$, one gets
$
I_1 =
\left[\pa_x^- \vphi (b_k^0) + \pa_x^+ \vphi (b_k^0)\right]  v_k  \mu_k  .
$
Combining this with (\ref{e DoF psi k}), we complete the proof.
\end{proof}

\subsection{Proof of statements (ii)-(iii) of Theorem \ref{t AM opt det}}
\label{ss proof weak* lb}

\textbf{First, we consider the case $n=2$ and prove statement (ii)}.
Propositions \ref{p billiard} (v) and \ref{p oinL} imply that the set $\{ \Phi (a_j) \}_1^n$ consists
of $n$ distinct points. This yields the uniqueness of the line $L$ satisfying (i.a)-(i.b) and the fact that $L$ is the unique supporting line to the set $ S_0 $ at $ z_0 = m^{-1} \sum \Phi^2 (a_j) m_j$.

\emph{Statement (ii.b)} follows from Propositions \ref{p billiard} (v) and \ref{p oinL}. It follows from  (ii.b) that \linebreak
$ \{ \Phi^2 (a_j) \}_{j=1}^n   \subset  [\Phi^2 (a_1) , \Phi^2 (a_n) ] $. In turn, this yields
\begin{eqnarray}
& z_0  \in  \bigl( \Phi^2 (a_1) , \Phi^2 (a_n) \bigr). \label{e z0 in segment}
\end{eqnarray}
Indeed, when $\| \dd M_0 \| = m$, one can see that $z_0$ is a convex combination of $ \Phi^2 (a_j) $
with nonzero coefficients $m_j/m$. So (\ref{e z0 in segment}) follows directly from $ \{ \Phi^2 (a_j) \}_{j=1}^n   \subset [\Phi^2 (a_1) , \Phi^2 (a_n) ] $.
Consider the case $\| \dd M_0 \| < m$. By Proposition \ref{p oinL}, one has $L = \RR$ and
\begin{equation} \label{e L=R n=2}
 \Phi^2 (a_1) = 1 >0, \qquad \qquad \Phi^2 (a_2) = \Phi^2 (\ell) < 0.
\end{equation}
Since $m_1/m$ and $m_2/m$ are less than $1$, we again get (\ref{e z0 in segment}).

It is easy to get from (\ref{e z0 in segment}) and (i.b) that
\begin{equation} \label{e H_0 in cone}
\overline{H_0} - z_0 = \cone (S_0 -z_0) ,
\end{equation}
where $H_0$ is the closed half-space  defined in Sections \ref{sss HypBill} and \ref{sss deg case}.
Indeed, (i.b) implies that the points of
$\Phi^2 \left[ \I \setminus \{ a_j \}_{j=1}^n \right] $ do not belong to $\Bd H_0 = L$.
By (\ref{e Trj in}), these points are in $ H_0 $. So the inclusion (\ref{e S0 sub H0}) yields
$S_0 \cap H_0 \neq \emptyset$. This, (\ref{e z0 in segment}), and
$
[\Phi^2 (a_1) , \Phi^2 (a_n) ] \subset S_0 \subset \overline{H_0}
$
imply (\ref{e H_0 in cone}).

Writing (\ref{e dM=sum}) with the use of the tuples of positions $a:=(a_j)_1^n \in \PP_n$ and masses $\mu = (m_j)_1^n \in \RR_+^n$, we see that $\dd M_0 = \dd \De_{a,\mu}$.
Combining (\ref{e H_0 in cone})  with Proposition \ref{p w* boundary} (applied to $\mu^0 = \mu $, $b^0 = a$ and $\V = \V (a)$), we see that
$0$ is a boundary point of
$\Cone_1 := \cone \left( C_0 \DF [\V ] \cup [\overline{H_0} - z_0] \right).$
So $\Cone_1 \neq \CC $ and
\begin{equation} \label{e DF V in H0}
C_0 \DF [\V ] \subset \overline{H_0} - z_0 .
\end{equation}

\emph{Let us prove (ii.a)}.
Assume that $a_1 > 0$.
Then $(v_j)_1^n$ is a tuple of $a$-order preserving velocities whenever
$v_j = 0$ for all $j \ge 2$ and $v_1 < 0$ (i.e., we can move $a_1$ slightly back keeping the tuple of positions ordered). Put $v_1 = -1$.
It follows from $\Phi (a_1)=1$,  $\pa_x^- \Phi (a_1)=0$,  $\pa_x^+ \Phi (a_1)= - \om^2 m_1 $, and
Proposition \ref{p m one move}
that
\begin{equation*} \label{e D move a1}
C_0 \DF_{a,\mu} (v) =  - \pa_x^+ \Phi (a_1)  m_1 = \om^2 m_1^2  .
\end{equation*}

From $n \ge 2$, Proposition \ref{p billiard} (v),  and Proposition \ref{p oinL}, we see that $0 = s_1 > \frac{\left< \om^2 , p \right>_\CC}{\im \om^2} $.
Since  $\im \om^2 < 0$, this gives $\left< \om^2 , p \right>_\CC >0$.
 The latter is equivalent to $\om^2 \not \in \overline{H_0} - z_0$. So $C_0 \DF_{a,\mu} (v) \not \in \overline{H_0} - z_0$.
This contradicts (\ref{e DF V in H0}).

\emph{Let us prove (ii.c)}.
Define the tuple $v = (v_j)_1^n$ by $v_k = 0$ for $k \neq j $ and $v_j= 1 $.
Since $a_{j-1} < a_j < \ell$, we see that $\pm v$ are tuples of $a$-order preserving velocities.
Proposition \ref{p m one move} and (\ref{e im vhi vhi'}) yield
\begin{equation*} 
C_0 \DF_{a,\mu} (v) =  \Phi (a_j) [\pa_x^- \Phi (a_j) + \pa_x^+ \Phi (a_j)] m_j \neq 0  .
\end{equation*}
Since $\DF_{a,\mu} (-v) = - \DF_{a,\mu} (v)$, the inclusion (\ref{e DF V in H0})
implies that $\pm C_0 \DF_{a,\mu} (v)$ are parallel to $L$. This can be written as
(\ref{e pa pm proj  p}).
Combining (\ref{e pa pm proj  p}) and (\ref{e pa+ = pa- -om aj}),
one gets
$
 \langle \pa_x^- \Phi^2 (a_j) \, , \, p \ \rangle_\CC
 =  m_j \langle \om^2 \Phi^2 (a_j) \, , \, p  \ \rangle_\CC ,
$
and then (\ref{e mj a neq l}).

\emph{Statement (ii.d)} follows from (\ref{e pa+ = pa- -om aj}) and (\ref{e BCl}).

\textbf{Finally, consider the case $\| \dd M_0 \| < m$ and prove statement (iii).}
By Propositions \ref{p z0} and \ref{p oinL}, $\RR$ is the only supporting line to $S_0$ at $z_0$
and $n \le 2$. Moreover, $z_0 \in (-\infty,1)$. Indeed,
in the case $n=2$, the latter follows from (\ref{e L=R n=2}). In the case $n=1$, from
 $m_1 = \|\dd M_0 \| < m$ and $\Phi (a_1) =1$.
Since every line $L$ satisfying (i) contains $z_0$ and $1$, we see that $L = \RR$.

It is easy to show that $\cone (S_0 -z_0) = \overline{\CC_+} $.
Indeed, in the case $n=1$, this  follows from $[0,1] \subset S_0$ and $z_0 \in (0,1)$. The case $n=2$ has been considered in the proof of (ii), see (\ref{e H_0 in cone}).

Assume $a_1 > 0$. Then, similarly to the proof of (ii.a),
one can show that $\om^2 m_1 \in C_0 \DF [\V]$. This and $\om^2 \in \CC_-$ yield
that $0 \in \Intr \Cone_0$. The latter contradicts Proposition \ref{p w* boundary}.

\section{Calculation of optimizers for small frequencies}
\label{s small fr}

Recall that, when $\dd M_0$ has the form (\ref{e dM=sum}),  $n$ stands for the number of points in $\supp \dd M_0$.

Proposition \ref{p Sclosed} and the implication (\ref{e impl loc bd md}) $\Rightarrow $ (\ref{e impl loc bd weak}) yield the following statement.

\begin{prop} \label{p calc of min from w*bd}
Let $\Bdlocw \Si [\Am]$ be the set of $\om$ that satisfy (\ref{e weak loc-bd}) for certain $\dd M_0 \in \Am$. Then
$\displaystyle \
\beta_{\min} (\alpha; \Am) = \min \{ -\im \om \ : \ \om \in \Bdlocw \Si [\Am] \text{ and } \re \om = \alpha \} \
$
(the minimum exists for each $\alpha \in \RR$).
\end{prop}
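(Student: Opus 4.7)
The plan is to establish the equality by showing the two inequalities between $\beta_{\min}(\alpha;\Am)$ and the infimum on the right-hand side, and simultaneously that the infimum is attained, so that it is in fact a minimum.

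First, I would invoke Proposition \ref{p Sclosed}(ii) to produce, for the given frequency $\alpha\in\RR$, a string $\dd M_0\in\Am$ of minimal decay for $\alpha$, together with the associated quasi-eigenvalue $\om=\alpha-\ii\beta_{\min}(\alpha;\Am)\in\Si(\dd M_0)$. By the implication \eqref{e impl loc bd md}$\Rightarrow$\eqref{e impl loc bd weak}, this $\om$ is a weakly* $\dd M_0$-local boundary point of $\Si[\Am]$, hence $\om\in\Bdlocw\Si[\Am]$. Since $\re\om=\alpha$ and $-\im\om=\beta_{\min}(\alpha;\Am)$, the set $\{-\im\om'\,:\,\om'\in\Bdlocw\Si[\Am],\ \re\om'=\alpha\}$ is nonempty and contains the value $\beta_{\min}(\alpha;\Am)$. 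This gives the inequality
\[
\inf \{-\im \om' \,:\, \om'\in\Bdlocw\Si[\Am],\ \re\om'=\alpha\} \ \le\ \beta_{\min}(\alpha;\Am).
\]

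For the reverse inequality, note that every $\om'\in\Bdlocw\Si[\Am]$ satisfies, by Definition \ref{d loc Bd qe}, $\om'\in\Si(\dd M_1)$ for some admissible $\dd M_1\in\Am$, so $\om'\in\Si[\Am]$. Consequently, if $\re\om'=\alpha$, then by the definition of $\beta_{\min}(\alpha;\Am)$ we have $-\im\om'\ge\beta_{\min}(\alpha;\Am)$. Taking the infimum over such $\om'$ yields the opposite inequality, and combined with the previous step shows that the infimum equals $\beta_{\min}(\alpha;\Am)$ and is attained at the $\om$ produced in the first step; hence it is a minimum.

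There is no real obstacle here: the whole content of the proposition is the combination of existence of an optimizer (Proposition \ref{p Sclosed}(ii)) with the elementary observation that any optimizer produces a weakly* local boundary point of $\Si[\Am]$, plus the trivial fact that weakly* local boundary points lie in $\Si[\Am]$ and are therefore bounded below by $\beta_{\min}(\alpha;\Am)$. The only point requiring a little care is to make sure one cites Definition \ref{d loc Bd qe} to justify $\Bdlocw\Si[\Am]\subset\Si[\Am]$, rather than using closedness of $\Si[\Am]$ from Proposition \ref{p Sclosed}(i), which plays no role in this particular argument.
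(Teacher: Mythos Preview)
Your proof is correct and follows essentially the same approach as the paper, which simply states that the proposition follows from Proposition~\ref{p Sclosed} together with the implication \eqref{e impl loc bd md}$\Rightarrow$\eqref{e impl loc bd weak}. Your write-up is more detailed than the paper's one-line justification, and your remark that only part~(ii) of Proposition~\ref{p Sclosed} is needed is accurate.
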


Let $\beta_1 : \RR \to \RR_+$ be an even function defined by
the equalities
\begin{eqnarray*}
\text{$\beta_1 (0) := m^{-1}$, \qquad $ \beta_1 (\alpha) := \frac{1}{2\ell} $ \ \ when \ \
$\alpha \neq 0$ and
$\alpha^2 \ge \frac{1}{m \ell } - \frac {1}{4 \ell^2} $, }  \label{e b1 1}\\
\text{and (in the case $m<4\ell$) \ \
$\beta_1 (\alpha) := m^{-1} + \sqrt{m^{-2} - \alpha^2}$
\ when \ $0<\alpha^2 < \frac{1}{m \ell } - \frac {1}{4 \ell^2} $}.
\label{e b1 2}
\end{eqnarray*}

\begin{thm} \label{t small fr}
Let $-(m\ell)^{-1/2} \le \alpha \le (m\ell)^{-1/2}$. Then:
\item[(i)] $ \beta_{\min} (\alpha; \Am) = \beta_1 (\alpha) $
\item[(ii)] There exists a unique string $\dd M^{[\alpha]}$ of minimal decay for the frequency $\alpha$.
\item[(iii)]
$\dd M^{[\alpha]} = m_1 \de (x- a_1 ) \dd x$ with
$m_1 $ and $a_1 $ given by
\begin{eqnarray}
& m_1 =m, \ \ \ a_1 = \ell  & \text{ when }
\alpha = 0;
\notag \\
& m_1 =m, \ \ \ a_1 = \ell - \frac{1}{2 m^{-1} + 2 (m^{-2} - \alpha^2)^{1/2}}  & \text{ when }
0<\alpha^2 < \frac{1}{m \ell } - \frac {1}{4 \ell^2} ;
\notag \\
& m_1 =\frac{1}{(4\ell)^{-1}+ \alpha^2 \ell}, \ \ \ a_1 = 0   & \text{ when }
\alpha \neq 0 \text{ and } \alpha^2 \ge \frac{1}{m \ell } - \frac {1}{4 \ell^2}  .
\label{e m1a1 case3}
\end{eqnarray}
\end{thm}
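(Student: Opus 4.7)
My approach is to reduce to the analysis of weakly* local boundary points via Proposition~\ref{p calc of min from w*bd} and then exploit the structural description in Theorem~\ref{t AM opt det}. The case $\alpha=0$ is immediate from Proposition~\ref{p al=0 min dec}, giving $\beta_1(0)=m^{-1}$ and the unique optimizer $m\,\de(x-\ell)\,\dd x$, so I concentrate on $0<\alpha^2\le 1/(m\ell)$. The essential claim is that, in this range, every weakly* local boundary point comes from a single point-mass string ($n=1$ in (\ref{e dM=sum})). Once this is established, the formulas in (iii) and the value $\beta_1(\alpha)$ follow by direct calculation using Example~\ref{ex 1 pt mas}.

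The \emph{$n=1$ analysis} proceeds as follows. A string $m_1\de(x-a_1)\dd x$ with $a_1<\ell$ has quasi-eigenvalue satisfying $\beta=1/[2(\ell-a_1)]$ and $\alpha^2+\beta^2=2\beta/m_1$, so $m_1 = 2\beta/(\alpha^2+\beta^2)$. The constraints $m_1\le m$ and $a_1\ge 0$ amount to $m\beta^2-2\beta+m\alpha^2\ge 0$ and $\beta\ge 1/(2\ell)$. A case split on whether the lower root $(1-\sqrt{1-m^2\alpha^2})/m$ of this quadratic lies above or below $1/(2\ell)$ reproduces exactly the two nontrivial branches of $\beta_1$ in the theorem, produces the minimizer $(m_1,a_1)$ uniquely, and (with the degenerate subcase $a_1=\ell$) covers all single point-mass configurations.

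The \emph{exclusion of $n\ge 2$} is where I expect the main difficulty. Theorem~\ref{t AM opt det}(iii) already reduces the regime $\|\dd M_0\|<m$ to $n\le 2$. In every remaining $n\ge 2$ subcase, (ii.a) pins $a_1=0$, so on $[0,a_2]$ the mode is the linear function $\Phi(x)=1-\om^2 m_1 x$. The supporting-line condition $\Phi^2(a_2)\in L=1+\ii p\RR$, combined with the reflection identities (\ref{e mj a neq l}) or (\ref{e mn a=l}) and positivity of the masses, determines $a_2$, $m_j$, $p$ algebraically in terms of $\om$; substituting these back into $F(\om;\dd M_0)=0$ and using the length constraint $a_n\le\ell$ should yield $|\im\om|>\beta_1(\alpha)$ throughout $\alpha^2\le 1/(m\ell)$. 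The cleanest route is probably to parametrize via $u=\om^2 m_1 a_2$ (and analogously for the last segment) and exploit that, for small $|\alpha|$, the hyperbola $\Hyp$ nearly degenerates to the coordinate axes, leaving no room for a non-degenerate two-segment billiard of total length $\le\ell$. Once $n\ge 2$ is excluded, uniqueness in (ii) follows from the uniqueness within the $n=1$ family established in the previous paragraph.
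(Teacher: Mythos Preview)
Your handling of $\alpha=0$ and the $n=1$ analysis is sound and essentially matches the paper (which packages the $n=1$ dichotomy via Lemmas~\ref{l two support lines}--\ref{l one support lines} and Proposition~\ref{p n=1}). The divergence, and the gap, is in the exclusion of $n\ge 2$.

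Your proposed route---recover $a_2$, $m_j$, $p$ algebraically from the billiard and reflection relations, substitute into $F(\om;\dd M_0)=0$, and conclude $|\im\om|>\beta_1(\alpha)$---is substantially harder than necessary, and the sketch is not a proof. In particular, $m_1$ and $p$ are \emph{not} determined by $\om$ alone (the paper's concluding remark explicitly lists them as free parameters), so ``substituting back'' still leaves a two-parameter family to control. The asymptotic idea that ``for small $|\alpha|$ the hyperbola nearly degenerates'' would at best handle a neighborhood of $\alpha=0$, not the full interval $\alpha^2\le 1/(m\ell)$.

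The paper's argument (Proposition~\ref{p om for n>=2}) is a one-line geometric estimate that you are close to but do not extract. By Propositions~\ref{p billiard}(iv) and~\ref{p oinL}, the point $\Phi(a_2)$ lies on the opposite branch $\Hyp^-$ (or on $\ii\RR$ in the degenerate case), hence $\re\Phi(a_2)\le 0$. Since $\Phi$ is affine on $[a_1,a_2]$ with $\Phi(a_1)=1$ and slope $-\om^2 m_1$, this forces
\[
\re\om^2 \;\ge\; \frac{1}{m_1(a_2-a_1)} \;\ge\; \frac{1}{m\ell}.
\]
But $\re\om^2=\alpha^2-\beta^2<\alpha^2\le 1/(m\ell)$, a contradiction. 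This disposes of $n\ge 2$ immediately and uniformly across the stated range, after which your $n=1$ computation and Proposition~\ref{p calc of min from w*bd} finish the proof exactly as you outline.
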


The proof is given in the Subsection \ref{ss proof small fr}.

\subsection{The case of two supporting lines}

\begin{lem} \label{l two support lines}
Let (\ref{e om loc-bd}) be fulfilled and $\re \om >0$.
Then the following statements are equivalent:
\item[(i)] There exist two distinct supporting lines to the set $ S_0 $ at $ z_0 $.
\item[(ii)] $\dd M_0 = m \delta (x-a_1) \dd x $ and $\arg_0 (\Phi^2 (\ell) -1) \neq \arg_0 \om^2$.
\end{lem}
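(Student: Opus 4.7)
The plan is to reformulate (i) as the statement that $z_0$ is a vertex (corner) of the convex set $S_0 = \conv(\Phi^2[\I]\cup\{0\})$ (see Lemma \ref{l conv Trj y}), equivalently that the tangent cone $\cone(S_0-z_0)$ is \emph{pointed} (contains no line). I will then use Propositions \ref{p billiard}, \ref{p oinL} to reduce $\dd M_0$ to a single point mass and the explicit form of the trajectory $\Phi^2[\I]$ to decide when $1$ is a vertex.

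For (i)$\Rightarrow$(ii), first rule out $n\ge 2$: by Proposition \ref{p z0}(i) every supporting line $L$ at $z_0$ contains $\Phi^2[\supp\dd M_0]=\{\Phi^2(a_j)\}_{j=1}^n$, and these $n$ points are pairwise distinct by Proposition \ref{p billiard}(v) in the case $0\notin L$ (via $s_1>\cdots>s_n$) or Proposition \ref{p oinL} in the case $L=\RR$ (which yields $\Phi^2(\ell)<0\neq 1$), so $L$ is uniquely determined. Next, rule out $n=1$ with $m_1=\|\dd M_0\|<m$: then $z_0=m_1/m\in(0,1)$ lies in the relative interior of the segment $[0,1]\subset \Bd S_0$ (the latter because $S_0\subset\overline{\CC_+}$ by Proposition \ref{p oinL}), giving the closed upper half-plane $\overline{\CC_+}$ as tangent cone and $\RR$ as the unique supporting line. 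Thus $\dd M_0=m\delta(x-a_1)\dd x$ and $z_0=\Phi^2(a_1)=1$. For the $\argb$ clause, compute $\pa_x^+\Phi^2(a_1)=2\Phi(a_1)\pa_x^+\Phi(a_1)=-2\om^2 m$, so the tangent cone at $1$ always contains the rays $\RR_+(-1)$ (from $0\in S_0$) and $\RR_+(-\om^2)$ (tangent to $\Phi^2[\I]$ at $1$); if $\argb(\Phi^2(\ell)-1)=\argb\om^2$, then $\Phi^2(\ell)-1\in\RR_+\om^2$ is antipodal to $-\om^2$, forcing the tangent cone to contain the full line $\RR\om^2$ and hence to fail pointedness, a contradiction.

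For (ii)$\Rightarrow$(i), use the explicit form $\Phi(x)=1-(x-a_1)\om^2 m$ on $[a_1,\ell]$ to factor $\Phi^2(x)-1=s\om^2 m(s\om^2 m-2)$ with $s=x-a_1\in[0,\ell-a_1]$. Since $\om^2\in\CC_-$, the factor $s\om^2 m-2$ moves along a ray from $-2$ in direction $\om^2$, staying in the open lower half-plane, so $\argb(s\om^2 m-2)$ varies continuously and monotonically from $-\pi$ at $s=0^+$ toward $\argb\om^2$ as $s\to\infty$, never meeting $\RR_+$; the $\argb$ condition forces $\argb(s\om^2 m-2)\neq 0$ at $s=\ell-a_1$. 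Consequently the entire arc $\Phi^2[\I]-1$ lies inside a proper convex cone spanned by the tangent ray $\RR_+(-\om^2)$ and the chord ray $\RR_+(\Phi^2(\ell)-1)$; adjoining $\RR_+(-1)$ (which a direct check places in the same closed half-plane) keeps $\cone(S_0-1)$ pointed, so $1$ is a vertex of $S_0$ and two distinct supporting lines exist.

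The main obstacle is the fine $\argb$-tracking along the quadratic arc $\Phi^2[\I]-1$, coordinating the principal branch cut along $\RR_-$ and certifying that the candidate tangent cone really is pointed; a secondary subtlety, arising because the lemma's second clause is in fact automatically satisfied under (\ref{e om loc-bd}) and $\re\om>0$ (one computes $(\Phi^2(\ell)-1)/\om^2\in\CC_-$ using $\Phi(\ell)=\ii\om m$), is to make the $\argb$ condition play a sharp and necessary geometric role within the proof rather than appear as a redundant hypothesis.
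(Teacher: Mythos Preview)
Your proof is essentially the paper's argument recast in the dual language of tangent cones: where the paper parametrizes the set $\pset$ of unit normals $p$ for which $L(p)=1+\ii p\RR$ supports $S_0$ at $z_0$ and shows that $\pset$ has more than one element exactly when the $\arg$-width $\xi_1(\ell)-\xi_1(a_1)$ of the arc $\Phi^2[\I]-1$ is strictly less than $\pi$, you phrase the same dichotomy as pointedness of $\cone(S_0-z_0)$. The reduction (i)$\Rightarrow$($n=1$, $m_1=m$) via Propositions \ref{p z0}, \ref{p billiard}, \ref{p oinL} is identical, and your factorization $\Phi^2(x)-1=s\om^2 m(s\om^2 m-2)$ with monotone $\argb(s\om^2 m-2)$ is exactly the paper's monotone branch $\xi_1$.

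Your closing observation that the $\argb$ clause in (ii) is in fact automatic under (\ref{e om loc-bd}) and $\re\om>0$ is correct: from $\Phi(\ell)=\ii\om m$ one gets $(\Phi^2(\ell)-1)/\om^2=-m^2-1/\om^2\in\CC_-$, hence never on $\RR_+$. The paper does not make this remark; it keeps the clause explicit and then exploits its \emph{negation} (i.e., $\Phi^2(\ell)-1\in\om^2\RR_+$) as the starting point of the contradiction in the proof of Lemma \ref{l one support lines}.
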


\begin{proof}
From Proposition \ref{p z0}, we see that there exists at leat one supporting line
to $ S_0 $ at $ z_0 $, that every such a line contains $\{ \Phi (a_j) \}_1^n$, and,
in particular, contains $1=\Phi (a_1) $. So every supporting line has the form $L (p) := 1+ \ii p \RR $, where
$p$ is a univ normal vector to $L (p)$.
Let $\pset $ be the set of $p \in \TT$ such that $S_0 \subset \overline{H_0 (p)}$ and $z_0 \in L (p)$ (see Sections \ref{sss HypBill} and \ref{sss deg case} for
the definition of $H_0 (p)$).
So $\{ L_p \}_{p \in \pset}$ is the set of all supporting lines to $S_0$ at $z_0$.
Clearly, $\pset \neq \emptyset$ and $\pset \subset \{ e^{\ii \xi} : \xi \in [-\pi/2, \pi/2) \}$ (since $0 \in S_0 \subset H_0 (p)$).

\emph{Step 1: statement (i) $\Rightarrow $ n=1 and $m_1 =m$.}
Suppose $n \ge 2$.
Since the points $\Phi^2 (a_1)$ and $\Phi^2 (a_2)$ are distinct and
belong to $L (p)$ for each $p \in \pset$, we see that there exists only one supporting  line to $ S_0 $ at $ z_0 $, a contradiction.
So $n=1$. Suppose $ m_1 < m$. By Proposition \ref{p z0} (iii), $\RR$ is the only supporting line to $ S_0 $ at $ z_0 $, a contradiction.

\emph{Step 2: restrictions on $\pset$ in the case when $n=1$ and $m_1 = m$. }
Suppose $n=1$ and $m_1 = m$.
In this case, $z_0 = 1$. In particular, $z_0 \in L(p)$ is satisfied for every $p \in \TT$.
Hence,
\begin{equation} \label{e pset eq 1}
p \in \pset \text{ if and only if }  \arg_0 p \in [-\pi/2, \pi/2) \ \text{ and \ $\langle \Phi^2 (x) -1 , p \rangle_\CC \le 0$ for all $x \in [a_1,\ell]$} .
\end{equation}

 Since $n=1$, we see that $\Phi [\I]= \{ 1- \om^2 m_1 (x-a_1) \ : \ x \in [a_1,\ell -a_1] \}$.
It follows easily from $\im \om^2 <0$ and $0 \not \in \Phi [\I] $ that, for $x \in (a_1, \ell]$,  there exists  a  continuous strictly increasing  branch $\xi_1 (x)$ of the multifunction
$\arg (\Phi^2 (x)-1)$ singled out by $\lim\limits_{x \to a_1+} \xi_1 (x) = \arg_0 (-\om^2)$. When $x= a_1$, we define $\xi_1 (a_1) :=  \arg_0 (-\om^2) $.
It follows from (\ref{e pset eq 1}) that
\begin{equation} \label{e pset eq 2}
p \in \pset \text{ if and only if } \arg_0 p \in [-\pi/2, \pi/2) \text{ and } [\xi_1 (a_1), \xi_1 (\ell) ] \subset [ \arg_0 p + \pi /2 , \arg_0 p + 3\pi/2]  .
\end{equation}

\emph{Step 3: (i) $\Rightarrow $ (ii)} If $\xi_1 (\ell) = \xi_1 (a_1) + \pi$, (\ref{e pset eq 2}) yields that $\pset$ consists of one number, and so a supporting line is unique, a contradiction.

\emph{Step 4: (ii)  $\Rightarrow $ (i)} Since $\pset \neq \emptyset$,  (\ref{e pset eq 2}) implies $\xi_1 (\ell) \le \xi_1 (a_1) + \pi$. Combining this with (ii), we see that
$\xi_1 (\ell) < \xi_1 (a_1) + \pi$. The latter, (\ref{e pset eq 2}), and $\xi_1 (a_1) = \im (-\om^2) > 0$ easily imply that there exist infinitely many $p \in \TT$ satisfying the right-hand
 side of the equivalence  (\ref{e pset eq 2}).
\end{proof}

\subsection{Proof of Theorem \ref{t small fr} and the case of single point mass}

\label{ss proof small fr}

\begin{lem} \label{l one support lines}
Let (\ref{e weak loc-bd}) be fulfilled and $\re \om >0$.
Assume that  $n=1$ and that there exists only one supporting line $L $ to the set $ S_0 $ at $ z_0 $.
Then $L= \RR$,  $m_1 < m$,  and $a_1 = 0$.
\end{lem}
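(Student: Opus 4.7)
The plan is to reduce everything to the single claim that $\| \dd M_0 \| < m$, because Theorem~\ref{t AM opt det}(iii), combined with the standing hypothesis $n = 1$, immediately forces $L = \RR$ and $a_1 = 0$. (Recall that (\ref{e weak loc-bd}) is available, and that it implies (\ref{e om loc-bd}) via (\ref{e impl loc bd weak})$\Rightarrow$(\ref{e impl loc bd strong}), so all previously established results for either form of local boundary point apply.)

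To prove $m_1 := \| \dd M_0 \| < m$, I argue by contradiction: suppose $m_1 = m$, so that $\dd M_0 = m \delta(x - a_1) \dd x$. First, I note $a_1 < \ell$: Example~\ref{ex 1 pt mas} shows that $a_1 = \ell$ would give $\Si(\dd M_0) = \{ -\ii/m \}$, contradicting $\re \om > 0$. With $m_1 = m$, the uniqueness of the supporting line triggers the converse implication (i)$\Leftarrow\!/\!$(ii) in Lemma~\ref{l two support lines}: since (i) fails and the first clause of (ii) holds, the second clause must fail, giving
\[
\argb (\Phi^2(\ell) - 1) = \argb \om^2, \qquad \text{i.e., } \Phi^2(\ell) - 1 = r \om^2 \text{ for some } r > 0.
\]

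Next I compute $\Phi^2(\ell) - 1$ explicitly. Because $n = 1$, Lemma~\ref{l phi on int M=0} (or direct integration of (\ref{e int ep phi})) gives $\Phi(x) = 1 - \om^2 m (x - a_1)$ for $x \in [a_1,\ell]$. Setting $\lambda := m(\ell - a_1) > 0$,
\[
\Phi^2(\ell) - 1 = (1 - \lambda \om^2)^2 - 1 = \lambda \om^2 \bigl( \lambda \om^2 - 2 \bigr).
\]
For the right-hand side to be a positive real multiple of $\om^2$, the factor $\lambda \om^2 - 2$ must itself be a positive real number. But $\om \in \QQ_{\mathrm{IV}}$ yields $\im \om^2 = -2 (\re \om)(-\im \om) < 0$, so
\[
\im(\lambda \om^2 - 2) = \lambda \, \im \om^2 < 0,
\]
a contradiction. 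Hence $m_1 < m$, and as explained in the first paragraph the conclusions $L = \RR$ and $a_1 = 0$ follow.

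The main obstacle is exactly the borderline case $m_1 = m$: here $z_0 = 1$ lies trivially on every line through $1$, so Proposition~\ref{p z0}(iii) gives no direct information and one must instead exploit the \emph{uniqueness} of the supporting line. Lemma~\ref{l two support lines} converts this uniqueness into the sharp angular identity $\argb(\Phi^2(\ell) - 1) = \argb \om^2$, and the explicit parabolic form of $\Phi^2$ on $[a_1,\ell]$ collides with the fact that $\om^2$ lies strictly in the open lower half-plane. Everything else is bookkeeping against previous theorems.
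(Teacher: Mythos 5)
Your proof is correct and rests on the same computational core as the paper's, but you organize the logic more economically. The paper proceeds in two steps: first it proves $L=\RR$ by contradiction (assuming $L\neq\RR$, deducing $m_1=m$ from Proposition~\ref{p z0}(iii), then using Lemma~\ref{l two support lines} and the explicit formula for $\Phi$ to reach a contradiction), and only then proves $m_1<m$ by a second contradiction argument that invokes Proposition~\ref{p oinL} (which is available only because $L=\RR$ has just been established). You instead notice that the explicit computation $\Phi^2(\ell)-1=\lambda\om^2(\lambda\om^2-2)$ already yields a contradiction from $m_1=m$ alone, without knowing $L$, because the hypothesis that the supporting line is unique together with $\dd M_0 = m\delta(x-a_1)\dd x$ forces $\argb(\Phi^2(\ell)-1)=\argb\om^2$ via the contrapositive of Lemma~\ref{l two support lines}, and the imaginary part of $\lambda\om^2-2$ then contradicts $\om\in\QQ_{\mathrm{IV}}$. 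Having obtained $m_1<m$ directly, you get $L=\RR$ and $a_1=0$ at once from Theorem~\ref{t AM opt det}(iii), bypassing both Proposition~\ref{p z0}(iii) and Proposition~\ref{p oinL}. The approaches prove the same statement with the same key lemma and the same algebra; yours is a slightly shorter route because it does not need to know $L=\RR$ before ruling out $m_1=m$.
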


\begin{proof}
To prove $L = \RR$, assume converse.
It follows from Proposition \ref{p z0}, Lemma \ref{l two support lines} and the lemma's proof that $m_1 = m$ and
$ \Phi^2 (\ell) -1 =  s \om^2 $ for certain $s > 0$.
Since $\Phi^2 (\ell) -1 = \om^4 m^2 (\ell - a_1)^2 -2 \om^2 m (\ell -a_1) $,
we have $ s = \om^2 m^2 (\ell - a_1)^2 -2 m (\ell -a_1) $. By (\ref{e aone<l}) ,
$a_1 < \ell$ and so $\im \om^2 = 0 $ .  This contradicts $\om \in \QQ_{IV}$.

 Let us prove $m_1 < m$ and $a_1 = 0$.
If $m_1 = m$, then Lemma \ref{l two support lines} yields $\Phi^2 (\ell) -1 \in \om^2 \RR_+$ and, in turn,
$\im \Phi^2 (\ell)  < 0 $. The latter contradicts Proposition \ref{p oinL}.
So $m_1 < m$. Theorem \ref{e dM=sum} (iii) yields  $a_1 = 0$.
\end{proof}

The following proposition essentially describes the strings consisting of a single point mass and  producing weakly* local boundary points of $\Si [\Am]$ with nonzero frequency.

\begin{prop} \label{p n=1}
Suppose (\ref{e weak loc-bd}),  $\re \om > 0$, and $n=1$.
Then $m_1 < 4 \ell$ and (exactly) one of the following two assertions holds:
\item[(i)] $m_1 = m < 4 (\ell -a_1) $ and
$\om = - \ii \frac {1}{2 (\ell - a_1) } + \sqrt{ \frac{1}{m (\ell - a_1)} -
\frac {1}{4 (\ell - a_1)^2}}$ .
\item[(ii)] $m_1<m$,  $a_1 = 0$,   and
$\om = - \frac {\ii}{2 \ell } + \sqrt{ \frac{1}{m_1 \ell } -
\frac {1}{4 \ell^2} } $ .
\end{prop}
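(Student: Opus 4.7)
My plan is to reduce Proposition~\ref{p n=1} to a direct application of the explicit single-mass formulas in Example~\ref{ex 1 pt mas}, combined with the mass-critical dichotomy supplied by Theorem~\ref{t AM opt det}(iii). Since $n=1$, the string has the form $\dd M_0 = m_1 \delta(x-a_1)\dd x$ and is completely determined by the two parameters $(m_1,a_1) \in (0,m] \times [0,\ell]$, so Example~\ref{ex 1 pt mas} already enumerates the possible quasi-eigenvalues.

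First, I would dispose of $a_1 = \ell$: by (\ref{e x0=l}), this gives $\Si(\dd M_0) = \{-\ii m_1^{-1}\} \subset \ii \RR$, contradicting $\re \om > 0$. So $a_1 < \ell$, and formula (\ref{e x0 ne l}) yields
\[
\om \;=\; -\frac{\ii}{2(\ell - a_1)} \;\pm\; \sqrt{\frac{1}{m_1(\ell-a_1)} - \frac{1}{4(\ell-a_1)^2}}.
\]
The hypothesis $\re \om > 0$ forces the radicand to be strictly positive (otherwise $\om \in \ii \RR$) and selects the $"+"$ branch; in particular this already yields $m_1 < 4(\ell - a_1)$.

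Next I would split on whether $\|\dd M_0\| = m$ or $\|\dd M_0\| < m$. In the first case $m_1 = m$, the displayed formula is exactly statement (i), and the bound $m < 4(\ell - a_1)$ is the one just obtained. In the second case $m_1 < m$, I would invoke Theorem~\ref{t AM opt det}(iii), which under hypothesis (\ref{e weak loc-bd}) forces $a_1 = 0$ (using $n = 1 \le 2$); substituting $a_1 = 0$ into the formula produces statement (ii), with $m_1 < 4\ell$ inherited from $m_1 < 4(\ell-a_1) = 4\ell$. The uniform bound $m_1 < 4\ell$ in the conclusion of the proposition is then the union of the two cases (in (i), $m_1 = m < 4(\ell - a_1) \le 4\ell$), and the mutual exclusivity of (i) and (ii) is automatic from $m_1 = m$ versus $m_1 < m$.

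I do not anticipate a serious obstacle: the structural content has already been absorbed into Theorem~\ref{t AM opt det}(iii), which pins down $a_1 = 0$ in the subcritical mass regime, and Example~\ref{ex 1 pt mas} supplies the closed-form spectrum. The only points requiring modest care are the branch selection in the square root (fixed by $\re \om > 0$) and verifying that $m_1 < 4(\ell - a_1)$ is a strict inequality, which is forced because $m_1 = 4(\ell - a_1)$ would place $\om$ on $\ii \RR$.
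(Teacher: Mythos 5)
Your proof is correct. The paper's own argument also begins by extracting $m_1<4\ell$ (and the branch selection) from Example~\ref{ex 1 pt mas} and $\re\om>0$, so that part is identical. Where you diverge is in handling the dichotomy: the paper routes through the supporting-line Lemmas~\ref{l two support lines} and \ref{l one support lines}, which first classify whether $S_0$ has one or two supporting lines at $z_0$, and then deduce $m_1=m$ or ($m_1<m$, $a_1=0$) from that geometric classification. You instead split directly on $m_1=m$ vs.\ $m_1<m$ and invoke Theorem~\ref{t AM opt det}(iii) in the subcritical case. This is logically sound and in fact slightly shorter, since the proof of Lemma~\ref{l one support lines} itself ultimately appeals to Theorem~\ref{t AM opt det}(iii) to get $a_1=0$; you are simply bypassing the intermediate supporting-line reformulation. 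What the paper's route buys is that Lemmas~\ref{l two support lines}--\ref{l one support lines} carry additional geometric information ($L=\RR$, the argument condition on $\Phi^2(\ell)-1$) that is reused elsewhere (Section~\ref{s c rem}), but none of it is needed for this particular proposition; your more direct path gives the same conclusion with less machinery.
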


\begin{proof}
Formula (\ref{e x0 ne l}) and $\re \om >0$ easily imply $m_1 < 4 \ell$ and, in combination with Lemmas \ref{l two support lines}-\ref{l one support lines},
the rest of the statement.
\end{proof}

\begin{prop}\label{p om for n>=2}
Suppose  (\ref{e om loc-bd}), $\re \om > 0$, and $n \ge 2$. Then
$
\re \om^2 \ge \frac{1}{m_1 (a_2 - a_1) } \ge \frac{1}{m \ell} > 0 .
$
\end{prop}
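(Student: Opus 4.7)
The plan is to compute $\Phi(a_2)$ explicitly on the interval $[a_1,a_2]$, which is free of mass, and then use the conclusions of Propositions \ref{p billiard} and \ref{p oinL} to show that $\re \Phi(a_2)\le 0$; reading this inequality through the formula for $\Phi(a_2)$ yields exactly the claimed lower bound on $\re\om^2$.

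First, since $\dd M_0 = \sum_{j=1}^n m_j \delta(x-a_j)\dd x$ with $a_1=\aone$ and $n\ge 2$, we have $\Phi(a_1)=1$ and $\pa_x^-\Phi(a_1)=0$ by the boundary condition. Equation (\ref{e pa+ = pa- -om}) at $x=a_1$ gives $\pa_x^+\Phi(a_1) = -\om^2 m_1$. Because $\supp \dd M_0 \cap (a_1,a_2) = \emptyset$, Lemma \ref{l phi on int M=0} shows that on $[a_1,a_2]$ the mode $\Phi$ moves with constant velocity $-\om^2 m_1$, so
\[
\Phi(a_2) = 1 - \om^2 m_1 (a_2-a_1).
\]

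The main step is the inequality $\re \Phi(a_2) \le 0$. I would split according to whether (\ref{e 0 nin L}) or (\ref{e 0 in L}) holds. In the hyperbolic case of Section \ref{sss HypBill}, Proposition \ref{p billiard}(iv) places $\Phi(a_2)$ on the branch $\Hyp^-=\{-\sqrt{1+\ii p s}:s\in\RR\}$. Since $p$ is the unit normal to $L$ pointing away from $0$, we have $\re p >0$, so for every $s\in\RR$ the number $1+\ii p s$ has nonzero imaginary part except at $s=0$, and then equals $1\notin\RR_{\le 0}$; with the principal branch of the square root this forces $\re\sqrt{1+\ii ps}>0$, hence $\re\Phi(a_2)<0$ strictly. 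In the degenerate case of Section \ref{sss deg case}, since $n\ge 2$ Proposition \ref{p oinL}(i) forces $a_2=\ell\in\supp\dd M_0$, and then Proposition \ref{p oinL}(iii) gives $\re\Phi(\ell)=0$. In either case $\re\Phi(a_2)\le 0$.

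Substituting into the formula for $\Phi(a_2)$, we get $\re\bigl(\om^2\bigr)\, m_1 (a_2-a_1)\ge 1$, i.e. $\re\om^2\ge \frac{1}{m_1(a_2-a_1)}$. The second inequality is immediate: since $\dd M_0\in\Am$, the mass $m_1\le \|\dd M_0\|\le m$, and since $a_1\ge 0$ and $a_2\le \ell$, we have $a_2-a_1\le \ell$, so $m_1(a_2-a_1)\le m\ell$ and therefore $\frac{1}{m_1(a_2-a_1)}\ge \frac{1}{m\ell}>0$.

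The only delicate point is the assertion $\re\Phi(a_2)\le 0$ in the hyperbolic case: one must check that membership in $\Hyp^-$ genuinely forces a negative real part, which relies on the fact that $\re p>0$ (so the line $L$ does not hit $\overline{\RR_-}$) and on the choice of the principal square root used to define $\Hyp^\pm$. All the remaining steps are direct substitutions.
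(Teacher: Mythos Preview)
Your proof is correct and follows essentially the same route as the paper's: compute $\Phi(a_2)=1-\om^2 m_1(a_2-a_1)$ from Lemma~\ref{l phi on int M=0} and (\ref{e pa+ = pa- -om}), invoke Propositions~\ref{p billiard}(iv) and~\ref{p oinL} to obtain $\re\Phi(a_2)\le 0$, and read off the inequality. The paper compresses the hyperbolic step into the single remark that Propositions~\ref{p billiard}(iv) and~\ref{p oinL} give $\re\Phi(a_2)\le 0$, while you spell out why membership in $\Hyp^-$ forces a negative real part; this extra care is harmless and justified.
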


\begin{proof} By Lemma \ref{l phi on int M=0} and (\ref{e pa phi 1}),
$\pa_x \re \Phi (x) = - m_1 \re \om^2 $ for $x \in (a_1,a_2)$. It follows from Propositions \ref{p billiard} (iv) and \ref{p oinL}
that $\re \Phi (a_2) \le 0$. Since $\Phi (a_1) =1$, we see that $\re \om^2 > 0$ and
$1 \le m_1 \re \om^2 (a_2 - a_1)$.
\end{proof}

Propositions \ref{p n=1} and \ref{p om for n>=2} allows one to describe
all $\om $ and $\dd M_0$ satisfying (\ref{e weak loc-bd}), $\re \om \neq 0$, and
$\re \om^2 < \frac{1}{m \ell}$.
Straightforward calculations and Propositions \ref{p al=0 min dec}, \ref{p calc of min from w*bd} complete the proof of Theorem \ref{t small fr}.

\section{Concluding remarks}
\label{s c rem}

\textbf{1. Reduction to a problem with four real parameters.}
Let $\re \om > 0$. Assume that $\om$ is of minimal decay for $\re \om$
(or, more generally, satisfies (\ref{e weak loc-bd})  with a certain $\dd M_0$), but $\dd M_0$ and $\Phi (x) = \vphi (x,\om;\dd M_0)$ are unknown.
Then the quasi-eigenvalue $\om$, the first mass $m_1 \in \RR_+$, and the normal $p \in \{ e^{\ii \xi } \ : \ \xi \in [-\pi/2, \pi/2)\}$ from Theorem \ref{t AM opt det} completely determine
the mode $\Phi$ and the string $\dd M_0$, which can be calculated via the following procedure.

\emph{Case 1.} If $\om $ belongs to the circle $\TT_{1/m} (- \ii/m) = \{z \in \CC \ : \  |z + \ii/m | = 1/m\}$, then
$m_1=m$ and the case (i) of Proposition \ref{p n=1} takes place
(this follows from \cite[Lemma 4.1]{Ka13_KN}). So $n=1$ and $a_1$ can be recovered from the formula for $\om$, i.e.,
$a_1  = \ell + \frac {1}{2 \im \om }$. This gives $\dd M_0 = m \de (x - a_1) \dd x$ (and, in turn, gives
$\Phi$ if one needs it).

\emph{Case 2.} If $\om \not \in \TT_{1/m} (- \ii/m)$, then
Theorem \ref{t AM opt det} (ii.a) and Proposition \ref{p n=1} imply that $a_1 = 0$.
Since $m_1$, $\om$, and $p$ are known, one can find the first line segment $[\Phi (a_1), \Phi (a_2)]$ of Theorem \ref{t AM opt det} (i.a) using (\ref{e pa+ = pa- -om aj}) and Lemma \ref{l phi on int M=0}.

Indeed, consider the ray $\Ray_1 := 1 - \om^2 \RR_+$. If $\Ray_1$ does not intersect the hyperbola
$\Hyp=  \{ \pm  \sqrt{1+\ii p s } \ : \
s \in \RR \}$
(which is associated with $p$ and, in the case $p=-\ii$, is degenerate), then $n=1$ and $\dd M_0 = m_1 \de (x) \dd x$.

If $\Ray_1$ intersects $\Hyp$ at a point $1 - \om^2 m_1 \ga$ with $\ga > 0$, there are three cases. In the case $\ga > \ell$, one can see that $n=1$ and again
$\dd M_0 = m_1\de (x) \dd x$. When $\ga = \ell$, we have either $n = 2$, $a_2 = \ell  $, and $m_2>0$ given by (\ref{e mn a=l}), or $n=1$ and  $\dd M_0 = m_1\de (x) \dd x$ (if  (\ref{e mn a=l}) gives $0$ for $m_2$).
In the third case  $\ga < \ell$, one has
$n \ge 2$, $a_2 = \ga $, and the second mass $m_2$ can be found via (\ref{e mj a neq l}).
Then (in the third case) the procedure of finding of line segments of the trajectory of $\Phi$, positions $ a_j $,  and masses $ m_j $ can be continued inductively with the use of assertions (i) and (ii) of Theorem \ref{t AM opt det}.

Thus, the four parameters, $\re \om$, $\im \om$, $m_1$, and $\xi = \arg_0 p$,
completely determine $\Phi$ and $\dd M_0$.

\noindent \textbf{2. Optimizers over $\Am$ that are not extreme points of $\Si [\Am]$}.
In the case when $\alpha \neq 0$ and $\frac{1}{m \ell } - \frac {1}{4 \ell^2} < \alpha^2 \le  \frac{1}{m \ell } $, formula (\ref{e m1a1 case3}) and Proposition \ref{p n=1} imply that the unique string $\dd M^{[\alpha]}$ of minimal decay for the frequency $\alpha$ has the form  $m_1 \de (x ) \dd x$ with $0 < m_1 < m$.
So $\dd M^{[\alpha]}$ is not an extreme point of $\Am$ (but it belongs to the 2-D face
$[0\dd x \, , \, \de (x ) \dd x]$ of $\Am$).

\noindent \textbf{3. Weakly* local boundary points of $\Si [\Am]$ that are not of minimal decay over $\Am$.}
Let $m < 2\ell$ and $0 < a_1 \le \ell - \frac{m\ell}{4\ell-m}$. Then
$\om = - \ii \frac {1}{2 (\ell - a_1) } + \sqrt{ \frac{1}{m (\ell - a_1)} -
\frac {1}{4 (\ell - a_1)^2}}$ is a boundary point of $\Si [\Am]$, but is not a quasi-eigenvalue of minimal decay for corresponding frequency $\alpha =  \sqrt{ \frac{1}{m (\ell - a_1)} - \frac {1}{4 (\ell - a_1)^2}}$.

Indeed, $\om \in \Bd \Si [\Am]$ follows from  \cite[Lemma 4.1]{Ka13_KN}.
On the other side, $\om$ is not a quasi-eigenvalue of minimal decay for $\alpha $ due to  the fact that the case (ii) of Proposition \ref{p n=1} gives the smaller value $\frac{1}{2\ell}$ for the decay rate.

Obviously,  $\om$ is weakly* $\dd M_0$-local  boundary point of $\Si [\Am]$ for the associated string $\dd M_0 = m \de (x-a_1) \dd x$.

\textbf{4. Optimizers reach one of the constraints} as it is shown by the following result.

\begin{cor} \label{t constr}
Let $\dd M_0$ be of minimal decay for a certain frequency. Then at least one of the two following statements hold true: (i)  $\| \dd M_0 \|=m$, (ii) $\aone (\dd M_0) = 0$.
(In other words, $\dd M_0$ has maximal possible mass or $\dd M_0$ has maximal possible length.)
\end{cor}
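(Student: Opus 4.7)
The plan is to reduce the statement to previously established results, splitting by the sign of the frequency $\alpha = \re \om$ of a quasi-eigenvalue $\om$ of minimal decay produced by $\dd M_0$.

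First, I would dispense with the zero-frequency case. If $\alpha = 0$, Proposition \ref{p al=0 min dec} identifies $\dd M_0$ uniquely as $m \de (x- \ell) \dd x$, for which $\| \dd M_0 \| = m$, so (i) holds.

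For $\alpha \neq 0$, I would reduce to $\alpha > 0$ using Remark \ref{r sym}: since $\Si (\dd M)$ is symmetric with respect to $\ii \RR$ for any $\dd M \in \Mes_+$, the same string $\dd M_0$ that is optimal for the frequency $\alpha < 0$ through the quasi-eigenvalue $\om = \alpha - \ii \beta $ is also optimal for $-\alpha$ through $\overline{-\om} = -\alpha - \ii \beta$; so replacing $\om$ by $\overline{-\om}$ if necessary does not change $\dd M_0$ but allows us to assume $\re \om > 0$. Then the implication (\ref{e impl loc bd md}) $\Rightarrow$ (\ref{e impl loc bd weak}) ensures that $\om$ is a weakly* $\dd M_0$-local boundary point of $\Si [\Am]$, so Theorem \ref{t AM opt det} applies.

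The crucial step is now immediate from Theorem \ref{t AM opt det}(iii): if $\| \dd M_0 \| < m $, then $a_1 = 0$, which is exactly the statement $\aone (\dd M_0) = a_1 = 0$ and yields (ii). Consequently the negation of (i) forces (ii), establishing the required dichotomy. The only ``obstacle'' is just the bookkeeping to make sure the symmetry reduction in the case $\alpha < 0$ really preserves the same string $\dd M_0$, but this is already built into Remark \ref{r sym} and the reality of $\dd M_0 \in \Mes_+$, so no new analytic work is required.
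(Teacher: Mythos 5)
Your proof is correct and follows essentially the same route as the paper: split off $\alpha=0$ via Proposition \ref{p al=0 min dec}, reduce $\alpha<0$ to $\alpha>0$ by the symmetry of Remark \ref{r sym}, and then for $\alpha>0$ invoke Theorem \ref{t AM opt det}(iii) to conclude $\| \dd M_0 \|<m$ forces $a_1=\aone(\dd M_0)=0$. The paper also cites Proposition \ref{p n=1}, which handles the $n=1$ case directly, but as you observe Theorem \ref{t AM opt det}(iii) already covers it, so nothing is missing.
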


This fact immediately follows from the combination of
Theorem \ref{t AM opt det} with Propositions \ref{p n=1} and \ref{p al=0 min dec}.

It is interesting to compare Corollary \ref{t constr} with numerical \cite{KS08}, \cite[the problem $\mathrm{Opt_{\max}}$]{HBKW08} and analytical \cite{Ka13,Ka13_KN}  results on the quasi-eigenvalue optimization with quite different constraints on coefficients. These results also lead to the conclusion that at least one of the constraints is reached for optimizers.

\section{Appendix: the proof of Lemma \ref{l 2par per}}
\label{s Appendix}

We use several lemmas from \cite{Ka13}. Lemma \ref{l P ser} is \cite[Lemma 3.4]{Ka13},
Lemmas \ref{l as z} and \ref{l simple z in T}  can be easily extracted from the proof of \cite[Lemma 3.6]{Ka13}.

\begin{lem}[\cite{Ka13}] \label{l P ser}
Let $P(z,\tau) = z^r + h_1 (\tau) z^{r-1} + \dots + h_r (\tau)$ be a monic polynomial in $z\in \CC$ with coefficients
$h_j$ analytic in the complex variable $\tau$ for $\tau \in \DD_\delta (0)$ (with  $\delta>0$). Suppose
\[
h_j (0) = 0 \ \ \text{ for } j = 1, \dots, r, \ \ \ \text{ and } \ h_r '(0) \neq 0.
\]
Then for $\tau$ close enough to $0$ there exist
exactly $r$ distinct roots of $P (\cdot,\tau)= 0$ and these roots are given by an
$r$-valued analytic  function $Z (\tau)$ that admits a Puiseux series representation
\begin{equation} \label{e Z Pser}
Z (\tau) = \sum_{j=1}^{\infty} c_j \tau^{j/r} \  \
\text{ with the leading coefficient } \ c_1 = \sqrt[\rr]{-h'_r (0)} \neq 0
\end{equation}
(here $\sqrt[\rr]{\tau}$ is an arbitrary  branch of the multi-function $\tau^{1/r}$, and $c_j \in \CC$ are constants).
\end{lem}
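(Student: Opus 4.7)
The plan is to use the classical Puiseux substitution to reduce the problem to a setting where the analytic implicit function theorem applies. First I would introduce an auxiliary variable $\sigma$ by the change of coordinates $\tau=\sigma^r$ and substitute $z=\sigma w$ into $P(z,\tau)$. Since the hypothesis $h_j(0)=0$ means that $h_j(\sigma^r)$ is divisible by $\sigma^r$ for every $j=1,\dots,r$, the rescaled polynomial
\[
R(w,\sigma) \;:=\; \sigma^{-r} P(\sigma w,\sigma^r)
\]
extends to a function analytic in $(w,\sigma)$ on a neighborhood of every point $(w_0,0)\in\CC\times\{0\}$. A direct termwise expansion, using $h_r(\sigma^r)=h_r'(0)\sigma^r+O(\sigma^{2r})$ and $h_j(\sigma^r)=O(\sigma^r)$ for $j<r$, gives the key identity $R(w,0)=w^r+h_r'(0)$.

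Next I would analyze the limit equation $w^r+h_r'(0)=0$. Because $h_r'(0)\ne 0$, this polynomial has $r$ distinct simple roots $c_1 \omega^k$, $k=0,\dots,r-1$, where $c_1=\sqrt[\rr]{-h_r'(0)}$ is any fixed $r$-th root of $-h_r'(0)$ and $\omega=e^{2\pi\ii/r}$. At each such root, $\pa_w R(c_1\omega^k,0)=r(c_1\omega^k)^{r-1}\ne 0$, so the analytic implicit function theorem supplies, for some $\rho>0$, unique analytic functions $w_k : \DD_\rho (0)\to\CC$ with $w_k(0)=c_1\omega^k$ and $R(w_k(\sigma),\sigma)\equiv 0$. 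The functions $z_k(\sigma):=\sigma\, w_k(\sigma)$ are then analytic on $\DD_\rho (0)$, satisfy $P(z_k(\sigma),\sigma^r)\equiv 0$, and admit power-series expansions $z_k(\sigma)=c_1\omega^k\sigma+O(\sigma^2)$.

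To assemble the $r$ analytic roots into a single Puiseux series in $\tau$, I would exploit the symmetry $R(w,\sigma)=R(\omega^{-1}w,\omega\sigma)$, which follows from the fact that $P(\sigma w,\sigma^r)$ depends on $\sigma$ only through the combinations $\sigma w$ and $\sigma^r$, together with $\omega^r=1$. Uniqueness in the implicit function theorem then forces $w_k(\sigma)=\omega^k w_0(\omega^k\sigma)$, and hence $z_k(\sigma)=z_0(\omega^k\sigma)$. Writing $z_0(\sigma)=\sum_{j\ge 1}c_j\sigma^j$ and substituting any chosen branch of $\tau^{1/r}$ for $\sigma$, one obtains the stated representation $Z(\tau)=\sum_{j\ge 1}c_j\tau^{j/r}$ with leading coefficient $c_1=\sqrt[\rr]{-h_r'(0)}\ne 0$; the $r$ values of $Z(\tau)$ for a fixed $\tau$ correspond precisely to the $r$ branches $\omega^k\tau^{1/r}$ of the $r$-th root, equivalently to the $r$ functions $z_k$.

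Finally I would verify exhaustion and distinctness of the roots: for $\tau\ne 0$ sufficiently small, the leading-order values $c_1\omega^k\sigma$ are pairwise distinct, so the $z_k(\sigma)$ are pairwise distinct; since $P(\cdot,\tau)$ is a monic polynomial of degree $r$ in $z$, it has at most $r$ roots, and therefore the $z_k$ exhaust them. The main technical step is the identity $R(w,0)=w^r+h_r'(0)$, which isolates the role of the hypotheses $h_j(0)=0$ and $h_r'(0)\ne 0$; after that, the argument reduces to a standard application of the analytic implicit function theorem, with the Galois-type symmetry $z_k(\sigma)=z_0(\omega^k\sigma)$ handling the Puiseux-series bookkeeping.
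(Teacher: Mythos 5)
Your proof is correct and self-contained. The paper itself does not prove this lemma; it is imported verbatim as \cite[Lemma 3.4]{Ka13}, so there is no in-text proof to compare against. Your argument is the classical Newton--Puiseux resolution in the nondegenerate case: the substitution $\tau=\sigma^r$, $z=\sigma w$ turns the degenerate root into $r$ simple roots of $R(w,0)=w^r+h_r'(0)$, the analytic implicit function theorem lifts these to analytic branches $w_k(\sigma)$, and the symmetry $R(\omega^{-1}w,\omega\sigma)=R(w,\sigma)$ together with uniqueness gives the clean bookkeeping $z_k(\sigma)=z_0(\omega^k\sigma)$, assembling the $r$ branches into a single $r$-valued Puiseux series with leading coefficient $\sqrt[r]{-h_r'(0)}$. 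The key computation $R(w,0)=w^r+h_r'(0)$ and the termwise analyticity of $\sigma^{-j}h_j(\sigma^r)$ both check out, and the exhaustion/distinctness argument at the end closes the loop. This is almost certainly the same route as in the cited source, since it is the standard proof when the Newton polygon consists of a single segment of slope $-1/r$; an alternative would be to invoke the Weierstrass preparation theorem first and then apply Newton's polygon to the resulting Weierstrass polynomial, but that adds machinery without gain in this simple case.
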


It is not an essential restriction to assume that $\sqrt[\rr]{\cdot}$ and $\arg z$
are continuous in $\CC \setminus \overline{\RR_-} $ and fixed by $\sqrt[\rr]{1}=1$, $\arg 1 =0$, and that
the complex numbers $\eta_j$ defined by (\ref{e def eta}) satisfy
\begin{eqnarray}
\arg\sqrt[r]{\eta_j} = (-1)^j \frac{\xi_0}{2r}, \ \ \ j=1,2 . \label{e arg pa Q}
\end{eqnarray}

By the Weierstrass preparation theorem, $Q (z;\zeta) = P (z;\zeta) R (z;\zeta)$
in a certain nonempty polydisc $\DD_{\ep_1} (0) \times \DD_{\de_1} (0) \times \DD_{\de_1} (0)$ with a Weierstrass polynomial
\begin{equation} \label{e Qzga}
 P (z;\zeta) = z^r + q_1 (\zeta) z^{r-1} + \dots + q_r (\zeta) ,
\end{equation}
and functions $R$ and $q_j$ such that:
\begin{itemize}
\item the function $R$ is analytic and has no zeroes in $\DD_{\ep_1} (0) \times \DD_{\de_1} (0) \times
\DD_{\de_1} (0)$,
\item the coefficients $q_j $ of \emph{the Weierstrass polynomial} $P$ are analytic
in $\DD_{\de_1} (0) \times \DD_{\de_1} (0)$,
\item $q_j (0,0) = 0$ for all $j=1,\dots,r$.
\end{itemize}
Then
\begin{equation} \label{e zer Q P}
Q \text{ and } P \ \text{ have the same zeroes in } \DD_{\ep_1} (0) \times \DD_{\de_1} (0) \times
\DD_{\de_1} (0) .
\end{equation}

Let $2\de_2 < \xi_0 /r$. Then there exist $\theta_1$, $\theta_2$ such that  $0<\theta_1<\theta_2<1$
and
\begin{eqnarray} \label{e <xi1<}
\arg \sqrt[\rr]{\wt \eta_1}  < \arg \sqrt[\rr]{\eta_1} + \de_2 < \arg \sqrt[\rr]{\eta_2} - \de_2  < \arg \sqrt[\rr]{\wt \eta_2},
\text{ where } \wt \eta_j := (1-\theta_j) \eta_1 + \theta_j \eta_2 .
\end{eqnarray}
Let us define the (real) triangles
\[
T_\de [\theta_1, \theta_2] := \left\{ \ ( [1-\theta] c , \theta c )  \subset \CC^2 \, : \, c \in (0,\delta] , \ \theta \in [\theta_1,\theta_2]
\right\} .
\]
So $T_{\de_1} [\theta_1, \theta_2] \subsetneq T_{\de_1}$. The following property of $T_\de [\theta_1, \theta_2]$ is essential for the next lemma:
if a sequence $\{ \zeta^{[n]} \}_{n=1}^{\infty} \subset T_\de [\theta_1, \theta_2] $ tends to $ \{ 0,0 \} $, then
\begin{equation} \label{e ga asymp}
\zeta_1^{[n]} \asymp \zeta_2^{[n]} \asymp |\zeta^{[n]}| \ \text{ as } n \to \infty .
\end{equation}
Here and below by $\zeta^{[n]}$ we denote the pair $( \zeta_1^{[n]}, \zeta_2^{[n]} ) \in \CC^2$.

\begin{lem}[\cite{Ka13}] \label{l as z}
Assume that there exist sequences $z^{[n]}$, $\zeta_1^{[n]}$ and $\zeta_2^{[n]} $ such that
\item[(i)] $\zeta^{[n]} = ( \zeta_1^{[n]}, \zeta_2^{[n]} ) $ belong to $T_{\de_1} [\theta_1, \theta_2]$
and tend to $( 0,0 )$ as $n \to \infty$,
\item[(ii)] $P (z^{[n]}; \zeta^{[n]} ) = 0$ for all $n \in \NN$.

Then $z^{[n]} \to 0$. Moreover,
\begin{equation} \label{e as zn}
(z^{[n]})^r = \left( \zeta_1^{[n]} \eta_1 + \zeta_2^{[n]} \eta_2 \right) \ [1+o(1)] \ \text{ and } \ |z^{[n]}| \asymp |\zeta^{[n]}|^{1/r}
\ \text{ as } n\to \infty .
\end{equation}
\end{lem}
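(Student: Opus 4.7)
The plan is to combine the Weierstrass factorization of $Q$ with its Taylor expansion at $\mathbf{0}$, in three stages: first localize $z^{[n]}$ to a neighborhood of $0$; second, extract a rough bound and then leading-order matching from the Taylor expansion of $Q$; third, use the geometry of $T_{\de_1}[\theta_1,\theta_2]$ and the open-sector condition (\ref{a eta}) to divide through by $\eta_1\zeta_1^{[n]} + \eta_2\zeta_2^{[n]}$. Localization is immediate: since $P(z;\zeta) = z^r + q_1(\zeta)z^{r-1} + \dots + q_r(\zeta)$ is monic of degree $r$ in $z$ with $q_j(\zeta^{[n]}) \to q_j(0,0) = 0$, the classical continuity of roots of monic polynomials forces every root of $P(\cdot;\zeta^{[n]})$ to converge to the unique $r$-fold root of $z^r = 0$. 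Hence $z^{[n]} \to 0$.

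\textbf{Rough bound and leading order.} Since each $q_j$ is analytic and vanishes at $(0,0)$, one has $|q_j(\zeta^{[n]})| \le C|\zeta^{[n]}|$ for $n$ large. Writing $(z^{[n]})^r = -\sum_{j=1}^r q_j(\zeta^{[n]})(z^{[n]})^{r-j}$, taking absolute values, and using $|z^{[n]}| \to 0$ yields $|z^{[n]}|^r \le 2rC|\zeta^{[n]}|$ and hence the preliminary estimate $|z^{[n]}| = O(|\zeta^{[n]}|^{1/r})$. Next, since $0$ is an $r$-fold zero of $Q(\cdot,0,0)$, we have $\pa_z^k Q(\mathbf{0}) = 0$ for $k < r$ and $\pa_z^r Q(\mathbf{0}) \neq 0$, so the Taylor expansion of $Q$ at $\mathbf{0}$ reads
\[
Q(z,\zeta) = \frac{\pa_z^r Q(\mathbf{0})}{r!}\, z^r + \pa_{\zeta_1} Q(\mathbf{0})\,\zeta_1 + \pa_{\zeta_2} Q(\mathbf{0})\,\zeta_2 + \mathcal{R}(z,\zeta),
\]
where the remainder $\mathcal{R}$ collects monomials $z^k\zeta_1^l\zeta_2^m$ with either $k \ge r+1$ and $l = m = 0$, or $l + m \ge 2$, or $k \ge 1$ and $l + m \ge 1$. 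Substituting the preliminary bound gives $|\mathcal{R}(z^{[n]},\zeta^{[n]})| = O(|z^{[n]}|^{r+1}) + O(|z^{[n]}|\,|\zeta^{[n]}|) + O(|\zeta^{[n]}|^2) = o(|\zeta^{[n]}|)$. By (\ref{e zer Q P}) and assumption (ii), $Q(z^{[n]},\zeta^{[n]}) = 0$; dividing by $\pa_z^r Q(\mathbf{0})/r!$ and invoking definition (\ref{e def eta}) of $\eta_j$ produces the identity
\[
(z^{[n]})^r = \eta_1\zeta_1^{[n]} + \eta_2\zeta_2^{[n]} + o(|\zeta^{[n]}|).
\]

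\textbf{Factoring through; the main obstacle.} On $T_{\de_1}[\theta_1,\theta_2]$ the components $\zeta_j^{[n]}$ are nonnegative reals satisfying $\zeta_j^{[n]} \asymp |\zeta^{[n]}|$ by (\ref{e ga asymp}). Hypothesis (\ref{a eta}) places $\eta_1$ and $\eta_2$ on two rays separated by the angle $\xi_0 \in (0,\pi)$, so both vectors lie strictly inside a common open half-plane; consequently the planar cone $\{a\eta_1 + b\eta_2 : a,b \ge 0\}$ is a proper convex cone and there is $c_0 > 0$ with $|a\eta_1 + b\eta_2| \ge c_0(a+b)$ for all $a,b \ge 0$. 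Taking $a = \zeta_1^{[n]}$, $b = \zeta_2^{[n]}$ gives $|\eta_1\zeta_1^{[n]} + \eta_2\zeta_2^{[n]}| \ge c_1|\zeta^{[n]}|$, so the identity from the previous paragraph may be divided through to yield $(z^{[n]})^r = (\eta_1\zeta_1^{[n]} + \eta_2\zeta_2^{[n]})[1+o(1)]$; taking moduli then produces $|z^{[n]}| \asymp |\zeta^{[n]}|^{1/r}$. The main obstacle is exactly this final lower bound: stage two by itself only yields the additive asymptotic $(z^{[n]})^r = \eta_1\zeta_1^{[n]} + \eta_2\zeta_2^{[n]} + o(|\zeta^{[n]}|)$, which cannot be converted into the multiplicative form claimed by the lemma unless the quantity $\eta_1\zeta_1^{[n]} + \eta_2\zeta_2^{[n]}$ is known to be of exact order $|\zeta^{[n]}|$. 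The proof therefore rests jointly on the $\RR_+$-structure of $T_{\de_1}[\theta_1,\theta_2]$ (preventing cancellation between $\eta_1\zeta_1^{[n]}$ and $\eta_2\zeta_2^{[n]}$) and on the open-sector condition $\xi_0 \in (0,\pi)$ (ensuring that even arbitrary nonnegative combinations of $\eta_1,\eta_2$ cannot vanish).
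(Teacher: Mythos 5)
Your proof is correct, and it reaches the same leading-order identity as the paper's argument, but via a genuinely different decomposition in the middle step. The paper works with the Weierstrass polynomial $P$ itself: since $P$ has degree exactly $r$ in $z$, the identity $(z^{[n]})^r = -q_r(\zeta^{[n]}) - \sum_{j<r}q_j(\zeta^{[n]})(z^{[n]})^{r-j}$ has no $z$-powers above $r$, so $z^{[n]}\to 0$ alone already kills the terms with $j<r$ at order $o(|\zeta^{[n]}|)$, and the only remaining work is to linearize $q_r$ and identify $\pa_{\zeta_j}q_r(0,0)=-\eta_j$ by differentiating $Q=PR$. You instead Taylor-expand $Q$ directly at $\mathbf 0$. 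This avoids the need to relate the derivatives of $q_r$ to those of $Q$, but the price is that the remainder $\mathcal R$ now contains $z$-powers beyond $r$; to show these are $o(|\zeta^{[n]}|)$ you rightly insert the a priori rough bound $|z^{[n]}|=O(|\zeta^{[n]}|^{1/r})$ (without it, $z^{[n]}\to 0$ alone does not control $|z^{[n]}|^{r+1}$ against $|\zeta^{[n]}|$). Both routes are valid: the paper's is marginally leaner because the polynomial truncation is free, while yours is more self-contained since it never needs to touch the factorization $Q=PR$ beyond using (\ref{e zer Q P}) to pass from $P=0$ to $Q=0$. Finally, you spell out the non-cancellation step — that (\ref{e ga asymp}) places $\zeta_1^{[n]},\zeta_2^{[n]}$ at comparable order and (\ref{a eta}) confines $\eta_1,\eta_2$ to a common open half-plane so that $|\zeta_1^{[n]}\eta_1+\zeta_2^{[n]}\eta_2|\gtrsim|\zeta^{[n]}|$ — which the paper compresses into one sentence; your convex-cone bound $|a\eta_1+b\eta_2|\ge c_0(a+b)$ for $a,b\ge 0$ is exactly the right lower estimate and makes the last inference fully explicit.
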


We include the proof of Lemma \ref{l as z} since it explains the definition of $\eta_{1,2}$,
and assumption  (\ref{a eta}).
To prove the lemma, it is enough to note that  (ii) and (\ref{e Qzga}) yields that $z^{[n]} = o(1)$ as $n \to \infty$.
Plugging this back to (ii), one gets
\[
(z^{[n]})^r = - q_r (\zeta^{[n]}) + q_{r-1} (\zeta^{[n]}) o (1) + \dots + q_{1} (\zeta^{[n]}) o (1)  .
\]
So (\ref{e ga asymp}) and $q_j (0,0) = 0$ imply
\[
(z^{[n]})^r = - \zeta_1^{[n]} \pa_{\zeta_1} q_r (0,0) - \zeta_2^{[n]} \pa_{\zeta_2} q_r (0,0)
+ o(|\zeta^{[n]}|) .
\]
The differentiation of the equality $Q=PR$ and formula (\ref{e Qzga}) easily gives
$ \pa_{\zeta_j} q_r (0,0) = - \eta_j $, $ j=1,2$.
Thus, (\ref{e as zn}) follows from (\ref{e ga asymp}) and assumption (\ref{a eta}).

\begin{lem}[\cite{Ka13}]  \label{l simple z in T}
Let  $0<\theta_1<\theta_2<1$.
There exists $\epsilon \in (0,\de_1)$ with the following property:
if $P (z,\zeta) = 0$ and $\zeta \in T_{\epsilon} [\theta_1,\theta_2]$, then
$z$ is a simple zero of the polynomial $P (\cdot,\zeta) $.
\end{lem}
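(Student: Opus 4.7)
\textbf{Proof plan for Lemma~\ref{l simple z in T}.} I will argue by contradiction, reducing the problem to a clean asymptotic comparison that is already half-done by Lemma~\ref{l as z}. The case $r = 1$ is vacuous since every degree-one polynomial has one simple root, so assume $r \ge 2$. If no such $\epsilon$ exists, then for each $n \in \NN$ one may choose $\zeta^{[n]} \in T_{1/n}[\theta_1,\theta_2] \subset T_{\de_1}[\theta_1,\theta_2]$ and $z^{[n]} \in \CC$ that is a multiple zero of $P(\cdot\,; \zeta^{[n]})$. Clearly $\zeta^{[n]} \to (0,0)$, and the multiplicity hypothesis reads
\[
P(z^{[n]}; \zeta^{[n]}) = 0 \qquad \text{and} \qquad \pa_z P(z^{[n]}; \zeta^{[n]}) = 0.
\]

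The first equation is exactly the setup of Lemma~\ref{l as z}, which then yields $z^{[n]} \to 0$ together with the asymptotic bound $|z^{[n]}| \asymp |\zeta^{[n]}|^{1/r}$ as $n \to \infty$. (This is where assumption (\ref{a eta}) on $\eta_1, \eta_2$ does its job: since $\arg \eta_2 - \arg \eta_1 = \xi_0 \in (0,\pi)$, the segment from $\eta_1$ to $\eta_2$ avoids $0$, so $\zeta_1^{[n]} \eta_1 + \zeta_2^{[n]} \eta_2$ is genuinely of order $|\zeta^{[n]}|$ for sequences in $T_{\de_1}[\theta_1,\theta_2]$.)

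Now expand the vanishing of the derivative:
\[
\pa_z P(z; \zeta) = r z^{r-1} + \sum_{j=1}^{r-1} (r-j)\, q_j(\zeta)\, z^{r-j-1}.
\]
Since $q_j$ is analytic at the origin with $q_j(0,0) = 0$, there is a constant $C$ with $|q_j(\zeta^{[n]})| \le C |\zeta^{[n]}|$ for all $j$ and all large $n$. Combined with $|z^{[n]}|^{r-j-1} \asymp |\zeta^{[n]}|^{(r-j-1)/r}$, this gives
\[
\bigl| q_j(\zeta^{[n]})\, (z^{[n]})^{r-j-1} \bigr| = O\bigl( |\zeta^{[n]}|^{(2r-j-1)/r} \bigr),
\]
and since $(2r-j-1)/r \ge r/r = 1$ for $1 \le j \le r-1$, the entire sum is $O(|\zeta^{[n]}|)$. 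On the other hand, the leading term satisfies $|r (z^{[n]})^{r-1}| \asymp |\zeta^{[n]}|^{(r-1)/r}$, and because $(r-1)/r < 1$ for $r \ge 2$, the ratio $|\zeta^{[n]}|^{(r-1)/r} / |\zeta^{[n]}|$ tends to $+\infty$. Hence for all sufficiently large $n$,
\[
\bigl| r (z^{[n]})^{r-1} \bigr| \ > \ \Bigl| \sum_{j=1}^{r-1} (r-j)\, q_j(\zeta^{[n]})\, (z^{[n]})^{r-j-1} \Bigr|,
\]
contradicting $\pa_z P(z^{[n]}; \zeta^{[n]}) = 0$. This contradiction yields the required $\epsilon$.

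The main obstacle is a conceptual rather than a technical one: one has to trust that Lemma~\ref{l as z} already extracted the right "scaling" $|z| \asymp |\zeta|^{1/r}$ uniformly over the angular parameter $\theta \in [\theta_1,\theta_2]$, after which the simplicity of roots follows almost mechanically from the fact that the leading monomial $r z^{r-1}$ is of \emph{smaller} power in $|\zeta|$ than any of the lower-order coefficients $q_j(\zeta) z^{r-j-1}$, and hence cannot be cancelled by them for $|\zeta|$ small.
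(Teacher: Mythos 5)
Your proof is correct. The paper does not reproduce a proof of this lemma (it attributes it, together with Lemma~\ref{l as z}, to the proof of Lemma 3.6 in \cite{Ka13}), so there is no in-text argument to compare against; but your reasoning is exactly what one would want given the tools at hand, and in particular it makes clean use of Lemma~\ref{l as z} as the paper presents it. The contradiction scheme is sound: a multiple root satisfies $P(z^{[n]};\zeta^{[n]})=0$ and $\pa_z P(z^{[n]};\zeta^{[n]})=0$; Lemma~\ref{l as z} gives the two-sided bound $|z^{[n]}|\asymp |\zeta^{[n]}|^{1/r}$ (the lower bound is the one you actually need, to guarantee the leading term $r(z^{[n]})^{r-1}$ is of order exactly $|\zeta^{[n]}|^{(r-1)/r}$); the Weierstrass coefficients satisfy $q_j(\zeta)=O(|\zeta|)$ because $q_j$ is analytic and vanishes at the origin; and your exponent bookkeeping $(2r-j-1)/r\ge 1>(r-1)/r$ for $1\le j\le r-1$, $r\ge 2$ is correct, so the leading term dominates and the derivative cannot vanish for large $n$. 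One small point worth making explicit in a write-up: the sequence extraction requires $1/n<\de_1$ (true for large $n$), and the bound $|q_j(\zeta)|\le C|\zeta|$ should be taken uniformly over $j=1,\dots,r-1$ and over $\zeta$ in a fixed small polydisc, which is immediate but deserves a word. An alternative route would have been to fix $\theta\in[\theta_1,\theta_2]$, invoke Lemma~\ref{l P ser} (distinctness of the $r$ Puiseux branches for small $\tau$), and then use compactness of $[\theta_1,\theta_2]$ to get a uniform threshold; your argument neatly bypasses that compactness step because Lemma~\ref{l as z} is already stated for arbitrary sequences in the full truncated triangle.
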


Let $\epsilon$ be as in Lemma \ref{l simple z in T}.
It is essential for the next lemma that $(0,0) \not \in T_\epsilon [\theta_1, \theta_2] $.
So there exist open neighborhoods (in the sense of $\CC^2$) of $T_\epsilon [\theta_1, \theta_2]$ that do not contain the origin $(0,0)$.

\begin{lem} \label{l Z1}
There exist an open simply connected set $W \subset \CC^2$ and an analytic in $W$ function
$Z_1 (\zeta_1,\zeta_2)$ with the following properties:
\item[(i)] $T_\epsilon [\theta_1, \theta_2] \subset W$,
\item[(ii)] for each $\zeta \in T_\epsilon [\theta_1, \theta_2]$, the number
$Z_1 (\zeta) = Z_1 (\zeta_1,\zeta_2)$ is a zero of $P ( \cdot , \zeta_1, \zeta_2 )$ and
\begin{equation} \label{e Z1 as}
Z_1 (\zeta) = \sqrt[r]{\zeta_1 \eta_1 + \zeta_2 \eta_2 + o (\zeta)} \ \  \text{ as } \zeta \to 0 ,
\end{equation}
\item[(iii)] for every $\theta \in [\theta_1, \theta_2]$ there exists $\epsilon_1 (\theta) >0$
 such that the series representation
\begin{equation} \label{e Ztau}
Z_1 \left( [1-\theta] \tau , \theta \tau \right) = \sum_{j=1}^{\infty} c_j (\theta) \ (\sqrt[r]{\tau})^j  \ \
\text{ with the leading coefficient }
c_1 (\theta) = \sqrt[\rr]{ (1-\theta) \eta_1 + \theta \eta_2 }
\end{equation}
holds for $\tau \in [0, \epsilon_1 (\theta)] $.
\end{lem}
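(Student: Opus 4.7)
My plan is to select a distinguished Puiseux branch $Z_\theta(\tau)$ along each radial direction $\tau \mapsto ([1-\theta]\tau, \theta\tau)$ in $T_\epsilon[\theta_1,\theta_2]$ via Lemma \ref{l P ser}, and then glue these branches into a single analytic function $Z_1$ on a simply connected open neighborhood $W \subset \CC^2$ of $T_\epsilon[\theta_1,\theta_2]$ using the implicit function theorem together with the monodromy theorem.

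\textbf{Branch selection along each ray.} For fixed $\theta$, I would set $h_j(\tau) := q_j([1-\theta]\tau, \theta\tau)$. These are analytic in $\tau$, vanish at $\tau=0$, and
\[
h_r'(0) = (1-\theta)\,\pa_{\zeta_1}q_r(0,0) + \theta\,\pa_{\zeta_2}q_r(0,0) = -[(1-\theta)\eta_1+\theta\eta_2],
\]
using the identity $\pa_{\zeta_j}q_r(0,0) = -\eta_j$ established in the proof of Lemma \ref{l as z}. By assumption (\ref{a eta}), $\arg\eta_2 = \arg\eta_1 + \xi_0$ with $\xi_0 \in (0,\pi)$, so the line segment $\{(1-\theta)\eta_1+\theta\eta_2 : \theta\in[0,1]\}$ lies in a half-plane through the origin, avoids $0$, and stays inside the sector where the chosen continuous branch of $\sqrt[\rr]{\cdot}$ is defined (in keeping with (\ref{e arg pa Q})). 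Lemma \ref{l P ser} then gives $r$ Puiseux branches, and I would select $Z_\theta(\tau)$ as the unique one whose leading coefficient is $c_1(\theta) = \sqrt[\rr]{(1-\theta)\eta_1+\theta\eta_2}$.

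\textbf{Construction of $W$ and gluing.} Next, I would let $W$ be an open tubular neighborhood of $T_\epsilon[\theta_1,\theta_2]$ in $\CC^2$ whose radial thickness $\rho(|\zeta|)$ shrinks faster than $|\zeta|$ as $\zeta\to(0,0)$. This keeps $W$ out of the origin and prevents loops around $(0,0)$, while $W$ deformation-retracts onto $T_\epsilon[\theta_1,\theta_2]$, which is contractible (homeomorphic to $(0,\epsilon]\times[\theta_1,\theta_2]$); hence $W$ is simply connected. I would then shrink $\rho$ further so that, via Lemma \ref{l simple z in T}, continuity of zero-sets, and the $|\zeta|^{1/r}$-separation of zeros near the apex given by Lemma \ref{l as z}, all zeros of $P(\cdot,\zeta)$ remain simple on $W$. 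Fixing a base point $\zeta^* := ([1-\theta_1]\tau_0, \theta_1\tau_0)$ for some small $\tau_0>0$, I define $Z_1(\zeta^*) := Z_{\theta_1}(\tau_0)$. Since this is a simple zero of $P(\cdot,\zeta^*)$, the implicit function theorem provides a local analytic extension $Z_1$; simple connectivity of $W$ together with the absence of multiple zeros then yield, by the monodromy theorem, a single-valued analytic extension $Z_1$ on all of $W$.

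\textbf{Verification and main obstacle.} To verify (ii)--(iii), I would fix $\theta\in[\theta_1,\theta_2]$ and consider $z(\tau) := Z_1([1-\theta]\tau, \theta\tau)$; this is a zero of $P(\cdot,[1-\theta]\tau,\theta\tau)$ that tends to $0$ as $\tau\to 0^+$ (all zeros converge to $0$ by Lemma \ref{l as z}), so by uniqueness it coincides with one of the $r$ branches of Lemma \ref{l P ser}, with leading coefficient $\omega(\theta)\sqrt[\rr]{(1-\theta)\eta_1+\theta\eta_2}$ for some $r$-th root of unity $\omega(\theta)$. Continuity of $\omega(\theta)$ together with its finite range forces it to be constant, and $\omega(\theta_1)=1$ by construction, so $\omega\equiv 1$; this proves (iii), whence (ii) follows by recombining $\tau\bigl[(1-\theta)\eta_1+\theta\eta_2\bigr] = \zeta_1\eta_1+\zeta_2\eta_2$ inside the root. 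The main obstacle will be the combined requirement that $W$ be simply connected and that the zeros of $P(\cdot,\zeta)$ stay simple throughout $W$: near $(0,0)$ the $r$ branches form a branched cover and any loop around the origin would permute them, so $W$ must be shaped to approach $(0,0)$ strictly along the angular sector spanned by $T_\epsilon[\theta_1,\theta_2]$.
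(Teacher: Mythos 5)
Your overall strategy matches the paper's: select a Puiseux branch along one ray via Lemma \ref{l P ser}, extend it to a simply connected neighborhood $W$ of $T_\epsilon[\theta_1,\theta_2]$ avoiding the origin using simplicity of zeros (Lemma \ref{l simple z in T}) plus the implicit function theorem and monodromy, and then fix the branch by a connectedness argument. So the blueprint is essentially the paper's. The difference is in how the branch is pinned down, and there is a genuine gap there.

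The gap is the unsupported assertion that $\omega(\theta)$ is continuous. That continuity is in fact the crux of the whole lemma, not a side remark. The leading coefficient of the Puiseux series along each ray is a $\tau \to 0^+$ limit, and pointwise limits of continuous functions need not be continuous in the parameter $\theta$; some uniform control is required. The paper obtains this by first showing, as an analogue of Lemma \ref{l as z} applied to $Z_1$, that $Z_1^r(\zeta) = \zeta_1\eta_1 + \zeta_2\eta_2 + f(\zeta)$ with $f$ analytic and $f(\zeta)= o(\zeta)$ \emph{uniformly} as $\zeta \to 0$ in $T_\epsilon[\theta_1,\theta_2]$. From this uniform estimate it follows that for $\epsilon_2$ small enough $Z_1^r(\zeta)$ stays inside the sector $\Sec[\arg\eta_1,\arg\eta_2]$, so the function $\zeta \mapsto Z_1(\zeta)/\sqrt[r]{Z_1^r(\zeta)}$ is a continuous root-of-unity-valued map on the connected set $T_{\epsilon_2}[\theta_1,\theta_2]$, hence constant; since it equals $1$ along the base ray, it is identically $1$. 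Your $\omega(\theta)$ is exactly the restriction of this function to a ray, so the paper's argument is what you would need to supply. Without the uniform asymptotic (which Lemma \ref{l as z} already provides in the form of sequential asymptotics over the whole two-dimensional set), the continuity of $\omega$ cannot be asserted.

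A second, smaller issue: you derive (ii) from (iii), but the $o(\zeta)$ in (\ref{e Z1 as}) is a two-dimensional asymptotic, whereas (\ref{e Ztau}) gives one-dimensional Puiseux remainders with constants and radii $\epsilon_1(\theta)$ a priori depending on $\theta$. Passing from (iii) to (ii) therefore needs an additional uniformity-in-$\theta$ argument. The paper sidesteps this by proving (ii) first, directly from the uniform estimate on $Z_1^r$, and then deducing (iii), which is the cleaner order.

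Finally, a minor over-specification: requiring the tube $W$ to shrink faster than $|\zeta|$ near the origin is unnecessary; any open simply connected neighborhood of $T_\epsilon[\theta_1,\theta_2]$ that avoids the origin and on which the zeros of $P(\cdot,\zeta)$ remain simple suffices, as the paper uses.
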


\begin{proof}
Applying Lemma \ref{l P ser} to the function $\wt P (z,\tau) := P (z, (1-\theta) \tau , \theta \tau)$
with  complex variables $z$ and $\tau$ and a fixed parameter $\theta \in [\theta_1,\theta_2]$,
one can produce the $r$-valued Puiseux series for the zeroes
$Z ([1-\theta] \tau, \theta \tau )$ of $P (\cdot, (1-\theta) \tau , \theta \tau)$:
\begin{equation} \label{e Ztau 2}
Z \left( [1-\theta] \tau , \theta \tau \right) = \sum_{j=1}^{\infty} c_j (\theta) \tau^{j/r} , \  \
\text{ with }
c_1 (\theta)= \sqrt[r]{ (1-\theta) \eta_1 + \theta \eta_2 } .
\end{equation}
Choosing  $\theta_0 \in [\theta_1,\theta_2]$
and placing the branch $ \sqrt[r]{\tau} $ (chosen at the beginning of this subsection) instead of the multi-function $\tau^{1/r}$
in (\ref{e Ztau 2}),
we obtain a branch $Z_1 \left( [1-\theta_0] \tau , \theta_0 \tau \right)$
of the multi-function $ Z \left( [1-\theta_0] \tau , \theta_0 \tau \right) $.
For $\tau \in [0, \epsilon_1 (\theta_0)]$ with $\epsilon_1 (\theta_0)>0$ small enough,
the series converges and indeed gives a zero of $P(\cdot, (1-\theta_0) \tau , \theta_0 \tau)$.

Since $T_\epsilon [\theta_1, \theta_2] $ is simply connected,
Lemma \ref{l simple z in T} and the implicit function theorem for simple zeroes
imply that $Z_1$ can be extended from the line-segment
\[
\left\{ \  ( [1-\theta_0] \tau , \theta_0 \tau) \in \CC^2  \ : \ \tau \in [0, \epsilon_1 (\theta)] \ \right\}
\]
to an analytic function $Z_1 (\zeta_1,\zeta_2)$ in a certain open simply connected
neighborhood $W$ of $T_\epsilon [\theta_1, \theta_2] $ saving the property $P \left( Z_1 (\zeta); \zeta \right) = 0$.
If such a neighborhood $W$ is fixed, the extension is unique. (Note that $T_\epsilon [\theta_1, \theta_2] $ does not contain
the origin and, in the case $r>1$, the neighborhood $U$ also must not contain the origin).

Lemma \ref{l as z} implies that $Z_1 (\zeta_1,\zeta_2) \to 0$ as
$\zeta \to 0$ in $T_\epsilon [\theta_1, \theta_2] $.
That is, $Z_1$ is continuous in the closure $\overline{T_\epsilon [\theta_1, \theta_2]}$.
(Note that $Z_1 (0,0) = 0$ due to (\ref{e Ztau 2}) and that $\overline{T_\epsilon [\theta_1, \theta_2]}$
consists of $(0,0)$ and $T_\epsilon [\theta_1, \theta_2]$).
Similar to the proof of Lemma \ref{l as z}, one can show that
\begin{equation} \label{e Z1= +f}
Z_1^r (\zeta) = \zeta_1 \eta_1 + \zeta_2 \eta_2 + f (\zeta) , \
\end{equation}
 where $f$ is analytic in $U$ and $ f (\zeta) = o (\zeta) $ as $\zeta \to 0 $ in $T_\epsilon [\theta_1, \theta_2]$.
So it is possible to take $\epsilon_2<\epsilon$ such that
$\zeta \in T_{\epsilon_2} [\theta_1, \theta_2] $ implies
\begin{equation} \label{e Zr sec}
 Z_1^r (\zeta) \in \Sec [\arg \eta_1 , \arg \eta_2 ] \ \ \ \text{ and, in particular, } Z_1 (\zeta) \neq 0
\end{equation}
(note that, by definition (\ref{e Sec def}), the sector $\Sec [\arg \eta_1 , \arg \eta_2 ]$ does not contain $0$).
From this and the expression for $c_1 (\theta_0)$ in (\ref{e Ztau 2}), we get
\begin{equation} \label{e sqrtZr=Z}
\sqrt[r]{ Z_1^r \left( \zeta \right) } = Z_1 \left( \zeta \right)
\end{equation}
for all $\zeta = ( [1-\theta_0] \tau , \theta_0 \tau ) $ with $  0 < \tau < \min \{ \epsilon_2, \epsilon_1 (\theta_0)\} $.

Now (\ref{e Z1 as}) follows from (\ref{e Z1= +f}) and the fact that formula (\ref{e sqrtZr=Z}) survives the above considered analytic
extension to $T_{\epsilon_2} [\theta_1, \theta_2] $. Indeed,
assume that for certain $\zeta \in T_{\epsilon_2} [\theta_1,
\theta_2]$ and a certain nontrivial $r$-th root of unity $1^{1/r}$ ($1^{1/r} \neq 1$)
the equality $Z_1 ( \zeta ) = 1^{1/r} \sqrt[r]{
Z_1^r ( \zeta) } $ holds.
Then (\ref{e Zr sec}) and the standard argument concerning simultaneously closed and open subsets
of a connected set imply that $Z_1$ is discontinuous at some point $\wt \zeta \in T_{\epsilon_2} [\theta_1, \theta_2] $,
a contradiction.

For every $\theta \in [\theta_1, \theta_2]$, there exists $\epsilon_1 (\theta) >0$ and a branch $\sqrt[r]{_{_\theta} \tau}$
of $\tau^{1/r}$ such that the function
$Z_1 \left( [1-\theta] \tau , \theta \tau \right)$
admits the series representation (\ref{e Ztau 2}) for $\tau \in [0, \epsilon_1 (\theta)] $
with $(\cdot)^{1/r}$ replaced by the branch $\sqrt[r]{_{_\theta} z}$.
(In other words, for a fixed $\theta$ formula (\ref{e Ztau 2}) holds with a fixed branch $\sqrt[r]{_{_\theta} \tau}$
of $\tau^{1/r}$; note that for $\theta=\theta_0$ this fact holds by definition of $Z_1$).
Indeed, for $\tau\leq \epsilon_1 (\theta) $ with $\epsilon_1 (\theta)$ small enough,
Lemma \ref{l P ser} implies that all the roots of $\wt P (z,\tau) = 0$ are distinct
and are produced by the $r$-valued series (\ref{e Ztau 2}).
Assume for a moment that for two different numbers $\tau_1, \tau_2 \in (0, \epsilon_1 (\theta)] $, the value of
$Z_1 \left( [1-\theta] \tau , \theta \tau \right)$ is given by
(\ref{e Ztau 2}) with different branches of $(\cdot)^{1/r}$. Then standard arguments imply that
$Z_1 \left( [1-\theta] \tau , \theta \tau \right)$ is not continuous on $(0, \epsilon_1 (\theta)]$.
This contradicts the definition of $Z_1$.

Finally, comparing asymptotics of (\ref{e Z1 as}) with that of
the first term in the series (\ref{e Ztau 2}), we see
that $Z_1 \left( [1-\theta] \tau , \theta \tau \right)$ corresponds to the branch $ \sqrt[r]{\tau} $
(i.e., $\sqrt[r]{_{_\theta} \tau} = \sqrt[r]{\tau}$ for all $\theta \in [\theta_1, \theta_2]$).
\end{proof}

\begin{lem} \label{l img Z1}
For $\ep>0$ small enough, the image $Z_1 [ \, T_{\de_1} [\theta_1, \theta_2] \, ]$ of $\ T_{\de_1} [\theta_1, \theta_2]$
contains the set
\[
S_1 := \DD_\ep (0) \cap \Sec \left[ \arg \sqrt[r]{\eta_1} + \delta_2 ,  \arg
\sqrt[r]{\eta_2} - \delta_2 \right] .
\]
\end{lem}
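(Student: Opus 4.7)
I would prove the lemma by a topological degree argument: deform $Z_1$ on a small sub-triangle $T_{\epsilon'}[\theta_1,\theta_2] \subset T_{\delta_1}[\theta_1,\theta_2]$ to an explicit model map whose image can be read off, then invoke homotopy invariance of the Brouwer degree. Parametrize the triangle via $\Psi(c,\theta) := ((1-\theta)c, \theta c)$ from the filled rectangle $R := [0,\epsilon'] \times [\theta_1,\theta_2]$, and define $F := Z_1 \circ \Psi : R \to \CC$, extended continuously by $F(0,\theta) := 0$. From (\ref{e Z1 as}) one has $F(c,\theta)^r = c\, \wt\eta(\theta) + o(c)$ as $c \to 0^+$, uniformly in $\theta \in [\theta_1,\theta_2]$, where $\wt\eta(\theta) := (1-\theta)\eta_1 + \theta \eta_2$ interpolates between $\wt \eta_1$ and $\wt \eta_2$. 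Because $\xi_0 \in (0,\pi)$ and by (\ref{e <xi1<}), the segment $\{\wt\eta(\theta) : \theta \in [\theta_1,\theta_2]\}$ lies in an open sector bounded away from both $0$ and $\overline{\RR_-}$, so a single branch of $(\cdot)^{1/r}$ is globally defined there, and Lemma \ref{l Z1}(iii) identifies it with the branch used in $Z_1$. This upgrades the asymptotic to $F(c,\theta) = c^{1/r} \sqrt[r]{\wt\eta(\theta)}\,(1 + o(1))$ as $c \to 0^+$, uniformly in $\theta$.

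The model map $F_0 : R \to \CC$ is defined by $F_0(c,\theta) := c^{1/r} \sqrt[r]{\wt\eta(\theta)}$. Since the segment $\wt\eta([\theta_1,\theta_2])$ misses $0$ and $\xi_0 \in (0,\pi)$, the function $\argb \wt\eta(\theta)$ is strictly monotone in $\theta$, hence so is $\arg \sqrt[r]{\wt\eta(\theta)}$, taking values in $[\arg \sqrt[r]{\wt\eta_1},\arg \sqrt[r]{\wt\eta_2}]$. Consequently $F_0$ restricts to a homeomorphism from $(0,\epsilon'] \times [\theta_1,\theta_2]$ onto a closed ``curvy sector'' with vertex $0$ and bounding rays of arguments $\arg \sqrt[r]{\wt\eta_1}$ and $\arg \sqrt[r]{\wt\eta_2}$. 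By the inequalities (\ref{e <xi1<}), this sector contains $S_1 \setminus \{0\}$ in its interior once the disk radius $\epsilon$ in the definition of $S_1$ is taken small enough with $\epsilon'$ fixed.

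Finally, I would use the straight-line homotopy $H_s(c,\theta) := (1-s) F_0(c,\theta) + s F(c,\theta)$, $s \in [0,1]$. Choose $\epsilon' > 0$ so small that the uniform asymptotic above gives $|F - F_0| < \tfrac{1}{2} |F_0| \sin \delta_2$ on all three non-degenerate edges of $\partial R$; the degenerate edge $c = 0$ collapses to $\{0\}$ and is harmless for $w \neq 0$. Then for every $w \in S_1 \setminus \{0\}$ whose argument sits inside $[\arg \sqrt[r]{\wt\eta_1}, \arg \sqrt[r]{\wt\eta_2}]$ by margin at least $\delta_2$ and whose modulus is smaller than the infimum of $|F_0|$ on $\{c = \epsilon'\}$, one verifies $H_s(c,\theta) \neq w$ for every $(c,\theta) \in \partial R$ and every $s \in [0,1]$. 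Homotopy invariance yields $\deg(F,\Intr R,w) = \deg(F_0,\Intr R,w) = \pm 1$, since $F_0$ is a local homeomorphism near $F_0^{-1}(w)$; nonvanishing degree forces $w \in F(R) \subset Z_1[T_{\delta_1}[\theta_1,\theta_2]]$. The case $w = 0$ is covered by $Z_1(0,0) = 0$.

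The hard part will be the third step: making the degree argument clean in the presence of the collapsed edge $c = 0$, where both $F$ and $F_0$ are non-transversely singular. The estimate $|F - F_0| < \tfrac12 |F_0|\sin\delta_2$ must be genuinely uniform down to $c = 0$, which is exactly what Lemma \ref{l Z1}(ii)--(iii) delivers, but one also needs the argument of $F(c, \theta_i)$ along the radial edges $\theta = \theta_i$ to stay within $\delta_2$ of $\arg \sqrt[r]{\wt\eta_i}$ for \emph{all} $c \in (0,\epsilon']$; this forces $\epsilon'$ to be chosen only after $\delta_2$ is fixed, and is the step where the argument from the earlier Puiseux-based Lemma \ref{l simple z in T} gets genuinely strengthened into a surjectivity statement via the homotopy/loop-contraction idea mentioned in the author's sketch.
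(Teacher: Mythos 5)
Your proposal is correct and rests on the same topological core as the paper — Lemma \ref{l Z1} supplies the leading-order asymptotics, and a degree/winding-number argument converts boundary control into coverage — but the technical packaging is genuinely different. The paper parametrizes the boundary of the sub-triangle by explicit loops $\gamma_\tau$, computes the index $\ind[z_0; Z_1(\gamma_\nu)]=1$ directly by tracking argument increments on the three edges, and then argues by contradiction: if $z_0$ were not covered, the shrinking family $\gamma(s,\tau)$ would contract $Z_1(\gamma_\nu)$ to the origin inside $\CC\setminus\{z_0\}$, forcing the index to vanish. You instead introduce an explicit model map $F_0(c,\theta)=c^{1/r}\sqrt[r]{\wt\eta(\theta)}$, show it is a homeomorphism onto a curvy sector containing $S_1$, and transfer its nonzero Brouwer degree to $F=Z_1\circ\Psi$ by the straight-line homotopy $(1-s)F_0+sF$ with a relative error bound $|F-F_0|<\tfrac12|F_0|\sin\delta_2$. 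What the paper's route buys is economy: no model map needs to be analyzed, and the contraction-to-origin homotopy comes for free from the family of triangles. What your route buys is transparency about exactly where the Puiseux estimate enters (the uniform relative bound on the two radial edges and the outer edge) and a cleaner separation between the "easy" combinatorics of $F_0$ and the "hard" perturbation to $F$; you also spare yourself the slightly delicate claim that positive argument increments bounded by $\pi,2\pi,\pi$ sum to exactly $2\pi$, which the paper's proof relies on. Your closing remark correctly identifies the genuine subtlety — the degenerate edge $c=0$ and the uniformity of the error bound down to that edge — and notes, correctly, that Lemma \ref{l Z1}(ii)–(iii) is precisely what makes that uniformity available.
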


\begin{proof}
For $\tau > 0$, consider loops (closed paths) $\ga_\tau : [0,1] \to \RR^2 \subset \CC^2$ that are parameterizations of
the boundaries of the triangles $T_\tau [\theta_1,\theta_2]$ and have the following property:
the map $\ga (s,\tau) = \ga_\tau (s)$ defined on $ [0,1] \times [0,\nu] $ is a homotopy of $\ga_\nu$ into the loop
$\ga_0  \equiv 0$ ($\nu$ is arbitrary positive, $\ga_0$ is defined by $\ga_0 (s) = 0$ for all $s \in [0,1]$, and so
the loop $\ga_0$ degenerates to the origin $( 0, 0)$). To be specific,
put
\[
\gamma_\tau (s) := \left\{
\begin{array}{lll}
t_1 (s) \ (1-\theta_1 , \theta_1 ) & \text{ for } s \in [0,1/3]  & \text{ with } t_1 (s) := 3 \tau  s ,  \\
\tau \ ( 1-t_2 (s) , t_2(s) ) & \text{ for } s \in [1/3,2/3] & \text{ with }
t_2 (s) := (\theta_2-\theta_1) (3s-1) + \theta_1 , \\
t_3 (s) \ ( 1-\theta_2 , \theta_2 ) & \text{ for }  s \in [2/3,1] & \text{ with } t_3 (s) := 3 \tau (1-s)
\end{array}
\right.
\]

$Z_1 (\ga_\tau)$ are loops in $\CC$ (these loops are rectifiable in the sense that
$\pa_s Z_1 (\ga_\tau (\cdot)) \in L_{\CC}^1 [0,1]$,
the loop $Z_1 (\ga_0) \equiv 0$ is reduced to the point $0$). For any fixed $\nu \geq 0$, the map
$Z_1 (\ga (s,\tau)) : [0,1] \times [0,\nu] \to \CC$ is a loop homotopy of $Z_1 (\ga_\nu)$ into $0$.

By inequalities (\ref{e <xi1<}) and Lemma \ref{l Z1} (see also the proof of Lemma \ref{l Z1}),
there exists $\nu \in (0, \delta_1]$ such that
\begin{itemize}
\item
for $\zeta \in T_{\nu} [\theta_1, \theta_2]$,
\begin{equation} \label{e Z1 sec}
 Z_1 (\zeta) \neq 0  \text{ and, moreover, } Z_1 (\zeta) \in \Sec [\arg \sqrt[r]{\eta_1} , \arg \sqrt[r]{\eta_2} ] ;
\end{equation}
\item
for $\tau \in (0,\nu]$,
\begin{eqnarray} \label{e argZ1 theta12}
\arg Z_1 \left( [1-\theta_1]\tau, \theta_1 \tau \right)
 < \arg \sqrt[\rr]{\eta_1} + \de_2 < \arg \sqrt[\rr]{\eta_2} - \de_2
 < \arg Z_1 \left( [1-\theta_2]\tau, \theta_2 \tau \right) .
\end{eqnarray}
\end{itemize}

Take $\ep$ such that
\begin{equation} \label{e ep<min}
0<\ep < \min_{\theta \in [\theta_1, \theta_2]} | Z_1 \left( [1-\theta]\nu , \theta \nu \right) |.
\end{equation}
Then for arbitrary $z_0 \in S_1$ the index
of the point $z_0$ w.r.t. the loop $Z_1 (\ga_\nu)$ equals 1,
\begin{equation} \label{e ind}
\ind [z_0 ; Z_1 (\ga_\nu) ] = 1 .
\end{equation}
Indeed, by (\ref{e Z1 sec}), (\ref{e argZ1 theta12}), and (\ref{e ep<min}),
the increments of the argument of $Z_1 (\ga_\nu (s))$ on each of the intervals $s \in [0,1/3]$,
$s \in [1/3,2/3]$, and $s \in [2/3,1]$ are positive, but less than $\pi$, $2\pi$, and $\pi$, respectively.

It follows from (\ref{e ind}) that $z_0 $ belongs to the image $S_2 := Z_1 (T_\nu [\theta_1, \theta_2])$. In fact,
assume that $z_0 \not \in S_2$. Then $z_0 $ does not belong to the curve (image of the loop)
$Z_1 (\ga_\tau )$ for every $\tau \in [0,\nu]$.  So $Z_1 (\ga (s,\tau))$ is a homotopy of $Z_1 (\ga_\nu)$ into
$Z_1 (\ga_0) \equiv 0$
in the domain $\CC \setminus \{ z_0 \}$. Thus, $\ind [z_0 ; Z_1 (\ga_\nu) ] = \ind (z_0 ; Z_1 [\ga_0]) = 0$.
This contradicts (\ref{e ind}).
\end{proof}

Summarizing, we see from Lemma \ref{l Z1} that  $P ( Z_1 (\zeta) ; \zeta ) = 0$ and from (\ref{e zer Q P}) that
$Q ( Z_1 (\zeta) ; \zeta ) = 0$ for all $\zeta \in T_{\de_1} [\theta_1, \theta_2]$. Thus,
$
Z_1 [ \, T_{\de_1} [\theta_1, \theta_2] \, ] \subset \Si_Q \left[ \, T_{\de_1} [\theta_1, \theta_2] \, \right] \subset
\Si_Q \left[ T_{\de_1} \right]$ .
Lemma \ref{l img Z1} completes the proof.

\end{document}